\title{Non-conservation of a generalized helicity in the Euler equations}
\author{Vikram Giri, Hyunju Kwon, and Matthew Novack}
\date{}
\definecolor{ao(english)}{rgb}{0.0, 0.5, 0.0}
\definecolor{applegreen}{rgb}{0.55, 0.71, 0.0}
\definecolor{darkpastelgreen}{rgb}{0.01, 0.75, 0.24}
\definecolor{azure}{rgb}{0.0, 0.5, 1.0}
\definecolor{burgundy}{rgb}{0.5, 0.0, 0.13}
\definecolor{battleshipgrey}{rgb}{0.52, 0.52, 0.51}
\definecolor{bluey}{rgb}{.15, 0, 5}
\definecolor{alizarin}{rgb}{0.82, 0.1, 0.26}
\definecolor{brightpink}{rgb}{1.0, 0.0, 0.5}
\newcommand{\qho}{{q*}}
\newcommand{\bp}{\mathbb{P}}
\newcommand{\mcf}{\mathcal{F}}
\newcommand{\ddt}{\left[ \sfrac{d}{dt} \right]}
\titleformat{\subsection}[runin]
      {\normalfont\bfseries}
      {\thesubsection}
      {0.5em}
      {}
      [.]
\titleformat{\subsubsection}[runin]
      {\normalfont\bfseries}
      {\thesubsubsection}
      {0.5em}
      {}
      [.]
\begin{document}

\maketitle

\begin{abstract}
For a $C^1_{t,x}$ solution $u$ to the incompressible 3D Euler equations, the helicity $H(u(t))=\int_{\mathbb{T}^3} u \cdot \curl u$ is constant in time.  For general low-regularity weak solutions, it is not always clear how to define the helicity, or whether it must be constant in time in the case that there is a clear definition.  In this paper, we define a generalized helicity which extends the classical definitions and construct weak solutions of Euler of almost Onsager-critical regularity in $L^3$ with prescribed generalized helicity and kinetic energy.   
\end{abstract}

\setcounter{tocdepth}{1}
\tableofcontents

\section{Introduction}
Given a smooth vector field $v : \T^3 \to \R^3$, its \emph{helicity} $H(v)$ is defined by
\begin{equation}\label{def:hel}
    H(v) := \int_{\T^3} v(x) \cdot \curl v(x)\,dx\,.
\end{equation}
Note that the sign of the helicity flips upon reflection of the vector field in a plane; thus the helicity can be regarded as a measure of reflection non-invariance of a vector field.  Moreau~\cite{Moreau} and Moffatt~\cite{Moffatt} noticed that helicity is a conserved quantity for the three-dimensional incompressible Euler equations
\begin{equation}\label{eqn:Euler}
    \begin{cases}
    \pa_t u + \div (u \otimes u) + \na p = 0 \\
    \div \, u =0 
    \end{cases}
\end{equation}
posed on $\T^3= \R^3 / \Z^3$. 
That is, for a smooth solution $(u,p)$ of~\eqref{eqn:Euler}, we have for all $0\leq t \leq T$ that
\begin{equation}\label{eqn:hel_cons}
    H(u(t)) = H(u(0))\,.
\end{equation}

\subsection{Motivation and background} 
The helicity of a vector field has a topological interpretation. To see this, let the vorticity $\omega = \curl u$ be composed of two simple, linked vortex filaments of strengths $\kappa_1$ and $\kappa_2$. More precisely, there exist two smooth, simple, null-homologous, unknotted closed curves $\ga_1, \ga_2$ that are linked, $\omega$ is a measure supported on the $\ga_i$'s with mass $\kappa_i$ on $\gamma_i$, and $\omega(x)$ is the unit vector tangent to $\ga_i$ at $x$. Let $\Sigma_i$ be a surface such that $\pa \Sigma_i = \ga_i$. Following Moffat~\cite{Moffatt}, we can compute the helicity as
\begin{align*}
    H(u) = \int_{\T^3} u \cdot \curl u = \sum_{i=1}^2 \int_{\ga_i} u \cdot \dot \ga_i = \sum_{i=1}^2 \int_{\Sigma_i} \curl u = \pm \kappa_1 \kappa_2 \, \times  \, \textnormal{linking number}(\ga_1, \ga_2)\,.
\end{align*}
The $\pm$ arises due to the two possible relative orientations of the filaments. Thus the helicity of a vector field is related to a topological linking property of the vortex lines.  This point of view was clarified by Arnold~\cite{Arnold}, who showed that the helicity of a smooth vector field is equal to an averaged, asymtotic linking number of trajectories of its vorticity. 

Furthermore, the helicity obeys a stronger invariance unrelated to the Euler equations: namely, for any volume-preserving diffeomorphism $\Phi$ and exact vector field $\omega$, the push-forward vector field $\Phi_*\omega = (\na\Phi)^{-1} \omega\circ\Phi$ satisfies
\begin{equation}\label{eqn:hel_cons_gen}
    H(\curl^{-1} \Phi_*\omega) = H(\curl^{-1}\omega)\,.
\end{equation}
Note that the push-forward of an exact vector field is exact, and so the $\curl^{-1}$ in the above equation makes sense. This shows that the helicity is an integral invariant of exact vector fields $\omega$ under volume-preserving diffeomorphisms; in fact it is the only such invariant~\cite{AK98, Ku09, Enciso-et-al-16, Serre84, SerreSurvey}. The helicity conservation for smooth solutions of the Euler equations~\eqref{eqn:hel_cons} follows easily from this more general invariance~\eqref{eqn:hel_cons_gen}: indeed, the vorticity equation is precisely the statement that the vorticity evolves as the push-forward of the volume-preserving flow of $u$. An extension of helicity to $C^0$ Hamiltonian structures was recently obtained in~\cite{ES25} where the authors prove a version of the invariance~\eqref{eqn:hel_cons_gen} for volume-preserving \emph{homeomorphisms}, answering a question of Arnold, in a special case.

However, in turbulent flows exhibiting anomalous dissipation of kinetic energy, the regularity of the vector field $u$ is insufficient for the theory of the transport equation to give us a well-behaved, volume-preserving flow. Regularizing the equation, for example by considering a vanishing viscocity limit of Navier-Stokes solutions, might not help due to the problem of vortex reconnections. Therefore in the low regularity setting, the topological interpretation of helicity could break down, and one is left with the analytic definition~\eqref{def:hel}.  

Since there is no global existence theory for smooth solutions of~\eqref{eqn:Euler}, we must consider weak solutions, which may be square-integrable vector fields but no better.  For such weak solutions, however, it is not clear how to interpret~\eqref{def:hel}, much less check that it is conserved; indeed it would appear that~\eqref{def:hel} is only well-defined for vector fields in $\dot H^{\sfrac 12}(\mathbb{T}^3)$ via the duality pairing $H(v):=\langle v , \curl v \rangle_{\dot H^{\sfrac 12}, \dot H^{-\sfrac 12}}$.  While it is known that every weak solution $u$ of~\eqref{eqn:Euler} which belongs to $L^3_t B^{\sfrac 23}_{3, c(\mathbb{N})} \cap L^\infty_t H^{\sfrac 12}$ does indeed conserve the helicity~\cite{CCFS08} -- see also~\cite{Chae03, DeRosa, WWY23, IS25, BT25} -- the following question has remained open.
\smallskip

\noindent\textbf{Question: } Do there exist weak solutions $u$ of~\eqref{eqn:Euler} for which the helicity~\eqref{def:hel} admits a canonical definition, but for which $H(u(t))$ is non-constant in time?
\smallskip

\noindent We will answer this question positively using a notion of generalized helicity introduced below.  We remark that a stronger version of this question was posed by Buckmaster and Vicol~\cite[Conjecture~2.7]{BVShortReview}, in which the weak solution is required to belong to $C^0_t H^{\sfrac 12}$.  Our extension of the definition of helicity to rougher vector fields offers an alternative approach.

\subsection{Generalized helicity and main result}
To define the generalized helicity, we set some conventions regarding Fourier series. We identify a smooth function $f:\T^3\rightarrow \R^d$ with its smooth, periodic lift: the function which satisfies $f(x+k)=f(x)$ for all $x\in \R^3$ and $k\in \Z^3$. For $k\in \Z^3$, the Fourier coefficient $\mcf(f)(k)$ is defined by $\mcf(f)(k) = \int_{\T^3} f(x) e^{-2\pi i k\cdot x}$.  Note that for any $s \in \mathbb{R}$, this definition makes sense on $H^s(\mathbb{T}^3)$.  Now let $\varphi:\mathbb{Z}^3 \rightarrow \mathbb{R}$ be given and $f \in H^s(\mathbb{T}^3)$; we set
\begin{equation}\label{def:multiplier}
    \bp_{\varphi}( f ) = \sum_{k \in \Z^3} \varphi(k) \mcf(f)(k) e^{2\pi i k \cdot x} \, .
\end{equation}
A specific example of the above is the $m^{\rm th}$ Littlewood-Paley projection $\bp_{\varphi_{2^m}}(f)$ of $f$, where $m\in \mathbb{N}\cup \{0\}$ and $\varphi_{2^m}$ is defined as in Definition~\ref{def:hel}. Note that $\mathbb{P}_{\varphi_{2^m}} f$ is always a smooth function and depends on the choice of base kernel $\varphi$. 

Let $u_1, u_2 : \T^3 \rightarrow \R^3$ be smooth vector fields.  We define
\begin{align}
H(u_1, u_2) &:= \int_{\T^3} u_1(x) \cdot \curl u_2(x) \, dx = \sum_{\vec k \in \mathbb{Z}^3}  \mcf(u_1) (k) \cdot \left( 2\pi ik \times \mcf(u_2)(-k) \right)  \, , \label{e:fourier:series}
\end{align}
where the latter equality holds due to Parseval's identity and the assumption that $u_2$ is real-valued, so that $\overline{\mcf (u_2)}(k) = \mcf (u_2)(-k)$.  
Then for a smooth vector field $u:\T^3\rightarrow \R^3$, the helicity $H(u)$ as defined in~\eqref{def:hel} is precisely $H(u,u)$.  It is clear that $H(u_1, u_2)$ makes sense if $u_1, u_2 \in \dot H^{\sfrac 12}(\T^3)$, or if $u_1$ is smooth and $u_2$ is a distribution.   On the other hand, observe that the sum in~\eqref{e:fourier:series}, evaluated for a given non-smooth vector field $u=u_1=u_2$, may converge if $\mcf( u)(k)$ is orthogonal to $ k \times \mcf(u)(-k)$.  We therefore offer the following definition of helicity in terms of mollifications and Littlewood-Paley projections, which allows for cancellations in the sum in~\eqref{e:fourier:series}.  At a heuristic level, we require the value of the helicity to be independent of the choice of mollification kernel used in the approximation (c.f.~\eqref{e:gen.hel.2}), and the shell-by-shell helicity to be absolutely summable for every choice of kernel defining the Littlewood-Paley shells, (c.f.~\eqref{e:sum.hel}).

\begin{definition}[\textbf{Generalized helicity}]\label{def:gen.hel}
Let $u \in L^2(\T^3)$ be given. We say that the generalized helicity of $u$ is well-defined if the following two conditions hold.
\begin{enumerate}[(i)]
\item There exists a number $H(u) \in \mathbb{R}$ such that for all Schwartz kernels $\varphi$ with $\int_{\R^d} \varphi = 1$, 
\begin{align}\label{e:gen.hel.2}
    H(u) = \lim_{\epsilon \rightarrow 0} H(\varphi_\epsilon \ast u , u) \,  , \qquad \textnormal{where } \varphi_\epsilon(x) = \epsilon^{-3}\varphi(\epsilon^{-1}x) \, .
\end{align}
\item Let $\mathcal{A}$ be the class of functions $\td\varphi:\mathbb{R}\rightarrow [0,1]$ which are smooth, compactly supported, and satisfy the following: there exists $\delta(\td\varphi)>0$, depending on $\td\varphi$, such that $\td\varphi(x)= 1$ for all $|x|\leq \delta(\td\varphi)$. For a given such $\td\varphi$, define $\ph (k) = \td\ph (k) - \td\ph(2k)$ and $\ph_{2^m}(k)= \ph(2^{-m} k)$ for $m \geq 1$, and $\ph_{2^0}(k)=\td\vp(k)$.  For all such $\varphi$, 
\begin{align}\label{e:sum.hel}
 \lim_{M\rightarrow \infty} \sum_{m,m'=0}^M \left| H\left( \bp_{\varphi_{2^m}}(u) , \bp_{\varphi_{2^{m'}}} (u) \right) \right|  <\infty \, ,
\end{align}
\end{enumerate}
\end{definition}
\begin{remark}[\textbf{Computing helicity shell-by-shell}]
If $u \in L^2(\mathbb{T}^3)$ satisfies Definition~\ref{def:gen.hel}, then for all $\td\varphi \in \mathcal{A}$,
\begin{align}
    H(u) = \lim_{M\rightarrow \infty} H(\check{\tilde\varphi}_{2^{-M}}\ast u , u)= \lim_{M \rightarrow \infty} \sum_{m=0}^M H \left( \bp_{\varphi_{2^m}} (u) , u \right) =  \lim_{M\rightarrow \infty} \sum_{m,m'=0}^M H\left( \bp_{\varphi_{2^m}}(u) , \bp_{\varphi_{2^{m
    '}}}(u) \right) \, . \notag 
\end{align}
\end{remark}
\begin{remark}[\textbf{Comparison with the classical definitions of helicity}]
Note that for $\dot H^{\sfrac 12}$ vector fields,~\eqref{e:sum.hel} holds, and the limit in \eqref{e:gen.hel.2} is well-defined and agrees with~\eqref{e:fourier:series} and~\eqref{def:hel}. Therefore we are justified in using the same notation $H$ in all three cases.  However, Definition~\ref{def:gen.hel} does extend the classical definitions to a larger class of objects.  For example, shear flows with regularity worse that $\dot H^{\sfrac 12}$ will satisfy~\eqref{e:gen.hel.2}-\eqref{e:sum.hel} since $u$ and $\curl u$ are always orthogonal (as are any combination of Littlewood-Paley projections or mollifications of $u$ and $\curl u$).   Since helicity is a measure of the total amount of linking, twisting, and writhing of vortex lines, the helicity of a shear flow \emph{should} be zero; the trajectories are all straight lines. More generally, functions which have cancellations of helicity occurring within the overlap of the $k$ and $k'$ shells may fall under the purview of Definition~\ref{def:gen.hel}, such as functions which in each Littlewood-Paley shell resemble shear flows to leading order. Our construction of weak Euler solutions in this paper will be flows of this type; in each Littlewood-Paley shell, the solution is very nearly a sum of shears.
\end{remark}

Our main theorem can now be stated as follows.
\begin{theorem}\label{thm:main}
    Let $0<\beta<\sfrac 12$ and $T>0$. Suppose that smooth functions $e:[0,T]\rightarrow(0,\infty)$ and $h:[0,T]\rightarrow(-\infty,\infty)$ are given. Then there exist weak solutions $u:[0,T]\times\T^3\rightarrow\R^3$ to the 3D Euler equations, belonging to $C([0,T]; (H^\beta\cap L^{\frac{1}{1-2\beta}})(\T^3))$, such that the generalized helicity is well-defined in the sense of Definition~\ref{def:gen.hel} and the kinetic energy and generalized helicity satisfy
    \begin{equation}\label{e:profiles}
    \sfrac 12 \left\| u(t,\cdot) \right\|_{L^2(\T^3)}^2 = e(t) \, , \qquad H(u(t)) = h(t) \, .    
    \end{equation}
\end{theorem}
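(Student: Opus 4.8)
The plan is to prove Theorem~\ref{thm:main} by a convex integration (Nash iteration) scheme. We inductively construct smooth triples $(u_q,p_q,\mathring R_q)$ solving the Euler--Reynolds system $\pa_t u_q + \div(u_q\otimes u_q) + \na p_q = \div\mathring R_q$, $\div u_q = 0$, with a rapidly growing frequency parameter $\lambda_q$ (with $\lambda_{q+1}\ge\lambda_q^b$ for a fixed $b>1$) and a geometrically decaying amplitude parameter $\delta_q$, arranged so that $\mathring R_q\to 0$ and $u_q$ converges in $C([0,T];(H^\beta\cap L^{\frac{1}{1-2\beta}})(\T^3))$ to a weak solution $u$ of~\eqref{eqn:Euler}. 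For the regularity and the many error estimates we build on the sharp intermittent scheme (in the spirit of the recent $L^3$-based Onsager constructions) that attains the threshold $\beta\nearrow\sfrac 12$ in $H^\beta$ while remaining just below $L^\infty$ --- equivalently just below the $L^3$-based Onsager regularity threshold --- so the building blocks are intermittent, shear-type (Mikado-type) flows and the bookkeeping of the oscillation error, the transport/flow error, the gluing step and the pressure is by now standard. The new feature is that the iteration must \emph{simultaneously} prescribe the kinetic energy and the (classical) helicity of $u_q$; we therefore carry, alongside the usual estimates, the inductive hypotheses
\[
\left| e(t)\bigl(1-\delta_{q+2}\bigr) - \sfrac 12 \|u_q(t,\cdot)\|_{L^2(\T^3)}^2\right| \le \sfrac 14 \, \delta_{q+1}\, e(t)\,, \qquad \bigl| h(t) - H(u_q(t))\bigr| \le \delta_{q+1}\,,
\]
for all $t\in[0,T]$. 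Since $\sum_q\delta_q<\infty$, the helicity gap is summable, which is decisive for the well-definedness of the generalized helicity of the limit.

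We take the perturbation in the form $w_{q+1} = \sum_\xi a_\xi(t,x)\,W_\xi(\lambda_{q+1}x) + (\text{correctors})$, where the $W_\xi$ range over finitely many shear building blocks whose oscillation directions are pairwise distinct, and the amplitudes $a_\xi$ (varying at a frequency $\ll\lambda_{q+1}$) solve, as usual, the algebraic identity $\sum_\xi a_\xi^2\langle W_\xi\otimes W_\xi\rangle = \rho(t,x)\,\mathrm{Id} - \mathring R_q$ with $\rho\ge0$ tuned so that $\sfrac 12 \|w_{q+1}\|_{L^2}^2$ fills in the prescribed energy gap. Two observations organize the helicity. First, $\curl u_q$ has Fourier support at frequencies $\lesssim\lambda_q$ while $w_{q+1}$ is supported at frequencies $\gg\lambda_q$; hence $H(u_q,w_{q+1})=H(w_{q+1},u_q)=0$ and $\int_{\T^3}u_q\cdot w_{q+1}=0$ \emph{exactly}, so that $\sfrac 12 \|u_{q+1}\|_{L^2}^2 = \sfrac 12 \|u_q\|_{L^2}^2 + \sfrac 12 \|w_{q+1}\|_{L^2}^2$ and --- crucially --- $H(u_{q+1}) = H(u_q) + H(w_{q+1})$. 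Second, a pure shear has vanishing helicity and the curl of a shear flow is $L^2$-orthogonal to every shear in the same plane; so with the plain $W_\xi$'s one would get $H(w_{q+1})\approx 0$ and the helicity would be frozen along the iteration. To produce a prescribed $H(w_{q+1})$ we endow each block with a small \emph{twist}: a shear profile $\cos(\lambda_{q+1}\,\xi\cdot x)\,e$ in the relevant plane is replaced by a screw-type profile $\cos(\lambda_{q+1}\,\xi\cdot x)\,e + \mu_{q+1}(t)\sin(\lambda_{q+1}\,\xi\cdot x)\,e'$ (with $e\perp e'\perp\xi$), whose self-helicity is $-\mu_{q+1}(t)\,\lambda_{q+1}$ times the block's $L^2$ mass, i.e.\ linear in $\mu_{q+1}$ and of arbitrary sign. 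Solving the scalar equation for the smooth function $\mu_{q+1}(t)$ so that $H(w_{q+1}) = h(t) - H(u_q(t))$ forces $|\mu_{q+1}(t)|\sim|h-H(u_q)|/\bigl(\lambda_{q+1}\cdot(\text{energy gap})\bigr)\lesssim\lambda_{q+1}^{-1}$; the twist is thus vanishingly small, perturbing $\langle W_\xi\otimes W_\xi\rangle$, $\|w_{q+1}\|_{L^2}$, and every Reynolds-stress, oscillation and time-derivative error only at order $\mu_{q+1}^2$ or $\mu_{q+1}$ (far below the threshold $\delta_{q+1}$), and the perturbation remains very nearly a sum of shears. Hence $H(u_{q+1}(t)) = h(t)$ exactly, and the helicity induction closes.

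Passing to the limit, $u_q\to u$ in $C([0,T];H^\beta\cap L^{\frac{1}{1-2\beta}})$, $u$ is a weak solution of~\eqref{eqn:Euler}, and $\sfrac 12 \|u(t,\cdot)\|_{L^2}^2 = \lim_q\sfrac 12 \|u_q(t,\cdot)\|_{L^2}^2 = e(t)$. It remains to verify that the generalized helicity of $u = u_0 + \sum_{q\ge1}w_q$ is well-defined in the sense of Definition~\ref{def:gen.hel} and equals $h(t)$. Because $\lambda_{q+1}\gg\lambda_q$, the Fourier supports of $\curl w_q$ and $w_{q'}$ are disjoint for $q\ne q'$, and (with at most finitely many exceptions) each Littlewood--Paley shell $\bp_{\varphi_{2^m}}$ meets the Fourier support of at most one $w_q$, for every admissible kernel $\tilde\varphi\in\mathcal A$. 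For condition (ii), the only possibly nonzero terms $H(\bp_{\varphi_{2^m}}u,\bp_{\varphi_{2^{m'}}}u)$ are the finitely many near-diagonal ones attached to a single $w_q$; since $\bp_{\varphi_{2^m}}$ acts component-by-component, the shear part of $w_q$ contributes nothing (the orthogonality of $\curl$(shear) to shears survives Littlewood--Paley truncation), so each such term is $O(\mu_q\lambda_q\|a_\xi\|_{L^2}^2) = O(\delta_q)$ up to corrector contributions of lower order, and summing over $q$ with $\sum_q\delta_q<\infty$ yields the finite bound~\eqref{e:sum.hel}. For condition (i), given any Schwartz kernel $\varphi$ with $\int_{\R^3}\varphi\,dx=1$, write $H(\varphi_\epsilon\ast u,u) = \sum_{q,q'}H(\varphi_\epsilon\ast w_q,w_{q'})$ (with $w_0:=u_0$); the cross-scale terms vanish by the same disjointness (mollification does not enlarge Fourier support), so $H(\varphi_\epsilon\ast u,u) = \sum_q H(\varphi_\epsilon\ast w_q,w_q)$, each summand tends to $H(w_q,w_q)$ as $\epsilon\to0$, and the tail is controlled uniformly in $\varphi$ and $\epsilon$ by $\sum_q\delta_q$; hence~\eqref{e:gen.hel.2} holds with $H(u) = \sum_q H(w_q,w_q)$, independent of $\varphi$. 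Finally, telescoping $H(u_{q+1}) = H(u_q) + H(w_{q+1})$ gives $H(u) = H(u_0) + \sum_{q\ge1}H(w_q,w_q) = \lim_Q H(u_Q) = h(t)$, which establishes~\eqref{e:profiles}.

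The main obstacle, beyond the (substantial but imported) machinery of the sharp intermittent scheme, is to install the helicity-tuning twist without degrading any of the delicate estimates underlying the regularity $H^\beta\cap L^{\frac{1}{1-2\beta}}$, and --- more subtly --- to make the shear structure robust enough that condition (ii) of Definition~\ref{def:gen.hel} holds for \emph{every} admissible Littlewood--Paley kernel and condition (i) for \emph{every} Schwartz mollifier, including non-radial and oscillatory ones. This forces the building blocks to be shear-like in a Littlewood--Paley-stable manner, with all helicity isolated in a single scalar twist whose magnitude is pinned to the summable helicity gaps; one must check that a concentrated (intermittent) screw-flow block has helicity comparable to $(\text{twist})\times(\text{frequency})\times(L^2\text{-mass})$ with $L^2$-mass comparable to the block's $L^2$ norm, and that the extra terms it injects into the Reynolds stress, the oscillation error and the time-derivative error are all $o(\delta_{q+1})$. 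Ensuring that the exact vanishing of the shear--shear and cross-scale helicity couplings is not spoiled by mollification or Littlewood--Paley truncation --- so that no adversarial kernel can manufacture a resonance --- is the crux of the well-definedness half of the theorem.
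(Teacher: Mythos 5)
Your high-level strategy does match the paper's: intermittent shear-type building blocks carry negligible helicity, one adds a tunable helical/twisted correction to inject the prescribed helicity increment, and frequency separation across generations makes the generalized helicity well-defined. But the proposal asserts three things that are false or unaddressed, and each of them is one of the main obstacles the paper has to overcome.

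The most serious gap is that you never transport your building blocks. At regularity $\beta\nearrow\sfrac12$ the principal perturbation cannot be $a_\xi W_\xi(\lambda_{q+1}x)$; it must be flowed along the background field, $a_\xi\,(\nabla\Phi)^{-1}W_\xi(\Phi)$, or the transport error is not closable. But on the natural Lipschitz time-scale $\tau_q$ one has $\|D\Phi-\Id\|=O(1)$, so the flowed pipe is \emph{not} approximately a shear and $\int w\cdot\curl w$ has no smallness at all --- precisely the scenario in which your "remains very nearly a sum of shears" step fails. The paper's central new ingredient is to run the flow for a shortened time $\mu_q\ll\tau_q$ (see~\eqref{eq:defn:tau}) so that $\|D\Phi-\Id\|\lesssim\tau_q^{-1}\mu_q$ (Corollary~\ref{cor:deformation}); this makes the leading helicity term of size $\tau_q^{-1}\mu_q\lambda_q\delta_{q+\bn}$ (see~\eqref{514:1}), but creates tension with the transport error that forces the constraint~\eqref{con:ovb} relating $\beta$ and $\bar b$. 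Closely related: your claimed \emph{exact} identities $H(u_q,w_{q+1})=0$, $\int u_q\cdot w_{q+1}=0$, and "each LP shell meets at most one $w_q$" are false, since the amplitude functions are products of cutoffs and are not band-limited, and the flow maps further spread the spectrum. The paper replaces them with quantitative off-band decay of the Fourier coefficients of $w_{q+1}$ extracted from the iterated-divergence potential structure (Lemma~\ref{ph1} and Hypothesis~\ref{hyp:freq:loc}); your condition~(ii) argument for arbitrary admissible kernels collapses without such bounds. Two further issues: a Beltrami twist $\cos(\lambda\xi\cdot x)e+\mu\sin(\lambda\xi\cdot x)e'$ is a Fourier mode, and multiplying a compactly supported intermittent Mikado by it neither gives a divergence-free field nor preserves the spatial dodging geometry --- the paper instead post-composes the Mikado with a volume-preserving helical diffeomorphism (Proposition~\ref{prop:pipe:helical}), which stays inside an enlarged pipe~\eqref{helical.in.straight} and has computable helicity~\eqref{helicity.mathcalH}. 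And your inductive hypothesis $|h-H(u_q)|\le\delta_{q+1}$ allows the gap to vanish or change sign, whereas a uniform \emph{lower} bound (as in Hypothesis~\ref{hyp:helicity.prescription}) is needed to control material derivatives of the square-root amplitude $\sqrt{h-H(u_q)}$ appearing in the helical corrector.
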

\noindent
The constructed solutions exhibit intermittency, a characteristic feature of turbulence as in \cite{NV22}.
By interpolation, in particular, the solutions belong to $C_tB^s_{3,\infty,x}$ with $s \to \sfrac13$ as $\beta \to \sfrac12$, and so such solutions are almost Onsager-critical in $L^3$.
Using techniques which are by now standard, we could augment the proof of this statement to instead prove non-uniqueness for the initial value problem, for example in a class of functions whose generalized helicities are constant in time.  In other words, imposing the conservation of generalized helicity has no bearing whatsoever on the uniqueness of the initial value problem, or on the behavior of the kinetic energy.

\subsection{Ideas of the proof}
Convex integration and Nash iteration techniques were first applied to the Euler equations by De Lellis and  Sz\'ekelyhidi~\cite{DLS09, DLS13}.  In the following years, Nash iterations were used to prove Onsager's conjecture~\cite{Isett2018, BDLSV17, GiriRadu} and demonstrate non-uniqueness of weak solutions to the Navier-Stokes equations~\cite{BV19}.  We also refer to the surveys~\cite{BV19, DSReview} for overviews and the papers~\cite{Luo, MS, BBV, BCV22, DK22, CheskidovLuo, Is22, FLS1, FLS2, BCK1, BCK2, MNY, M24} for recent developments.  On the other hand, our construction of solutions to the Euler equations is based on the style of intermittent Nash iterations pioneered in~\cite{BMNV21, NV22}, together with the simplifying ideas of the `continuous scheme' in~\cite{GKN23, GKN23strong}. These tools construct solutions with regularity $L^\infty_t(H^{1/2-}_x \cap L^{\infty-}_x)$, which is almost enough for the helicity to be well-defined. In order for the generalized helicity to be well-defined and prescribed, we require some new ideas and adaptations of the tools from~\cite{BMNV21, NV22, GKN23, GKN23strong}.
\subsubsection{Mikado flows have zero helicity} 
The high-frequency building blocks that we use in our Nash iteration are intermittent pipe bundles, or products of two sets of intermittent Mikado flows with different thicknesses and spacings. These are non-interacting, smooth shear flows and so have zero (generalized) helicity. Pretending for a moment that the bundles need not be transported by the low-frequency background flow, the leading order term in the helicity of the perturbation vanishes.  By using this cancellation, we are able to control the leading order terms required to satisfy Definition~\ref{def:gen.hel}.  However, a number of further ideas are required to control the helicity from the remaining terms; it is at this point that the methods from~\cite{BMNV21, NV22, GKN23, GKN23strong} cease to be sufficient.

\subsubsection{Shortening of the time-scale} In order for our perturbation to have interact well with the background flow $\Phi$, it is necessary for the high-frequency building blocks to be transported by $\Phi$. If $\Phi$ deforms our shear building blocks to order one by an arbitrary diffeomorphism, i.e. if $\|D\Phi - \Id \| = O(1)$, then it is no longer possible to say that the deformed shears have small helicity. 

To remedy this, we shorten the time-scale on which we run the background flow from the Lipschitz time-scale $\tau_q$ to a new time-scale $\mu_q \ll \tau_q$.  Crucially, since the Lipschitz norm of the background vector field is unchanged, we now have $\|D\Phi - \Id \| < \tau_q^{-1}\mu_q$; see Corollary~\ref{cor:deformation}. The helicity contribution from the deformed shears, computed in~\eqref{514:1}, is of order $\tau_q^{-1}\mu_q \la_q \de_\qbn$ 
which must be $\ll 1$; see subsection~\ref{sec:not.general} for the definitions of these parameters.  But we cannot choose $\mu_q$ arbitrarily small due to the transport error (see Section~\ref{ss:ER:TN} and~\eqref{par:trans}), which requires that
$$ \frac{\de_\qbn^{\sfrac12}\mu_q^{-1}}{\la_\qbn}\frac{\la_{q+\half}}{\la_\qbn} < \de_{q+2\bn}\,. $$
Putting the above two constraints together, we see that $\mu_q$ must satisfy
$$ \de_{q+2\bn}^{-1} \de_\qbn^{\sfrac12} \la_{q+\half} \la_{\qbn}^{-2} < \mu_q < \tau_q \la_q^{-1}\de_\qbn^{-1} \underset{\eqref{eq:defn:tau}}\approx {\de_\qbn^{-1}\de_q^{-\sfrac12}\la_q^{-2}}\,. $$
Using the definitions of $\de_q$ and $\la_q$, such a choice of $\mu_q$ only exists if
$$ \de_{q+2\bn}^{-1} \de_\qbn^{\sfrac32} \de_q^{\sfrac12} \la_{q+\half} \la_q^{2} \la_{\qbn}^{-2} < 1 \iff 2\beta \bar b^4 - 3\beta \bar b^2 - \beta + 2 + \bar b - 2 \bar b^2 < 0\,. $$
In other words, we must have $\beta > \frac{2 + \bar b - 2 \bar b^2}{1+3\bar b^2 - 2\bar b^4}$, which is essentially one of the constraints in~\eqref{con:ovb}. 

\subsubsection{Using regularity to deal with lower order terms} 
Certain lower order terms, such as the divergence correctors of the principal Nash perturbation, have no apparent cancellations that would imply that their helicity is small. For such terms, we simply estimate their helicity by their amplitude. For example, we estimate
\begin{align*}
    \int_{\T^3} w^{(c)} \cdot \curl w^{(c)}\, dx \leq \|w^{(c)}\|_{L^2}\|D w^{(c)}\|_{L^2}
\end{align*}
Such terms can only be small if the solutions are of a certain regularity $\beta > \frac{2-\bar b}{2\bar b}$; see again~\eqref{con:ovb}. 

The oscillation error in the Nash iteration puts an upper bound $\beta < (2\bar b^2)^{-1}$, implying that $(\la_q\de_{\qbn})_{q=1}^\infty$ is an increasing sequence. However, the shortening of the time-scale and the lower order terms force lower bounds on the regularity $\beta$ in terms of $\bar b$, a consequence of which we have that $(\la_q\de_{q+2\bn})_{q=1}^\infty$ is a decreasing sequence. For a choice of $\beta$, such bounds are satisfied for a range of $\bar b$, but we are unable to send $\bar b$ to $1$ for an arbitrary choice of $\beta < 1/2$. We deal with this by first choosing $\bar b$ and then choosing $\beta$ in our Nash iteration. The precise choices are given in Section~\ref{sec:not.general}.

\subsubsection{New `helical' building blocks in order to inject a desired helicity}
In order to prescribe the helicity profile in Theorem~\ref{thm:main}, at each step of our iteration we add in a new perturbation $w_h$ that makes sure that the new total helicity differs from our desired profile by an amount that goes to zero along the iteration; see Hypothesis~\ref{hyp:helicity.prescription} for the exact estimate. Our $w_h$ is small enough in $L^2$ so that the oscillation errors it produces in the Nash iteration can be directly absorbed into the new Reynolds error without needing to invert the divergence. However, $w_h$ is large enough in $\dot H^{\sfrac 12}$ in order to correct the helicity produced by the previous steps of the iteration as well as that produced by the Nash perturbation. 

This perturbation $w_h$ will have the overall structure of the usual Nash perturbation with the intermittent pipe bundles replaced by helical building blocks. These are divergence-free flows that are supported on a small neighborhood of a helix $t \mapsto (r\cos (2\pi\la_*t), r\sin (2\pi\la_*t), t)$ that itself is (up to fixed geometric constants) contained in the support of a single Mikado. The latter property allows for the required dodging Hypotheses~\ref{hyp:dodging1}--\ref{hyp:dodging2} to hold using the same proof that these hypotheses hold for the usual Nash perturbation. For this, one must have $r \lec \la_\qbn^{-1}$, the radius of a single Mikado; in fact we will choose $r\approx \la_\qbn$. The `wavelength' $\la_*^{-1}$ of the helix is chosen such that $\la_* \ll \la_\qbn$, so that the helical perturbation lives at frequency $\la_\qbn$. However, in order to inject enough helicity and close various estimates, we also require that $\la_{q+\bn-1}\ll \la_*$.

\subsection*{Acknowledgements}
 V.G. was supported by the Simons Collaboration on the Localization of Waves and by an SNF Ambizione grant. H.K. gratefully acknowledged the financial support of ETH Z\"{u}rich. M.N. was supported by NSF grant no. DMS-2307357.

\section{Inductive assumptions}

In this section, we present the inductive assumptions required for the iterative proposition, which is stated in subsection~\ref{ss:iterative}. The main theorems are then proved in the same subsection.

\subsection{General notations and parameters}\label{sec:not.general}

To formulate the inductive assumptions, we need a number of carefully chosen parameters.  We specify the primary parameters now while ignoring certain secondary parameters. A full delineation of the parameters is contained in Section~\ref{sec:pam}.  

\newcommand{\ovb}{\ov b}

\begin{enumerate}[(i)]
    \item\label{ip1} Choose a parameter $\ov b$ such that $0<\ov b-1 \ll 1$,  $\bar b^2 <2$, and
\begin{align}\label{con:ovb}
      \max\left( \frac{2+\ovb- 2\ovb^2}{1+3\ovb^2 -2\ovb^4}, \frac{2-\ovb}{2\ovb}\right) < \frac 1{2\ovb^2}. 
    \end{align}
    \item\label{ip2} We define the regularity index $0<\beta<\sfrac12$, so that $\beta \nearrow \sfrac 12$ as $\ovb \searrow 1$, by
    \begin{equation}\label{def:beta}
        \beta = \frac 12 \left( \max\left( \frac{2+\ovb- 2\ovb^2}{1+3\ovb^2 -2\ovb^4}, \frac{2-\ovb}{2\ovb}\right) +    \frac{1}{2\ovb^2}\right) 
    \end{equation}
\item\label{ip3} Choose $\bn \gg 1$ an even integer such that 
\begin{equation}\label{par:osc:pre}
    \beta < \frac 1{2b^{\bn}} - \frac8{\bn}\,,
\end{equation}
and then define
\begin{equation}\label{def:b}
b = \ovb^{\sfrac 2 \bn } \qquad \iff \qquad  \ovb = b^{\sfrac \bn 2} \, .
\end{equation}
Since $\ovb$ is fixed, $b\searrow 1$ as $\bn\nearrow \infty$. 
\end{enumerate}

The frequency parameter $\la_q$, the amplitude parameter $\de_q$, the intermittency parameter $r_q$, and the multi-purpose parameter $\Gamma_q$ are defined by 
\begin{subequations}
\begin{align}
    &\la_q = 2^{\left \lceil (b^q) \log_2 a \right \rceil}\approx a^{(b^q)} \, , \qquad  \de_q =\la_0^{2\be} \la_q^{-2\beta} \, ,
    \label{eq:def:la:de}\\
    r_q = \frac{\la_{q+\half}\Ga_q}{\la_\qbn}&\, , \qquad
    \Ga_q = 2^{\left \lceil \varepsilon_\Gamma \log_2 \left( \frac{\la_{q+1}}{\la_q} \right) \right \rceil}  
    \approx \left( \frac{\la_{q+1}}{\la_q} \right)^{\varepsilon_\Gamma} \approx \la_{q}^{(b-1)\varepsilon_\Gamma} \, .   \label{eq:deffy:of:gamma}
\end{align}
\end{subequations}
We shall require the additional parameters $\tau_q$, $\mu_q$,  $\la_\qho$, $\Lambda_q$, $\Tau_q$, $\la_\qho$, and $\badshaq$, defined by
\begin{subequations}\label{def:m:t}
\begin{align}
\tau_q = \left(\delta_q^{\sfrac 12} \lambda_q \Gamma_q^{35}\right)^{-1}, \qquad
\mu_q =  {\tau_q} \la_q^{-(1-2\be b^{\bn})}{\Ga_{q+1}^{-30}} \approx \tau_q (\lambda_q\delta_\qbn)^{-1}  \Gamma_{q+1}^{{-30}} \, , \label{eq:defn:tau}  \\
\Lambda_q = \lambda_q \Gamma_q^{10} \, , \qquad \Tau_q = {\Lambda_q^{10}} \, , \qquad 
\la_\qho = \lambda_\qbn \Gamma_{q-100}^{-\frac{1}{100}} \, ,
\qquad
\badshaq {\approx \log_{\Gamma_q} \lambda_q^{\frac{12}{\bn}}} \, ,  \label{def:thursday}
\end{align}
\end{subequations}
so that $\Gamma_q^{\badshaq}\leq \lambda_q^{\frac{12}{\bn}}$. Note that from~\eqref{eq:eg:timescale}, we have that $\mu_q \ll \tau_q$; the amount by which we shorten $\mu_q$ is essentially the ``amount of extra room in the transport error estimate.'' Finally, we set
\begin{align*}
    D_{t,q} = \pa_t  + \hat u_q\cdot \nabla, \quad  1\ll \NcutSmall\ll \Nindt \ll  \Nind \ll \Nfin
\end{align*}
and
\[ \MM{n,N_*,\tau^{-1},\Tau^{-1}} := \tau^{-\min\{n,N_*\}} \Tau^{-\max\{n-N_*,0\}}\,.\]
\begin{remark}[\bf Space-time norms]\label{rem:notation:space:time}
We shall always measure objects using uniform-in-time norms $\sup_{t\in[T_1,T_2]}\|\cdot (t)\|$, where $\| \cdot (t)\|$ is any of a variety of norms used to measure functions defined on $\T^3\times[T_1,T_2]$ but restricted to time $t$. We shall always abbreviate these norms by $\| \cdot \|$.
\end{remark}

\begin{remark}[\bf Space-time balls]\label{rem:notation:space:time:balls}
For any set $\Omega\subseteq \mathbb{T}^3\times \mathbb{R}$, we shall use the notations
\begin{subequations}\label{eq:space:time:balls}
\begin{align}
B(\Omega,\lambda^{-1}) &:= \left\{ (x,t) \, : \, \exists \, (x_0,t) \in \Omega \textnormal{ with } |x-x_0| \leq \lambda^{-1} \right\}\\
B(\Omega,\lambda^{-1},\tau) &:= \left\{ (x,t) \, : \, \exists \, (x_0,t_0) \in \Omega \textnormal{ with } |x-x_0| \leq \lambda^{-1} \, , |t-t_0| \leq \tau \right\}
\end{align}
\end{subequations}
for space and space-time neighborhoods of $\Omega$ of radius $\lambda^{-1}$ in space and $\tau$ in time, respectively. 
\end{remark}

\subsection{Relaxed equations}\label{ss:relaxed}

We assume that $(u_q, p_q, R_q)$ satisfy
\begin{align}\label{eqn:ER}
\begin{cases}
\partial_t u_q + \div (u_q \otimes u_q) + {\nabla p_q} = \div R_q \, , \qquad \qquad 
\div\, u_q = 0 \, .
\end{cases}
\end{align}
We will use the notations
\begin{equation}\label{eq:hat:no:hat}
u_q = \underbrace{\hat u_{q-1} + \hat w_q}_{=: \hat u_q} + \hat w_{q+1} + \dots + \hat w_{q+\bn-1} =: \hat u_{q+\bn-1} \, 
\end{equation}
for velocity fields.  We assume that $R_q$ has a decomposition into symmetric matrices
\begin{align}
R_q &= \sum_{k=q}^{q+\bn-1} R_q^k \, . \label{eq:ER:decomp:basic} 
\end{align}

\subsection{Inductive assumptions for velocity cutoff functions}
\label{sec:cutoff:inductive}
We note that all assumptions in subsection~\ref{sec:cutoff:inductive} are assumed to hold for all $q-1\leq q'\leq q+\bn-1$. First, we assume that the velocity cutoff functions\index{velocity cutoffs} form a partition of unity:\index{$i$ and $\imax$}
\begin{align}\label{eq:inductive:partition}
    \sum_{i\geq 0} \psi_{i,q'}^2 \equiv 1, \qquad \mbox{and} \qquad \psi_{i,q'}\psi_{i',q'}=0 \quad \textnormal{for}\quad|i-i'| \geq 2 \, .
\end{align}
Second, we assume that there exists an $\imax = \imax(q') \geq 0$, bounded uniformly in $q'$, such that
\begin{subequations}\label{ind:imax}
\begin{align}
\imax(q') &\leq \frac{\badshaq+12}{(b-1)\varepsilon_\Gamma} \, ,
\label{eq:imax:upper:lower} \\
\psi_{i,q'} \equiv 0 \quad \mbox{for all} \quad i > \imax(q')\,,
\qquad &\mbox{and} \qquad
\Gamma_{q'}^{\imax(q')} \leq 
\Ga_{q'-\bn}^{\sfrac{\badshaq}{2}+18}
 \delta_{q'}^{-\sfrac 12}r_{q'-\bn}^{-1}   \, .
\label{eq:imax:old}
\end{align}
\end{subequations}
For all $0 \leq i \leq \imax$, we assume the following pointwise derivative bounds for the cutoff functions $\psi_{i,q'}$. First, for multi-indices $\alpha,\beta \in {\mathbb N}^k$, $k \geq 0$, $0 \leq |\alpha| + |\beta| \leq \Nfin$, we assume that
\begin{align}
&\frac{{\bf 1}_{\supp \psi_{i,q'}}}{\psi_{i,q'}^{1- (K+M)/\Nfin}} \left|\left(\prod_{l=1}^k D^{\alpha_l} D_{t,q'-1}^{\beta_l}\right) \psi_{i,q'}\right| \leq \Gamma_{q'} (\Gamma_{q'}  \lambda_{q'})^{|\alpha|} 
\MM{|\beta|,\NindSmall - \NcutSmall,  \Gamma_{q'-1}^{i+3}   {\mu_{q'-1}^{-1}}, \Gamma_{q'-1} \Tau_{q'-1}^{-1}} \, .
\label{eq:sharp:Dt:psi:i:q:old}
\end{align}
Next, with $\alpha, \beta,k$ as above, $N\geq 0$ and $D_{q'}:=\hat w_{q'}\cdot\nabla$, we assume that
\begin{align}
&\frac{{\bf 1}_{\supp \psi_{i,q'}}}{\psi_{i,q'}^{1- (N+K+M)/\Nfin}} \left| D^N \left( \prod_{l=1}^k D_{q'}^{\alpha_l} D_{t,q'-1}^{\beta_l}\right)  \psi_{i,q'} \right| \notag\\
&\qquad \leq \Gamma_{q'} ( \Gamma_{q'}  \lambda_{q'})^N
(\Gamma_{q'}^{i-5}  {\mu_{q'-1}^{-1}})^{|\alpha|}
\MM{|\beta|,\Nindt-\NcutSmall,  \Gamma_{q'-1}^{i+3}   {\mu_{q'-1}^{-1}}, \Gamma_{q'-1}  \Tau_{q'-1}^{-1}}
\label{eq:sharp:Dt:psi:i:q:mixed:old}
\end{align}
for $0 \leq N+ |\alpha| + |\beta| \leq \Nfin$. Moreover, for $0\leq i \leq \imax(q')$, we assume that
\begin{align}
\norm{\psi_{i,q'}}_{1}  \leq \Gamma_{q'}^{-2i+\CLebesgue} \qquad \mbox{where} \qquad \CLebesgue = \frac{4+b}{b-1} \, .
\label{eq:psi:i:q:support:old}
\end{align}
Our last assumption is that for all $q' \leq q+\bn-1$ and all $q''\leq q'-1$, we assume 
\begin{equation}
    \psi_{i',q'}  \psi_{i'',q''} \not \equiv 0 \quad \implies \quad 
    \mu_{q'} \Gamma_{q'}^{-i'} \leq \mu_{q''} \Gamma_{q''}^{-i'' -25}   \, . \label{eq:inductive:timescales}
\end{equation}

\subsection{Inductive bounds on the Reynolds stress}\label{sec:R:inductive}

We assume that for $q\leq k \leq q+\bn-1$ and $N+M\leq 2\Nind$, $R_q^k$ satisfies 
\begin{subequations}\label{eq:pressure:inductive}
\begin{align}
\norm{ \psi_{i,k-1} D^N D_{t,k-1}^M  R_q^k }_{1}  
 &\leq \Ga_q \delta_{k+\bn} \Lambda_k^N \MM{M, \NindRt, \Gamma_{k-1}^{i {+20}}  {\mu_{k-1}^{-1}},   \Tau_{k-1}^{-1}  {\Ga_{k-1}^{10}}} \, .
\label{eq:R:inductive:dtq} \\
\norm{ \psi_{i,k-1} D^N D_{t,k-1}^M R_q^k }_{\infty}  
 &\leq \Ga_q {\Gamma_k^{\badshaq}} \Lambda_k^N \MM{M, \NindRt, \Gamma_{k-1}^{i {+20}}  {\mu_{k-1}^{-1}} ,  \Tau_{k-1}^{-1} {\Ga_{k-1}^{10}} } \, , \label{eq:R:dtq:uniform} 
\end{align}
\end{subequations}

\subsection{Dodging inductive hypotheses}\label{ss:dodging}

\begin{hypothesis}[\bf Effective dodging]\label{hyp:dodging1}
For $q',q''\leq q+\bn-1$ that satisfy $0<|q''-q'|\leq \bn-1$,
\begin{equation}\label{eq:ind:dodging}
    B\left(\supp \hat w_{q'} , \lambda_{q'}^{-1}\Gamma_{q'+1} \right) \cap  B\left( \supp \hat w_{q''} , \lambda_{q''}^{-1}\Gamma_{q''+1} \right)= \emptyset \, .
\end{equation}
\end{hypothesis}

\begin{hypothesis}[\textbf{Density and direction of old pipe bundles}]\label{hyp:dodging2}
There exists a $q$-independent constant $\const_D$ such that the following holds.  Let $\bar q', \bar q''$ satisfy $q \leq \bar q'' < \bar q'\leq q+\bn-1$, and set
\begin{equation} \label{eq:diam:def}
d(\bar q', \bar q'') :=  \min\left[ (\lambda_{\bar q''}\Gamma_{\bar q''}^7)^{-1} , (\lambda_{\bar q' - \half} \Gamma_{\bar q' - \bn})^{-1} \right] \, .
\end{equation}
Let $t_0\in\R$ be any time and $\Omega\subset\T^3$ be a convex set of diameter at most $d(\bar q', \bar q'')$.  Let $i$ be such that $\Omega \times \{t_0\}\cap \supp \psi_{i,\bar q''} \neq \emptyset$. 
Let $\Phi_{\bar q''}$ be the flow map such that
\begin{align*}
\begin{cases}
\pa_t \Phi_{\bar q''} + \left(\hat u_{\bar q''} \cdot \na \right) \Phi_{\bar q''} = 0\\
\Phi_{\bar q''}(t_0,x) = x \, .
\end{cases}
\end{align*}
We define $\Omega(t)=\Phi_{\bar q''}(t)^{-1}(\Omega)$.\footnote{For any set $\Omega'\subset \T^3$, $\Phi_{\bar q''}(t)^{-1}(\Omega')=\{x\in \T^3: \Phi_{\bar q''}(t,x) \in \Omega'\}$. We shall also sometimes write $\Omega\circ \Phi_{\bar q''}(t)$.} 
Then there exists a set $L=L(\bar q',\bar q'', \Omega, {t_0})\subseteq \T^3\times \R$ such that for all $t\in(t_0- {\mu_{\bar q''}}\Gamma_{\bar q''}^{-i+2},t_0+ {\mu_{\bar q''}}\Gamma_{\bar q''}^{-i+2})$,
\begin{align}\label{eq:ind:dodging2}
    (\partial_t + \hat u_{\bar q''}\cdot\nabla) \mathbf{1}_{L}(t,\cdot) \equiv 0 \qquad \textnormal{and} \qquad \textnormal{supp}_x \hat w_{\bar q'}(x,t) \cap \Omega(t) \subseteq L \cap \{t\} \, .
\end{align}
Furthermore, there exists a finite family of continuously differentiable curves $\{\ell_{j,L}\}_{j=1}^{\const_D}$ of length at most $2d(\bar q', \bar q'')$ which satisfy 
\begin{equation}\label{eq:concentrazion}
    L \cap \{t=t_0\} \subseteq \bigcup_{j=1}^{\const_D} B\left( \ell_{j,L} , 3\la_{\bar q'}^{-1} \right) \, .
\end{equation}
Finally, there exist $q$-independent sets  {$\Xi_0,\Xi_0', \dots, \Xi_{\bn-1}, \Xi_{\bn-1}' \subset \mathbb{Q}^3\cap \mathbb{S}^2$} such that for all curves $\ell_{j,L}$ used to control the support of $\hat w_{\overline q'}$, the tangent vector to the curve $\ell_{j,L}$ belongs to a $\Gamma_{ {0}}^{-1}$-neighborhood of a vector $\xi\in \Xi_{\overline{q}' \, \textnormal{mod} \, \bn} \cup \Xi'_{\overline{q}' \, \textnormal{mod} \, \bn}$.
\end{hypothesis}

\subsection{Inductive velocity bounds}
\label{sec:inductive:secondary:velocity}

In this subsection, we assume that $ {1}\leq q'\leq q+\bn-1$.  First, for $0 \leq i \leq \imax$, $k\geq 1$, $\alpha, \beta \in \N^k$, we assume that
\begin{align}
&\norm{\left( \prod_{l=1}^k D^{\alpha_l} D_{t,q'-1}^{\beta_l} \right) \hat w_{q'} }_{L^\infty(\supp \psi_{i,q'})} \leq \Gamma_{q'}^{i+2}\de_{q'}^{\sfrac12}(\la_{q'}\Ga_{q'})^{|\alpha|} \MM{|\beta|,\Nindt, \Gamma_{{q'}}^{i+3}   {\mu_{q'-1}^{-1}},  \Gamma_{{q'-1}} \Tau_{q'-1}^{-1}}
\label{eq:nasty:D:wq:old}
\end{align}
for $|\alpha|+|\beta|  \leq {\sfrac{3\Nfin}{2}+1}$. We also assume that for $N\geq 0$,
\begin{subequations}\label{eq:nasty}
\begin{align}
&\norm{ D^N \Big( \prod_{l=1}^k D_{q'}^{\alpha_l} D_{t,q'-1}^{\beta_l} \Big) \hat w_{q'}}_{L^\infty(\supp \psi_{i,q'})} \notag\\
&\qquad \leq
(\Gamma_{{q'}}^{i+2}\delta_{q'
}^{\sfrac 12})^{|\alpha|+1} (\la_{q'}\Ga_{q'})^{N+|\alpha|} \MM{|\beta|,\Nindt,  \Gamma_{q'}^{i+3}   {\mu_{q'-1}^{-1}},  \Gamma_{q'-1} \Tau_{q'-1}^{-1}}   \label{eq:nasty:Dt:wq:old} \\
&\qquad \leq
\Gamma_{q'}^{i+2} \delta_{q'}^{\sfrac 12}  (\lambda_{q'}\Ga_{q'})^N (\Gamma_{q'}^{i-5}   {\mu_{q'}^{-1}})^{|\alpha|}  \MM{|\beta|,\Nindt, \Gamma_{q'}^{i+3}   {\mu_{q'-1}^{-1}},  \Gamma_{q'-1} \Tau_{q'-1}^{-1}}
\label{eq:nasty:Dt:wq:WEAK:old}
\end{align}
\end{subequations}
whenever $N+|\alpha|+|\beta|\leq  {\sfrac{3\Nfin}{2}+1}$. Next, we assume
\begin{align}
&\norm{\left( \prod_{l=1}^k D^{\alpha_l} D_{t,q'}^{\beta_l} \right) D \hat u_{q'} }_{L^\infty(\supp \psi_{i,q'})}  \leq  {\delta_{q'}^{\sfrac 12} \lambda_{q'}\Ga_{q'}^{i+3}} (\lambda_{q'}\Ga_{q'})^{|\alpha|} \MM{|\beta|,\Nindt,\Gamma_{q'}^{i-5}  {\mu_{q'}^{-1}},   \Gamma_{q'-1} \Tau_{q'-1}^{-1}}
\label{eq:nasty:D:vq:old}
\end{align}
for $|\alpha|+|\beta| \leq {\sfrac{3\Nfin}{2}}$.  In addition, we assume the lossy bounds
\begin{subequations}\label{eq:bob:old}
\begin{align}
\norm{\left( \prod_{l=1}^k D^{\alpha_l} D_{t,q'}^{\beta_l} \right)  \hat u_{q'}}_{L^\infty(\supp \psi_{i,q'})} &\leq \Gamma_{q'}^{i+2} \delta_{q'}^{\sfrac 12} \lambda_{q'}^2 (\lambda_{q'}\Ga_{q'})^{|\alpha|} \MM{|\beta|,\Nindt,\Gamma_{q'}^{i-5}  {\mu_{q'}^{-1}},   \Gamma_{q'-1} \Tau_{q'-1}^{-1}}
\label{eq:bob:Dq':old} \\
\left\| D^{|\alpha|} \partial_t^{|\beta|} \hat{u}_{q'} \right\|_{L^\infty} & \leq \lambda_{q'}^{\sfrac 12} (\lambda_{q'}\Gamma_{q'})^{|\alpha|} \Tau_{q'}^{-|\beta|} \, , \label{eq:bobby:old}
\end{align}
\end{subequations}
hold, where the first bound holds for $|\alpha|+|\beta| \leq \sfrac{3\Nfin}{2}+1$, and the second bound for $|\alpha|+|\beta|\leq 2\Nfin$.

\begin{remark}[\bf Upgrading material derivatives for velocity and velocity cutoffs]
\label{rem:D:t:q':orangutan}
Similar to~\cite[Remark~2.9]{GKN23}, we have the bound
\begin{align}
&\norm{ D^N  D_{t,q'}^{M}  \hat w_{q'} }_{L^\infty(\supp \psi_{i,q'})} \les \Gamma_{q'}^{i+2} \delta_{q'}^{\sfrac 12}  (\la_{q'}\Ga_{q'})^N 
\MM{M,\Nindt, \Gamma_{q'}^{i-5}   {\mu_{q'}^{-1}},  \Gamma_{q'-1} \Tau_{q'-1}^{-1}}
\label{eq:nasty:Dt:uq:orangutan}
\end{align}
for all $N+M \leq {\sfrac{3\Nfin}{2}+1}$. We also have that for all $N+M \leq \Nfin$,
\begin{align}
\frac{{\bf 1}_{\supp \psi_{i,q'}}}{\psi_{i,q'}^{1- (N+M)/\Nfin}} \left| D^N  D_{t,q'}^{M}  \psi_{i,q'} \right| &\leq \Gamma_{q'} (\lambda_{q'}\Ga_{q'})^N
\MM{M,\Nindt-\NcutSmall, \Gamma_{q'}^{i-5}  {\mu_{q'}^{-1}}, \Gamma_{q'-1}  \Tau_{q'-1}^{-1}} \notag\\
&< \Gamma_{q'} (\lambda_{q'}\Ga_{q'})^N \MM{M,\Nindt,\Gamma_{q'}^{i-4} {\mu_{q'}^{-1}},\Gamma_{q'-1}^2\Tau_{q'-1}^{-1}} \, .
\label{eq:nasty:Dt:psi:i:q:orangutan}
\end{align}
\end{remark}

\subsection{Inductive energy and helicity bounds}\label{sec:he:bounds}
We assume that
\begin{align}\label{eq:resolved:energy}
    \frac{\delta_\qbn \Ga_q^{10}}{4} \leq e(t) - \frac 12 \| u_q(t,\cdot) \|_{L^2(\T^3)}^2 &\leq \delta_\qbn \Ga_q^{10}\,. 
\end{align}
Let $B=\overline{B_1(0)} \subset \mathbb{C}$ be the closed unit ball in the complex plane. For any $q'\leq q$, the helicity of $u_{q'}$ satisfies the following hypothesis. 

\begin{hypothesis}[\bf Helicity prescription]\label{hyp:helicity.prescription}
    For any $q'\leq q$ and any multiplier $\varphi :\mathbb{Z}^3 \rightarrow B$  satisfying $\varphi(k) = 1$ for $|k|\leq \lambda_{q'+\bn-1} \Gamma_{q'+\bn-1}$, we have that
\begin{align}\label{eq:resolved:helicity1} 
\Ga_{q'}^{-1} \leq  h(t) - \int_{\T^3}  \bp_{\varphi} (u_{q'}) \cdot \curl u_{q'}  \leq \Ga_{q'-1}^{-1}  \, .   
\end{align}
\end{hypothesis}
\noindent Note we may take $\varphi \equiv 1$ above, so that~\eqref{eq:resolved:helicity1} implies that for all $q' \leq q$,
\begin{align}\label{eq:resolved:helicity1a} 
   \Ga_{q'}^{-1} \leq  h(t) - H(u_{q'}) \leq \Ga_{q'-1}^{-1} \, . 
\end{align}
\begin{hypothesis}[\bf Helicity-in-a-shell]\label{hyp:helinashell}
    For $q' \leq q$ and any Schwartz kernel $\varphi:\R^3 \rightarrow B$  satisfying
\begin{align}\label{schwartz:decay}
\| \check \varphi \|_{L^1(\R^3)}
    \leq 1\,,\quad
\left\|  \mathbf{1}_{F_q} \Check{\varphi} \right\|_{L^1(\R^3)} \leq \lambda_{q'+\bn}^{-100} \, , \quad \textnormal{where } F_q = \left( B_{(\lambda_{q'+\bn-1}\Gamma^{-2}_{q'+\bn-1})^{-1}}\right)^c \, ,
\end{align}
we have that\footnote{We have in mind $\bp_\varphi=\bp_{\varphi_{2^m}} \bp_{\varphi_{2^{m'}}}
=\bp_{\varphi_{2^m}*\varphi_{2^{m'}} } $
; this condition will allow us to prove~\eqref{e:sum.hel}.}
\begin{align}
\left| \int_{\T^3} \bp_\varphi (\hat w_{q'+\bn-1}) \cdot \curl  \hat w_{q'+\bn-1} \right| < \Gamma_{q'-5}^{-1/2} \, . \label{summy}
\end{align}
\end{hypothesis}
\begin{hypothesis}[\bf Frequency localization]\label{hyp:freq:loc}
    For any $q' \leq q$, any natural numbers $k_1 \leq \lambda_{q'+\bn-1} \Gamma_{q'+\bn-1}^{-1}$ and $k_2 \geq \lambda_{q'+\bn-1}\Gamma_{q'+\bn-1}$, and any multipliers $\varphi_1, \varphi_2:\mathbb{Z}^3\rightarrow B$ such that $\varphi_1(k)=0$ for all $|k|>k_1$ and  $\varphi_2(j)=0$ for all $ |j|<k_2$, 
    we have 
\begin{align}\label{eq:resolved:helicity3}
 \left\| \bp_{\varphi_1}\left( \hat w_{q'+\bn-1} \right) \right\|_\infty \leq {k_1}{\lambda_{q'+\bn-1}^{-20}} \, , \qquad  \left\| \bp_{\varphi_2}\left( \hat w_{q'+\bn-1} \right) \right\|_\infty \leq {\lambda_{q'+\bn-1}}{k_2^{-20}}  \, .
\end{align}
\end{hypothesis}
\begin{hypothesis}[\bf Temporal regularity]\label{hyp:temp}
For $0 \leq M \leq \sfrac{\Nfin}{3}$, $H(u_q):  {[0,T]} \rightarrow \R$ satisfies
\begin{equation}\label{hel:lip}
    \left\| \left[ \sfrac{d}{dt} \right]^M H(u_q) \right\|_{L^{\infty}([0,T])} \les \MM{M, \Nindt, \mu_{q-1}^{-1} \Gamma_{q-1}^{20}, \Tau_{q-1}^{-1}\Ga_{q-1}^8} \, .
\end{equation}
\end{hypothesis}

\subsection{Iterative proposition and proof of main theorem}\label{ss:iterative}

 {Before stating the iterative proposition, we smoothly extend $e$ and $h$ to the time interval $[-1,T+1]$ and denote (in a slight abuse of notation) the extensions by $e$ and $h$.}

\begin{proposition}[\bf Iterative proposition]\label{prop:main} 
Let $e,h,T$ be given as in Theorem~\ref{thm:main}, $\ovb$ chosen as in~\eqref{con:ovb}, $\beta$ as in~\eqref{def:beta} and Theorem~\ref{thm:main}, $\bn$ as in~\eqref{par:osc:pre}, and $b$ as in~\eqref{def:b}. Then there exist parameters $\varepsilon_\Gamma$, $\badshaq$, $\NcutSmall$, $\Nindt$, $\Nind$, $\Nfin$ such that we can find sufficiently large $a_*$ such that for $a\geq a_*$, the following statements hold for any $q\geq 0$. Suppose that $(u_q, p_q, R_q)$ solves~\eqref{eqn:ER} on the time interval $[-\tau_{q-1},T+\tau_{q-1}]$ is given, and suppose that there exists a partition of unity $\{\psi_{i,q'}^6\}_{i\geq 0}$ of $[-\tau_{q-1},T+\tau_{q-1}] \times \T^3$ for $q-1\leq q'\leq q+\bn-1$ such that 
    \begin{itemize}
        \item $\psi_{i,q'}$ satisfies \eqref{eq:inductive:partition}--\eqref{eq:inductive:timescales}, and
        \item $u_q$ and $R_q$ satisfy \eqref{eq:hat:no:hat}, \eqref{eq:ER:decomp:basic}, \eqref{eq:pressure:inductive}, \eqref{eq:resolved:energy}, and Hypotheses~\ref{hyp:dodging1}, \ref{hyp:dodging2}, \ref{hyp:helicity.prescription}--\ref{hyp:temp},
         and
\eqref{eq:nasty:D:wq:old}--\eqref{eq:bob:old}
         hold. 
    \end{itemize}
     Then there exist  $\{\psi_{i, q+\bn}^6\}_{i\geq 0}$ of $[-\tau_q,T+\tau_q] \times \T^3$ satisfying \eqref{eq:inductive:partition}--\eqref{eq:inductive:timescales} for $q'=q+\bn$, and $(u_{q+1}, p_{q+1}, R_{q+1})$ solving \eqref{eqn:ER} on $[-\tau_q,T+\tau_q]$ satisfying the following conditions: \eqref{eq:hat:no:hat}, \eqref{eq:ER:decomp:basic}, \eqref{eq:pressure:inductive}, \eqref{eq:resolved:energy}, and Hypotheses~\ref{hyp:dodging1}, \ref{hyp:dodging2}, \ref{hyp:helicity.prescription}--\ref{hyp:temp}, and \eqref{eq:nasty:D:wq:old}--\eqref{eq:bob:old} hold for $q\mapsto q+1$.    
\end{proposition}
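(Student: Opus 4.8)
The plan is to carry out one full step of the intermittent Nash iteration, which — because of the parameter grouping $u_q = \hat u_{q+\bn-1}$ in~\eqref{eq:hat:no:hat} — amounts to adding the \emph{single} perturbation $\hat w_{q+\bn}$ to pass from the resolved solution at level $q$ to the one at level $q+1$, while producing the new Reynolds error $R_{q+1} = \sum_{k=q+1}^{q+\bn} R_{q+1}^k$. First I would fix the remaining free parameters $\varepsilon_\Gamma, \badshaq, \NcutSmall, \Nindt, \Nind, \Nfin$ (deferred to Section~\ref{sec:pam}) so that all the numerology — in particular~\eqref{con:ovb}, \eqref{def:beta}, \eqref{par:osc:pre} — closes, and then choose $a_*$ large. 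The construction of $\hat w_{q+\bn}$ has three ingredients: (a) the principal Nash perturbation $w^{(p)}$ built from intermittent pipe bundles at frequency $\lambda_{q+\bn}$, oscillating against $R_q^{q}$ (the oldest piece of the stress, which at this stage is the ``active'' one), and transported by the flow $\Phi$ of $\hat u_{q+\bn-1}$ run only on the shortened time scale $\mu_{q+\bn-1}$; (b) the divergence corrector $w^{(c)}$ making the sum divergence-free; and (c) the new \emph{helical} perturbation $w_h$, built from the helical building blocks supported near a helix of radius $r\approx\lambda_{q+\bn}^{-1}$ and wavelength $\lambda_*^{-1}$ with $\lambda_{q+\bn-1}\ll\lambda_*\ll\lambda_{q+\bn}$, whose $L^2$ size is small enough to be absorbed directly into $R_{q+1}$ but whose $\dot H^{1/2}$ size is large enough to correct the helicity defect in Hypothesis~\ref{hyp:helicity.prescription}. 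One also updates the velocity cutoffs $\psi_{i,q+\bn}$ and the partition of unity, re-glues on the extended time interval, and redefines the pressure.

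The bulk of the verification is then organized as follows. (1) \textbf{Dodging and geometry:} verify Hypotheses~\ref{hyp:dodging1}--\ref{hyp:dodging2} for the new bundle, using the standard placement/rotation of Mikado directions from the sets $\Xi_j,\Xi_j'$ and the deformation bound $\|D\Phi-\Id\|<\tau_q^{-1}\mu_q$ from Corollary~\ref{cor:deformation}; the helical blocks inherit dodging because each helix lies (up to geometric constants) inside a single Mikado. (2) \textbf{Velocity estimates:} establish~\eqref{eq:nasty:D:wq:old}--\eqref{eq:bob:old} for $\hat w_{q+\bn}$ and $\hat u_{q+\bn}$ from the explicit form of the building blocks, the cutoff bounds~\eqref{eq:sharp:Dt:psi:i:q:old}--\eqref{eq:psi:i:q:support:old}, and the transport estimates on the shortened time scale. (3) \textbf{Reynolds stress:} decompose $\div R_{q+1}$ into oscillation, transport, Nash, and corrector errors plus the stress generated by $w_h$; invert the divergence with the microlocal/intermittent inverse-divergence operator, using the dodging to localize, and check~\eqref{eq:pressure:inductive} with the new amplitude $\delta_{q+1+\bn}$. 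The constraints quoted in the ``Ideas of the proof'' subsection (the transport-error room forcing $\mu_q$ from below, the oscillation error forcing $\beta<(2\bar b^2)^{-1}$, the lower-order-term bound $\beta>(2-\bar b)/(2\bar b)$) are exactly what make step (3) close. (4) \textbf{Energy:} choose the amplitude of $w^{(p)}$ pointwise in time (via the usual $\rho$-function) so that~\eqref{eq:resolved:energy} holds at level $q+1$, absorbing the $L^2$ contributions of $w^{(c)}$ and $w_h$ into the $\Gamma$-room. (5) \textbf{Helicity:} this is the genuinely new part — verify Hypotheses~\ref{hyp:helicity.prescription}--\ref{hyp:temp} for $u_{q+1}$. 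The leading-order helicity of $w^{(p)}$ vanishes because Mikado bundles are non-interacting shears (subsubsection ``Mikado flows have zero helicity''); the deformation-induced helicity is $O(\tau_q^{-1}\mu_q\lambda_q\delta_\qbn)\ll1$ by the time-scale shortening; the cross helicity $H(w^{(p)},\hat u_{q+\bn-1})$ and the corrector helicity $H(w^{(c)},\cdot)$ are controlled by amplitude times $\dot H^{1/2}$-size; and $w_h$ is designed to inject precisely the helicity needed to improve the defect from $\Gamma_{q-1}^{-1}$-scale to $\Gamma_q^{-1}$-scale. Hypotheses~\ref{hyp:helinashell} and~\ref{hyp:freq:loc} (helicity-in-a-shell, frequency localization) follow from the sharp frequency support of the building blocks together with the Schwartz-decay conditions~\eqref{schwartz:decay}, and Hypothesis~\ref{hyp:temp} from differentiating $H(u_{q+1})$ in time and using the material-derivative bounds.

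The main obstacle is step (5), specifically controlling \emph{all} the sub-leading helicity contributions simultaneously with the Reynolds and energy estimates within a single admissible window for $\beta$ and $\bar b$. The cancellation giving zero helicity for undeformed Mikados is fragile: once $\Phi$ deforms the shears, one must quantify the error using $\|D\Phi-\Id\|<\tau_q^{-1}\mu_q$, and this competes directly against the transport-error lower bound on $\mu_q$; reconciling the two forces the lower bound $\beta>(2+\bar b-2\bar b^2)/(1+3\bar b^2-2\bar b^4)$, and only the combination with the oscillation and lower-order constraints in~\eqref{con:ovb}--\eqref{def:beta} leaves room. A second delicate point is that $w_h$ must be large in $\dot H^{1/2}$ to move the helicity yet small in $L^2$ so its oscillation error needs no inverse divergence — this is why the helix wavelength must satisfy $\lambda_{q+\bn-1}\ll\lambda_*\ll\lambda_{q+\bn}$, and checking that this intermediate-frequency object is still compatible with the frequency-localization Hypothesis~\ref{hyp:freq:loc} and with dodging (it lives inside one Mikado, radius $r\approx\lambda_{q+\bn}^{-1}$) is the crux of the new construction. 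Everything else — cutoff propagation, divergence inversion, pressure redefinition, re-gluing on $[-\tau_q,T+\tau_q]$ — follows the now-standard template of~\cite{BMNV21, NV22, GKN23, GKN23strong} with only bookkeeping changes.
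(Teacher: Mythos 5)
Your proposal is correct and follows essentially the same strategy as the paper: one step of an intermittent Nash iteration augmented by the helical corrector, transported on the shortened time scale $\mu_q$, with verification of each inductive hypothesis delegated to the corresponding lemma (cutoffs and dodging, velocity and stress estimates, energy, and the four helicity/frequency hypotheses). The obstacles you flag -- competition between the deformation bound $\|D\Phi-\Id\|\lesssim\tau_q^{-1}\mu_q$ and the transport-error lower bound on $\mu_q$, and the balance between the $L^2$ and $\dot H^{1/2}$ sizes of $w_h$ -- are precisely the ones the paper resolves via the constraints~\eqref{con:ovb}--\eqref{def:beta} and the choice $\la_\qho=\lambda_\qbn\Gamma_{q-100}^{-1/100}$.
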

\begin{proof}[Proof of Proposition~\ref{prop:main}]
\eqref{eq:inductive:partition} is verified in Lemma \ref{lem:partition:of:unity:psi},~\eqref{ind:imax} in~\eqref{eq:imax:upper:bound:uniform},~\eqref{eq:sharp:Dt:psi:i:q:old}--\eqref{eq:sharp:Dt:psi:i:q:mixed:old} in Lemma \ref{lem:sharp:D:psi:i:q}, \eqref{eq:psi:i:q:support:old} in Lemma~\ref{lem:psi:i:q:support}, and~\eqref{eq:inductive:timescales} in Lemma~\ref{lem:overlap:timescales}.  Next,~\eqref{eqn:ER} is verified in~\eqref{ER:new:equation},~\eqref{eq:hat:no:hat} in~\eqref{def.w.mollified}, and \eqref{eq:ER:decomp:basic} holds by induction and~\eqref{ho},~\eqref{eq:Onpnp:est},~\eqref{eq:trans:est},~\eqref{eq:div:corrector}, and~\eqref{est.stress.moll}. Hypotheses~\ref{hyp:dodging1}--\ref{hyp:dodging2} are verified in Lemma~\ref{lem:dodging} and Remark~\ref{rem:checking:hyp:dodging:1}, 
 {\eqref{eq:pressure:inductive} is proved in Section~\ref{sec:err},}
and~\eqref{eq:nasty:D:wq:old}--\eqref{eq:bob:old} holds by induction and Lemma~\ref{lem:Dt:Dt:wq:psi:i:q:multi}. 
 {Lastly, \eqref{eq:resolved:energy} is proved in subsection~\ref{sec:energy}, Hypothesis \ref{hyp:helicity.prescription} is verified in Proposition~\ref{prop:hel,prescription}, Hypothesis~\ref{hyp:helinashell} in Proposition~\ref{prop:hel:shell}, Hypothesis~\ref{hyp:freq:loc} in Proposition~\ref{prop:hyp:freq:loc}, and Hypothesis~\ref{hyp:temp} in Lemma~\ref{lem:hel:derivs}.}
\end{proof}
 
\begin{proof}[Proof of Theorem~\ref{thm:main}]
In \texttt{Step 0}, we construct initial approximate solution $(u_1, p_1, R_1)$. 
Then, applying Proposition~\ref{prop:main} iteratively, we obtain a sequence of approximate solutions and construct a solution with desired regularity and total kinetic energy in \texttt{Step 1}. 
In \texttt{Step 2}, we check that~\eqref{e:sum.hel} is satisfied; that is, the helicity is absolutely summable over any choice of Littlewood-Paley decomposition.  In \texttt{Step 3}, we check \eqref{e:gen.hel.2} and the prescription of the helicity profile $h(t)$.
\bigskip

\noindent\texttt{Step 0: Construct $(u_1,p_1, R_1)$. } We first choose parameters. Let
\begin{align*}
    \bar{\la} = \la_0, 
    \quad
    \bar\la_{*}= \la_0^{\frac23 + \frac13(b^{\bn+1}-1)}, \quad
    \bar\tau= \la_0^{-1}\, , \quad \hat u_{0,R} = \bar{e}(t)  \bar{\varrho}_{R}(x)\vec e \, . 
\end{align*}
Here, we choose $a$ in the definition of $\la_0$ (see \eqref{eq:def:la:de}) sufficiently large so that $\bar{e}(t):=\sqrt{2e(t)-\de_{\bn+1}}\geq \left(\min_{t\in [0,T]} e(t)\right)^\frac12$,
and $\vec e := \frac35 e_2 + \frac45 e_3$. 
Also, $\bar{\varrho}_{R} = \bar{\la}^{-\dpot}\div^{\dpot} \bar{\vartheta}_{R}$, where $\bar\vartheta_{R}$ is smooth, $(\sfrac{\mathbb{T}}{\bar{\la}})^3$-periodic, has support contained in pipes of thickness $\bar{\la}^{-1}$, and satisfies
\begin{align*}
\vec e \cdot \na \vartheta_R =0, \quad
\norm{\na^N\bar{\vartheta}_{R}}_\infty
    \lec \bar{\la}^N, \quad \norm{\bar{\varrho}_{R}}_2^2 =1
\end{align*}
for all $N\leq 2\Nfin+\dpot$.  This implies that $\hat u_{0,R}$ solves stationary Euler with zero pressure and satisfies
\begin{align*}
    \norm{\hat u_{0,R}}_{L^2(\mathbb{T}^3)}^2 
    = 2e(t)-\de_\bn, \quad H( \hat u_{0,R}) =0\, . 
\end{align*}
Define $\hat u_{0, h,\pm}$ by
\begin{align*}
    \hat u_{0, h,\pm}
    = \frac{\bar{\la}}{\bar{\la}_*^{\sfrac32}}
\bar{h}_{\pm}(t)\bar{\varrho}_{h}(\Psi_{\pm}) (\na \Psi_{\pm})^{-1}e_2\, . 
\end{align*}
Here,   
$\bar{h}_{\pm}(t) = \mathcal{P}_{\bar\tau}\sqrt{(h(t)-2^{-1}\Ga_0^{-1})_{\pm}}$ 
and $\mathcal{P}_{\bar \tau}$ is a standard mollifier in time whose kernel is compactly supported on the interval $(-\bar\tau, \bar\tau)$.  The deformation $\Psi_{\pm}$ is defined by
\begin{align*}
    \Psi_{\pm}
    &=(x_1 \pm \bar{\la}^{-1}\sin(\bar{\la}_* x_2), x_2, x_3 + \bar{\la}^{-1}\cos(\bar{\la}_* x_2))^{\top}
    \end{align*}
Lastly, the density functions $\bar{\varrho}_{h}$ satisfy the same assumptions as $\bar{\varrho}_{R}$, except that $\na\bar{\varrho}_{h}$ is orthogonal to $e_2$ instead of $\vec e$. Moreover, 
$\bar{\varrho}_{R}$ and
$\bar{\varrho}_{h}$ are chosen to have disjoint supports. Then, one can compute
\begin{align*}
    \norm{\hat u_{0,h,\pm}}_2^2 \lec \frac{\bar{\la}^2}{\bar{\la}_*^3}, \quad
    H(\hat u_{0,h,\pm})
= \pm 
\bar h_{\pm}^2\, . 
\end{align*}

Set $\hat w_{q'} \equiv 0$ for $1\leq q'\leq \bn$, $p_0=0$, $u_1 = \hat u_{0}
    = \hat u_{0,R}
    +\hat u_{0,h, +}+ \hat u_{0,h, -} =:\hat u_{0,R}
    +\hat u_{0,h}$,
$R_1^k \equiv 0$ for $1< k \leq \bn $, and
\begin{align*}
    R_1^1 &= \hat u_{0,h} \otimes \hat u_{0,h}
    + \frac{\bar{\la}}{\bar{\la}_*^{\sfrac32}}
   \bar{h}_{ +}'\divH(\bar\varrho_{h}(\Psi_+)(\na\Psi_+)^{-1}e_2)
    + \frac{\bar{\la}}{\bar{\la}_*^{\sfrac32}}
    \bar{h}_{ -}'\divH(\bar\varrho_{h}(\Psi_-)(\na\Psi_-)^{-1}e_2)
    \\
     &\quad+ \bar{e}' 
     \begin{pmatrix}
         0 & (\bar{\la}^{-\dpot}\div^{\dpot-1}\bar\vartheta_{R})_2 &(\bar{\la}^{-\dpot}\div^{\dpot-1}\bar\vartheta_{R})_3 \\
         (\bar{\la}^{-\dpot}\div^{\dpot-1}\bar\vartheta_{R})_2  & 0 & 0\\
         (\bar{\la}^{-\dpot}\div^{\dpot-1}\bar\vartheta_{R})_3 &0 &0\, , 
     \end{pmatrix}
\end{align*}
where $\divH$ is an operator inverting the divergence as in~\cite[Proposition A.13]{GKN23}. Furthermore, set $\psi_{i,q'}=1$ if $i=0$ and $0$ otherwise for $0\leq q'\leq \bn$.

By construction, one can easily check that $(u_1,p_1, R_1)$ solves the Euler-Reynolds system and $R_1^k$ are symmetric matrices. Also, $\psi_{i,q'}$ for $0\leq q'\leq \bn$ satisfies all inductive hypotheses in Section \ref{sec:cutoff:inductive} for $q=1$. By the choices of $\bar{\la}$, $\bar{\la}_*$, and $\bar \tau$ so that $\frac{\bar \la^2}{\bar\la_*^3}\ll \de_{\bn+1}$ and $\bar\tau^{-1}\leq \mu_0$, $R_1^k$ satisfies the inductive hypotheses in Section \ref{sec:R:inductive} for $q=0$. Since $\hat w_{\bar q'} \equiv 0$ for $1\leq \bar q' \leq \bn$, $\hat u_{q'}=\hat u_0$ for $0\leq q'\leq \bn$, and $\bar\la =\la_0\de_0^{\frac12}$, the inductive hypotheses in Section \ref{ss:dodging} and \ref{sec:inductive:secondary:velocity} hold for $q=1$.  Finally, we consider the inductive energy and helicity bounds. Since $\hat u_{0,R}$ and $\hat u_{0, h}$ have disjoint supports, 
\begin{align*}
    \norm{u_1}_2^2 
    = \norm{\hat u_{0,R}}_2^2
    + \norm{\hat u_{0, h}}_2^2
    = 2e(t)-\de_{\bn+1} + O\left(\frac{\bar{\la}^2}{\bar{\la}_*^3}\right)
    \, .
\end{align*} 
Using $\frac{\bar\la^2}{\bar\la_*^3}\ll \de_{\bn+1}$, \eqref{eq:resolved:energy} holds true. As for the helicity, 
\begin{align}
    H(u_1) 
    &=H( \hat u_{0,R})
    + H(\hat u_{0, h,+})
    + H(\hat u_{0, h,-})
    + 2H(\hat u_{0, h,+}, \hat u_{0, h,-})
    =
    \bar{h}^2_{+} - 
    \bar{h}^2_{-}+ 2H(\hat u_{0, h,+}, \hat u_{0, h,-})\notag\\
    &= g_+^2 - g_-^2
    + 
(\mathcal{P}_{\bar \tau}g_+)^2 -
g_+^2 
   -
\left((\mathcal{P}_{\bar \tau}g_-)^2 -
g_-^2\right) + 2H(\hat u_{0, h,+}, \hat u_{0, h,-})\notag\\
&= h(t) - 2^{-1}\Ga_0^{-1}
    + O(\Ga_1^{-1})
    \label{hel.u_0}
\end{align}
where $g_{\pm}:= \sqrt{(h(t)-2^{-1}\Ga_0^{-1})_{\pm}}$.
To prove the last line, we write
\begin{align}\label{eqn:mol.g0}
   |(\mathcal{P}_{\bar \tau}-1) g_\pm(t)|
   \leq \int_{\mathbb{R}} |K_{\bar \tau}(s)| |g_{\pm}(t-s)
   - g_{\pm}(t)| ds
\end{align}
on $[0,T]$, and split into three cases. First if $\norm{h'}_{L^\infty([-1,T+1])}=0$, then $h$ and $g_{\pm}$ are constant functions. Therefore 
\begin{align}
\label{eqn:mol.g1}
|g_\pm(t-s)-g_\pm(t)|\equiv 0   
\end{align}
for any $t\in [0,T]$ and $|s|\leq \bar\tau$. Second, we now assume that $\norm{h'}_{L^\infty([-1,T+1])}>0$ and consider $t\in [0,T]$ such that $|h(t)-2^{-1}\Ga_0^{-1}|\leq 2\norm{h'}_{L^\infty([-1,T+1])}\bar \tau$. Then for any $|s|\leq \bar\tau$,
\begin{align}
   |h(t-s)-2^{-1}\Ga_0^{-1}| 
   &\leq 
|h(t-s)-h(t)|
+ |h(t)-2^{-1}\Ga_0^{-1}|
\leq 3\norm{h'}_{L^\infty([-1,T+1])}\bar \tau \, , \notag \\
\label{eqn:mol.g2}
    |g_\pm(t-s) - g_\pm(t)|
    &\leq
|g_\pm(t-s)|+ |g_\pm(t)|
\lec \bar \tau^\frac12\, . 
\end{align}
Lastly, if $\norm{h'}_{L^\infty([-1,T+1])}>0$ and  and $|h(t)-2^{-1}\Ga_0^{-1}|>2\norm{h'}_{L^\infty([-1,T+1])}\bar \tau$, then we have
\begin{align*}
|h(t-\tau)-2^{-1}\Ga_0^{-1}|
\geq 
|h(t)-2^{-1}\Ga_0^{-1}| - |h(t-\tau)-h(t)|
\geq
\norm{h'}_{L^\infty([-1,T+1])}\bar \tau
\end{align*}
for any $|\tau|\leq \bar\tau$, and 
\begin{align}\label{eqn:mol.g3}
    |g_\pm(t-s) - g_\pm(t)|
    \leq  \int_0^s 
    |g'(t-\tau)| d\tau 
    = \int_0^s
    \frac{|h'(t-\tau)|}
    {2\sqrt{(h(t-\tau)-2^{-1}\Ga_0^{-1}})_{\pm}} d\tau
    \lec 
\frac{\norm{h'}_{\infty} \bar\tau}{2 \sqrt{\norm{h'}_{\infty}\bar \tau}}
\lec \bar\tau^\frac12
\end{align}
for any $|s|\leq \bar \tau$. 
Combining the estimates \eqref{eqn:mol.g0}--\eqref{eqn:mol.g3}, we get
\begin{align*}
    |(\mathcal{P}_{\bar\tau} g_{\pm})^2- g_{\pm}^2)|
    \leq |(\mathcal{P}_{\bar\tau}-1)g_{\pm})| |(\mathcal{P}_{\bar\tau}+1)g_{\pm})|
    \lec \bar\tau^\frac12 \ll \Ga_1^{-1} . 
\end{align*}
On the other hand, to handle $H(\hat u_{0, h,+}, \hat u_{0, h,-})$, we note that the supports of
$\bar h_+$ and $\bar h_-$ overlap only on the set $\Omega_0:=\bigcup_{\{t_0: h(t_0)=2^{-1}\Ga_0^{-1}\}} [t_0-\bar \tau, t_0+ \bar \tau]$. When $t\in B(\Omega_0, \bar \tau) := \{s: |s-s_0|<\bar\tau, s_0\in \Omega_0\}$, we have $t\in [t_0-2\bar \tau, t_0+ 2\bar \tau]$ for some $t_0$ satisfying $h(t_0)=2^{-1}\Ga_0^{-1}$, and
\begin{align*}
   | h(t)- 2^{-1}\Ga_0^{-1}|
   \leq 
   |h(t)-h(t_0)|+
   | h(t_0)- 2^{-1}\Ga_0^{-1}|
   \leq 2\norm{h'}_{\infty}\bar \tau\, . 
\end{align*}
This implies that 
\begin{align*}
    \bar \la^{-N}\norm{\na^N \hat u_{0,h,\pm}}_{L^\infty(\Omega_0 \times \mathbb{T}^3)}
    \lec \frac{\bar \la}{\bar \la_*^{\frac32}}
 \bar\tau^{\frac12}, \quad 
 H(\hat u_{0,h,+}, \hat u_{0,h,-})
 \lec \frac{\bar \la^3}{\bar \la_*^3} \bar \tau  \ll \Ga_1^{-1}\, . 
\end{align*}
Therefore, we verified \eqref{hel.u_0}.

Now, we check the inductive assumptions for the helicity bounds in section \ref{sec:he:bounds}. 
Let $\ph:\mathbb{Z}^3\to [0,1]$ be any multiplier which satisfies $\ph(k)=1$ for $|k|\leq \la_{\bn}\Ga_{\bn}$. Then,
\begin{align*}
    H(u_1)- \int \bp_{\ph}(u_1)\cdot\curl u_1 
    &= \sum_{k} (1-\ph(k)) \mathcal{F}(u_1)(k) \cdot 2\pi i k \times \mathcal{F}(u_1)(-k)\\
    &= \sum_{k} (1-\ph(k)) |k|^{-2\dpot}  \mathcal{F}(u_1)(k) \cdot |k|^{2\dpot} 2\pi i k \times \mathcal{F}(u_1)(-k)\,  
\end{align*}
and 
\begin{align*}
\left|H(u_1)- \int \bp_{\ph}(u_1)\cdot\curl u_1\right|
&\leq \left(\sum_k |(1-\ph)|^2 |k|^{-4\dpot} |\mathcal{F}(u_1)(k)|^2\right)^{\frac12} \norm{\Delta^\dpot \curl u_1}_2\\
&\lec (\la_{\bn}\Ga_{\bn})^{-2\dpot} \bar{\la}^{2\dpot+1}
\leq \frac 1{10}\Ga_0^{-1}\, .
\end{align*}
This together with \eqref{hel.u_0}
implies \eqref{eq:resolved:helicity1} and \eqref{eq:resolved:helicity1a} when $q'=1$. The inequalities \eqref{summy} and \eqref{eq:resolved:helicity3} with $q'=1$ hold true because $ \hat w_{\bn}\equiv 0$.  Lastly, \eqref{hel:lip} holds for $q=1$ because $\bar\tau^{-1}\leq \mu_0^{-1}$. 

\bigskip

\noindent\texttt{Step 1: Constructing a weak Euler solution and prescribing the energy profile $e(t)$.}
With the initial approximate solution $(u_1, p_1, R_1)$ from \texttt{Step 0} at hand, we apply Proposition~\ref{prop:main} iteratively to obtain a sequence $\{u_q\}_{q=1}^{\infty}$ of the approximate solutions. Since $\be=\be(\bar b)\nearrow \sfrac 12$ as $\bar b \searrow 1$ (see \eqref{def:beta}), for any given $\bar \beta\in (0,\sfrac12)$, $\bar \beta< \beta <\sfrac12$ can be achieved by adjusting $\bar b$. Then, the usual interpolation argument implies that $\{u_q\}$ is Cauchy in $C_t(H^{\bar\beta}\cap L^{\frac1{1-2\bar\beta)}})$, leading to the desired regularity of $u:=\lim_{q\rightarrow \infty} u_q$. Furthermore, $u$ is a weak solution to the Euler equations by
the convergence of $u_q$ to $u$ in $L^2_{t,x}$ and that of $R_q$ to $0$ in $L^1_{t,x}$. Also, the energy prescription, (i.e., the first part of \eqref{e:profiles}) follows from \eqref{eq:resolved:energy} and the convergence of $u_q$ to $u$ in $C^0_t L^2$.

\bigskip

\noindent\texttt{Step 2: Checking~\eqref{e:sum.hel}. } 
Let $\td\varphi\in \mathcal{A}$.  We first note that since $\td\varphi$ is compactly supported, there exists $M=M(\varphi) \in \mathbb{N}$ such that the Littlewood-Paley kernels $\varphi_{2^m}$ and $\varphi_{2^{m'}}$ defined using $\td \varphi$ have disjoint supports if $|m-m'| > M$. We will check that
\begin{equation}\label{thu:to:sho}
\sum_{m,m'} \left| H\left( \bp_{\varphi_{2^m}}(u), \bp_{\varphi_{2^{m'}}}(u) \right) \right| \leq \sum_{m,m'} \sum_{q,q'} \left| H\left( \bp_{\varphi_{2^m}}(\hat w_{q}), \bp_{\varphi_{2^{m'}}}(\hat{w}_{q'}) \right) \right| < \infty \, .
\end{equation}
The first inequality above follows immediately from the triangle inequality and the fact that $u = \sum_{q} \hat w_q$, so it only remains to check the second inequality in~\eqref{thu:to:sho}.  We split
\begin{align}
\sum_{m,m'} \sum_{q,q'} \left| H\left( \bp_{\varphi_{2^m}}(\hat w_{q}), \bp_{\varphi_{2^{m'}}}(\hat{w}_{q'}) \right) \right| &= \sum_{m,m'} \sum_{q \neq q'} \left| H\left( \bp_{\varphi_{2^m}}(\hat w_{q}), \bp_{\varphi_{2^{m'}}}(\hat{w}_{q'}) \right) \right| \notag \\
&\qquad + \sum_{m,m'} \sum_q \left| H\left( \bp_{\varphi_{2^m}}(\hat w_{q}), \bp_{\varphi_{2^{m'}}}(\hat{w}_{q}) \right) \right| \notag\\
&= S_{\neq} + S_{=} \, .  \label{split1}
\end{align}

We first show that $S_{\neq} < \infty$. Morally speaking, this term should be finite because $\hat w_q$ and $\hat w_{q'}$ have frequency supports very far away from one another due to superexponential frequency growth, which combined with the restriction $|m-m'|\leq M$ and~\eqref{eq:resolved:helicity3} allows us to bound this term. To make this precise, we first note that due to the superexponential growth of $\lambda_{q}$ as specified in~\eqref{eq:def:la:de},
$$ \lim_{q \rightarrow \infty} \frac{\lambda_q \Gamma_q}{\lambda_{q+1}\Gamma_{q+1}^{-1}} = 0 \, . $$
As a consequence, there exists $q_0$, dependent on the support of $\td\varphi$, such that for all $\td q \geq q_0$ and all $q'' \neq \td q$, the following holds.  If 
$\varphi_{2^m}$ takes non-zero values on frequencies whose magnitude belongs to $[\lambda_{q''+\bn-1}\Gamma_{q''+\bn-1}^{-1}, \lambda_{q''+\bn-1}\Gamma_{q''+\bn-1}]$,
then $\varphi_{2^m}(k) = 0$ for all frequencies $k$ such that $|k| \in [\lambda_{\td q+\bn-1}\Gamma_{\td q+\bn-1}^{-1},\lambda_{\td q+\bn-1}\Gamma_{\td q+\bn-1}]$.  Applying this fact to all terms in $S_{\neq}$ where either $q$ or $q'$ is larger than $q_0$, we find that either $\hat w_q$ or $\hat w_{q'}$ satisfies one of the estimates from~\eqref{eq:resolved:helicity3}.  The same fact applies also to all terms in $S_{\neq}$ where $q,q' < q_0$ but either $m>\log_2 \left( \lambda_{q_0+\bn}\Gamma_{q_0+\bn} \right)$ or $m'> \log_2 \left( \lambda_{q_0+\bn}\Gamma_{q_0+\bn} \right)$.  Thus we can write
\begin{align}
    &S_{\neq} \leq \sum_{m,m'} \sum_{\substack{q \neq q' \\ q, q' < q_0}} \left| H\left( \bp_{\varphi_{2^m}}(\hat w_{q}), \bp_{\varphi_{2^{m'}}}(\hat{w}_{q'}) \right) \right| + 2 \sum_{m,m'} \sum_{\substack{q \neq q' \\ q \geq q_0}} \left| H\left( \bp_{\varphi_{2^m}}(\hat w_{q}), \bp_{\varphi_{2^{m'}}}(\hat{w}_{q'}) \right) \right| \notag  \\
    &\leq \sum_{\substack{m,m'\\ m,m' \leq \log_2 \left( \lambda_{q_0+\bn}\Gamma_{q_0+\bn} \right) }} \sum_{\substack{q \neq q' \\ q, q' < q_0}} \left| H\left( \bp_{\varphi_{2^m}}(\hat w_{q}), \bp_{\varphi_{2^{m'}}}(\hat{w}_{q'}) \right) \right| \notag \\
    & + 2\sum_{\substack{m,m'\\ m > \log_2 \left( \lambda_{q_0+\bn}\Gamma_{q_0+\bn} \right) }} \sum_{\substack{q \neq q' \\ q, q' < q_0}} \left| H\left( \bp_{\varphi_{2^m}}(\hat w_{q}), \bp_{\varphi_{2^{m'}}}(\hat{w}_{q'}) \right) \right|  + 2\sum_{m,m'} \sum_{\substack{q \neq q' \\ q \geq q_0}} \left| H\left( \bp_{\varphi_{2^m}}(\hat w_{q}), \bp_{\varphi_{2^{m'}}}(\hat{w}_{q'}) \right) \right| \notag \\
    &\leq \sum_{\substack{m,m'\\ m,m' \leq \log_2 \left( \lambda_{q_0+\bn}\Gamma_{q_0+\bn} \right) }} \sum_{\substack{q \neq q' \\ q, q' < q_0}} \left| H\left( \bp_{\varphi_{2^m}}(\hat w_{q}), \bp_{\varphi_{2^{m'}}}(\hat{w}_{q'}) \right) \right| \notag \\
    &\qquad + 2 \sum_{\substack{m,m'\\ m > \log_2 \left( \lambda_{q_0+\bn}\Gamma_{q_0+\bn} \right) }} \sum_{\substack{q \neq q' \\ q, q' < q_0}}  {\delta_{q'}^{\sfrac 12} 
    \Gamma_{q'}^{100} \frac{\lambda_q^2}{(2^m)^2} }\notag\\
    &\qquad + 2\sum_{\substack{m,m' \\ m \leq \log_2 \left( \lambda_{q+\bn-1}\Gamma_{q+\bn-1}^{-1} \right)}} \sum_{\substack{q \neq q' \\ q \geq q_0}} \frac{(2^m)^2}{\lambda_{q+\bn-1}^2} \delta_{q'}^{\sfrac 12} \Gamma_{q'}^{100} + 2\sum_{\substack{m,m' \\ m \geq \log_2 \left( \lambda_{q+\bn-1}\Gamma_{q+\bn-1} \right)}} \sum_{\substack{q \neq q' \\ q \geq q_0}} \frac{\lambda_{q+\bn-1}^2}{(2^m)^2} \delta_{q'}^{\sfrac 12} \Gamma_{q'}^{100} \, . \label{mess}
\end{align}
The first sum on the right-hand side in~\eqref{mess} contains finitely many terms and is therefore finite.  The second contains finitely many different values of $q$ and $q'$, and then since $|m-m'| \leq M$ and we have $2^m$ in the denominator, this term is finite.  Next, the third term is summable in $q'$, while the summability in $m$ and $q$ follows from the fact that $\frac{2^m}{\lambda_{q+\bn-1}} \rightarrow 0$ superexponentially as $\rightarrow \infty$ due to the restriction that $m \leq \log_2( \lambda_{q+\bn-1} \Gamma_{q+\bn-1}^{-1})$ and the superexponential growth of frequencies (summability in $m'$ follows from $|m-m'|\leq M$).  Similarly, the fourth term is summable in $q'$, and in $q$ and $m$ since $\frac{\lambda_{q+\bn-1}}{2^m} \rightarrow 0$ superexponentially
as $q \rightarrow \infty$ due to the condition $m \geq \log_2\left( \lambda_{q+\bn-1} \Gamma_{q+\bn-1} \right)$ and the superexponential growth of frequencies.  Thus $S_{\neq} < \infty$. 

Moving to $S_=$, we first consider the cases when 
$\varphi_{2^m}$ and $\varphi_{2^{m'}}$ are supported  
\emph{outside} the active frequency support of $\hat w_q$; that is, when either $\varphi_{2^m}$ or $\varphi_{2^{m'}}$ satisfies~\eqref{eq:resolved:helicity3}.  By arguments quite similar to those from $S_{\neq}$, these terms are negligible and therefore absolutely summable, and we omit further details.  Thus we need only consider the cases when $\varphi_{2^m}$ and $\varphi_{2^{m'}}$ have supports which intersect the set of frequencies whose modulus takes values in $[\lambda_{q}\Gamma_q^{-1}, \lambda_q\Gamma_q]$.  In such case, we claim that there exists $q_1$ such that for all $q \geq q_1$,~\eqref{schwartz:decay} is satisfied by any kernel  {$
\|\check{\varphi}\|_1^{-2}
\check{\varphi}_{2^m}\ast \check{\varphi}_{2^{m'}}$. }
Indeed the first condition in~\eqref{schwartz:decay} follows obviously from the Young's convolution inequality. 
 {For the second condition, we choose a ball $B_R$ which contains the support of $\td\varphi$. This implies that $\supp(\varphi_{2^m})\subset B_{2^mR}$. Since $\supp(\varphi_{2^m})$ intersects the set of frequencies whose modulus takes values in $[\lambda_{q}\Gamma_q^{-1}, \lambda_q\Gamma_q]$, we have $2^mR>\lambda_{q}\Gamma_q^{-1}$. Similarly, $2^{m'}R>\lambda_{q}\Gamma_q^{-1}$. Then, by the Schwartz decay of the base kernel $\varphi = \td \varphi(k) - \td \varphi(2k)$, 
for any $p \in \mathbb{N}$, there exists $C(p, \td\varphi)$ such that 
$$ |\check\varphi(x)| \leq \frac{C(p, \td \varphi)}{1+|x|^p} \qquad \implies \left| 2^{3m} \check \varphi(2^mx) \right| \leq \frac{2^{3m}C(p, \td \varphi)}{1+|2^mx|^p} \, . $$
Using that $2^m, 2^{m'} >\lambda_q \Gamma_q^{-1}R^{-1}$, we have that for $p$ sufficiently large and $q\geq q_0$, where $q_0$ depends on $p$ and thus $\td\varphi$,
\begin{align*}
\|\check{\varphi}_{2^m}\ast \check{\varphi}_{2^{m'}}\|_{L^1(|x|\geq \lambda_q^{-1} \Gamma_q^2)}
&\leq     
\int_{|x| \geq \lambda_q^{-1} \Gamma_q^2}\left[\int_{|y|\geq \frac12 \la_q^{-1}\Ga_q^2}
+\int_{|y|\leq \frac12 \la_q^{-1}\Ga_q^2}
\right]
2^{3(m+m')}
|\check{\varphi}(2^my)||\check{\varphi}(2^{m'}(x-y))| dy dx\\
&\leq
\|\check{\varphi}\|_1
\int_{|y|\geq \frac12 \la_q^{-1}\Ga_q^2}
2^{3m}|\check{\varphi}(2^my)| dy\\
&\qquad 
+
\int_{|y|\leq \frac12 \la_q^{-1}\Ga_q^2}2^{3(m+m')}|\check{\varphi}(2^my)|
\int_{|x-y| \geq\frac12 \lambda_q^{-1} \Gamma_q^2}
|\check{\varphi}(2^{m'}(x-y))|dx dy \\
&\lec_{C(p,\td\varphi), R} 
\Ga_q^{-p+3}\leq \|\check{\varphi}\|_1^2\la_q^{-1000}\, .
\end{align*}
Therefore, 
$
\|\check{\varphi}\|_1^{-2}
\check{\varphi}_{2^m}\ast \check{\varphi}_{2^{m'}}$ satisfies
\eqref{schwartz:decay}, and \eqref{summy} holds for $\hat w_q$.}
We have reduced the finiteness of $S_=$ to the finiteness of 
\begin{align*}
&\sum_{\left\{\substack{m,m',q \, : \\ |m-m'| \leq M, \\ \supp(\varphi_{2^m}, \varphi_{2^{m'}})\cap [\lambda_q\Gamma_q^{-1}, \lambda_q \Gamma_q]\neq \emptyset , \\ \eqref{summy}\textnormal{ holds}}\right\}} 
\left| H\left( \bp_{\varphi_{2^{m'}}}\bp_{\varphi_{2^m}}(\hat w_{q}), \hat{w}_{q} \right) \right| \\
&\les \norm{\check{\varphi}}_1^2\sum_q \sum_{\left\{ \substack{ m,m' \, : \\ \supp(\varphi_{2^m}, \varphi_{2^{m'}})\cap [\lambda_q\Gamma_q^{-1}, \lambda_q \Gamma_q]\neq \emptyset } \right\} } \Gamma_{q-\bn-4}^{-1} \\
&\les \norm{\check{\varphi}}_1^2\sum_q \Gamma_{q-\bn-4}^{-1} \log_2(\lambda_q \Gamma_q) 
\les \norm{\check{\varphi}}_1^2\sum_q \Gamma_{q-\bn-4}^{-1} b^q \log_2(a) < \infty \, ,
\end{align*}
where we have used~\eqref{eq:def:la:de}.  This concludes the proof of~\eqref{e:sum.hel}.

\bigskip

\noindent\texttt{Step 3: Checking~\eqref{e:gen.hel.2}.} 
Let $\varphi$ be a Schwarz function with unit mean. We start with the case that $\phi:=\hat\varphi\in \mathcal{A}$, and aim to check that for any given $\bar\varepsilon>0$, there exists $\epsilon_0=\epsilon_0(\bar\varepsilon)$ such that
\begin{equation}
\label{helicitiy.small}
\left| h(t) - H\left( \varphi_\epsilon\ast u, u \right) \right|
=\left| h(t) - H\left( \bp_{\phi_{\epsilon^{-1}}} u, u \right) \right|
< \bar\varepsilon \, .  
\end{equation}
holds true for any $\epsilon\in(0, \epsilon_0)$, where $\phi_{\epsilon^{-1}}:= \phi (\epsilon \cdot)$.  
Let $q_{\epsilon}$ be the smallest integer such that $\lambda_{q_{\epsilon} + \bn} \Gamma_{q_{\epsilon} + \bn}^{-1} > |k|$ for all $k$ in the support of $\phi_{\epsilon^{-1}}$. We decompose the helicity above into
\begin{align*}
      H\left( \bp_{\phi_{\epsilon^{-1}}}u, u \right) 
    &=
      H\left( \bp_{\phi_{\epsilon^{-1}}}u_{q_{\bar m}+1}, u_{q_{\bar m}+1} \right) +H\left( \bp_{\phi_{\epsilon^{-1}}}(u-u_{q_{\bar m}+1}), u_{q_{\bar m}+1} \right)
    +  H\left( \bp_{\phi_{\epsilon^{-1}}}u, u- u_{q_{\bar m}+1} \right) \\
    &=: H_1(t) + H_2(t)+ H_3(t)\, . 
\end{align*}
Since none of the frequencies in the support of $\phi_{\epsilon^{-1}}$ are in the support of the active frequencies of $u - u_{q_{\bar m}+1}$, by repeating arguments similar to \texttt{Step 2}, we can choose $\epsilon_0$ small enough so that $|H_2(t)| + |H_3(t)| < \sfrac {\bar\varepsilon}5$. Moving to $H_1(t)$, we split
\begin{align*}
&H\left( \bp_{\phi_{\epsilon^{-1}}}u_{q_{\bar m}+1}, u_{q_{\bar m}+1} \right)\\ &= 
H\left( \bp_{\phi_{\epsilon^{-1}}}u_{q_{\bar m}-1}, u_{q_{\bar m}-1} \right) + 2  H\left( \bp_{\phi_{\epsilon^{-1}}}(u_{q_{\bar m}-1}), \hat w_{q_{\bar m}+\bn} + \hat w_{q_{\bar m} + \bn -1} \right) \\
&\qquad 
+   H\left( \bp_{\phi_{\epsilon^{-1}}}(\hat w_{q_{\bar m}+\bn} + \hat w_{q_{\bar m} + \bn -1}), \hat w_{q_{\bar m}+\bn} + \hat w_{q_{\bar m} + \bn -1} \right) =: H_{11}(t) + H_{12}(t) + H_{13}(t)\, .
\end{align*}
Using arguments similar to those in \texttt{Step 2}, which are predicated on the disjoint active frequency support of separate velocity increments, we can choose $ \epsilon_0$  sufficiently small so that $|H_{12}(t)|<\sfrac {\bar\varepsilon} 5$.  In addition, exploiting Hypothesis ~\ref{hyp:helinashell} as in \texttt{Step 2},
we can choose $ \epsilon_0$ small enough so that $|H_{13}|<\sfrac {\bar\varepsilon} 5$.  
Next, by choosing $\epsilon_0$ sufficiently small based on $\varphi$, we can ensure that $\phi_{\epsilon^{-1}}$ takes value $1$ when
$|k|\leq \lambda_{q_{\epsilon} + \bn-2} \Gamma_{q_{\epsilon} + \bn-2}$, which is possible due to the superexponential growth of frequencies and $\phi\in\mathcal{A}$.
Therefore $\phi_{\epsilon^{-1}}$ satisfies the assumption for the multiplier required in~\eqref{eq:resolved:helicity1}, and so
\begin{align*}
\left|
    h(t)-  H\left( \bp_{\phi_{\epsilon^{-1}}}(u_{q_{\epsilon}-1}), u_{q_{\epsilon}-1} \right) \right| \leq \Ga_{q_{\epsilon}-2}^{-1} < \frac {\bar\varepsilon} 5 
\end{align*}
if $\epsilon$ is sufficiently large.   
Combining all the estimates, we have \eqref{helicitiy.small}.

We now outline the steps needed to prove~\eqref{e:gen.hel.2} for general Schwarz functions $\varphi$ with unit mean. Noting that $\phi:=\hat \varphi$ satisfies $\phi(0)=1$,
let $\{\phi^{\delta}\}$ for $\delta>0$ be a family of multipliers such that the following hold. First, $\phi^{\delta}(z) \equiv \phi(z)$ for all $2 \delta \leq |z| \leq \delta^{-1}$, $\phi^{\delta}(z) \equiv 1$ for all $|z| \leq \delta$, and $\phi^{\delta}(z)=0$ for all $|z| \geq 2\delta^{-1}$. Next, we construct $\phi^\delta$ so that there exists $C$ such that for all $\delta\in (0,1)$,\footnote{$\phi^\delta$ can be constructed by adjusting $\phi$ so that it is constant in a ball of radius $\approx \delta$ around the origin, then mollifying only near the origin with a mollifier at scale $\approx\delta$.}
\begin{equation}\label{eq:mollifier:bounds}
    \| \nabla^M ( \phi - \phi^\delta ) \|_{L^\infty(B_{2\delta}(0))} \leq C \delta^{1-M} \, , \qquad M=0,1,2,3,4 \, . 
\end{equation}
Then $\phi^{\delta}\in \mathcal{A}$ for $\delta\in(0,1)$, and it is enough to prove that there exists a small $\delta=\delta(\bar\varepsilon, \varphi)>0$ such that for any small $\epsilon>0$,
\begin{align}
\left|  H(\varphi_\eps \ast u, u) - H(\bp_{\phi^{\delta}_{\epsilon^{-1}}}u, u)\right|
=
\left|   H(\bp_{\phi_{\epsilon^{-1}}-\phi^{\delta}_{\epsilon^{-1}}}u, u)\right|
<\bar\varepsilon.
\label{t1}
\end{align}
To this end, observe that $\|\phi_{\epsilon^{-1}}-\phi^{\delta}_{\epsilon^{-1}}\|_{\infty}$ and $\|\check\phi_{\epsilon}-\check{\phi}^{\delta}_{\epsilon}\|_{1}$ are of order $\delta$, since
\begin{align*}
\|\phi_{\epsilon^{-1}}-\phi^{\delta}_{\epsilon^{-1}}\|_{\infty} 
&\lec 
\|\check\phi_{\epsilon}-\check{\phi}^{\delta}_{\epsilon}\|_{1}
=\|\check\phi- \check{\phi^{\delta}}\|_1
\leq 
\|\check\phi- \check{\phi^{\delta}}\|_{L^1(B_{\de^{-1}})}
+\|\check\phi- \check{\phi^{\delta}}\|_{L^1(B_{\de^{-1}}^c)}\\
&\lec \de^{-3}\|\check\phi- \check{\phi^{\delta}}\|_{\infty}
+\de\||x|^4(\check\phi- \check{\phi^{\delta}})\|_{L^\infty}
\lec \de^{-3}\| \phi- \phi^\delta\|_1
+ \de\|\Delta^2(\phi- \phi^{\delta})\|_1\lec_{\|\hat\varphi\|_{\mathcal{S}}} \de,
\end{align*}
where the last line follows from the Schwartz decay of $\phi$ and~\eqref{eq:mollifier:bounds}. Using this and the linearity of \eqref{summy}--\eqref{eq:resolved:helicity3} in the kernel $\varphi$, repeating earlier arguments gives the bound $|   H(\bp_{\phi_{\epsilon^{-1}}-\phi^{\delta}_{\epsilon^{-1}}}u, u)|\lec_{\|\hat\varphi\|_{\mathcal{S}}}\de$, leading to \eqref{t1}. \end{proof}

\section{Helical building blocks}\label{sec:helicalbb}

\subsection{Mikado bundles and other preliminaries}
We start with a series of preliminary facts from~\cite{GKN23, BMNV21}, with some slight adjustments related to the helicity.
\begin{customprop}{4.1, \cite{GKN23}}[\textbf{Geometric lemma}]\label{p:split}
Let $\Xi= {\Xi_0}=\left\{\sfrac 35 e_i \pm \sfrac 45 e_j\right\}_{1\leq i < j\leq 3}$. {Let $\bn\in \mathbb{N}$ be a given natural number.} Then there exists $\epsilon>0$ and disjoint sets  {$\Xi_0, \Xi_1, \dots, \Xi_{\bn-1} \subset \mathbb{Q}^3 \cap \mathbb{S}^2$} such that every symmetric 2-tensor in $B(\Id, \epsilon)$ can be written as a unique, positive linear combination of {$\xi_j \otimes \xi_j$ for $\xi_j \in \Xi_j$, $1\leq j \leq \bn$}.\index{$\Xi_j$} 
i.e., there exists $\ga_{\xi_j}\in C^\infty(B(\Id, \epsilon);(0,\infty))$ such that
\begin{align*}
     R = \sum_{{\xi_j\in\Xi_j}}\left(\gamma_{{\xi_j}}(R)\right)^2  {\xi_j\otimes \xi_j}, \quad\forall R\in B(\Id, \epsilon)\, . 
\end{align*}
\end{customprop}

\begin{definition}
[\textbf{Vector directions}]\label{def:vec}
The sets  {$\Xi_0, \dots, \Xi_{\bn-1}$} are obtained by applying rational rotations to the fixed set $\Xi$ and will be used to produce simple tensors which cancel Reynolds stresses; we choose the rational rotation matrices such that none of  {$\Xi_0, \dots, \Xi_{\bn-1}$} contain $\vec e_2$.  We also require $\bn$-many vector directions  {$\vec h_0, \dots, \vec h_{\bn-1}$} used to correct the helicity profile. We set $ {\vec h_0} = \vec e_2$ and then apply rational rotation matrices to obtain $\vec h_2, \dots \vec h_n$, ensuring that for all $1 \leq j_1 \neq  j_2 \leq \bn$,
\begin{equation}
    \left(\Xi_{j_1} \cup \{\vec h_{j_1}\} \right) \cap 
    \left(\Xi_{j_2} \cup \{\vec h_{j_2}\} \right) = \emptyset \, , 
\end{equation}
so that vector directions used at a given step of the iteration are always distinct from the vector directions used in the previous $\bn-1$ steps of the iteration.
\end{definition}

The next results are nearly verbatim from~\cite{GKN23}, the only adjustment being the incorporation of the vector directions needed to correct the helicity.

\begin{customprop}{4.4,~\cite{GKN23}}[\textbf{Rotating, Shifting, Periodizing}]\label{prop:pipe:shifted}
Fix $ \xi \in  {\bigcup_{j=0}^{\bn-1}} \left(  \Xi_j \cup \{ \vec h_j \} \right) $ from Proposition \ref{p:split} and Definition~\ref{def:vec}.  For any such $\xi$, we choose $\xi',\xi'' \in \mathbb{Q}^3\cap\mathbb{S}^2$ such that $\{\xi,\xi',\xi''\}$ is an orthonormal basis of $\R^3$. We then denote by $n_\ast$ the least positive integer such that $n_\ast \xi, n_\ast \xi' n_\ast \xi'' \in \mathbb{Z}^3$ for all $\xi \in  {\bigcup_{j=0}^{\bn-1}} \left(  \Xi_j \cup \{ \vec h_j \} \right)$. Let ${r^{-1},\lambda\in\mathbb{N}}$ be given such that $\lambda r\in\mathbb{N}$. Let $\varkappa:\mathbb{R}^2\rightarrow\mathbb{R}$ be a smooth function with support contained inside a ball of radius $\sfrac{1}{4}$. Then for $k\in\{0,...,r^{-1}-1\}^2$, there exist functions $\varkappa^k_{\lambda,r,\xi}:\mathbb{R}^3\rightarrow\mathbb{R}$ defined in terms of $\varkappa$, satisfying the properties listed in~\cite[Proposition~4.4]{GKN23}.
\end{customprop}

We now recall the basic intermittent pipe flows used in~\cite{BMNV21, GKN23}.  

\begin{customprop}{4.5,~\cite{GKN23}}[\bf Intermittent pipe flows for Reynolds stress]
\label{prop:pipeconstruction}
Fix a vector direction \\ $\xi \in  {\bigcup_{j=0}^{\bn-1}} \left(  \Xi_j \cup \{ \vec h_j \} \right) $ from Proposition~\ref{p:split} and Definition~\ref{def:vec}, $r^{-1},\lambda \in \mathbb{N}$ with $\lambda r\in \mathbb{N}$, and large integers $\Nfin$ and $\Dpot$. There exist vector fields $\mathcal{W}^k_{\xi,\lambda,r}:\mathbb{T}^3\rightarrow\mathbb{R}^3$ for $k\in\{0,...,r^{-1}-1\}^2$ and implicit constants depending on $\Nfin$ and $\Dpot$ but not on $\lambda$ or $r$ such that:
\begin{enumerate}[(1)]
    \item\label{item:pipe:1} There exists $\varrho:\mathbb{R}^2\rightarrow\mathbb{R}$ given by the iterated divergence $\div^\Dpot  \vartheta =: \varrho$ of a pairwise symmetric tensor potential $\vartheta:\mathbb{R}^2\rightarrow\mathbb{R}$ with compact support in a ball of radius $\frac{1}{4}$ such that the following holds.  Let $\varrho_{\xi,\lambda,r}^k$ and $\vartheta_{\xi,\lambda,r}^k$ be defined as in Proposition~\ref{prop:pipe:shifted}, in terms of $\varrho$ and $\vartheta$ (instead of $\varkappa$).  Then there exists $\mathcal{U}^k_{\xi,\lambda,r}:\mathbb{T}^3\rightarrow\mathbb{R}^3$ such that
    if $\{\xi,\xi',\xi''\} \subset \mathbb{Q}^3 \cap \mathbb{S}^2$ form an orthonormal basis of $\R^3$ with $\xi\times\xi'=\xi''$, then we have
    \begin{equation}
    \mathcal{U}_{\xi,\lambda,r}^k
     =  - \frac 13  \xi' \underbrace{\lambda^{-\Dpot} \xi''\cdot \nabla \left(\div^{\Dpot-2} \left(\vartheta_{\xi,\lambda,r}^k \right)\right)^{ii}}_{=:\varphi_{\xi,\lambda,r}^{\prime \prime k}}
     +  \frac 13
     \xi'' \underbrace{\lambda^{-\Dpot} \xi'\cdot \nabla \left(\div^{\Dpot-2} \left(\vartheta_{\xi,\lambda,r}^k \right)\right)^{ii}
     }_{=:\varphi_{\xi,\lambda,r}^{\prime k}}
     \label{eq:UU:explicit}
        \,, 
    \end{equation}
and thus
\begin{align}
\label{eq:WW:explicit}
\curl \mathcal{U}^k_{\xi,\lambda,r} = \xi \lambda^{-\Dpot }\div^\Dpot  \left(\vartheta^k_{\xi,\lambda,r}\right) &= \xi \varrho^k_{\xi,\lambda,r} =: \mathcal{W}^k_{\xi,\lambda,r}
\,, \qquad 
    \xi \cdot \nabla \vartheta_{\xi,\lambda,r} = 0
    \,.
\end{align}
    \item\label{item:pipe:2} $\{\mathcal{U}_{\xi,\lambda,r}^k\}_{k}$, $\{\varrho_{\xi,\lambda,r}^k\}_{k}$, $\{\vartheta_{\xi,\lambda,r}^k\}_{k}$, and $\{\mathcal{W}_{\xi,\lambda,r}^k\}_{k}$ satisfy the conclusions from~\cite[Proposition~4.4]{GKN23}.
    \item\label{item:pipe:3} $\mathcal{W}^k_{\xi,\lambda,r}$ is a stationary, pressureless solution to the Euler equations.
    \item\label{item:pipe:4} $\displaystyle{\dashint_{\mathbb{T}^3} \mathcal{W}^k_{\xi,\lambda,r} \otimes \mathcal{W}^k_{\xi,\lambda,r} = \xi \otimes \xi }$.
    \item\label{item:pipe:5} For all $n\leq 3 \Nfin$, 
    \begin{align}\label{e:pipe:estimates:1}
    \left\| \nabla^n\vartheta^k_{\xi,\lambda,r} \right\|_{L^p(\mathbb{T}^3)} &\lesssim \lambda^{n}r^{\left(\frac{2}{p}-1\right)} , \qquad {\left\| \nabla^n\varrho^k_{\xi,\lambda,r} \right\|_{L^p(\mathbb{T}^3)} \lesssim \lambda^{n}r^{\left(\frac{2}{p}-1\right)} } \\
\label{e:pipe:estimates:2}
    \left\| \nabla^n\mathcal{U}^k_{\xi,\lambda,r} \right\|_{L^p(\mathbb{T}^3)} &\lesssim \lambda^{n-1}r^{\left(\frac{2}{p}-1\right)} , \qquad {\left\| \nabla^n\mathcal{W}^k_{\xi,\lambda,r} \right\|_{L^p(\mathbb{T}^3)} \lesssim \lambda^{n}r^{\left(\frac{2}{p}-1\right)} }.
    \end{align}
\item\label{item:pipe:3.5} We have that $\supp \vartheta_{\xi,\lambda,r}^k \subseteq B\left( \supp\varrho_{\xi,\lambda,r} ,2\lambda^{-1}\right)$.
    \item\label{item:pipe:6} Let $\Phi:\mathbb{T}^3\times[0,T]\rightarrow \mathbb{T}^3$ be the periodic solution to the transport equation $\partial_t \Phi + v\cdot\nabla \Phi =0$ with initial datum $
\Phi|_{t=t_0} = x$, with a smooth, divergence-free, periodic velocity field $v$. Then $\nabla \Phi^{-1} \cdot \left( \mathcal{W}^k_{\xi,\lambda,r} \circ \Phi \right) = \curl \left( \nabla\Phi^T \cdot \left( \mathcal{U}^k_{\xi,\lambda,r} \circ \Phi \right) \right)$.
\item\label{item:pipe:7} For any convolution kernel $K$, $\Phi$ as above, $A=(\nabla\Phi)^{-1}$, and for $i=1,2,3$,
\begin{align}
\bigg{[} \nabla \cdot \bigg{(} A \, K\ast \left(  \mathcal{W}^k_{\xi,\lambda,r} \otimes  \mathcal{W}^k_{\xi,\lambda,r} \right)(\Phi) A^T \bigg{)} \bigg{]}_i 
& = A_m^j \xi^m \xi^l \partial_jA_{l}^i \, K\ast \left( \left( \varrho^k_{\xi,\lambda,r} \right)^2(\Phi) \right) \, .
\label{eq:pipes:flowed:2}
\end{align}
\end{enumerate}
\end{customprop}

We now recall the bundling pipe flows. Since only quadratic cancellations are used in this paper, some simplifications have been made relative to~\cite[Proposition~4.9]{GKN23}.

\begin{customprop}{4.9,~\cite{GKN23}}[\textbf{``Bundling'' pipe flows $\rhob_{\xi}^k$}]\label{prop:bundling}
Fix a vector direction $\xi \in  {\bigcup_{j=0}^{\bn-1}} \left(  \Xi_j \cup \{ \vec h_j \} \right) $. Then for $k\in\{1,\dots,\Gamma_q^6\}$, there exist bundling pipe flows $\rhob_{\xi}^k$ satisfying the following.
\begin{enumerate}[(1)]
    \item\label{i:bundling:1} $\rhob^k_{\xi}$ is $\left( \sfrac{\T}{\lambda_{q+1}\Gamma_q^{-4}}\right)^3$-periodic and satisfies $\xi \cdot \nabla \rhob^k_{\xi}\equiv 0$.
    \item\label{i:bundling:2} $\{\rhob^k_{\xi}\}_{k}$ satisfies the conclusions of Proposition~\ref{prop:pipe:shifted} with $r^{-1}=\Gamma_q^3$, $\lambda=\lambda_{q+1}\Gamma_q^{-1}$.  In particular, $\supp \rhob^k_{\xi} \cap \supp \rhob^{k'}_{\xi} = \emptyset$ for $k\neq k'$, and there are $\Gamma_q^6$ disjoint choices of placement.
    \item\label{i:bundling:3} $\displaystyle \int_{\T^3} \rhob_{\xi,k}^2=1$.
    \item\label{i:bundling:4} For all $n\leq 3\Nfin$ and $p\in[1,\infty]$,
    $\left\| \nabla^n \rhob^k_{\xi} \right\|_{L^p(\mathbb{T}^3)} \lesssim \left(\Gamma_q^{-1}\lambda_{q+1}\right)^n \Gamma_q^{-3\left(\frac 2p -1\right)}$.
\end{enumerate}
\end{customprop}

Now we recall the anistropic cutoffs, on which we will chose placements for different bundles.
\begin{customdef}{4.10,~\cite{GKN23}}[\textbf{Strongly anisotropic cutoffs}]\label{def:etab}
To each $\xi \in  {\bigcup_{j=0}^{\bn-1}} \left(  \Xi_j \cup \{ \vec h_j \} \right)$, we associate a partition of the orthogonal space $\xi^\perp \in \T^3$ into a grid\footnote{We refer to the grid used in Proposition~\ref{prop:pipe:shifted}, as any periodicity issues have been avoided there.} of squares of sidelength $\approx\lambda_{q+\half}^{-1}$. We index the squares $\mathcal{S}$ in this partition by $I$. To this grid, we associate a partition of unity $ \etab_{\xi}^I$, i.e.,
    \begin{align}\label{eq:sa:summability}
         \etab_\xi^{I} =\begin{cases}
        1 &\textnormal{  on  } \frac34 \mathcal{S}_{I}\\
        0 &\textnormal{  outside  } \frac54 \mathcal{S}_{I}
        \end{cases}, \qquad
        \sum_{I} (\etab_\xi^{I})^{2} = 1 \, ,
    \end{align}
    which in addition satisfies $(\xi\cdot\na) \etab_\xi =0$ and $\left\| \nabla^N \etab_\xi^I \right\|_\infty \lesssim \lambda_{q+\half}^N$ for all $N\leq 3\Nfin$ and all $I$. We can bound the cardinality of the index set $I$ by  $|\{I \in \mathcal{S} \}| \lec \la_{q+\half}^2$.
\end{customdef}

We now recall the \emph{intermittent pipe bundles} from~\cite[Definition~4.12]{GKN23}.  We do not define these for the vectors $\vec h_j$, as those vectors will be used to define distinct ``helical pipe bundles'' later.

\begin{definition}[\textbf{Intermittent pipe bundles}]\label{defn:pipe.bundle}
For each $\xi \in  {\bigcup_{j=0}^{\bn-1}}\Xi_j \subset\mathbb{Q}^{3} \cap \mathbb{S}^2$, we define intermittent pipe bundles by $\BB_{\xi} =\chib_{\xi} \sum_I  \etab_{\xi}^I \WW_{\xi}^I =\chib_{\xi} \sum_I  \etab_{\xi}^I \xi \varrho_{\xi}^I$, where $\chib_{\xi} = \chib_{\xi}^m$ is defined as in Proposition \ref{prop:bundling} for some $m=m_{\xi}$, $\WW_{\xi}^I:=\mathcal{W}^{k}_{\xi,\lambda_{q+\bn},\sfrac{\lambda_{q+\half}\Gamma_q}{\lambda_{q+\bn}}}$, and $\varrho_{\xi}^I:= \varrho^{k}_{\xi,\lambda_{q+\bn},\sfrac{\lambda_{q+\half}\Gamma_q}{\lambda_{q+\bn}}}$ is defined as in Propositions~\ref{prop:pipeconstruction} for some $k$.
\end{definition}

\subsection{Helical pipe bundles}
We now define the helical pipe bundles. 
\begin{proposition}[\textbf{Helical pipe bundles}]\label{prop:pipe:helical}
Let $\Nfin, \Dpot \in \mathbb{N}$ be large integers.  Define 
\begin{align}\notag
    \Psi_q^{\pm} (x_1,x_2,x_3) &= (x_1 \mp \la_{q+\bn}^{-1}\sin(\la_\qho x_2), x_2, x_3 + \la_{q+\bn}^{-1}\cos(\la_\qho x_2))^{\top}  \notag \\
\label{defn:mathcal.H}
    \mathbb{H}_{\pm \vec e_2,\lambda_\qbn, r_q}^k
    &:= \tilde e_2^{\pm} \,  \varrho_{\vec e_2, \lambda_\qbn, r_q}^k (\Psi_q^\pm)  \,  , \qquad \qquad   \tilde e_2^\pm = (\nabla \Psi_q^\pm)^{-1}  \vec e_2
    \end{align}
where $\varrho^k_{\vec e_2,\lambda_\qbn, r_q}$ is defined in~Proposition~\ref{prop:pipeconstruction}.  We define helical pipe bundles by
\begin{equation}
            \BB_{ \vec e_2,\pm} = \rhob_{ \vec e_2} \sum_I \zetab_{\vec e_2}^I \mathbb{H}^I_{{\pm}  \vec e_2,\lambda_\qbn, r_q} \, . \label{eq:hel:bundle}
        \end{equation}
The $I$ indicates that the placement (i.e. the value of $k$) depends on the cutoff $\zetab_{ \vec e_2}^I$. Then there exist implicit constants depending on $\Nfin$ and $\Dpot$ but not on $q$ such that the following hold.
\begin{enumerate}[(1)]
    \item\label{item:hpipe:1} $\varrho=\varrho_{\vec e_2,\lambda_\qbn,r_q}:\mathbb{R}^2\rightarrow\mathbb{R}$ is given by the iterated divergence $\div^\Dpot  \vartheta =: \varrho$ of a pairwise symmetric tensor potential $\vartheta:\mathbb{R}^2\rightarrow\mathbb{R}$ with compact support in a ball of radius $\approx \lambda_{\qbn}^{-1}$ such that the following holds.  Let $\varrho^k$ and $\vartheta^k$ be defined as in Proposition~\ref{prop:pipe:shifted}, in terms of $\varrho$ and $\vartheta$ (instead of $\varkappa$).  Then there exists $\mathbb{U}_{\pm  \vec e_2, \lambda_\qbn, r_q}^k:\mathbb{T}^3\rightarrow\mathbb{R}^3$ such that
    if $\{ \vec e_2,\xi',\xi''\} \subset \mathbb{Q}^3 \cap \mathbb{S}^2$ form an orthonormal basis of $\R^3$ with $ \vec e_2\times\xi'=\xi''$, then we have
    \begin{equation}
    \mathbb{U}_{\pm  \vec e_2, \lambda_\qbn, r_q}^k
     =  - \frac 13  \xi' \underbrace{\lambda_{q+\bn}^{-\Dpot} \xi''\cdot \nabla \left(\div^{\Dpot-2} \left(\vartheta^k \right)\right)^{ii}}_{=:\varphi^{\prime \prime k}}
     +  \frac 13
     \xi'' \underbrace{\lambda_{q+\bn}^{-\Dpot} \xi'\cdot \nabla \left(\div^{\Dpot-2} \left(\vartheta^k \right)\right)^{ii}
     }_{=:\varphi^{\prime k}}
     \label{eq:hUU:explicit}
        \,, 
    \end{equation}
and thus
\begin{equation}
\label{eq:hWW:explicit}
\curl \mathbb{U}_{\pm  \vec e_2, \lambda_\qbn, r_q}^k = \xi \lambda_{q+\bn}^{-\Dpot }\div^\Dpot  \left(\vartheta^k\right) =  \vec e_2 \varrho^k
\,,
\end{equation}
and 
\begin{equation}
     \vec e_2 \cdot \nabla \vartheta^k = ( \vec e_2 \cdot \nabla )\mathbb{U}_{\pm \vec e_2, \lambda_\qbn, r_q}^k
    = 0
    \,.
    \label{eq:hderivative:along:pipe}
    \end{equation}
\item\label{item:pipeh:2} The sets $\{\mathbb{U}_{\pm  \vec e_2, \lambda_\qbn, r_q}^k\}_{k}$, $\{\varrho^k\}_{k}$, and $\{\vartheta^k\}_{k}$ satisfy the conclusions from~\cite[Proposition~4.4]{GKN23}.
    \item\label{item:hpipe:2} For all $k \in \mathbb{N} \cup \{0\}$,
    \begin{align}\label{psi:q:bounds}
   \| \Psi_q^\pm \|_\infty \les 1 \, , \qquad  \|\nabla \Psi_q^\pm - \Id \|_\infty \les \frac{\lambda_\qho}{\lambda_\qbn} \, , \qquad   \| \nabla^{1+k} \Psi_q^\pm \|_\infty \les \lambda_{q+\bn}^{-1} \lambda_\qho^{1+k} \, .
    \end{align} 
    \item\label{item:hpipe:5} For all $n\leq 3 \Nfin$, 
    \begin{equation}\label{e:hpipe:estimates:1}
    {\| \nabla^n\vartheta^k \|_{L^p(\mathbb{T}^3)} \lesssim \lambda_\qbn^{n}r_q^{\left(\sfrac{2}{p}-1\right)} }, \qquad {\| \nabla^n\varrho^k \|_{L^p(\mathbb{T}^3)} \lesssim \lambda_\qbn^{n}r_q^{\left(\sfrac{2}{p}-1\right)} }
    \end{equation}
    and abbreviating $\mathcal H^k_{\pm  \vec e_2, \lambda_\qbn, r_q}$ by $\mathcal H^k$, we have the bounds ${\| \nabla^n\mathbb{U}_{\pm  \vec e_2, \lambda_\qbn, r_q}^k \|_{L^p(\mathbb{T}^3)} \lesssim \lambda_\qbn^{n-1}r_q^{\left(\sfrac{2}{p}-1\right)} }$ and ${\left\| \nabla^n\mathbb{H}^k \right\|_{L^p(\mathbb{T}^3)} \lesssim \lambda_\qbn^{n}r_q^{\left(\sfrac{2}{p}-1\right)} }$.
\item\label{item:hpipe:3.5} We have that $\supp \vartheta^k \subseteq B\left( \supp\varrho ,2\lambda_\qbn^{-1}\right)$.
    \item\label{item:hpipe:6} Let $\Phi:\mathbb{T}^3\times[0,T]\rightarrow \mathbb{T}^3$ be the periodic solution to the transport equation
\begin{align}
\label{e:hphi:transport}
\partial_t \Phi + v\cdot\nabla \Phi =0\,, 
\qquad 
\Phi|_{t=t_0} &= x\, ,
\end{align}
with a smooth, divergence-free, periodic velocity field $v$. Then
\begin{equation}\label{eq:hpipes:flowed:1}
\nabla \Phi^{-1} \cdot ( \mathbb{H}^k \circ \Phi ) = \curl ( \nabla(\Psi_q^\pm \circ\Phi)^T \cdot ( \mathbb{U}_{\pm  \vec e_2, \lambda_\qbn, r_q}^k \circ \Psi_q^\pm \circ \Phi ) ).
\end{equation}
\item\label{item:hpipe:7}  The helical pipe bundles have the property that for each fixed $I$, 
\begin{align}\label{helicity.mathcalH}
 {\pm} (\rhob_{\vec e_2} \zetab_{\vec e_2}^I)^2 &= \left( \frac{\lambda_\qbn}{\la_\qho^{\sfrac 32}}  \right)^2 (\rhob_{\vec e_2} \zetab_{\vec e_2}^I)^2 \dashint_{\left(\frac{\T}{\lambda_{q+\bn} r_q}\right)^3} \mathbb{H}^k_{{\pm}{\vec e_2},\lambda_\qbn, r_q} \cdot \curl \mathbb{H}^k_{{\pm} {\vec e_2},\lambda_\qbn, r_q} \notag\\
 &= \left( \frac{\lambda_\qbn}{\la_\qho^{\sfrac 32}} \right)^2 (\rhob_{ \vec e_2} \zetab_{ \vec e_2}^I)^2 \dashint_{\T^3} \mathbb{H}^k_{{\pm} \vec e_2,\lambda_\qbn, r_q} \cdot \curl \mathbb{H}^k_{{\pm} \vec e_2,\lambda_\qbn, r_q} \, .
\end{align}
\item\label{item:hpipe:8} There exists a $q$-independent geometric constant $C_\varrho >0$ such that
    \begin{align}\label{helical.in.straight}
            \supp \mathbb{H}^k_{{\vec e_2}, \la_{q+\bn}, r_q}(\cdot) \subset B(\supp \varrho^k_{{\vec e_2}, \la_{q+\bn},r_q}(\cdot), C_\varrho \la_{q+\bn}^{-1})\,.
        \end{align}
    \end{enumerate}
\end{proposition}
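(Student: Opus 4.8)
For this proposition the plan is to build everything from the straight intermittent pipe flows of Proposition~\ref{prop:pipeconstruction} with $\xi=\vec e_2$, $\lambda=\la_\qbn$, $r=r_q$, and then to ``bend'' the velocity field by the explicit map $\Psi_q^\pm$. Concretely, the scalars $\varrho^k,\vartheta^k$ and the potential $\mathbb U^k_{\pm\vec e_2,\la_\qbn,r_q}$ in the statement are \emph{taken to be} $\varrho^k_{\vec e_2,\la_\qbn,r_q}$, $\vartheta^k_{\vec e_2,\la_\qbn,r_q}$ and $\mathcal U^k_{\vec e_2,\la_\qbn,r_q}$; with this reading, \eqref{eq:hUU:explicit}, \eqref{eq:hWW:explicit}, the first identity in \eqref{eq:hderivative:along:pipe}, the claim that $\{\mathbb U^k\},\{\varrho^k\},\{\vartheta^k\}$ inherit the conclusions of \cite[Proposition~4.4]{GKN23}, the inclusion $\supp\vartheta^k\subseteq B(\supp\varrho,2\la_\qbn^{-1})$, and the $\varrho^k,\vartheta^k,\mathbb U^k$ halves of~\eqref{e:hpipe:estimates:1} all follow verbatim from Proposition~\ref{prop:pipeconstruction} via~\eqref{eq:UU:explicit},~\eqref{eq:WW:explicit} and $\xi\cdot\na\vartheta=0$ (the second identity in \eqref{eq:hderivative:along:pipe}, $\vec e_2\cdot\na\mathbb U^k=0$, is also immediate from~\eqref{eq:UU:explicit}, all the operations there being $\vec e_2$-transverse). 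The only genuinely new input here is that $\Psi_q^\pm$ is a volume-preserving diffeomorphism of $\T^3$: from the explicit formula $\na\Psi_q^\pm$ is triangular with unit diagonal, so $\det\na\Psi_q^\pm\equiv1$.

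The bounds \eqref{psi:q:bounds}, the $\mathbb H^k$ and $\mathbb U^k$ halves of~\eqref{e:hpipe:estimates:1}, and the inclusion~\eqref{helical.in.straight} are then routine consequences of the explicit shape $\Psi_q^\pm-\Id=\la_\qbn^{-1}(\mp\sin(\la_\qho x_2),0,\cos(\la_\qho x_2))^\top$. Indeed $\|\Psi_q^\pm-\Id\|_\infty\les\la_\qbn^{-1}$ gives $\|\Psi_q^\pm\|_\infty\les1$, and since $\mathbb H^k=\tilde e_2^\pm\,\varrho^k(\Psi_q^\pm)$ is supported in $(\Psi_q^\pm)^{-1}(\supp\varrho^k)$ it yields~\eqref{helical.in.straight}; each $x_2$-derivative of $\Psi_q^\pm-\Id$ produces one factor of $\la_\qho$, which is \eqref{psi:q:bounds}. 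For the $L^p$ bounds on $\mathbb H^k$ one differentiates by the chain rule, using \eqref{psi:q:bounds} together with $\la_\qho\le\la_\qbn$ (so derivatives landing on $\Psi_q^\pm$ never beat the rate $\la_\qbn$), then passes the $L^p$ norm of $\varrho^k\circ\Psi_q^\pm$ to that of $\varrho^k$ by the measure-preserving change of variables $y=\Psi_q^\pm(x)$; the prefactor $\tilde e_2^\pm=(\na\Psi_q^\pm)^{-1}\vec e_2$ and its derivatives are bounded because $\|\na\Psi_q^\pm-\Id\|_\infty\ll1$.

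For the flowed identity~\eqref{eq:hpipes:flowed:1}, the observation is that $\mathbb H^k=(\na\Psi_q^\pm)^{-1}(\mathcal W^k\circ\Psi_q^\pm)$ is the pullback of $\mathcal W^k=\curl\mathbb U^k$ under $\Psi_q^\pm$, and that $\Psi_q^\pm\circ\Phi$ is a composition of two volume-preserving diffeomorphisms, with $\na(\Psi_q^\pm\circ\Phi)=(\na\Psi_q^\pm\circ\Phi)\,\na\Phi$ of unit determinant. Hence $(\na\Phi)^{-1}(\mathbb H^k\circ\Phi)=(\na(\Psi_q^\pm\circ\Phi))^{-1}(\mathcal W^k\circ\Psi_q^\pm\circ\Phi)$, and the same computation that proves the analogous identity in Proposition~\ref{prop:pipeconstruction}---which uses only that the map is a volume-preserving diffeomorphism and that $\curl\mathbb U^k=\mathcal W^k$---identifies this with $\curl\!\bigl((\na(\Psi_q^\pm\circ\Phi))^\top(\mathbb U^k\circ\Psi_q^\pm\circ\Phi)\bigr)$, which is~\eqref{eq:hpipes:flowed:1}. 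Taking $v\equiv0$, $\Phi=\Id$ records in particular $\mathbb H^k=\curl\!\bigl((\na\Psi_q^\pm)^\top(\mathbb U^k\circ\Psi_q^\pm)\bigr)$.

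The heart of the matter is the helicity identity~\eqref{helicity.mathcalH}, and this is where the shape of $\Psi_q^\pm$ is chosen precisely. Write $\mathbb H^k=f\,\tilde e_2^\pm$ with $f=\varrho^k\circ\Psi_q^\pm$. Since $\curl(f\tilde e_2^\pm)=\na f\times\tilde e_2^\pm+f\,\curl\tilde e_2^\pm$ and the scalar triple product $\tilde e_2^\pm\cdot(\na f\times\tilde e_2^\pm)$ vanishes, one gets the pointwise identity $\mathbb H^k\cdot\curl\mathbb H^k=f^2\,(\tilde e_2^\pm\cdot\curl\tilde e_2^\pm)$. Now $\tilde e_2^\pm=(\na\Psi_q^\pm)^{-1}\vec e_2=(\pm\la_\qbn^{-1}\la_\qho\cos(\la_\qho x_2),\,1,\,\la_\qbn^{-1}\la_\qho\sin(\la_\qho x_2))^\top$ depends only on $x_2$, so a direct computation gives $\curl\tilde e_2^\pm=(\la_\qbn^{-1}\la_\qho^2\cos(\la_\qho x_2),\,0,\,\pm\la_\qbn^{-1}\la_\qho^2\sin(\la_\qho x_2))^\top$ and therefore $\tilde e_2^\pm\cdot\curl\tilde e_2^\pm=\pm\la_\qbn^{-2}\la_\qho^3(\cos^2+\sin^2)=\pm\la_\qho^3\la_\qbn^{-2}$ --- a \emph{constant}, with sign matching the $\pm$ in $\Psi_q^\pm$. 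Consequently, for every $k$,
\begin{align*}
\dashint_{\T^3}\mathbb H^k_{\pm\vec e_2,\la_\qbn,r_q}\cdot\curl\mathbb H^k_{\pm\vec e_2,\la_\qbn,r_q}
&=\pm\la_\qho^3\la_\qbn^{-2}\dashint_{\T^3}(\varrho^k\circ\Psi_q^\pm)^2\\
&=\pm\la_\qho^3\la_\qbn^{-2}\dashint_{\T^3}(\varrho^k)^2=\pm\la_\qho^3\la_\qbn^{-2}\,,
\end{align*}
using the change of variables $y=\Psi_q^\pm(x)$ and the normalization $\dashint_{\T^3}\mathcal W^k\otimes\mathcal W^k=\vec e_2\otimes\vec e_2$ of Proposition~\ref{prop:pipeconstruction}, i.e.\ $\dashint_{\T^3}(\varrho^k)^2=1$; the same value is obtained for the average over one cell $(\T/(\la_\qbn r_q))^3$ since the shift $\|\Psi_q^\pm-\Id\|_\infty\les\la_\qbn^{-1}$ is far smaller than the cell side $\la_\qbn^{-1}r_q^{-1}$, so $\varrho^k\circ\Psi_q^\pm$ lives in a single cell with the same $L^2$ mass as $\varrho^k$. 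Multiplying through by $(\rhob_{\vec e_2}\zetab_{\vec e_2}^I)^2$ and recalling $\la_\qho^3\la_\qbn^{-2}=(\la_\qho^{\sfrac32}/\la_\qbn)^2$ gives exactly~\eqref{helicity.mathcalH}. The apparent obstacle --- that bending a pipe could create an uncontrolled, placement-dependent helicity --- is thus defused by the fact that the helicity \emph{density} $\tilde e_2^\pm\cdot\curl\tilde e_2^\pm$ is the single constant $\pm\la_\qho^3\la_\qbn^{-2}$; establishing this constancy and its exact value (and, in the earlier items, tracking the $\la_\qho$-versus-$\la_\qbn$ scale separation through the chain rule) is where essentially all of the content lies, everything else being an adaptation of~\cite{GKN23}.
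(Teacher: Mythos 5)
Your proposal is correct and follows essentially the same route as the paper's proof: you observe that $\Psi_q^\pm$ is a volume-preserving diffeomorphism (unit-determinant triangular Jacobian), import items (1), (2), (4), (5), (6) from Proposition~\ref{prop:pipeconstruction} by pullback, and for the helicity identity you hit exactly the key computation $\tilde e_2^\pm\cdot\curl\tilde e_2^\pm=\pm\la_\qho^3\la_\qbn^{-2}$ (constant), together with $\mathbb H^k\cdot\curl\mathbb H^k=(\varrho^k\circ\Psi_q^\pm)^2\,\tilde e_2^\pm\cdot\curl\tilde e_2^\pm$ and the measure-preserving change of variables and normalization $\dashint(\varrho^k)^2=1$. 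Your extra remarks on the cell-vs-torus average and on the composition $\Psi_q^\pm\circ\Phi$ being volume-preserving in item~(6) are in the spirit of the paper's terser ``identical to Proposition~\ref{prop:pipeconstruction}'' disposal of those items.
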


\begin{proof}
{Since $\la_\qho$ and $\la_{q+\bn}$ are powers of two, $\Psi^\pm_q$ is a diffeomorphism of $\T^3$.}
After computing
    \begin{align*}
    \nabla \Psi_q^\pm &= \begin{pmatrix}
        1 & \mp\frac{\la_\qho}{\la_{q+\bn}}\cos(\la_\qho x_2) & 0 \\
        0 & 1 & 0 \\
        0 & -\frac{\la_\qho}{\la_{q+\bn}}\sin(\la_\qho x_2) & 1
    \end{pmatrix} \, , \quad (\nabla \Psi_q^\pm)^{-1} = \begin{pmatrix}
        1 & \pm\frac{\la_\qho}{\la_{q+\bn}}\cos(\la_\qho x_2) & 0 \\
        0 & 1 & 0 \\
        0 & \frac{\la_\qho}{\la_{q+\bn}}\sin(\la_\qho x_2) & 1
    \end{pmatrix} \, ,
\end{align*}
item~\ref{item:hpipe:2} follows immediately.  Note importantly that $\Psi_q^\pm$ is a volume-preserving diffeomorphism, since the determinant of $\nabla\Psi_q^\pm$ is one.  Then the proofs of item~\ref{item:hpipe:1},~\ref{item:pipeh:2},~\ref{item:hpipe:5},~\ref{item:hpipe:3.5}, and~\ref{item:hpipe:6} are identical to the corresponding proofs in Proposition~\ref{prop:pipeconstruction}, and we omit the details. 

We focus now on the proof of item~\ref{item:hpipe:7}. By~\eqref{defn:mathcal.H},
\begin{align}
\curl \mathbb{H}_\pm &= \left(\na (\varrho(\Psi_q^\pm)) \times \tilde e_2^\pm + \varrho(\Psi_q^\pm)\, \curl \tilde e_2^\pm\right) \, , \label{eq:hel.building1}\\
    \tilde e_2^\pm &= \left( \pm    \frac{\la_\qho}{\la_{q+\bn}}\cos(\la_\qho x_2), 1,  \frac{\la_\qho}{\la_{q+\bn}}\sin(\la_\qho x_2) \right)^\top \, ,  \label{eq:hel.building2}\\
\curl \tilde e_2^\pm &=\left( \frac{\la_\qho^2}{\la_{q+\bn}}\cos(\la_\qho x_2), 0, \pm\frac{\la_\qho^2}{\la_{q+\bn}} \sin(\la_\qho x_2) 
    \right)^\top \, , \label{eq:hel.building3}
\end{align}
and so we apply the Pythagorean theorem and deduce that $\curl \tilde e_2^\pm \cdot \tilde e_2^\pm = \pm \frac{\la_\qho^3}{\la_{q+\bn}^2}$.  Thus
\begin{align*}
    \left( \frac{\lambda_\qbn}{\la_\qho^{\sfrac 32}} \right)^2 \int_{\T^3} \mathbb{H}_\pm \cdot \curl \mathbb{H}_\pm \, dx 
    = \left( \frac{\lambda_\qbn}{\la_\qho^{\sfrac 32}} \right)^2 \int_{\T^3} \rho(\Psi^\pm)^2 \curl \tilde e_2^\pm \cdot \tilde e_2^\pm \, dx
    = \pm 
    \int_{\T^3} \rho(\Psi_q^\pm)^2 \, dx = \pm 1 \, .
\end{align*}

Finally, we must prove item~\ref{item:hpipe:8}.  Since $\| \Psi_q^\pm - \Id \|_\infty \leq \lambda_\qbn^{-1}$, the support of $\rho(\Psi_{\bn})$ is contained in a $\lambda_\qbn^{-1}$ ball around the support of $\rho$, which gives the result. 
\end{proof}

\section{Mollification and upgrading material derivatives}\label{ss:mollification}

The details in this section are largely similar to~\cite[Section~3]{GKN23}, and so we omit them.

\begin{lemma}[\bf Mollification and upgrading material derivative estimates]\label{lem:upgrading}
Assume that \emph{all} inductive assumptions listed in subsections~\ref{ss:relaxed}-\ref{sec:he:bounds} hold. Let $\Pqxt$ be a space-time mollifier with kernel a product of $\mathcal{P}_{q,x}(x)$, which is compactly supported in space at scale $\Lambda_q^{-1}\Gamma_{q-1}^{-\sfrac 12}$, and $\mathcal{P}_{q,t}(t)$, which is compactly supported in time at scale $\Tau_{q-1}\Gamma_{q-1}^{\sfrac 12}$; we assume that both kernels have vanishing moments up to $10\Nfin$ and are $C^{10\Nfin}$-differentiable. Define\index{$\Pqxt$}\index{$R_\ell$}\index{$\pi_\ell$}\index{$\varphi_\ell$} the trace-free, symmetric $2$-tensor
\begin{align}
    &R_\ell := \Pqxt R_q^q - \frac 13 \tr \left( \Pqxt R_q^q \right) \Id     \label{def:mollified:stuff}
\end{align}
on the space-time domain $[-\sfrac{\tau_{q-1}}2,T+\sfrac{\tau_{q-1}}2]\times \T^3$. 
Then the following hold. 
\begin{enumerate}[(i)]
\item\label{item:moll:one} The following relaxed equation (replacing \eqref{eqn:ER}
) is satisfied:
\begin{align}\notag
&\partial_t u_q + \div (u_q \otimes u_q) + \nabla \left( p_q - \frac 13 \tr R_q^q \right) = \div\left(R_\ell + \left( R_q^q - \frac 13 (\tr R_q^q)\Id - R_\ell \right) + \sum\limits_{k=q+1}^{q+\bn-1} R_q^k  \right) \, . 
\end{align}
\item\label{item:moll:two}
The inductive assumptions in subsection~\ref{sec:cutoff:inductive}, \ref{ss:dodging},~\ref{sec:inductive:secondary:velocity}, and~\ref{sec:he:bounds} remain unchanged.
\item\label{item:moll:two:2}  The inductive bounds for $R_q^q$ in~\eqref{eq:pressure:inductive} are replaced by the following: for all $N+M \leq \Nfin$,
\begin{subequations}\label{eq:pressure:upgraded}
\begin{align}
\norm{ \psi_{i,q} D^N D_{t,q}^M  R_\ell }_{1}  
&\les \Gamma_q^2 \delta_{q+\bn}\left(\Lambda_q\Gamma_q\right)^N \MM{M, \NindRt, \Gamma_{q}^{i} \mu_q^{-1}, \Tau_{q}^{-1} } \, ,
\label{eq:pressure:inductive:dtq:upgraded} \\
\norm{ \psi_{i,q} D^N D_{t,q}^M R_\ell }_{\infty} 
&\les \Gamma_q^{2+\badshaq} \left(\Lambda_q\Gamma_q\right)^N \MM{M, \NindRt, \Gamma_{q}^{i} \mu_q^{-1}, \Tau_{q}^{-1} } \, . \label{eq:pressure:inductive:dtq:uniform:upgraded}   
\end{align}
\end{subequations}
\item\label{item:moll:four} The symmetric tensor $R_{\ell} + \sfrac 13 \tr R_q^q \Id -R_q^q$ satisfies the following: for all $N+M\leq 2\Nind$,
\begin{align}\notag    \left\| D^N \Dtq^M \left( R_{\ell} + \sfrac 13 (\tr R_q^q)\Id -R_q^q \right) \right\|_{\infty} \les \Gamma_{q+1} \Tau_{q+1}^{ 4\Nindt} \delta_{q+3\bn}^2 \lambda_{q+1}^N \MM{M,\Nindt, \mu_q^{-1},\Gamma_{q}^{-1}\Tau_{q}^{-1}} \, .
\end{align}
\end{enumerate}
\end{lemma}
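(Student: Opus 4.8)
The plan is to follow the mollification and material-derivative-upgrading procedure of~\cite[Section~3]{GKN23}, adapted to the present parameters and stress decomposition. Items~\ref{item:moll:one} and~\ref{item:moll:two} require no estimates, and I would dispose of them first. For~\ref{item:moll:one} I would subtract $\frac13\na\tr R_q^q$ from both sides of~\eqref{eqn:ER} and add and subtract $\div R_\ell$ on the right-hand side; since~\eqref{eq:ER:decomp:basic} gives $R_q^q+\sum_{k=q+1}^{q+\bn-1}R_q^k-\frac13(\tr R_q^q)\Id=R_q-\frac13(\tr R_q^q)\Id$ and the divergence of a pure-trace tensor is a gradient, the displayed relaxed equation is then literally equivalent to~\eqref{eqn:ER}. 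For~\ref{item:moll:two} I would note that the inductive assumptions of subsections~\ref{sec:cutoff:inductive},~\ref{ss:dodging},~\ref{sec:inductive:secondary:velocity}, and~\ref{sec:he:bounds} are statements about $u_q$, the increments $\hat w_{q'}$, the cutoffs $\psi_{i,q'}$, and the helicities $H(u_{q'})$ with $q'\le q$, none of which is altered by~\eqref{def:mollified:stuff}, while of the Reynolds bounds~\eqref{eq:pressure:inductive} only the component $k=q$ is modified.

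For~\ref{item:moll:two:2} the plan is to start from~\eqref{eq:R:inductive:dtq}--\eqref{eq:R:dtq:uniform} with $k=q$, valid only for $N+M\le 2\Nind$, and upgrade them to all $N+M\le\Nfin$. The steps are: (a) a Bernstein-type estimate exploiting that $\mathcal P_{q,x}$ is supported at scale $\Lambda_q^{-1}\Gamma_{q-1}^{-1/2}$, that $\mathcal P_{q,t}$ is supported at scale $\Tau_{q-1}\Gamma_{q-1}^{1/2}$, and that both kernels are $C^{10\Nfin}$, so that spatial derivatives of $\Pqxt R_q^q$ cost $\Lambda_q\Gamma_q$ and plain time derivatives cost $(\Tau_{q-1}\Gamma_{q-1}^{1/2})^{-1}$; (b) commuting $\mathcal P_{q,x}$ past $\psi_{i,q-1}$ with an error exponentially small in $\lambda_q$ (using the kernel's compact support and~\eqref{eq:sharp:Dt:psi:i:q:old}), then replacing $\psi_{i,q-1}$ by $\psi_{i,q}$ via~\eqref{eq:inductive:partition} and~\eqref{eq:inductive:timescales}; (c) writing $D_{t,q}=D_{t,q-1}+\hat w_q\cdot\na$ and absorbing each factor $\hat w_q\cdot\na$ on $\supp\psi_{i,q}$ through~\eqref{eq:nasty:D:wq:old}--\eqref{eq:nasty} at a cost of at most $\Gamma_q^{i}\mu_q^{-1}$; and (d) commuting the remaining $D_{t,q}^M$ through the mollifier using the commutator identities of~\cite[Section~3]{GKN23}. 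The key check is that the $\Gamma_{q-1}^{-1/2}$ factor in the two mollification scales is exactly the room needed to absorb the $\Gamma_{q-1}^{i+20}$ and $\Gamma_{q-1}^{10}$ losses in~\eqref{eq:pressure:inductive} once the cutoff level is raised from $q-1$ to $q$, which accounts for the single extra $\Gamma_q$ in~\eqref{eq:pressure:inductive:dtq:upgraded} and the unchanged $\Gamma_q^{\badshaq}$ prefactor in~\eqref{eq:pressure:inductive:dtq:uniform:upgraded}; removing the trace $\frac13\tr(\Pqxt R_q^q)\Id$ only improves constants.

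For~\ref{item:moll:four} I would first write $R_\ell+\frac13(\tr R_q^q)\Id-R_q^q=-(R_q^q-\Pqxt R_q^q)+\frac13\tr(R_q^q-\Pqxt R_q^q)\Id$, which reduces matters to bounding $D^N D_{t,q}^M\bigl(R_q^q-\Pqxt R_q^q\bigr)$ in $L^\infty$ for $N+M\le 2\Nind$. Using the vanishing moments of the kernels (up to order $10\Nfin$), the standard mollification-error estimate gives, for a suitable $L$,
\[
\bigl\| R_q^q-\Pqxt R_q^q \bigr\|_\infty \les \bigl(\Lambda_q^{-1}\Gamma_{q-1}^{-1/2}\bigr)^{L}\bigl\| \na^{L}R_q^q \bigr\|_\infty + \bigl(\Tau_{q-1}\Gamma_{q-1}^{1/2}\bigr)^{L}\bigl\| \pa_t^{L}R_q^q \bigr\|_\infty \, ,
\]
and analogously after applying $D^N D_{t,q}^M$, converting $\pa_t$ to $D_{t,q-1}$ via~\eqref{eq:bob:old} and treating $D_{t,q}$ versus $D_{t,q-1}$ as in step (c) above. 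Inserting~\eqref{eq:R:dtq:uniform} and distributing the at most $2\Nind$ derivatives as in~\cite[Section~3]{GKN23}, the spatial part of the error gains $\Gamma_{q-1}^{-L/2}$ per batch of $L$ moments, with further room in frequency because $\Lambda_q\ll\lambda_{q+1}$ (as $\varepsilon_\Gamma$ is small), and using that the intrinsic time scale of $R_q^q$ is, up to $\Gamma$-factors, no shorter than $\Tau_{q-1}\Gamma_{q-1}^{1/2}$, one should arrive at the bound $\les\Gamma_{q+1}\Tau_{q+1}^{4\Nindt}\delta_{q+3\bn}^2\lambda_{q+1}^N\,\MM{M,\Nindt,\mu_q^{-1},\Gamma_q^{-1}\Tau_q^{-1}}$; the small prefactor $\delta_{q+3\bn}^2$ is a by-product of the large separation between the mollification scale and $\lambda_{q+1}$.

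I expect the main obstacle to be item~\ref{item:moll:two:2}: the Bernstein and vanishing-moment estimates are routine, but the bookkeeping needed to pass from the level-$(q-1)$ cutoffs $\psi_{i,q-1}$ and material derivative $D_{t,q-1}$ of~\eqref{eq:pressure:inductive} to the level-$q$ versions in~\eqref{eq:pressure:upgraded}, while verifying that the budget of $\Gamma$-losses closes, is delicate — this is precisely what the material-derivative-upgrading lemmas of~\cite[Section~3]{GKN23} are designed to handle, so in practice I would invoke them rather than redo the argument.
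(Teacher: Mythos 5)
Your proposal is correct and follows essentially the same route as the paper, which omits the proof of this lemma and simply defers to~\cite[Section~3]{GKN23} for the Bernstein estimate, the commuting of the mollifier past the cutoffs, the decomposition $D_{t,q}=D_{t,q-1}+\hat w_q\cdot\nabla$, and the vanishing-moment mollification-error bounds that you outline. The one small misattribution, which does not constitute a gap, is in item~\ref{item:moll:four}: the smallness factor $\delta_{q+3\bn}^2$ arises from the $\Gamma_{q-1}^{-\sfrac12}$ margin built into the two mollification scales combined with the large number of vanishing moments (i.e.\ the $\Gamma_{q-1}^{-L/2}$ gain you compute), rather than from the separation between $\Lambda_q$ and $\lambda_{q+1}$, which only controls the derivative cost $\lambda_{q+1}^N$.
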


\section{New velocity increment}

\subsection{Preliminaries}\label{subsec:cutoff} 
We recall basic facts about cutoff functions and flow maps.

\subsubsection{Time cutoffs}
\label{sec:cutoff:temporal}
We introduce two collections of basic temporal cutoffs similar to~\cite[subsection~5.1]{GKN23}; the only differences are that sixth powers have been substituted for second powers, and that $\mu_q$ has been substituted for $\tau_q$.  The first set of temporal cutoff functions $\chi_{i,k,q}(t)$ satisfies
\begin{subequations}\label{eq:chi}
\begin{align}
t\in \supp \chi_{i,k,q} \iff t &\in \left[ (k-1)\sfrac 12 \mu_q \Gamma_{q}^{-i-2}, (k+1) \sfrac 12 \mu_q \Gamma_{q}^{-i-2} \right]  \, , \label{eq:chi:support}\\
|\partial_t^m \chi_{i,k,q}| &\les (\Gamma_{q}^{i+2} \mu_{q}^{-1})^m \qquad \textnormal{for }m\geq 0 \, ,
\label{eq:chi:cut:dt} \\
\chi_{i,k_1,q}(t)\chi_{i,k_2,q}(t) &\equiv 0 \qquad \qquad\textnormal{unless }|k_1-k_2|\leq 1 \,  \label{e:chi:overlap}\\
 \sum_{k \in \Z} \chi_{i,k,q}^2 &\equiv 1 \, .
 \label{eq:chi:cut:partition:unity}
\end{align}
\end{subequations}
The second set of temporal cutoff functions $\tilde\chi_{i,k,q}$ is only needed for technical reasons (see Lemma~\ref{lem:dodging}) and is supported in the set of times $t$ satisfying
\begin{equation}\label{eq:chi:tilde:support}
\left| t-\mu_q \Gamma_{q}^{-i-2} k \right| \leq \mu_q\Gamma_{q}^{-i} \,.
\end{equation}
In addition, if $(i,k)$ and $(\istar,\kstar)$ are such that $\supp \chi_{i,k,q} \cap \supp \chi_{\istar,\kstar,q}\neq\emptyset$ and $\istar\in\{i-1,i,i+1\}$, then with a sufficiently large choice of $\lambda_0$, we have that $\supp \chi_{i,k,q} \subset \supp \tilde\chi_{\istar,\kstar,q}$.

\subsubsection{Velocity cutoffs}\index{$\psi_{i,q}$}\index{velocity cutoffs}  Recall from subsection~\ref{sec:inductive:secondary:velocity} that velocity cutoffs have already been constructed up to $\qbn-1$. The new velocity cutoffs $\psi_{i,\qbn}$ are constructed in Definition~\ref{def:psi:i:q:def}.

\subsubsection{Flow maps}\label{s:deformation}
\label{sec:cutoff:flow:maps}

We first recall two results from~\cite[Section~5.2]{GKN23}.  The only difference is cosmetic-- the timescale $\tau_q$ has been replaced by $\mu_q$.

\begin{lemma}[\bf Lagrangian paths don't jump many supports]
\label{lem:dornfelder}
Let $q'\leq q+\bn-1$ and $(x_0,t_0)$ be given. Assume that the index $i$ is such that $\psi_{i,q'}^2(x_0,t_0) \geq \kappa^2$, where $\kappa\in\left[\frac{1}{16},1\right]$. Then the forward flow $(X(t),t) := (X(x_0,t_0;t),t)$ of the velocity field $\hat u_{q'}$ originating at $(x_0,t_0)$ has the property that $\psi_{i,q'}^2(X(t),t) \geq\sfrac{\kappa^2}{2}$ for all $t$ such that $|t - t_0|\leq \mu_{q'} \Gamma_{q'}^{-i+4}$.
\end{lemma}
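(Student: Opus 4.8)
The plan is to control the evolution of $\psi_{i,q'}^2$ along the flow $X(t)$ of $\hat u_{q'}$ by estimating the material derivative $D_{t,q'} \psi_{i,q'}^2 = (\partial_t + \hat u_{q'}\cdot\nabla)\psi_{i,q'}^2$ and then integrating in time. First I would write, for fixed $(x_0,t_0)$ with $\psi_{i,q'}^2(x_0,t_0)\geq \kappa^2 \geq \sfrac{1}{256}$,
\[
    \psi_{i,q'}^2(X(t),t) = \psi_{i,q'}^2(x_0,t_0) + \int_{t_0}^t \left( D_{t,q'}\psi_{i,q'}^2 \right)(X(s),s)\, ds \, ,
\]
using that $\sfrac{d}{ds}\left[ \psi_{i,q'}^2(X(s),s) \right] = (D_{t,q'}\psi_{i,q'}^2)(X(s),s)$ along the flow. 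The goal is then to show that the integral term is smaller than $\sfrac{\kappa^2}{2}$ in absolute value on the time interval $|t-t_0|\leq \mu_{q'}\Gamma_{q'}^{-i+4}$.

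The key step is to bound $|D_{t,q'}\psi_{i,q'}^2|$ on the support of $\psi_{i,q'}$. Writing $D_{t,q'}\psi_{i,q'}^2 = 2\psi_{i,q'} D_{t,q'}\psi_{i,q'}$ and invoking the sharp material-derivative estimate~\eqref{eq:sharp:Dt:psi:i:q:old} with $k=1$, $\alpha_1 = 0$, $\beta_1 = 1$, $|\alpha| = 0$, $|\beta| = 1$ (using that $D_{t,q'-1}$ and $D_{t,q'}$ are comparable after accounting for $\hat w_{q'}\cdot\nabla$ via~\eqref{eq:nasty:Dt:psi:i:q:orangutan}), one obtains on $\supp\psi_{i,q'}$ a bound of the form
\[
    \left| D_{t,q'}\psi_{i,q'}^2 \right| \lesssim \psi_{i,q'}^{2 - 1/\Nfin} \cdot \Gamma_{q'} \cdot \Gamma_{q'}^{i+3}\mu_{q'}^{-1} \lesssim \Gamma_{q'}^{i+4}\mu_{q'}^{-1} \, ,
\]
where in the last step I use that $\psi_{i,q'}\leq 1$ and absorb the $\Gamma_{q'}$ prefactors. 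Crucially, the power $\psi_{i,q'}^{1-(K+M)/\Nfin}$ in the denominator of~\eqref{eq:sharp:Dt:psi:i:q:old} means the right-hand side still carries a positive power of $\psi_{i,q'}$, so the estimate degenerates near the boundary of the support; but since we only need a crude upper bound here, dropping that factor (bounding $\psi_{i,q'}\leq 1$) suffices. Integrating over $|t-t_0|\leq \mu_{q'}\Gamma_{q'}^{-i+4}$ gives
\[
    \left| \int_{t_0}^t \left( D_{t,q'}\psi_{i,q'}^2 \right)(X(s),s)\, ds \right| \lesssim \mu_{q'}\Gamma_{q'}^{-i+4}\cdot \Gamma_{q'}^{i+4}\mu_{q'}^{-1} = \Gamma_{q'}^{8} \, ,
\]
which is \emph{not} small --- so the naive estimate fails, and this is where the real argument is needed.

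The main obstacle, therefore, is that the time interval $\mu_{q'}\Gamma_{q'}^{-i+4}$ is \emph{longer} than the natural timescale $\mu_{q'}\Gamma_{q'}^{-i-2}$ on which $\psi_{i,q'}$ varies, so one cannot afford to lose all the $\Gamma_{q'}^{i}$ factors. The resolution, following the argument in~\cite[Section~5.2]{GKN23}, is a bootstrap/continuation argument: one shows that \emph{as long as} $\psi_{i,q'}^2(X(s),s)\geq \sfrac{\kappa^2}{2}$, the sharper form of~\eqref{eq:sharp:Dt:psi:i:q:old} --- keeping the factor $\psi_{i,q'}^{1-1/\Nfin}$ and using $\psi_{i,q'}^2 \geq \sfrac{\kappa^2}{2} \geq \Gamma_{q'}^{-C}$-type lower bounds together with the fact that on $\supp\chi_{i,k,q'}$ the effective timescale for the $i$-th cutoff is genuinely $\mu_{q'}\Gamma_{q'}^{-i}$ --- actually yields $|D_{t,q'}\log\psi_{i,q'}^2| \lesssim \Gamma_{q'}^{i+2}\mu_{q'}^{-1} \cdot \Gamma_{q'}$, no wait: more precisely one works with $\log \psi_{i,q'}^2$ so that the degenerate factor cancels, obtaining $\left| \sfrac{d}{ds}\log\psi_{i,q'}^2(X(s),s)\right| \lesssim \Gamma_{q'}^{i+3}\mu_{q'}^{-1}$ on the region where the lower bound holds, hence $\left|\log\psi_{i,q'}^2(X(t),t) - \log\psi_{i,q'}^2(x_0,t_0)\right| \lesssim \Gamma_{q'}^{i+3}\mu_{q'}^{-1}\cdot\mu_{q'}\Gamma_{q'}^{-i+4} = \Gamma_{q'}^7$. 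That still is not obviously small; the actual gain must come from the fact --- visible in~\eqref{eq:chi:support} and the companion estimates for $\psi_{i,q'}$ in~\cite{GKN23} --- that $\psi_{i,q'}$ is built so that its time-derivative bounds carry $\Gamma_{q'}^{i+2}\mu_{q'}^{-1}$ only through a factor that is itself cut off to vanish outside a window of width $\mu_{q'}\Gamma_{q'}^{-i}$, so that the \emph{integrated} change is $O(\Gamma_{q'}^{\text{const}})$ with a constant independent of $i$, and then a final absorption using $a$ (hence $\Gamma_{q'}$) large relative to fixed powers closes the bootstrap and upgrades $\sfrac{\kappa^2}{2}$ back to a bound that re-enters the continuation hypothesis. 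Since the statement asserts this is cosmetically identical to~\cite[Lemma~5.x]{GKN23} with $\tau_q \mapsto \mu_q$, I would set up the bootstrap exactly as there, verify that every inductive estimate invoked (namely~\eqref{eq:sharp:Dt:psi:i:q:old},~\eqref{eq:sharp:Dt:psi:i:q:mixed:old}, and the support/overlap properties~\eqref{eq:inductive:partition}) holds verbatim with $\mu_{q'}$ in place of $\tau_{q'}$ --- which it does, by inspection of subsection~\ref{sec:cutoff:inductive} --- and conclude. The one genuinely new check is that replacing $\tau_{q'}$ by the shorter $\mu_{q'}$ only \emph{helps}: the continuation window $\mu_{q'}\Gamma_{q'}^{-i+4}$ is correspondingly shorter, so the same arithmetic goes through with room to spare.
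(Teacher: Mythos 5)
Your high-level strategy is correct: control $\psi_{i,q'}^2$ along the flow via the material derivative, and close the estimate with a Gr\"onwall-type argument on a fractional power of $\psi_{i,q'}$ so that the degenerate factor $\psi_{i,q'}^{1-(N+M)/\Nfin}$ cancels. However, you plug in the wrong exponent when you estimate $D_{t,q'}\psi_{i,q'}$, and this error propagates: you read off $\Gamma_{q'}^{i+3}\mu_{q'}^{-1}$ (yielding $\Gamma_{q'}^{i+4}\mu_{q'}^{-1}$ after the $\Gamma_{q'}$ prefactor and hence an integrated loss of $\Gamma_{q'}^{8}$, or $\Gamma_{q'}^7$ with the $\log$ trick). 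But~\eqref{eq:nasty:Dt:psi:i:q:orangutan}, which you actually cite, gives $\Gamma_{q'}(\lambda_{q'}\Gamma_{q'})^N\MM{M,\Nindt-\NcutSmall,\Gamma_{q'}^{i-5}\mu_{q'}^{-1},\cdot}$ for the \emph{full} material derivative $D_{t,q'}\psi_{i,q'}$; with $N=0$, $M=1$ this reads
\[
|D_{t,q'}\psi_{i,q'}| \leq \psi_{i,q'}^{1-\sfrac 1 \Nfin}\,\Gamma_{q'}^{i-4}\mu_{q'}^{-1}
\]
on $\supp\psi_{i,q'}$. (The exponent $\Gamma_{q'-1}^{i+3}\mu_{q'-1}^{-1}$ in~\eqref{eq:sharp:Dt:psi:i:q:old} refers to $D_{t,q'-1}$, a genuinely different and \emph{faster} timescale, and should not be used here.) With the correct $\Gamma_{q'}^{i-4}$, the material-derivative cost times the time window $\mu_{q'}\Gamma_{q'}^{-i+4}$ is $O(1)$, not $\Gamma_{q'}^7$; the factor of $1/\Nfin$ produced by passing to $\psi_{i,q'}^{1/\Nfin}$ then makes the integrated change small (modulo absorbing geometric constants via the largeness of $a$ and using $\kappa\geq\sfrac{1}{16}$ so that $\psi_{i,q'}^{-1/\Nfin}$ stays bounded). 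Your step 4, where you conjecture an additional hidden gain from the structure of $\psi_{i,q'}$, is unnecessary once the exponent is fixed; the orangutan estimate already encodes the needed improvement. Note that the paper itself does not re-prove this lemma---it is imported verbatim from~\cite[Section~5.2]{GKN23} after $\tau_q\mapsto\mu_q$---so the only check required here is that the inductive hypotheses invoked (namely the cutoff derivative bounds and partition-of-unity properties of subsection~\ref{sec:cutoff:inductive}) remain available with the new timescale, which they do.
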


\begin{corollary}[\bf Backwards Lagrangian paths don't jump many supports]
\label{cor:dornfelder}
Suppose $(x_0,t_0)$ is such that $\psi^2_{i,q'}(x_0,t_0)\geq \kappa^2$, where $\kappa\in\left[\sfrac{1}{16},1\right]$. For $\abs{t-t_0}\leq\mu_{q'}\Gamma_{q'}^{-i+3}$, define $x$ to satisfy $x_0=X(x,t;t_0)$. Then we have that $\psi_{i,q'}(x,t)\neq 0$.
\end{corollary}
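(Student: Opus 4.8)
The plan is to derive the corollary directly from Lemma~\ref{lem:dornfelder}, applied to the trajectory of $\hat u_{q'}$ through $(x_0,t_0)$ rather than through $(x,t)$. First I would record the standard invertibility of the flow map: because $\hat u_{q'}$ is smooth, divergence-free and $\T^3$-periodic, the map $X(\cdot,t;t_0):\T^3\to\T^3$ is a diffeomorphism, so there is a unique $x$ with $x_0 = X(x,t;t_0)$, and by uniqueness for the ODE $\dot X = \hat u_{q'}(X)$ this relation is equivalent to $x = X(x_0,t_0;t)$. Thus $x$ is exactly the time-$t$ position of the (forward or backward) Lagrangian path of $\hat u_{q'}$ issuing from $(x_0,t_0)$.

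Next I would set $X(s) := X(x_0,t_0;s)$, so that $X(t_0) = x_0$ and $X(t) = x$. The hypothesis $\psi_{i,q'}^2(x_0,t_0)\ge\kappa^2$ with $\kappa\in[\sfrac1{16},1]$ is precisely the hypothesis of Lemma~\ref{lem:dornfelder}, which then gives $\psi_{i,q'}^2(X(s),s)\ge\sfrac{\kappa^2}{2}$ for every $s$ with $|s-t_0|\le\mu_{q'}\Gamma_{q'}^{-i+4}$. Since $\Gamma_{q'}\ge1$, the window $|t-t_0|\le\mu_{q'}\Gamma_{q'}^{-i+3}$ assumed in the corollary is contained in this larger window, so taking $s=t$ yields $\psi_{i,q'}^2(x,t)=\psi_{i,q'}^2(X(t),t)\ge\sfrac{\kappa^2}{2}>0$, and in particular $\psi_{i,q'}(x,t)\ne0$, as claimed.

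I do not expect any genuine obstacle here: the statement is simply Lemma~\ref{lem:dornfelder} read along backward Lagrangian paths. The only points needing a line of care are the flow-map bookkeeping (that $x_0 = X(x,t;t_0)$ indeed means $x = X(x_0,t_0;t)$) and the trivial comparison $\Gamma_{q'}^{-i+3}\le\Gamma_{q'}^{-i+4}$ of the two time intervals; both are immediate.
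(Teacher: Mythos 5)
Your proof is correct and is exactly the intended argument (the paper simply cites \cite[Section~5.2]{GKN23} for this corollary, whose proof is the same as yours). The two points you flag as needing care are indeed the only ones: the flow-map identity $x_0 = X(x,t;t_0) \iff x = X(x_0,t_0;t)$ by ODE uniqueness, and the containment of time windows $\mu_{q'}\Gamma_{q'}^{-i+3} \le \mu_{q'}\Gamma_{q'}^{-i+4}$ (which holds since $\Gamma_{q'}\ge 2$ by its definition in~\eqref{eq:deffy:of:gamma}). Applying Lemma~\ref{lem:dornfelder} to the trajectory through $(x_0,t_0)$ at time $t$ then gives $\psi_{i,q'}^2(x,t)\ge \kappa^2/2>0$, as you conclude.
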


We can now define and estimate the flow\index{flow maps}\index{$\Phi_{i,k,q}$, $\Phi_{(i,k)}$} of the vector field $\hat u_{q'}$ for $q'\leq q+\bn-1$ on the support of a velocity and time cutoff function. These estimates are completely analogous to \cite[subsection~5.2]{GKN23}.  We do note, however, that we have sharpened the estimates in~\eqref{eq:Lagrangian:Jacobian:1}--\eqref{eq:Lagrangian:Jacobian:2}, which is important for our estimations of the helicity of the velocity increment.

\begin{definition}[\bf Flow maps]\label{def:transport:maps} We define $\Phi_{i,k,q'}(x,t)=\Phi_{(i,k)}(x,t)$ as the solution to
\begin{equation}\label{e:Phi}
(\partial_t + \hat u_{q'} \cdot\nabla) \Phi_{i,k,q'} = 0 \, \qquad \qquad 
\Phi_{i,k,q'}(x,k{\mu_{q'}}\Gamma_{q'}^{-i-2})=x\, .
\end{equation}
\end{definition}
\noindent We denote the inverse of $D\Phi_{(i,k)}$ by $(D\Phi_{(i,k)})^{-1}$, in contrast to $D\Phi_{(i,k)}^{-1}$, the gradient of $\Phi_{(i,k)}^{-1}$.

\begin{corollary}[\bf Deformation bounds]
\label{cor:deformation}
For $k \in \Z$, $0 \leq i \leq  i_{\rm max}$, $q'\leq q+\bn-1$, and $2 \leq N \leq \sfrac{3\Nfin}{2}+1$, we have the following bounds on the set $\Omega_{i,q',k}:=\supp \psi_{i,q'}(x,t){\tilde\chi_{i,k,q'}(t)}$:
\begin{subequations}
\begin{align}
&\norm{D\Phi_{(i,k)} - {\rm Id}}_{L^\infty(\Omega_{i,q',k})} + \norm{(D\Phi_{(i,k)})^{-1} - {\rm Id}}_{L^\infty(\Omega_{i,q',k})} \leq \tau_{q'}^{-1} \Gamma_{q'}^{-5} \mu_{q'} \, ,
\label{eq:Lagrangian:Jacobian:1}\\
&\norm{D^N\Phi_{(i,k)} }_{L^\infty(\Omega_{i,q',k})} + \norm{D^N\Phi^{-1}_{(i,k)} }_{L^\infty(\Omega_{i,q',k})} + \norm{D^{N-1}\left((D\Phi_{(i,k)})^{-1}\right) }_{L^\infty(\Omega_{i,q',k})}  \leq  \frac{\mu_{q'} (\lambda_{q'}\Ga_{q'})^{N-1}}{\tau_{q'} \Gamma_{q'}^{5}}  \, .\label{eq:Lagrangian:Jacobian:2}
\end{align}
\end{subequations}
Furthermore, we have the following bounds for $1\leq N+M\leq \sfrac{3\Nfin}{2}$ and $0\leq N'\leq N$:
\begin{subequations}
\begin{align}
\left\| D^{N-N'} D_{t,q'}^M D^{N'+1} \Phi_{(i,k)} \right\|_{L^\infty(\Omega_{i,q',k})} &\leq \frac{\mu_{q'} (\lambda_{q'}\Ga_{q'})^{N}}{\tau_{q'} \Gamma_{q'}^{5} } \MM{M,\NindSmall,\Gamma_{q'}^{i} \mu_{q'}^{-1},\Tau_{q'-1}^{-1}\Gamma_{q'-1}}\label{eq:Lagrangian:Jacobian:5}\\
\left\| D^{N-N'} D_{t,q'}^M D^{N'} (D \Phi_{(i,k)})^{-1} \right\|_{L^\infty(\Omega_{i,q',k})} &\leq \frac{\mu_{q'} (\lambda_{q'}\Ga_{q'})^{N}}{\tau_{q'} \Gamma_{q'}^{5} } \MM{M,\NindSmall,\Gamma_{q'}^{i} \mu_{q'}^{-1},{\Tau}_{q'-1}^{-1}\Gamma_{q'-1}} \label{eq:Lagrangian:Jacobian:6}
\end{align}
\end{subequations}
\end{corollary}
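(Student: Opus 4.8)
The plan is to follow the argument of \cite[subsection~5.2]{GKN23} essentially verbatim, with the only changes being the cosmetic substitution $\tau_{q'}\mapsto\mu_{q'}$ (the background flow is now run on the shorter timescale) and a more careful accounting of the powers of $\Gamma_{q'}$ gained from integrating over a short time interval, which is what upgrades the estimates to the sharpened form appearing in \eqref{eq:Lagrangian:Jacobian:1}--\eqref{eq:Lagrangian:Jacobian:2}.

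First I would localize the flow. Fix $(x,t)\in\Omega_{i,q',k}$ and trace the trajectory of $\hat u_{q'}$ through $(x,t)$ back to the initial time $k\mu_{q'}\Gamma_{q'}^{-i-2}$. On $\supp\tilde\chi_{i,k,q'}$ one has $|t-k\mu_{q'}\Gamma_{q'}^{-i-2}|\leq\mu_{q'}\Gamma_{q'}^{-i}\leq\mu_{q'}\Gamma_{q'}^{-i+3}$, so applying Corollary~\ref{cor:dornfelder} (and Lemma~\ref{lem:dornfelder} for the portions of the trajectory lying later than $t$) at each intermediate time shows that the whole trajectory remains in $\supp\psi_{i,q'}$; there \eqref{eq:nasty:D:vq:old} with $k=|\alpha|=|\beta|=0$ gives $\|D\hat u_{q'}\|\leq\delta_{q'}^{\sfrac12}\lambda_{q'}\Gamma_{q'}^{i+3}$, and for general $\alpha,\beta$ it controls the higher spatial and material derivatives of $\hat u_{q'}$ along the trajectory.

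Next I would run the transport ODEs. Differentiating \eqref{e:Phi} in space shows that along Lagrangian paths $D\Phi_{(i,k)}$ and $(D\Phi_{(i,k)})^{-1}$ solve $D_{t,q'}(D\Phi)=-(D\hat u_{q'})D\Phi$ and $D_{t,q'}(D\Phi)^{-1}=(D\Phi)^{-1}(D\hat u_{q'})$ with data ${\rm Id}$ at time $k\mu_{q'}\Gamma_{q'}^{-i-2}$. A Gr\"onwall estimate over the interval of length $\leq\mu_{q'}\Gamma_{q'}^{-i}$, using the velocity bound just recorded together with $\tau_{q'}^{-1}=\delta_{q'}^{\sfrac12}\lambda_{q'}\Gamma_{q'}^{35}$ from \eqref{eq:defn:tau}, yields
\begin{align}\notag
\norm{D\Phi_{(i,k)}-{\rm Id}}_{L^\infty(\Omega_{i,q',k})}+\norm{(D\Phi_{(i,k)})^{-1}-{\rm Id}}_{L^\infty(\Omega_{i,q',k})}\lesssim\delta_{q'}^{\sfrac12}\lambda_{q'}\Gamma_{q'}^{i+3}\cdot\mu_{q'}\Gamma_{q'}^{-i}=\tau_{q'}^{-1}\mu_{q'}\Gamma_{q'}^{-32}\,,
\end{align}
which is $\leq\tau_{q'}^{-1}\mu_{q'}\Gamma_{q'}^{-5}$ once $a$ is taken large enough to absorb the implicit constant, proving \eqref{eq:Lagrangian:Jacobian:1}. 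For \eqref{eq:Lagrangian:Jacobian:2} and the mixed bounds \eqref{eq:Lagrangian:Jacobian:5}--\eqref{eq:Lagrangian:Jacobian:6} I would induct on the total number of derivatives: applying $D^{N-1}$ to the ODE for $D\Phi$, the leading term $-(D\hat u_{q'})D^N\Phi$ is handled by Gr\"onwall exactly as above, harvesting one factor $\mu_{q'}\Gamma_{q'}^{-i}\cdot\tau_{q'}^{-1}\Gamma_{q'}^{-35}$ and one factor $(\lambda_{q'}\Gamma_{q'})^{N-1}$ from the $N-1$ extra spatial derivatives landing on $\hat u_{q'}$ via \eqref{eq:nasty:D:vq:old}, which produces the prefactor $\mu_{q'}(\lambda_{q'}\Gamma_{q'})^{N-1}/(\tau_{q'}\Gamma_{q'}^{5})$; the remaining terms carry $\geq2$ spatial derivatives on $\hat u_{q'}$ and $\leq N-1$ on $\Phi$ and are controlled by the inductive hypothesis. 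Material derivatives are cheap because $D_{t,q'}\Phi_{(i,k)}\equiv0$, so commuting $D_{t,q'}$ past spatial derivatives only produces the commutator $[D_{t,q'},D]=-(D\hat u_{q'})\cdot\nabla$; iterating and invoking the material-derivative bounds in \eqref{eq:nasty:D:vq:old} (whose cost $\MM{|\beta|,\Nindt,\Gamma_{q'}^{i-5}\mu_{q'}^{-1},\Gamma_{q'-1}\Tau_{q'-1}^{-1}}$ is more than enough for the claimed $\MM{M,\NindSmall,\Gamma_{q'}^{i}\mu_{q'}^{-1},\Tau_{q'-1}^{-1}\Gamma_{q'-1}}$) closes the estimate, while the bounds on $D^N\Phi^{-1}$ follow from the fact that $\Phi^{-1}$ solves the same transport equation backward, or from the Neumann series $(D\Phi)^{-1}=\sum_{j\geq0}({\rm Id}-D\Phi)^j$ together with the chain rule.

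I expect the only real obstacle to be the bookkeeping in the inductive step: one must organize the Leibniz and commutator expansions so that exactly one factor of $\|D\hat u_{q'}\|$ times the short time interval is extracted at first order in $\hat u_{q'}$ — this single gain $\tau_{q'}^{-1}\mu_{q'}\Gamma_{q'}^{-32}$ is precisely the sharpening over \cite{GKN23} that the helicity estimates for the deformed shear building blocks require — while simultaneously checking that the $N$ spatial derivatives contribute no more than $(\lambda_{q'}\Gamma_{q'})^{N}$ and the $M$ material derivatives match the $\mathcal{M}$-notation in the statement. No new idea beyond the estimates already recorded in subsections~\ref{sec:cutoff:inductive}--\ref{sec:inductive:secondary:velocity} is required.
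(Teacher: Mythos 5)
Your proposal follows essentially the same route as the paper: identify that trajectories starting from $\Omega_{i,q',k}$ remain in $\supp\psi_{i,q'}$ via Lemma~\ref{lem:dornfelder} and Corollary~\ref{cor:dornfelder}, apply Gr\"onwall to the transport ODEs for $D\Phi_{(i,k)}$ and $(D\Phi_{(i,k)})^{-1}$ over the shortened interval of length $\lesssim\mu_{q'}\Gamma_{q'}^{-i}$ from \eqref{eq:chi:tilde:support}, feed in the velocity-gradient bounds \eqref{eq:nasty:D:vq:old} and the relation $\delta_{q'}^{\sfrac12}\lambda_{q'}=\tau_{q'}^{-1}\Gamma_{q'}^{-35}$ from \eqref{eq:defn:tau}, and then induct on derivative count — which is exactly the recipe the paper attributes to \cite[Corollary~6.27]{BMNV21}, modulo the $\tau_{q'}\mapsto\mu_{q'}$ substitution that produces the sharpened factor $\tau_{q'}^{-1}\mu_{q'}$. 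Your arithmetic ($\Gamma_{q'}^{-32}\leq\Gamma_{q'}^{-5}$) is correct and matches what the paper leaves implicit.
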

\begin{proof}
The proofs of~\eqref{eq:Lagrangian:Jacobian:1}--\eqref{eq:Lagrangian:Jacobian:2} are nearly identical to those in~\cite[Corollary~6.27]{BMNV21}. The estimates come from multiplying the size of $\nabla \hat u_{q'}$ given in~\eqref{eq:nasty:D:vq:old} on the support of $\Omega_{i,q',k}$ by the length of the timescale from~\eqref{eq:chi:tilde:support}, and paying the derivative cost of $\nabla \hat u_{q'}$ for each additional spatial derivative past the first.  Using Lemma~\ref{lem:dornfelder},~Corollary~\ref{cor:dornfelder},~\eqref{eq:nasty:D:vq:old} at level $q'$, and~\eqref{eq:defn:tau}, we obtain~\eqref{eq:Lagrangian:Jacobian:1}--\eqref{eq:Lagrangian:Jacobian:2}. Higher order derivatives are similar, and we omit further details.
\end{proof}

\subsubsection{Stress cutoffs}\label{sec:cutoff:stress}

In this subsection, we introduce the cutoff functions for the stress error $R_\ell$ obtained from mollifying $R_q^q$ in Lemma~\ref{lem:upgrading}. The definition is essentially a level-set chopping of $R_\ell$. Since $R_\ell$ satisfies estimates which are totally analogous to $\pi_\ell$ in \cite[Section~5.3]{GKN23} or $R_\ell$ in \cite[Section~5.3]{NV22}, we omit the details in the proofs of the following estimates.

\begin{lemma}[\bf Stress cutoffs]\label{lem:D:Dt:Rn:sharp}
There exist cutoff functions\index{$\omega_{j,q}$} $\{\omega_{j,q} \}_{j\geq 0}$ satisfying the following.
\begin{enumerate}[(i)]
    \item We have that
    \begin{align}
\sum_{j\geq 0} \omega_{j,q}^2(t,x) \equiv 1 \, , \qquad \omega_{j,q} \omega_{j',q} \equiv 0 \quad \textnormal{if} \quad |j-j'| >1  \, .
\label{eq:omega:cut:partition:unity}
\end{align}
\item We have that for $N+M\leq \Nfin$ and any $j\geq 0$,
\begin{subequations}
\begin{align}
{\bf 1}_{\supp (\omega_{j,q}\psi_{i,q})} | D^k D_{t,q}^m R_\ell (x,t) | 
&\leq \Ga_q^{2j+6}\de_{q+\bn}
(\Ga_q\La_q)^k \MM{m, \NindRt, \Gamma_{q}^{i} \mu_{q}^{-1} , \Tau_{q}^{-1} } \, .
\label{pt.est.pi.lem}\\
\sfrac 14 \delta_{q+\bn} \Ga_q^{2j} &\leq {\bf 1}_{\supp (\omega_{j,q})} |R_\ell| \label{pt.est.pi.lem.lower}
\end{align}
\end{subequations}
\item  There exists $\jmax(q) \leq \inf \{ j \, : \, \frac 14 \Ga_q^{2j} \delta_{q+\bn} \geq \Ga_q^{3+\badshaq} \}$, bounded independently of $q$, such that $\omega_{j,q} \equiv 0$ for all $\qquad j > \jmax = \jmax(q)$. Moreover, we have the bound $\Gamma_{q}^{2j_{\rm max}} \leq \de_{q+\bn}^{-1} \Gamma_q^{\badshaq+6}$. 
\item For $q\geq 0$, $0 \leq i \leq \imax$, $0 \leq j \leq \jmax$, and $N + M \leq \Nfin$, we have
\begin{align}
\frac{{\bf 1}_{\supp \psi_{i,q}} |D^N D_{t,q}^M \omega_{j,q}|}{\omega_{j,q}^{1-(N+M)/\Nfin}} 
\les (\Gamma_{q}^5 \Lambda_q)^N \MM{M, \Nindt,\Gamma_{q}^{i+4} \mu_{q}^{-1}, {\Tau}_{q}^{-1}} \, .
\label{eq:D:Dt:omega:sharp}
\end{align}
\item For any $r \geq 1$ and $0\leq j \leq \jmax$, we have that 
\begin{align}
\norm{\omega_{j,q}}_{L^r}  \lesssim  \Gamma_{q}^{\frac{2(1-j)}{r}} \, .
\label{eq:omega:support}
\end{align}
\end{enumerate}
\end{lemma}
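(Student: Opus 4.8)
The plan is to follow the level-set chopping construction of \cite[Section~5.3]{GKN23} (equivalently \cite[Section~5.3]{NV22}, \cite{BMNV21}) essentially verbatim, with the mollified stress $R_\ell$ playing the role of $\pi_\ell$ there; all of the structural input on $R_\ell$ that is needed is supplied by the bounds \eqref{eq:pressure:upgraded} of Lemma~\ref{lem:upgrading}. Concretely, I would first fix a partition of unity $\{\phi_j\}_{j\ge 0}$ of $[0,\infty)$ adapted to the scales $\Gamma_q^{4j}$: smooth functions with $\sum_{j\ge 0}\phi_j^2\equiv 1$, with $\phi_0$ supported (and equal to $1$) near the origin, $\phi_j$ for $j\ge 1$ supported in a thin annulus $\{\Gamma_q^{4j+1}\le y\le\Gamma_q^{4j+7}\}$ (so that $\phi_j\phi_{j'}\equiv 0$ once $|j-j'|\ge 2$), and satisfying the self-improving Gevrey-type bound $|\partial_y^n\phi_j|\lesssim\phi_j^{1-n/\Nfin}\,y^{-n}$, which is the standard device (as in \cite{BDLSV17}) that produces cutoffs with self-improving derivative estimates. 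I would then set $\omega_{j,q}:=\phi_j(\Upsilon_q)$, where $\Upsilon_q$ is a globally smooth nonnegative surrogate for $\delta_{q+\bn}^{-2}|R_\ell|^2$ obtained by summing sufficiently many normalized derivatives of $R_\ell$, $\Upsilon_q\approx\delta_{q+\bn}^{-2}\sum_{n+m\le N_{**}}\big((\Gamma_q\Lambda_q)^{-n}\MM{m,\NindRt,\mu_q^{-1},\Tau_q^{-1}}^{-1}|D^nD_{t,q}^m R_\ell|\big)^2$ with $N_{**}$ chosen comfortably below $\Nfin$; using $|R_\ell|^2$ rather than $|R_\ell|$ guarantees smoothness through the zero set of $R_\ell$, and summing over derivatives guarantees that on $\supp\omega_{j,q}$ \emph{all} low-order derivatives of $R_\ell$, not merely $R_\ell$ itself, are of the expected size $\Gamma_q^{2j}\delta_{q+\bn}$ times the appropriate weight.

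With this definition the elementary items are immediate. Item (i), i.e.\ \eqref{eq:omega:cut:partition:unity}, is inherited from $\sum_j\phi_j^2\equiv 1$ and the overlap structure of $\{\phi_j\}$. The lower bound \eqref{pt.est.pi.lem.lower} is the statement that $\phi_j$ is supported in $\{y\gtrsim\Gamma_q^{4j}\}$, which forces $|R_\ell|\ge\tfrac14\delta_{q+\bn}\Gamma_q^{2j}$ on $\supp\omega_{j,q}$ once constants are fixed. The $\jmax$ bound (iii) follows from \eqref{eq:pressure:inductive:dtq:uniform:upgraded} with $N=M=0$, which gives $|R_\ell|\lesssim\Gamma_q^{2+\badshaq}$ globally; hence $\omega_{j,q}\equiv 0$ as soon as $\tfrac14\Gamma_q^{2j}\delta_{q+\bn}$ exceeds this, with room to spare relative to the stated threshold $\Gamma_q^{3+\badshaq}$. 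For the $L^r$ bound \eqref{eq:omega:support}: since at each point some $\psi_{i,q}^2\ge\tfrac12$ by \eqref{eq:inductive:partition}, Chebyshev applied to the $L^1$ bound \eqref{eq:pressure:inductive:dtq:upgraded} with $N=M=0$, namely $\|\psi_{i,q}R_\ell\|_1\lesssim\Gamma_q^2\delta_{q+\bn}$, yields $|\{x:\omega_{j,q}(x)\ne 0,\ \psi_{i,q}(x)\ge\tfrac{1}{\sqrt{2}}\}|\lesssim\Gamma_q^{2-2j}$; summing over the $q$-independent number of relevant indices $0\le i\le\imax$ (and using $|\mathbb T^3|\lesssim 1$ when $j=0$) gives $\|\omega_{j,q}\|_{L^r}^r\le|\supp\omega_{j,q}|\lesssim\Gamma_q^{2(1-j)}$.

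The substance of the proof, and the step I expect to be the only genuine obstacle, is the pair of derivative estimates: the upper bound \eqref{pt.est.pi.lem} in item (ii) and the self-improving estimate \eqref{eq:D:Dt:omega:sharp} in item (iv). The mechanism, exactly as in \cite[Section~5.3]{GKN23}, is an interpolation argument: on $\supp(\omega_{j,q}\psi_{i,q})$ the relation $\Upsilon_q\approx\Gamma_q^{4j}$ pins down $|D^nD_{t,q}^mR_\ell|\lesssim\Gamma_q^{2j}\delta_{q+\bn}(\Gamma_q\Lambda_q)^n\MM{m,\NindRt,\Gamma_q^i\mu_q^{-1},\Tau_q^{-1}}$ for all $n+m\le N_{**}$, the $\Gamma_q^i$ gain over the $i=0$ weight used to build $\Upsilon_q$ being furnished on $\supp\psi_{i,q}$ by \eqref{eq:pressure:inductive:dtq:uniform:upgraded}; then for the remaining range $N_{**}<n+m\le\Nfin$ one interpolates these "small" bounds against the uniform $\Gamma_q^{2+\badshaq}$ bounds of \eqref{eq:pressure:inductive:dtq:uniform:upgraded}, and choosing $N_{**}$ large enough relative to $\Nfin$ and the ($q$-independent) constant $\badshaq$ makes the interpolated loss absorbable into the $\Gamma_q^6$ in \eqref{pt.est.pi.lem}. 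The delicate point — the obstacle — is precisely to set up $\Upsilon_q$ and choose $N_{**}$ so that this interpolation closes with the stated powers of $\Gamma_q$; it is for this reason that the definition involves a derivative-augmented surrogate rather than $|R_\ell|$ itself.

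Granting \eqref{pt.est.pi.lem}, estimate \eqref{eq:D:Dt:omega:sharp} follows by the Fa\`a di Bruno formula applied to $\omega_{j,q}=\phi_j(\Upsilon_q)$: each factor $\partial_y^\ell\phi_j(\Upsilon_q)$ contributes $\phi_j(\Upsilon_q)^{1-\ell/\Nfin}\Upsilon_q^{-\ell}\approx\omega_{j,q}^{1-\ell/\Nfin}\Gamma_q^{-4j\ell}$, while each differentiated copy of $\Upsilon_q$ is controlled, via \eqref{pt.est.pi.lem} and the product rule, by $\Gamma_q^{4j}$ times the expected spatial and material weights $(\Gamma_q\Lambda_q)^{(\cdot)}\MM{(\cdot),\Nindt,\Gamma_q^{i+4}\mu_q^{-1},\Tau_q^{-1}}$; multiplying out, the powers of $\Gamma_q^{4j}$ cancel, the powers of $\omega_{j,q}$ combine to the factor $\omega_{j,q}^{1-(N+M)/\Nfin}$ in the denominator, and the remaining $\Gamma_q$ losses are absorbed into the $\Gamma_q^5$ factors of \eqref{eq:D:Dt:omega:sharp}. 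Once the scheme is set up as in \cite{NV22, BMNV21} this verification is routine bookkeeping, which is why the details are omitted.
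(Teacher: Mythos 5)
Your proposal follows exactly the approach the paper itself points to: the paper explicitly states that, since $R_\ell$ obeys bounds totally analogous to $\pi_\ell$ in \cite[Section~5.3]{GKN23} and $R_\ell$ in \cite[Section~5.3]{NV22}, the proofs are omitted and can be read off from those references. Your reconstruction of that scheme -- a derivative-augmented surrogate $\Upsilon_q$, level-set chopping by Gevrey-type cutoffs $\phi_j$, Fa\`a di Bruno for \eqref{eq:D:Dt:omega:sharp}, Chebyshev on the inductive $L^1$ bound for \eqref{eq:omega:support}, the global $L^\infty$ bound \eqref{eq:pressure:inductive:dtq:uniform:upgraded} for $\jmax$, and interpolation of low-order sharp bounds against the uniform bound to cover the full range $N+M\le\Nfin$ -- is the correct blueprint and matches the paper's intent.

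One step, though, does not close as you have written it. You assert that the constraint $\Upsilon_q\gtrsim\Gamma_q^{4j}$ on $\supp\omega_{j,q}$ ``forces $|R_\ell|\ge\tfrac14\delta_{q+\bn}\Gamma_q^{2j}$ once constants are fixed,'' i.e.\ that \eqref{pt.est.pi.lem.lower} falls out of the support of $\phi_j$. With the derivative-augmented surrogate $\Upsilon_q\approx\delta_{q+\bn}^{-2}\sum_{n+m\le N_{**}}\big((\Gamma_q\Lambda_q)^{-n}\MM{m,\cdot,\cdot,\cdot}^{-1}|D^nD_{t,q}^mR_\ell|\big)^2$, the lower bound $\Upsilon_q\ge\Gamma_q^{4j+1}$ only says the \emph{sum} of normalized derivatives is large. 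It does not single out the zeroth-order term: near a simple zero of $R_\ell$ the term $\delta_{q+\bn}^{-2}|R_\ell|^2$ can be arbitrarily small while $\delta_{q+\bn}^{-2}(\Gamma_q\Lambda_q)^{-2}|DR_\ell|^2$ carries $\Upsilon_q$ up to the level $\Gamma_q^{4j}$, so the pointwise lower bound on $|R_\ell|$ fails there. The same gap then propagates into your Chebyshev argument for \eqref{eq:omega:support}, which is predicated on \eqref{pt.est.pi.lem.lower}. This is precisely the tension in the level-set construction: the derivative-augmented $\Upsilon_q$ is needed for the sharp Fa\`a di Bruno bounds, but it decouples the support of $\omega_{j,q}$ from the pointwise size of $|R_\ell|$. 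In the cited references this is resolved with more structure than a single-sentence ``once constants are fixed,'' and a complete write-up of this lemma would need to supply that argument rather than assert it.

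In short: the approach is the paper's approach, but the passage from $\Upsilon_q\gtrsim\Gamma_q^{4j}$ to \eqref{pt.est.pi.lem.lower} (and consequently to \eqref{eq:omega:support}) is a genuine gap in the sketch as written.
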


\subsubsection{Anisotropic checkerboard cutoffs}
\label{sec:cutoff:checkerboard:definitions}

We recall from~\cite[subsection~5.4]{GKN23} the anisotropic cutoffs.  Aside from changing $\tau$ to $\mu$, these cutoffs are identical to those in~\cite{GKN23}. 

\begin{lemma}[\bf Mildly anistropic checkerboard cutoffs]\label{lem:checkerboard:estimates}
Given $q$, $\xi\in\Xi_0, \cdots, \Xi_{\bn-1}$ or $\xi\in \{h_0, \cdots, h_{\bn-1}\}$, $i\leq \imax$, and $k\in\mathbb{Z}$, there exist cutoff functions $\zeta_{q,i,k,{\xi},\vec{l}}\,(x,t) = \mathcal{X}_{q,\xi,\vec{l}}\left(\Phi_{i,k,q}(x,t)\right)$ which satisfy the following properties.
\begin{enumerate}[(i)]
\item\label{item:checkeeee} The support of $\mathcal{X}_{q,\xi,\vec{l}}$ is contained in a rectangular prism of dimensions no larger than $\sfrac 34 \lambda_q^{-1}\Gamma_q^{-8}$ in the direction of $\xi_z$, and $\sfrac 34 \Gamma_{q}^5(\lambda_{q+1})^{-1}$ in the directions perpendicular to $\xi_z$.
\item\label{item:check:1} The material derivative $\Dtq (\zeta_{q,i,k,{\xi},\vec{l}})$ vanishes.  
\item\label{item:check:2} We have the summability property for all $(x,t)\in \T^3\times \R$ 
\begin{subequations}\label{eq:checkerboard:partition}
    \begin{align}
    \sum_{\vec{l}} \bigl(\zeta_{q,i,k,{\xi},\vec{l}}\,(x,t)\bigr)^{2} &\equiv 1 \, . \label{eq:summy:summ:1} 
    \end{align}
    \end{subequations}
\item\label{item:check:3}  Let $A=(\nabla\Phi_{(i,k)})^{-1}$. Then for all $N_1+N_2+M\leq \sfrac{3\Nfin}{2}+1$,
    \begin{align}\label{eq:checkerboard:derivatives}
        \bigl\| D^{N_1} \Dtq^M  ({\xi^\ell A_\ell^j \partial_j} )^{N_2} \zeta_{q,i,k,\xi,\vec{l}} \bigr\|_{L^\infty\left(\supp \psi_{i,q}\tilde\chi_{i,k,q} \right)} &\lesssim \left(\Gamma_{q}^{-5} \lambda_{q+1} \right)^{N_1} \left(\Gamma_q^8\lambda_{q}\right)^{N_2} \notag\\
        &\quad \times \MM{M,\Nindt,\Gamma_{q}^{i}\mu_q^{-1},\Tau_q^{-1}\Gamma_{q}^{-1}} \, .
    \end{align}
\end{enumerate}
\end{lemma}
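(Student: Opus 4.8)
The plan is to follow~\cite[subsection~5.4]{GKN23} nearly verbatim, the only change being that the operative timescale is $\mu_q$ rather than $\tau_q$. First I would construct the Eulerian cutoffs $\mathcal{X}_{q,\xi,\vec l}$ on $\T^3$: fix the rational orthonormal frame $\{\xi_z,\xi',\xi''\}$ adapted to $\xi$ that appears in Proposition~\ref{prop:pipe:shifted} (so that the anisotropic grid of prisms genuinely tiles $\T^3$ with no periodicity defect), take a one–dimensional partition of unity $\sum_{n\in\Z} g(\,\cdot\,-n)^2\equiv 1$ with $g\in C^\infty$ supported in an interval of length $\tfrac34$, rescale it to scale $\approx\lambda_q^{-1}\Gamma_q^{-8}$ along $\xi_z$ and to scale $\approx\Gamma_q^5\lambda_{q+1}^{-1}$ along $\xi'$ and $\xi''$, and let $\mathcal{X}_{q,\xi,\vec l}$ be the resulting tensor–product bump, with $\vec l$ running over the finite index set enumerating the prisms. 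Then $\zeta_{q,i,k,\xi,\vec l}:=\mathcal{X}_{q,\xi,\vec l}\circ\Phi_{i,k,q}$ as in the statement. Property~\ref{item:checkeeee} is immediate from the supports of $g$ (the length $\tfrac34$ is exactly what yields adjacent overlap but disjointness of next–adjacent bumps), and the summability~\eqref{eq:summy:summ:1} in property~\ref{item:check:2} follows because $\sum_{\vec l}\mathcal{X}_{q,\xi,\vec l}^2\equiv1$ while, for each fixed $t$, $x\mapsto\Phi_{i,k,q}(x,t)$ is a volume–preserving diffeomorphism of $\T^3$.

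Next I would record the two structural facts driving everything else. Since $\Phi_{i,k,q}$ solves $(\partial_t+\hat u_q\cdot\nabla)\Phi_{i,k,q}=0$ by Definition~\ref{def:transport:maps}, the chain rule gives $\Dtq\zeta_{q,i,k,\xi,\vec l}=(\nabla\mathcal{X}_{q,\xi,\vec l})\circ\Phi_{i,k,q}\cdot\Dtq\Phi_{i,k,q}=0$, which is property~\ref{item:check:1}. Moreover, with $A=(\nabla\Phi_{(i,k)})^{-1}$ we have $A^j_\ell\,\partial_j\Phi^m=\delta^m_\ell$, so that for any smooth $g$,
\[
\bigl(\xi^\ell A^j_\ell\partial_j\bigr)\bigl(g\circ\Phi_{(i,k)}\bigr)=(\xi\cdot\nabla g)\circ\Phi_{(i,k)}\,;
\]
iterating this $N_2$ times shows $(\xi^\ell A^j_\ell\partial_j)^{N_2}\zeta_{q,i,k,\xi,\vec l}=\bigl((\xi\cdot\nabla)^{N_2}\mathcal{X}_{q,\xi,\vec l}\bigr)\circ\Phi_{(i,k)}$, again a function composed with the flow — no factors of $A$ survive. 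Applying $\Dtq^M$ to this annihilates it whenever $M\geq 1$; hence the left–hand side of~\eqref{eq:checkerboard:derivatives} vanishes identically for $M\geq 1$, and, since $\MM{0,\Nindt,\Gamma_q^i\mu_q^{-1},\Tau_q^{-1}\Gamma_q^{-1}}=1$, this material–derivative factor is present in~\eqref{eq:checkerboard:derivatives} only to match the format of the companion cutoff lemmas.

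It remains to prove~\eqref{eq:checkerboard:derivatives} for $M=0$, i.e.\ to bound $D^{N_1}\bigl(g_{N_2}\circ\Phi_{(i,k)}\bigr)$ on $\Omega_{i,q,k}=\supp\psi_{i,q}\tilde\chi_{i,k,q}$, with $g_{N_2}:=(\xi\cdot\nabla)^{N_2}\mathcal{X}_{q,\xi,\vec l}$. By the anisotropy of $\mathcal{X}_{q,\xi,\vec l}$, $\|D^a g_{N_2}\|_\infty\lesssim(\lambda_q\Gamma_q^8)^{N_2}(\Gamma_q^{-5}\lambda_{q+1})^a$: the $N_2$ derivatives along $\xi=\xi_z$ cost the reciprocal of the long scale $\lambda_q^{-1}\Gamma_q^{-8}$, the remaining $a$ plain derivatives cost at worst the reciprocal of the short scale $\Gamma_q^5\lambda_{q+1}^{-1}$. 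I would then expand $D^{N_1}(g_{N_2}\circ\Phi_{(i,k)})$ by Fa\`a di Bruno into terms $(D^pg_{N_2})(\Phi_{(i,k)})\,D^{a_1}\Phi_{(i,k)}\cdots D^{a_p}\Phi_{(i,k)}$ with $\sum_l a_l=N_1$. The top term ($p=N_1$, all $a_l=1$) is bounded by $\|D^{N_1}g_{N_2}\|_\infty\,\|D\Phi_{(i,k)}\|^{N_1}_{L^\infty(\Omega_{i,q,k})}\lesssim(\lambda_q\Gamma_q^8)^{N_2}(\Gamma_q^{-5}\lambda_{q+1})^{N_1}$, using $\|D\Phi_{(i,k)}-\Id\|_{L^\infty(\Omega_{i,q,k})}\leq\tau_q^{-1}\Gamma_q^{-5}\mu_q\ll1$ from~\eqref{eq:Lagrangian:Jacobian:1}; every other term carries at least one factor $\|D^a\Phi_{(i,k)}\|\lesssim\tau_q^{-1}\mu_q\Gamma_q^{-5}(\lambda_q\Gamma_q)^{a-1}$ with $a\geq2$ from~\eqref{eq:Lagrangian:Jacobian:2}, and a short bookkeeping of the exponents (the derivatives lost on $g_{N_2}$ against the orders $a_l$) shows each such term is smaller than the top term by a positive power of $\tau_q^{-1}\mu_q$ and of $(\lambda_q\Gamma_q)/(\Gamma_q^{-5}\lambda_{q+1})$, both $\ll1$ — the first by~\eqref{eq:defn:tau}, the second because $\Gamma_q$ is a small power of $\lambda_q$ while $\lambda_{q+1}/\lambda_q\to\infty$. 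Only $N_1+N_2\leq\sfrac{3\Nfin}{2}+1$ enters, so all deformation bounds invoked lie within the range of Corollary~\ref{cor:deformation}.

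I do not expect a real obstacle: the lemma is explicitly the one in~\cite{GKN23} with $\tau_q\mapsto\mu_q$, and the only two points needing attention — both handled as there — are (a) arranging that the anisotropic grid of prisms tiles $\T^3$, which is why one works in the rational frame of Proposition~\ref{prop:pipe:shifted}, and (b) checking that the substitution $\tau_q\mapsto\mu_q$ breaks nothing, which is clear since $\mu_q\ll\tau_q$ only improves~\eqref{eq:Lagrangian:Jacobian:1}--\eqref{eq:Lagrangian:Jacobian:2}. The most error–prone step in practice is the Fa\`a di Bruno exponent bookkeeping in the previous paragraph, but its outcome is forced by the two smallness facts $\tau_q^{-1}\mu_q\ll1$ and $\lambda_q\Gamma_q\ll\Gamma_q^{-5}\lambda_{q+1}$.
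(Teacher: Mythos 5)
Your reconstruction is correct and coincides with what the paper intends: the paper itself gives no proof, pointing instead to \cite[Subsection~5.4]{GKN23} and noting that the only change is the substitution $\tau_q\mapsto\mu_q$, and your argument is exactly that proof written out. The key observations are all right --- the identity $(\xi^\ell A^j_\ell\partial_j)(g\circ\Phi_{(i,k)})=(\xi\cdot\nabla g)\circ\Phi_{(i,k)}$ (so that no surviving $A$-factors appear and the material derivative annihilates everything for $M\geq 1$), the reduction of the $M=0$ case to a Fa\`a di Bruno expansion of $D^{N_1}\bigl(g_{N_2}\circ\Phi_{(i,k)}\bigr)$, and the dominance of the top ($p=N_1$, all $a_l=1$) term, which uses only $\tau_q^{-1}\mu_q\Gamma_q^{-5}\ll 1$ and $\lambda_q\Gamma_q^6\ll\lambda_{q+1}$, both of which hold with more room than in \cite{GKN23} since $\mu_q\ll\tau_q$.
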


\begin{lemma}[\bf Strongly anisotropic checkerboard cutoff function]\label{lem:finer:checkerboard:estimates}
The cutoff functions $\etab_\xi^I\circ\Phiik$ defined using $\etab$ from Definition~\ref{def:etab} satisfy the following properties:
\begin{enumerate}[(1)]
\item\label{item:check:check:1} The material derivative $\Dtq (\etab_\xi^I\circ \Phiik)$ vanishes.
\item\label{item:check:check:2} For all $q,i,k,\xi$, $t\in\mathbb{R}$, and all $x\in\mathbb{T}^3$, $\sum_{I} (\etab_\xi^I \circ \Phiik)^2(x,t) = 1$.
\item\label{item:check:check:3} 
    Let $A=(\nabla\Phi_{(i,k)})^{-1}$.  Then for all $N_1+N_2+M\leq \sfrac{3\Nfin}{2}+1$, we have that
\begin{align}\notag
    \bigl\| D^{N_1} \Dtq^M  ({\xi^\ell A_\ell^j \partial_j} )^{N_2} \etab_\xi^I \circ \Phiik \bigr\|_{L^\infty\left(\supp \psi_{i,q}\tilde\chi_{i,k,q} \right)} &\lesssim \lambda_{q+ \sfrac \bn 2 }^{N_1}  \MM{M,\Nindt,\Gamma_{q}^{i}\mu_q^{-1},\Tau_q^{-1}\Gamma_{q}^{-1}} \, .
    \end{align}
    \item For fixed $q,i,k,\xi$, we have that $\# \left\{(\vecl,I) : \supp \left(\zeta_{q,i,k,\xi,\vecl} \, \etab^I_{\xi} \circ \Phi_{(i,k)}\right) \neq \emptyset \right\} \lec \Ga_q^8 \la_{q} \la_{q+\half}^2$.
\end{enumerate}
\end{lemma}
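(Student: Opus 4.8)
The plan is to dispatch the four items in turn, with items~\ref{item:check:check:1}--\ref{item:check:check:2} being essentially formal and item~\ref{item:check:check:3} and the final cardinality bound carrying the content; in all cases the argument is that of~\cite[subsection~5.4]{GKN23} with $\tau_q$ replaced by $\mu_q$, so only the structure needs recording. For item~\ref{item:check:check:1}, I would use that $\Phi_{(i,k)}$ solves $D_{t,q}\Phi_{(i,k)}=0$ by Definition~\ref{def:transport:maps}, so the chain rule gives $D_{t,q}(\etab_\xi^I\circ\Phi_{(i,k)})=(\nabla\etab_\xi^I)(\Phi_{(i,k)})\cdot D_{t,q}\Phi_{(i,k)}=0$. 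For item~\ref{item:check:check:2}, each time slice $\Phi_{(i,k)}(\cdot,t)$ is a volume-preserving diffeomorphism of $\T^3$, hence a bijection, and so $\sum_I(\etab_\xi^I\circ\Phi_{(i,k)})^2=\bigl(\sum_I(\etab_\xi^I)^2\bigr)\circ\Phi_{(i,k)}\equiv1$ by~\eqref{eq:sa:summability}.

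The heart of item~\ref{item:check:check:3} is the observation that, with $A=(\nabla\Phi_{(i,k)})^{-1}$, the chain rule together with $A_\ell^j\partial_j\Phi_{(i,k)}^m=\delta_\ell^m$ gives $\xi^\ell A_\ell^j\partial_j(f\circ\Phi_{(i,k)})=\bigl((\xi\cdot\nabla)f\bigr)\circ\Phi_{(i,k)}$ for any smooth $f$; that is, $\xi^\ell A_\ell^j\partial_j$ is the pullback by $\Phi_{(i,k)}$ of the directional derivative $\xi\cdot\nabla$. Since $(\xi\cdot\nabla)\etab_\xi^I\equiv0$ by Definition~\ref{def:etab}, iterating this identity gives $(\xi^\ell A_\ell^j\partial_j)^{N_2}(\etab_\xi^I\circ\Phi_{(i,k)})=\bigl((\xi\cdot\nabla)^{N_2}\etab_\xi^I\bigr)\circ\Phi_{(i,k)}=0$ whenever $N_2\geq1$, while item~\ref{item:check:check:1} gives $D_{t,q}^M(\etab_\xi^I\circ\Phi_{(i,k)})=0$ whenever $M\geq1$. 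Applying the operators from the innermost outward, I am therefore reduced to the case $M=N_2=0$ and to bounding $\|D^{N_1}(\etab_\xi^I\circ\Phi_{(i,k)})\|_{L^\infty(\supp\psi_{i,q}\tilde\chi_{i,k,q})}$; for this I would expand by the Fa\`a di Bruno formula, use $\|\nabla^N\etab_\xi^I\|_\infty\les\lambda_{q+\half}^N$ from Definition~\ref{def:etab} and the deformation bounds~\eqref{eq:Lagrangian:Jacobian:1}--\eqref{eq:Lagrangian:Jacobian:2} of Corollary~\ref{cor:deformation} on $\supp\psi_{i,q}\tilde\chi_{i,k,q}$, and observe that the leading term $(\nabla^{N_1}\etab_\xi^I)(\Phi_{(i,k)})\cdot(\nabla\Phi_{(i,k)})^{\otimes N_1}$ is $O(\lambda_{q+\half}^{N_1})$ because $\|\nabla\Phi_{(i,k)}-\Id\|\les\tau_q^{-1}\mu_q\Gamma_q^{-5}\ll1$, whereas every other term carries a factor $D^a\Phi_{(i,k)}$ with $a\geq2$ and is thus smaller by a power of $\lambda_q\Gamma_q/\lambda_{q+\half}\ll1$. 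This yields $\les\lambda_{q+\half}^{N_1}$, which is the $M=N_2=0$ value of the claimed right-hand side; in all remaining cases the left-hand side vanishes identically.

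For the cardinality bound in~(4), I would first note that $\zeta_{q,i,k,\xi,\vecl}=\mathcal{X}_{q,\xi,\vecl}\circ\Phi_{(i,k)}$ and $\Phi_{(i,k)}(\cdot,t)$ is a bijection, so $\supp\bigl(\zeta_{q,i,k,\xi,\vecl}\,\etab_\xi^I\circ\Phi_{(i,k)}\bigr)=\Phi_{(i,k)}^{-1}\bigl(\supp\mathcal{X}_{q,\xi,\vecl}\cap\supp\etab_\xi^I\bigr)$; hence the flow map is irrelevant and it suffices to count pairs $(\vecl,I)$ with $\supp\mathcal{X}_{q,\xi,\vecl}\cap\supp\etab_\xi^I\neq\emptyset$. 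By item~\ref{item:checkeeee} each $\supp\mathcal{X}_{q,\xi,\vecl}$ lies in a prism of dimensions $\les\lambda_q^{-1}\Gamma_q^{-8}$ along $\xi$ and $\les\Gamma_q^5\lambda_{q+1}^{-1}$ in the two perpendicular directions, and these prisms have bounded overlap; by Definition~\ref{def:etab} each $\supp\etab_\xi^I$ is (a $\sfrac54$-dilate of) a tube of square cross-section of side $\approx\lambda_{q+\half}^{-1}$ running in the direction $\xi$ and closing up after length $O(n_\ast)=O(1)$, and there are $\les\lambda_{q+\half}^2$ such tubes. Because $\bn\gg1$ forces $\lambda_{q+\half}\gg\lambda_{q+1}$, hence $\lambda_{q+\half}^{-1}\ll\Gamma_q^5\lambda_{q+1}^{-1}$, a fixed tube meets only $O(1)$ prisms in the perpendicular directions and at most $O(\lambda_q\Gamma_q^8)$ prisms along $\xi$, i.e. $O(\lambda_q\Gamma_q^8)$ prisms in total; multiplying by the $\les\lambda_{q+\half}^2$ tubes gives the bound $\les\Gamma_q^8\lambda_q\lambda_{q+\half}^2$.

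The only step demanding genuine care is item~\ref{item:check:check:3}: every estimate there rests on the separations of scales $\lambda_q\Gamma_q^{O(1)}\ll\lambda_{q+\half}$ and $\lambda_{q+1}\ll\lambda_{q+\half}$, which is exactly why $\bn$ is taken large in~\eqref{par:osc:pre}, and on the sharpened deformation bounds of Corollary~\ref{cor:deformation} in which $\tau_q$ (not $\mu_q$) controls $\|\nabla\Phi_{(i,k)}-\Id\|$ up to a gain; past this the computation is the bookkeeping already carried out in~\cite{GKN23}, which is why we do not repeat it.
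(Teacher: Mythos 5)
Your proposal is correct and reconstructs the argument that the paper delegates by citation to~\cite[subsection~5.4]{GKN23} (with $\tau_q$ replaced by $\mu_q$ via Corollary~\ref{cor:deformation}). The key observations — that $\xi^\ell A_\ell^j\partial_j$ pulls back to $\xi\cdot\nabla$ and hence annihilates $\etab_\xi^I\circ\Phiik$, that together with item~\ref{item:check:check:1} this forces the left-hand side of item~\ref{item:check:check:3} to vanish whenever $M\geq1$ or $N_2\geq1$, the Fa\`a di Bruno bound $\lesssim\lambda_{q+\sfrac\bn2}^{N_1}$ in the remaining case $M=N_2=0$, and the tube-versus-prism intersection count for item~(4) — all match the cited source.
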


\subsubsection{Cumulative cutoff function}
\label{sec:cutoff:total:definitions}
We introduce the cumulative cutoff functions
\begin{subequations}
\begin{align}
\eta_{i,j,k,\xi,\vec{l} }\,(x,t) &:= \psi_{i,q}(x,t) \omega_{j,q}(x,t) \chi_{i,k,q}(t)\zeta_{q ,i,k,\xi,\vec{l}}\,(x,t)  \, , \qquad \xi \in \cup_{j=0}^{\bn-1}\Xi_j \, , \label{def:cumulative:current}\\
\eta_{i,k,\xi,\vecl}&:= \psi_{i,q}  \chi_{i,k,q}(t)\zeta_{q,i,k,\vec \xi,\vec{l}}\,(x,t) \, , \qquad \xi \in \{\vec h_0, \dots \vec h_{\bn-1}\}  \, . \label{def:cumulative:hel}
\end{align}
\end{subequations}
We have the following $L^p$ estimates, which may be proven directly using \eqref{eq:omega:support} and \eqref{eq:psi:i:q:support:old}.

\begin{lemma}[\bf Cumulative support bounds for cutoff functions]\label{lemma:cumulative:cutoff:Lp}
For $r_1,r_2\in [1,\infty]$ with $\frac{1}{r_1}+\frac{1}{r_2}=1$ and any $0\leq i \leq \imax$, $0\leq j,\leq \jmax$, we have that for each $t$,
\begin{align}
    \sum_{\vecl} \left| \supp_x \left( \eta_{i,k,\vec e_2,\vecl }(t,x) \right) \right|  + \sum_{\vecl} \left| \supp_x \left( \eta_{i,j,k,\xi,\vecl }(t,x) \right) \right| &\lesssim \Gamma_{q}^{\frac{-2i + \CLebesgue}{r_1} + \frac{-2j}{r_2}+3} \, . \label{eq:supp:cumul:varphi}
\end{align}
We furthermore have that
\begin{align}
\sum_{i,j,k,\xi,\vecl,I } \left( \mathbf{1}_{\supp \eta_{i,j,k,\xi,\vecl } \rhob_\pxi \zetab_\xi^I} + \mathbf{1}_{\supp \eta_{i,k,\xi,\vecl } \rhob_\pxi \zetab_\xi^I} \right) \approx \sum_{i,j,k,\xi,\vecl } \left( \mathbf{1}_{\supp \eta_{i,j,k,\xi,\vecl } \rhob_\pxi} + \mathbf{1}_{\supp \eta_{i,k,\xi,\vecl } \rhob_\pxi} \right) \lesssim 1 \, . \label{eq:desert:cowboy:sum}
\end{align}
\end{lemma}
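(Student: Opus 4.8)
The plan is to peel both sums apart one index at a time, at each stage invoking the bounded overlap of the relevant family of cutoffs, and then to convert the remaining single-cell quantities into powers of $\Gamma_q$ via H\"older's inequality together with the support estimates \eqref{eq:psi:i:q:support:old} and \eqref{eq:omega:support}. The overlap facts I would record up front are: the checkerboard cutoffs $\{\zeta_{q,i,k,\xi,\vecl}\}_{\vecl}$ and the strongly anisotropic cutoffs $\{\etab^I_\xi\circ\Phi_{(i,k)}\}_I$ are smooth squared partitions of unity subordinate to a tiling (of $\T^3$, resp.\ of $\xi^\perp$), so $\sum_{\vecl}\mathbf{1}_{\supp_x\zeta_{q,i,k,\xi,\vecl}}\lesssim 1$ and $\sum_I\mathbf{1}_{\supp_x(\etab^I_\xi\circ\Phi_{(i,k)})}\lesssim 1$ (Lemmas~\ref{lem:checkerboard:estimates} and~\ref{lem:finer:checkerboard:estimates}); the velocity cutoffs overlap at most twice in $i$ by \eqref{eq:inductive:partition} and the stress cutoffs at most twice in $j$ by \eqref{eq:omega:cut:partition:unity}; the temporal cutoffs $\chi_{i,k,q}$ overlap at most twice in $k$ by \eqref{e:chi:overlap}; and $\xi$ ranges over a fixed, $q$-independent finite set.

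For the first estimate I would fix $t,i,j,k,\xi$. Since $\supp_x\eta_{i,j,k,\xi,\vecl}(t)\subseteq\supp_x(\psi_{i,q}\omega_{j,q})(t)\cap\supp_x\zeta_{q,i,k,\xi,\vecl}(t)$ and each $\supp_x\zeta_{q,i,k,\xi,\vecl}(t)$ is the preimage, under the volume-preserving flow map $\Phi_{(i,k)}(\cdot,t)$, of a rectangular prism with the dimensions in Lemma~\ref{lem:checkerboard:estimates}\eqref{item:checkeeee}, the $O(1)$ overlap of the cells yields $\sum_{\vecl}|\supp_x\eta_{i,j,k,\xi,\vecl}(t)|\lesssim|\supp_x(\psi_{i,q}\omega_{j,q})(t)|$, the implicit constant also absorbing the $O(\Gamma_q^5\lambda_{q+1}^{-1})$-scale enlargement caused by rounding the support up to whole cells. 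Now H\"older with exponents $(r_1,r_2)$ bounds $|\supp_x(\psi_{i,q}\omega_{j,q})(t)|$ by $|\supp_x\psi_{i,q}(t)|^{1/r_1}|\supp_x\omega_{j,q}(t)|^{1/r_2}$; inserting $|\supp_x\psi_{i,q}(t)|\lesssim\Gamma_q^{-2i+\CLebesgue}$, which is the content of \eqref{eq:psi:i:q:support:old} once one uses that $\psi_{i,q}$ is bounded below by a fixed positive constant on a definite portion of its support, together with $|\supp_x\omega_{j,q}(t)|\lesssim\Gamma_q^{2(1-j)}$ from \eqref{eq:omega:support} with $r=1$, produces $\Gamma_q^{\frac{-2i+\CLebesgue}{r_1}+\frac{-2j}{r_2}+\frac{2}{r_2}}\le\Gamma_q^{\frac{-2i+\CLebesgue}{r_1}+\frac{-2j}{r_2}+3}$, with one full power of $\Gamma_q$ to spare for the cell-rounding. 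For the helical cutoffs $\eta_{i,k,\vec e_2,\vecl}$ there is no $\omega_{j,q}$ factor, so the same reduction bounds their $\vecl$-sum directly by $|\supp_x\psi_{i,q}(t)|\lesssim\Gamma_q^{-2i+\CLebesgue}\le\Gamma_q^{-2i+\CLebesgue+3}$, which is the $r_1=1$ endpoint of the asserted inequality and the only regime in which that term is invoked downstream.

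For the second estimate, fix $(i,j,k,\xi,\vecl)$ (or $(i,k,\xi,\vecl)$ in the helical case): since $\sum_I(\zetab^I_\xi)^2\equiv 1$, summing $\mathbf{1}_{\supp_x(\eta_{\cdots}\rhob_\pxi\zetab^I_\xi)}$ over $I$ dominates $\mathbf{1}_{\supp_x(\eta_{\cdots}\rhob_\pxi)}$ and, by the $O(1)$ overlap of $\{\etab^I_\xi\circ\Phi_{(i,k)}\}_I$, is dominated by a constant multiple of it, which is the asserted $\approx$. For the final bound $\lesssim 1$ one discards $\rhob_\pxi$ (it only shrinks supports) and sums $\mathbf{1}_{\supp_x(\psi_{i,q}\omega_{j,q}\chi_{i,k,q}\zeta_{q,i,k,\xi,\vecl})}$ over the indices one at a time, collecting an $O(1)$ factor from the overlap in $i$, then $j$, then $k$, then $\vecl$, and a finite factor from the $\xi$-sum; the helical terms are handled identically with the $j$-index and $\omega_{j,q}$ absent. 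The only step requiring genuine care is the reduction opening the first estimate: one must verify that the cell-rounding enlargement of $\supp_x(\psi_{i,q}\omega_{j,q})(t)$ costs no more than the spare $\Gamma_q$ in the exponent, and — routinely but not entirely trivially — that the $L^1$ and $L^r$ bounds \eqref{eq:psi:i:q:support:old} and \eqref{eq:omega:support} genuinely control the measures of the supports, which rests on each of these cutoffs being comparable to $1$ on a fixed fraction of its support. Everything else is mechanical bookkeeping of bounded overlaps.
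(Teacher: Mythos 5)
Your proof is correct and is essentially the argument the paper intends; the paper supplies no details for this lemma, noting only that the estimates ``may be proven directly using \eqref{eq:omega:support} and \eqref{eq:psi:i:q:support:old}.'' The one step that deserves slightly more care than your heuristic suggests is the passage from the $L^1$/$L^r$ bounds to measure-of-support bounds: this follows not because an individual cutoff is bounded below on a fixed fraction of its support (a generic smooth bump need not be), but because the squared partition of unity and adjacency structure give, e.g., $\mathbf{1}_{\supp\psi_{i,q}}\leq\psi_{i-1,q}^2+\psi_{i,q}^2+\psi_{i+1,q}^2$, after which Chebyshev applied to \eqref{eq:psi:i:q:support:old} yields $|\supp_x\psi_{i,q}|\lesssim\Gamma_q^{-2(i-1)+\CLebesgue}$, and an analogous Chebyshev argument on $R_\ell$ via \eqref{pt.est.pi.lem.lower} and \eqref{eq:pressure:inductive:dtq:upgraded} gives $|\supp_x\omega_{j,q}|\lesssim\Gamma_q^{2(1-j)}$, with the spare powers of $\Gamma_q$ precisely what the $+3$ in \eqref{eq:supp:cumul:varphi} is there to absorb. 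Everything else --- H\"older on the product of supports, the bounded-overlap facts for $\{\zeta_{q,i,k,\xi,\vecl}\}_\vecl$, $\{\etab_\xi^I\circ\Phiik\}_I$, $\{\psi_{i,q}\}_i$, $\{\omega_{j,q}\}_j$, $\{\chi_{i,k,q}\}_k$, the finite $\xi$-range, discarding $\rhob_\pxi$ in \eqref{eq:desert:cowboy:sum}, and your correct observation that the $j$-independent helical term is only meaningful at the $r_1=1$ endpoint --- is in order.
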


\subsubsection{Cutoff aggregation lemmas}\label{sss:aggregation}

We now recall ``aggregation lemmas'' from~\cite[Section~5.6]{GKN23}, which allow us to sum over the indices in the cumulative cutoffs to produce useful global bounds in Lebesgue spaces.  Slight changes been made since $L^{\sfrac 32}$ bounds on stresses and $L^3$ bounds on velocities have been replaced with $L^1$ bounds on stresses and $L^2$ bounds on velocities, c.f. $\theta \in (0,2]$ rather than $(0,3]$.  For concision we state the lemmas for the cutoffs from~\ref{def:cumulative:current}; for the cutoffs from~\ref{def:cumulative:hel} identical statements hold with $\theta_2=0$ since these cutoffs do not depend on $j$.
\begin{corollary}[\bf Aggregated $L^p$ estimates]\label{rem:summing:partition}
Let $\theta\in (0,2]$, and $\theta_1,\theta_2\geq 0$ with $\theta_1+\theta_2=\theta = \sfrac 2p$. Let $H=H_{i,j,k,\xi,\vecl }$ or $H=H_{i,j,k,\xi,\vecl,I }$ be a function with 
\begin{align}
    \supp H_{i,j,k,\xi,\vecl } \subseteq \supp \eta_{i,j,k,\xi,\vecl } \qquad \textnormal{or} \qquad \supp H_{i,j,k,\xi,\vecl,I } \subseteq \supp  \eta_{i,j,k,\xi,\vecl } \, \zetab_{\xi}^{I }\circ\Phiik \, . \label{eq:agg:assump:1}
\end{align}
Assume that there exists $\const_H,N_*,M_*,N_x,M_t$ and $\lambda,\Lambda,\tau,\Tau$ such that
\begin{subequations}
\begin{align}
  \left\| D^N \Dtq^M H_{i,j,k,\xi,\vecl } \right\|_{L^p} &\lesssim \sup_{t\in\R} \left( \left| \supp_x \left( \eta_{i,j,k,\xi,\vecl } (t,x) \right) \right|^{\sfrac 1p} \right) \notag\\
  &\qquad \qquad \times \const_H \Gamma_q^{\theta_1 i + \theta_2 j}  \MM{N,N_x,\lambda,\Lambda} \MM{M,M_t,\tau^{-1}\Gamma_q^i,\Tau^{-1}} \label{eq:agg:assump:2} \\
  \left\| D^N \Dtq^M H_{i,j,k,\xi,\vecl,I } \right\|_{L^p} &\lesssim \sup_{t\in\R} \left( \left| \supp_x \left( \eta_{i,j,k,\xi,\vecl } \zetab_\xi^{I }\circ\Phiik  (t,x)\right) \right|^{\sfrac 1p} \right) \notag\\
  &\qquad \qquad \times \const_H \Gamma_q^{\theta_1 i + \theta_2 j} \MM{N,N_x,\lambda,\Lambda} \MM{M,M_t,\tau^{-1}\Gamma_q^i,\Tau^{-1}} \,  \label{eq:agg:assump:3}
\end{align}
\end{subequations}
for $N\leq N_*,M\leq M_*$. Then in the same range of $N$ and $M$,
\begin{subequations}
\begin{align*}
 \left\| \psi_{i,q} \sum_{i',j,k,\xi,\vecl } D^N \Dtq^M H_{i',j,k,\xi,\vecl } \right\|_{L^p} &\lesssim \Gamma_q^{{3+\theta_1 \CLebesgue}} \const_H \MM{N,N_x,\lambda,\Lambda} \MM{M,M_t,\tau^{-1}\Gamma_q^{i+1},\Tau^{-1}} \\
 \left\| \psi_{i,q} \sum_{i',j,k,\xi,\vecl,I } D^N \Dtq^M H_{i',j,k,\xi,\vecl,I } \right\|_{L^p} &\lesssim \Gamma_q^{{3+\theta_1 \CLebesgue}} \const_H \MM{N,N_x,\lambda,\Lambda} \MM{M,M_t,\tau^{-1}\Gamma_q^{i+1},\Tau^{-1}} \, .
\end{align*}
\end{subequations}
\end{corollary}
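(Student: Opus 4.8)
The plan is to follow the strategy of \cite[Section~5.6]{GKN23}: combine the uniform finite-overlap property \eqref{eq:desert:cowboy:sum} of the cumulative cutoffs with the $L^1$-type support estimate \eqref{eq:supp:cumul:varphi}, and choose the conjugate Lebesgue exponents $r_1,r_2$ so that the amplitude growth $\Gamma_q^{\theta_1 i+\theta_2 j}$ permitted in the hypotheses is exactly absorbed by the decay of the support measures. Throughout one fixes a time $t$, since by Remark~\ref{rem:notation:space:time} all norms are suprema over $t$ of spatial norms. The first step is to reduce the index set: since $D^N\Dtq^M$ is a differential operator, one has $\supp\left(D^N\Dtq^M H_{i',j,k,\xi,\vecl}\right)\subseteq\supp\eta_{i',j,k,\xi,\vecl}\subseteq\supp\psi_{i',q}$, so the outer factor $\psi_{i,q}$ together with \eqref{eq:inductive:partition} restricts the sum to those $i'$ with $|i'-i|\leq1$; at the fixed $t$, property \eqref{e:chi:overlap} leaves only boundedly many $k$; $\xi$ runs over a fixed finite set; and $j\leq\jmax$ with $\jmax$ bounded independently of $q$. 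After paying a $q$-independent constant, the relevant sum is thus effectively over $(j,\vecl)$, resp.\ $(j,\vecl,I)$, with every surviving $i'$ satisfying $i'\leq i+1$.

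Next, by \eqref{eq:desert:cowboy:sum} at most $\const$-many of the functions $D^N\Dtq^M H_{i',j,k,\xi,\vecl}$ (resp.\ with the additional index $I$) are nonzero at any point $(x,t)$, so H\"older's inequality in the counting measure gives the pointwise bound $\bigl|\sum D^N\Dtq^M H\bigr|^p\lesssim\sum\bigl|D^N\Dtq^M H\bigr|^p$; integrating in $x$ yields
\[
\Bigl\|\psi_{i,q}\sum_{i',j,k,\xi,\vecl}D^N\Dtq^M H_{i',j,k,\xi,\vecl}\Bigr\|_{L^p}^p\lesssim\sum_{i',j,k,\xi,\vecl}\bigl\|D^N\Dtq^M H_{i',j,k,\xi,\vecl}\bigr\|_{L^p}^p
\]
and the analogous inequality in the $I$-indexed case. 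Inserting the hypothesis \eqref{eq:agg:assump:2}, resp.\ \eqref{eq:agg:assump:3}, into each summand, and using the finite-overlap partition property of the strongly anisotropic cutoffs $\{\zetab_\xi^I\circ\Phiik\}_I$ (as in Lemma~\ref{lem:finer:checkerboard:estimates}) to collapse $\sum_I|\supp_x(\eta_{i',j,k,\xi,\vecl}\,\zetab_\xi^I\circ\Phiik)|\lesssim|\supp_x\eta_{i',j,k,\xi,\vecl}|$, both cases reduce to estimating $\sum_{\vecl}|\supp_x\eta_{i',j,k,\xi,\vecl}|$, which by \eqref{eq:supp:cumul:varphi} is $\lesssim\Gamma_q^{(-2i'+\CLebesgue)/r_1+(-2j)/r_2+3}$ for any H\"older pair $r_1,r_2$.

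Here one takes $\sfrac{1}{r_1}=\sfrac{p\theta_1}{2}$ and $\sfrac{1}{r_2}=\sfrac{p\theta_2}{2}$, which is admissible precisely because these sum to $\sfrac{p\theta}{2}=1$; then the factors $\Gamma_q^{p(\theta_1 i'+\theta_2 j)}$ coming from the hypotheses cancel $\Gamma_q^{-2i'/r_1-2j/r_2}$ exactly, leaving only $\Gamma_q^{\CLebesgue/r_1+3}=\Gamma_q^{p\theta_1\CLebesgue/2+3}$. Summing the boundedly many remaining indices and taking a $p$-th root therefore produces, on each of the $\lesssim1$ surviving pieces, the bound $\Gamma_q^{\theta_1\CLebesgue/2+3/p}\,\const_H\,\MM{N,N_x,\lambda,\Lambda}\,\MM{M,M_t,\tau^{-1}\Gamma_q^{i'},\Tau^{-1}}$. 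Finally, $\Gamma_q^{\theta_1\CLebesgue/2+3/p}\leq\Gamma_q^{\theta_1\CLebesgue+3}$ since $p\geq1$, and since every surviving $i'$ satisfies $i'\leq i+1$ and $\MM{M,M_t,\tau^{-1}\Gamma_q^{i'},\Tau^{-1}}$ is nondecreasing in $i'$, this is at most $\MM{M,M_t,\tau^{-1}\Gamma_q^{i+1},\Tau^{-1}}$ --- the factor appearing in the corollary --- while $\MM{N,N_x,\lambda,\Lambda}$ and $\const_H$ pass through unchanged. This gives the asserted estimate; the version for the cutoffs \eqref{def:cumulative:hel} is the same computation with the $j$-sum absent and $\theta_2=0$, using the matching support bound from \eqref{eq:supp:cumul:varphi}.

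The only genuine point is the balance in the support summation: the amplitude growth $\Gamma_q^{\theta_1 i+\theta_2 j}$ allowed in \eqref{eq:agg:assump:2}--\eqref{eq:agg:assump:3} must be dominated by the decay $\Gamma_q^{-2i/r_1-2j/r_2}$ of the cumulative support measures, and it is this requirement that forces the choice $\sfrac{1}{r_1}=\sfrac{p\theta_1}{2}$, $\sfrac{1}{r_2}=\sfrac{p\theta_2}{2}$ and explains why the hypothesis is phrased with $\theta=\sfrac 2p\in(0,2]$ decomposed as $\theta_1+\theta_2$. Everything else is routine bookkeeping built on the partition-of-unity and finite-overlap properties \eqref{eq:inductive:partition}, \eqref{e:chi:overlap}, \eqref{eq:desert:cowboy:sum} together with Lemmas~\ref{lem:checkerboard:estimates}--\ref{lem:finer:checkerboard:estimates}.
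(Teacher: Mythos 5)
Your proposal is correct and reproduces the intended argument: the paper itself does not spell out a proof of this corollary but defers to \cite[Section~5.6]{GKN23}, and the strategy there is exactly what you describe---reduce to boundedly many $(i',j,k,\xi)$ via the partition/overlap properties, bound the $L^p$ norm of the $\vecl$-sum (resp.\ $(\vecl,I)$-sum) by the sum of $L^p$ norms raised to the $p$-th power using finite overlap, insert the per-piece hypothesis, and then choose the H\"older pair $\sfrac{1}{r_1}=\sfrac{p\theta_1}{2}$, $\sfrac{1}{r_2}=\sfrac{p\theta_2}{2}$ in \eqref{eq:supp:cumul:varphi} so that the $\Gamma_q^{-2i'/r_1-2j/r_2}$ decay of the support measure exactly cancels the $\Gamma_q^{p(\theta_1 i'+\theta_2 j)}$ amplitude growth, leaving $\Gamma_q^{\theta_1\CLebesgue/2+3/p}\leq\Gamma_q^{3+\theta_1\CLebesgue}$ and the shift $i'\mapsto i+1$ in the material-derivative factor. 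The only cosmetic inaccuracy is the sentence justifying the final inequality solely by ``$p\geq 1$'': the bound $3/p\leq 3$ uses $p\geq 1$, while $\theta_1\CLebesgue/2\leq\theta_1\CLebesgue$ uses $\theta_1,\CLebesgue\geq 0$; both hold, so the conclusion is unaffected.
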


\subsection{Construction of the Euler-Reynolds perturbation}\label{ss:corr-ec-tor}

In this subsection, we define the Euler-Reynolds velocity increment, except for the choice of placement (see subsection \ref{sec:dodging}). None of the discussion in this subsection depend on the choice of placement. For fixed of $i$, $k$, define
\begin{align}
&R_{q,i,j,k} = \nabla\Phi_{(i,k)}
\left(e_{q+1}\Gamma_q^{2j}\Id -R_\ell  
\right)\nabla\Phi_{(i,k)}^T \, , \label{eq:rqnpj}
\end{align}
For all $\xi\in\Xi_{q \,  \rm{mod} \,  \bn}$, we define the coefficient function\index{$w_{q+1,R}$} $a_{\xi,i,j,k,\vecl}$ by\index{$a_{\xi,i,j,k,\vecl,R}$}
\begin{align}
a_{\xi,i,j,k,\vecl}
&=a_{(\xi)}=e_{q+1}
^{\sfrac12}
\Gamma^{j}_{q} 
\eta_{i,j,k,\xi,\vecl}\, \gamma_{\xi}\left(\frac{R_{q,i,j,k}}{e_{q+1}\Gamma_q^{2j}}\right)
\label{eq:a:xi:def}
\end{align}
where $\gamma_{\xi}$ is defined in Proposition~\ref{p:split} and
\begin{align*}
    e_{q+1}:=e_{q+1}(t)
    = \left(\frac{2e(t) - \|u_q\|_2^2 -\de_{q+\bn+1}}{3 \sum_j \int_{\T^3} \om_{j,q}^2 \Ga_q^{2j}}\right)\, . 
\end{align*}
In order to show that \eqref{eq:a:xi:def} is well-defined, we recall 
\eqref{pt.est.pi.lem.lower}, 
\eqref{eq:omega:cut:partition:unity}, and
\eqref{eq:pressure:inductive:dtq:upgraded} to get
\begin{align*}
    \sum_{j'\geq 0} \int \omega_{j',q}^2 \Ga_q^{2j'}
    \leq  4\de_{q+\bn}^{-1}\int  \sum_{j'\geq 0}\omega_{j',q}^2 |R_\ell| 
    \lec \Ga_q^2
\end{align*}
and further use \eqref{eq:Lagrangian:Jacobian:1}, \eqref{pt.est.pi.lem},
 and \eqref{eq:resolved:energy} to see that for all $j$,
\begin{equation}\label{pi:top:bottom}
\left|\left( {e_{q+1}\Gamma_q^{2j}}  \right)^{-1} {R_{q,i,j,k}|_{\supp \omega_{j,q}}}- \Id
    \right|\leq \Ga_q^{-1}\, .
\end{equation}
Also, by the inductive assumption \eqref{eq:resolved:energy}, the function $(2e(t) - \|u_q\|_2^2 -\de_{q+\bn+1})^{\sfrac12}$ is positive and smooth, and is of order $\de_{q+\bn}^{\sfrac12}$. 
The coefficient function $a_{(\xi)}$ is then multiplied by an intermittent pipe bundle $\nabla \Phi_{(i,k)}^{-1}  \BB_{(\xi)} \circ \Phi_{(i,k)}$, where we have used Proposition~\ref{prop:pipeconstruction} (with $\lambda=\lambda_{q+\bn}$ and $r=r_{q}$), Definition~\ref{defn:pipe.bundle}, and the shorthand notation
\begin{align} 
\mathbb{B}_{(\xi)} = \rhob_{(\xi)} \sum_I \zetab_{\xi}^{I} \WW_{(\xi)}^{I}
\label{eq:W:xi:q+1:nn:def}
\end{align}
to refer to the pipe bundle associated with the region $\Omega_0= \supp\zeta_{q,i,k,\xi,\vecl}\cap \{t=k\mu_q\Gamma_q^{-i-2}\}$ and the index $j$. We will use $\UU_{(\xi)}^I$ to denote the potential satisfying $\curl \UU_{(\xi)}^I=\WW_{(\xi)}^I$. Applying Proposition~\ref{prop:pipeconstruction}, we define the principal part of the Reynolds corrector by
\begin{equation}\label{wqplusoneonep}
    w_{q+1,R}^{(p)} = \sum_{i,j,k,\xi,\vecl,I} \underbrace{a_{(\xi)} \left(\chib_{(\xi)} \etab_{\xi}^{I} \right) \circ \Phiik  \curl \left(\nabla\Phi_{(i,k)}^T \mathbb{U}_{(\xi)}^I \circ \Phi_{(i,k)} \right)}_{=: w_{(\xi)}^{(p),I}}
    \, .
\end{equation}
The notation $w_{(\xi)}^{(p),I}$ refers to \emph{fixed} values of $i,j,k,\xi,\vecl,I$.  We add the divergence corrector
\begin{equation}\label{wqplusoneonec}
    w_{q+1,R}^{(c)} =\sum_{i,j,k,\xi,\vecl,I} \underbrace{\nabla  \left( a_{(\xi)} \left(\chib_{(\xi)} \etab_{\xi}^{I} \right) \circ \Phiik \right) \times \left(\nabla\Phi_{(i,k)}^T \mathbb{U}_{(\xi)}^I \circ \Phi_{(i,k)} \right)}_{=: w_{(\xi)}^{(c),I}}
    \, ,
\end{equation}
so that the mean-zero, divergence-free total Reynolds corrector is given by
\begin{equation}\label{wqplusoneone}
    w_{q+1,R} = \sum_{i,j,k,\xi,\vecl,I} \underbrace{\curl \left( a_{(\xi)} \left(\chib_{(\xi)} \etab_{\xi}^{I} \right) \circ \Phiik  \nabla\Phi_{(i,k)}^T \mathbb{U}_{(\xi)}^I \circ \Phi_{(i,k)} \right)}_{=: w_{(\xi)}^I} \, .
\end{equation}

\subsection{Construction of helical perturbation}
In this subsection, we define the helical velocity increment, except for the choice of placement.   Define $h_{q+1}^2$ by
\begin{equation}\label{defn:H}
    h_{q+1}^2 = |\T^3|^{-1} 
    (h(t)-H(u_q))\left(1-\Ga_{q-1} (2\Ga_{q})^{-1}\right)\, , 
\end{equation}
Note that the right hand side of \eqref{defn:H} is strictly positive due to the inductive assumption \eqref{eq:resolved:helicity1a}, so that $h_{q+1}$ is well-defined. For $\xi = \vec h_{q \, \rm{mod} \, \bn}$, we define the coefficient function $a_{\xi,\pm,i,k,\vecl}$ by
\begin{align}
\label{eq:a:xi:h:def}    a_{\xi, i,k,\vecl} = a_{\pxi} = \lambda_\qbn \la_\qho^{-\sfrac 32} \eta_{i,k,\xi,\vecl} \, h_{q+1} \, .
    \end{align}
The coefficient function $a_\pxi$ is then multiplied by a helical pipe bundle $\nabla \Phiik^{-1} \BB_\pxi \circ \Phiik$, where we have used Proposition~\ref{prop:pipe:helical} and the shorthand notation
\begin{equation}
    \BB_\pxi = \rhob_\pxi \sum_I \zetab_\xi^I \mathbb{H}^I_\pxi
\end{equation}
to refer to the helical pipe bundle associated with the region $\Omega_0= \supp \zeta_{q,i,k,\xi,\vecl} \cap \{t=k\mu_q\Gamma_q^{-i-2}\}$.  We will use $\td\UU_\pxi^I$ to denote the potential satisfying $\curl \td\UU_\pxi^I = \HH_\pxi^I$. So, from~\eqref{eq:hWW:explicit}, we have
\begin{equation}
    \td\UU_\pxi^I = (\na\Psi_{q,\xi}^+)^T \UU_\pxi^I \circ \Psi_{q,\xi}^+ \,.
\end{equation}
Applying~\eqref{eq:hpipes:flowed:1}, we define the principal part of the helicity corrector by
\begin{align}
    w_{q+1,h}^{(p)} 
    & = \sum_{i,k,\vecl,I}   \underbrace{a_\pxi \left( \rhob_\pxi \zetab_\xi^I \right) \circ \Phiik \curl \left( \nabla \Phiik^T \td\UU_\pxi^I \circ \Phiik \right) }_{w_\pxi^{(p),I}}
    \,. \label{w:q+1:H:def}
\end{align}
The notation $w_\pxi^{(p),I}$ refers to fixed values of $i,k,\vecl,I$; note that $\xi$ is always equal to $\vec h_{(q+1) \text{ mod } \bn}$.  We add the divergence corrector $w_{q+1,h}^{(c)}$ by
 \begin{align}
   w_{q+1,h}^{(c)}   =\sum_{i,k,\vecl,I}   \underbrace{\nabla \left( a_\pxi \left( \rhob_\pxi \zetab_\xi^I \right) \circ \Phiik  \right) \times \left( \nabla \Phiik^T \td\UU_\pxi^I \circ \Phiik \right)}_{w_{(\xi)}^{(hc),I}} \, , \label{w:q+1:h:c:def}
 \end{align}
so that the mean-zero, divergence-free total helicity corrector is given by
 \begin{align}
     w_{q+1,h}
     &= w_{q+1, h}^{(p)}+ w_{q+1, h}^{(c)} = \sum_{i,k,\vecl,I}   \underbrace{\curl \left( a_\pxi \left( \rhob_\pxi \zetab_\xi^I \right) \circ \Phiik \nabla \Phiik^T \td\UU_\pxi^I \circ \Phiik \right) }_{w_{(\xi)}^{(h),I}}\, .  \label{wqplusoneoneh}
 \end{align}

\subsection{Definition of the complete corrector}
We set
\begin{align}\label{defn:w}
    w_{q+1} = w_{q+1,R} + w_{q+1,h} \, , \qquad  w_{q+1}^{(p)} := w_{q+1,R}^{(p)} + w_{q+1,h}^{(p)} \, , \qquad w_{q+1}^{(c)} := w_{q+1,R}^{(c)} + w_{q+1,h}^{(c)} \, .
\end{align}
With $\diamond=R,h$ to distinguish between Reynolds and helicity correctors, we shall use the notations $w_{q+1,\diamond}$, $w_{q+1,\diamond}^{(p)}$,  and $w_{q+1,\diamond}^{(c)}$.

\subsection{Dodging for the velocity increment}\label{sec:dodging}

In this section, we define a mollified velocity increment $\hat w_{q+\bn}$. We then introduce Lemma \ref{lem:dodging}, which is slighter stronger than Hypothesis \ref{hyp:dodging1}.

\begin{definition}[\bf Definition of $\hat w_\qbn$ and $u_{q+1}$]\label{def:wqbn}
Let $\mathcal{\tilde P}_{q+\bn,x,t}$\index{$\mathcal{\tilde P}_{q+\bn,x,t}$} denote a space-time mollifier which is a product of compactly supported kernels at spatial scale $\lambda_{q+\bn}^{-1}\Gamma_{q+\bn-1}^{-\sfrac 12}$ and temporal scale $\Tau_{q+1}^{-1}$, where both kernels have vanishing moments up to $10\Nfin$ and are $C^{10\Nfin}$ differentiable.  Define
\begin{equation}\label{def.w.mollified}
    \hat w_{q+\bn} := \mathcal{\tilde P}_{q+\bn,x,t} w_{q+1} \, , \qquad u_{q+1} = u_q + \hat w_{\qbn} \, .
\end{equation}
\end{definition}
\noindent 
Recalling from~\eqref{eq:space:time:balls} the notations $B(\Omega,\lambda^{-1})$ and $B(\Omega,\lambda^{-1},\tau)$, we have that
\begin{equation}
    \supp \hat w_\qbn \subseteq B\left( \supp w_{q+1}, \sfrac 12 \lambda_\qbn^{-1} , \sfrac 12 \Tau_q \right) \, . \label{eq:dodging:useful:support}
\end{equation}
Then using~\eqref{eq:WW:explicit},~\eqref{eq:W:xi:q+1:nn:def},~\eqref{eq:hel:bundle},~\eqref{helical.in.straight},~\eqref{eq:W:xi:q+1:nn:def}, and~\eqref{w:q+1:H:def}, we set 
\begin{align}\label{eq:pipez:thursday}
    \varrho_{(\xi),R}^{I} := \xi \cdot \WW_{(\xi)}^I \, , \qquad \varrho_{(\xi), h}^I(\cdot) := \left[ \left( \nabla\Psi_q^\pm \right)^{-1} \xi \right] \cdot  \HH_{\pxi}^I  \, .
\end{align}
Next, in slight conflict with \eqref{eq:space:time:balls}, we shall also use the notation
\begin{align}\label{eq:ballz:useful}
    B\left(\supp \varrho_{\pxi,\diamond}^I,\lambda^{-1}\right) := \left\{ x\in\T^3 \, : \, \exists y \in \supp \varrho_{\pxi,\diamond}^I \, , |x-y| \leq \lambda^{-1} \right\}
\end{align}
throughout this section, despite the fact that $\supp\varrho_{\pxi,\diamond}^I$ is not a set in space-time, but merely a set in space. We shall also use the same notation but with $\varrho_{\pxi,\diamond}^I$ replaced by $\rhob_\xi^\diamond$. Finally, for any smooth set $\Omega\subseteq \mathbb{T}^3$ and any flow map $\Phi$ defined in Definition~\ref{def:transport:maps}, we use the notation 
\begin{equation}\label{eq:flowing:sets}
\Omega \circ \Phi := \left\{(y,t): t\in \R, \Phi(y,t)\in \Omega\right\} = \supp \left(\mathbf{1}_{\Omega}\circ \Phi\right) \, .
\end{equation}
In other words, $\Omega\circ\Phi$ is a space-time set whose characteristic function is annihilated by $\Dtq$.

\begin{lemma}[\bf Dodging for $w_{q+1}$ and $\hat w_{\qbn}$]\label{lem:dodging} 
We construct $w_{q+1}$ so that the following hold.
\begin{enumerate}[(i)]    \item\label{item:dodging:more:oldies} Let {$q+1\leq q' \leq q+ \sfrac \bn 2$} and fix indices $\diamond,i,j,k,\xi,\vecl$, which we abbreviate by $(\pxi,\diamond)$, for a coefficient function $a_{\pxi,\diamond}$ (c.f.~\eqref{eq:a:xi:def}, \eqref{eq:a:xi:h:def}).  Then
    \begin{equation}
        B\left( \supp \hat w_{q'}, \frac 12 {\lambda_{q+1}^{-1}\Ga_q^2}, {2 \Tau_q} \right) \cap \supp \left( \tilde \chi_{i,k,q} \zeta_{q,\diamond,i,k,\xi,\vecl} \, \rhob_{\pxi}^{\diamond}\circ \Phi_{(i,k)} \right) = \emptyset \, . \label{eq:oooooldies}
    \end{equation}
    \item\label{item:dodging:1} Let $q'$ satisfy $q+1\leq q' \leq q+\bn-1$, fix indices $(\pxi,\diamond,I)$, and assume that $\Phiik$ is the identity at time $t_{\pxi}$, cf. Definition~\ref{def:transport:maps}. Then we have that
\begin{align}
    B \left(\supp \hat w_{q'}, \frac 14 \lambda_{q'}^{-1} \Gamma_{q'}^2, 2\Tau_{q} \right)  \cap  \supp &\left( \tilde \chi_{i,k,q} \zeta_{q,\diamond,i,k,\xi,\vecl} \left(\rhob_{(\xi)}^\diamond \zetab_{\xi}^{I,\diamond} \right)\circ \Phiik \right) \notag \\
    &\cap
    B\left( \supp \varrho^I_{(\xi),\diamond} , \frac 12 {\lambda_{q'}^{-1} \Gamma_{q'}^2}\right)\circ \Phiik
    = \emptyset \, . \label{eq:dodging:oldies:prep}
    \end{align}
As a consequence we have
\begin{equation}\label{eq:dodging:oldies}
        B\left( \supp \hat w_{q'}, \frac 14 {\lambda_{q'}^{-1} \Gamma_{q'}^2}, 2\Tau_q \right) \cap  \supp w_{q+1} = \emptyset \, .
    \end{equation}
    \item\label{item:dodging:2} Consider the set of indices $\{(\pxi,\diamond,I)\}$, whose elements we use to index the correctors constructed in~\eqref{wqplusoneone} and~\eqref{wqplusoneoneh}, and let $\ttl, \ov \ttl \in \{p,c, hp, hc\}$ denote either principal or divergence corrector parts for Euler-Reynolds and helicity correctors, respectively. Then if $(\ov\diamond,(\ov \xi), \ov I) \neq (\diamond,(\xi),I)$, we have that for any $\ttl, \ov \ttl$,
    \begin{equation}\label{eq:dodging:newbies}
        B\left(\supp w_{\pxi}^{(\ttl),I}, \frac14 \la_\qbn^{-1}\Ga_q^2\right) \cap B\left(\supp w_{(\ov \xi)}^{(\ov \ttl), \ov I}, \frac14 \la_\qbn^{-1}\Ga_q^2\right) = \emptyset \, .
    \end{equation}
    \item\label{item:dodging:zero}  $\hat w_{\qbn,R}$ satisfies Hypothesis~\ref{hyp:dodging2} with $q$ replaced by $q+1$.
\end{enumerate}
\end{lemma}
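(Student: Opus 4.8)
The plan is to prove all four parts simultaneously by \emph{constructing} $w_{q+1}$: the corrector is a sum of localized pieces $w_{(\xi)}^{(\cdot),I}$ indexed by tuples $(i,j,k,\xi,\vecl,I)$, and for each tuple we retain the freedom to \emph{place} the associated intermittent pipe bundle --- the $r_q^{-2}$ shift choices of Proposition~\ref{prop:pipe:shifted} together with the $\Ga_q^6$ admissible positions of the bundling flow $\rhob_\xi^m$ of Proposition~\ref{prop:bundling}. I would follow the dodging arguments of~\cite{GKN23,BMNV21} essentially verbatim for the Reynolds corrector $w_{q+1,R}$, and in parallel for the helical corrector $w_{q+1,h}$; for the helical pieces the only new ingredient is~\eqref{helical.in.straight}, which confines each helical pipe to a $C_\varrho\la_\qbn^{-1}$-neighbourhood of a straight Mikado, so helical bundles can be dodged with the same reservoir of placements and by the same counting as the Reynolds bundles. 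The required choice exists by a pigeonhole argument: $r_q^{-2}$ and $\Ga_q^6$ dominate, by scale separation, the number of placements one is forced to avoid.

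For parts~(i) and~(ii), I would fix a single pipe $\varrho^I_{(\xi),\diamond}$ and the cumulative cutoff region $\supp(\tilde\chi_{i,k,q}\,\zeta_{q,\diamond,i,k,\xi,\vecl}\,(\rhob_{(\xi)}^{\diamond}\,\zetab_\xi^{I,\diamond})\circ\Phi_{(i,k)})$, and for every $q'$ with $q+1\leq q'\leq q+\bn-1$ cover this region by convex pieces of diameter $\leq d(q',q)$ so that Hypothesis~\ref{hyp:dodging2} applies with $\bar q'=q'$ and $\bar q''=q$ (so $\Phi_{\bar q''}$ is precisely the flow $\Phi_{(i,k)}$ of $\hat u_q$ used in the construction). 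This confines $\hat w_{q'}$, on the support of $\tilde\chi_{i,k,q}$, to a bounded number of short curves per piece, fattened by $3\la_{q'}^{-1}$, whose tangents lie in a $\Ga_0^{-1}$-neighbourhood of $\Xi_{q'\bmod\bn}\cup\Xi'_{q'\bmod\bn}$. Pulling these back by $\Phi_{(i,k)}^{-1}$ and using $\|D\Phi_{(i,k)}-\Id\|\leq\tau_q^{-1}\Ga_q^{-5}\mu_q\ll\Ga_0^{-1}$ from Corollary~\ref{cor:deformation} to control the lengths and directions of the flowed curves, I would then invoke the crucial transversality: the direction $\xi\in\Xi_{q\bmod\bn}$ (or $\vec h_{q\bmod\bn}$) of the pipe being placed makes, by Definition~\ref{def:vec}, a $q$-independent nonzero angle with every direction in $\Xi_{q'\bmod\bn}\cup\Xi'_{q'\bmod\bn}$ for all these $q'$. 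Hence each old curve blocks only a thin slab of placements, and a placement avoiding the whole finite union of blocked slabs over all $(q',i,j,k,\xi,\vecl,I)$ exists by the pigeonhole count; fattening the old supports by $\la_\qbn^{-1}$ to absorb the potential $\UU_{(\xi)}^I$ and the mollification of Definition~\ref{def:wqbn} then upgrades this to~\eqref{eq:dodging:oldies:prep}, hence to~\eqref{eq:oooooldies} and~\eqref{eq:dodging:oldies}. Part~(i) is the same statement at the coarser scale of the bundle envelope $\rhob_{\pxi}^{\diamond}$ rather than the individual pipes, so it follows a fortiori; the bookkeeping of which powers of $\Ga_q$ and $\la$ make the reservoir dominate is carried out exactly as in~\cite{GKN23}.

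For part~(iii) I would use only the self-disjointness of the pipe flows. Within one iteration step we employ finitely many directions (the six elements of $\Xi_{q\bmod\bn}$ and the single $\vec h_{q\bmod\bn}$), and Lemma~\ref{lem:finer:checkerboard:estimates} bounds the number of index tuples active near any one spatial scale by $\les\Ga_q^8\la_q\la_{q+\half}^2\ll r_q^{-2}$; so I can assign pairwise distinct shifts (Proposition~\ref{prop:pipe:shifted}) to all tuples sharing a direction, while tuples carrying different directions stay apart by transversality, and the fattening by $\la_\qbn^{-1}\Ga_q^2$ is harmless since consecutive placements are separated by at least a constant times the pipe spacing $(\la_{q+\half}\Ga_q)^{-1}\gg\la_\qbn^{-1}\Ga_q^2$. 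The mild overlaps of $\psi_{i,q}$, $\omega_{j,q}$, $\chi_{i,k,q}$ at neighbouring indices are handled by the same device, giving~\eqref{eq:dodging:newbies}.

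The main obstacle is part~(iv): verifying Hypothesis~\ref{hyp:dodging2} for $\hat w_{\qbn,R}$ with $q\mapsto q+1$, where $\bar q'=q+\bn$ is the only genuinely new value (the rest being inherited from level~$q$) and $q+1\leq\bar q''\leq q+\bn-1$, so that $d(q+\bn,\bar q'')\leq(\la_{q+\bn-\half}\Ga_q)^{-1}$ is far smaller than the pipe spacing of $w_{q+1,R}$; hence a convex $\Omega$ of that diameter meets only $\const_D=O(1)$ of the straight Mikados underlying $w_{q+1,R}$, uniformly in $q$. By the dodging just established in~(ii) together with~\eqref{eq:dodging:useful:support}, $\hat w_{q+1},\dots,\hat w_{q+\bn-1}$ vanish on a space-time neighbourhood of $\supp\hat w_{\qbn,R}$, so there $\hat u_{\bar q''}=\hat u_q$ and its flow coincides with the flows $\Phi_{(i',k')}$ used to build $w_{q+1,R}$. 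I would then take $L$ to be the $\hat u_{\bar q''}$-flow-out from $t_0$ of the $3\la_\qbn^{-1}$-fattened, $\Omega$-restricted cores of those $O(1)$ flowed Mikados: then $\mathbf{1}_L$ is annihilated by $\partial_t+\hat u_{\bar q''}\cdot\na$ by construction, the cores have length $\leq 2d(q+\bn,\bar q'')$ because they lie in $\Omega(t)$, and their tangents are $D\Phi_{(i',k')}\xi$ normalized, within $\tau_q^{-1}\Ga_q^{-5}\mu_q\ll\Ga_0^{-1}$ of $\xi\in\Xi_{q\bmod\bn}=\Xi_{(q+\bn)\bmod\bn}$ by Corollary~\ref{cor:deformation}. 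The delicate point, where the real work lies, is the containment $\supp_x\hat w_{\qbn,R}(\cdot,t)\cap\Omega(t)\subseteq L\cap\{t\}$ for all $t$ in the very short ($\ll\mu_q$) time window of Hypothesis~\ref{hyp:dodging2}: one must show that there $\supp\hat w_{\qbn,R}$ is, to leading order, transported by $\Phi_{(i',k')}$ --- the failure of $\Dtq$-invariance of the amplitude cutoffs $\psi_{i',q},\omega_{j',q}$ being negligible on this timescale by Lemma~\ref{lem:dornfelder} and the inductive bound~\eqref{eq:nasty:D:vq:old} --- and reconcile this with the $\hat u_{\bar q''}$-invariance demanded of $L$; this, and pinning $\const_D$ down independently of $q$, are handled exactly as in~\cite{GKN23}. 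The helical corrector $w_{q+1,h}$ is deliberately omitted from~(iv): it is small enough in $L^2$ that its oscillation error can be absorbed directly into $R_{q+1}$ without inverting the divergence, so it never needs to satisfy Hypothesis~\ref{hyp:dodging2}.
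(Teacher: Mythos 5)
Your high-level strategy --- reconstruct the dodging from the reservoir of shift and bundling placements, using Hypothesis~\ref{hyp:dodging2} at level $q$ and the transversality from Definition~\ref{def:vec}, with~\eqref{helical.in.straight} letting helical bundles ride along the same argument --- matches the route the paper signals (the paper's own proof is essentially a citation to~\cite[Lemma~4.5]{GKN23strong}). However, there are three concrete errors in how you fill in the details.

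First, part~(i) does \emph{not} follow a fortiori from part~(ii). In~\eqref{eq:oooooldies} the old increment must avoid the entire bundle envelope $\rhob_{\pxi}^\diamond$, whereas in~\eqref{eq:dodging:oldies:prep} it must avoid only the slice of the bundle lying within $B(\supp\varrho^I_{(\xi),\diamond},\frac12\lambda_{q'}^{-1}\Gamma_{q'}^2)\circ\Phiik$; the envelope strictly contains the union over $I$ of those slices, so~(i) is a \emph{stronger} conclusion, which is precisely why its range of $q'$ is cut off at $q+\sfrac{\bn}{2}$. The two parts need separate arguments: for $q'\leq q+\sfrac{\bn}{2}$ the whole $\rhob$-bundle can be placed to avoid $\hat w_{q'}$ using the $\Gamma_q^6$ disjoint bundling positions of Proposition~\ref{prop:bundling}; for larger $q'$ that is no longer possible and one falls back to placing individual pipes, which is~(ii). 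You have the implication backwards.

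Second, your justification for reading~(iv) as deliberately excluding the helical corrector does not hold up. Hypothesis~\ref{hyp:dodging2} at level $q+1$ governs $\hat w_{\bar q'}$ for $\bar q'=q+\bn$, which by Definition~\ref{def:wqbn} is the mollification of all of $w_{q+1}=w_{q+1,R}+w_{q+1,h}$; the iterative proposition needs the full statement to close. The $\Xi_j'$ appearing in the hypothesis exist precisely to record the helical directions $\vec h_j$, and the paper's own proof flags~\eqref{helical.in.straight} as the key new ingredient showing that the \emph{helical} bundles fit inside fattened straight Mikados and hence satisfy the \emph{same} curve-concentration and direction conditions. Your argument that the helical corrector escapes~(iv) because its oscillation error is absorbed without inverting the divergence is a non sequitur: Hypothesis~\ref{hyp:dodging2} is needed for \emph{future} placement steps to avoid the support of the helical increment, and that requirement has nothing to do with how the current step treats $S_{O,h}$.

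Third, the counting inequality you invoke in part~(iii), $\Gamma_q^8\lambda_q\lambda_{q+\half}^2\ll r_q^{-2}$, is false. Since $r_q^{-2}=\lambda_\qbn^2/(\lambda_{q+\half}^2\Gamma_q^2)$, your inequality would require $\Gamma_q^{10}\lambda_q\lambda_{q+\half}^4\ll\lambda_\qbn^2$, i.e. an exponent $1+4\bar b-2\bar b^2<0$ in the scale $a^{b^q}$; for $\bar b$ near~$1$ this exponent is close to $3$, so the left side dominates by a huge margin. The self-dodging of the new pipes does not rest on this comparison at all: pipes in distinct $I$ are already separated by the partition $\zetab_\xi^I$, pipes in distinct $\vecl$ by the checkerboard cutoffs, and distinct $\xi$ by transversality; the pigeonhole reservoir $r_q^{-2}$ is only compared against the $q$-independent constant $\const_D$ counting the \emph{old} curves per convex cell from Hypothesis~\ref{hyp:dodging2}, not against the tuple count from Lemma~\ref{lem:finer:checkerboard:estimates}.
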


\begin{proof}
The proof is the same as \cite[Lemma~4.5]{GKN23strong}.  The role played by the Euler-Reynolds corrector when $\diamond = R$ is identical in both cases, and the role played by the current corrector when $\diamond=\varphi$ in~\cite{GKN23strong} is analogous to that of the case $\diamond=h$ here.  Of crucial importance is the fact that the helical deformations defined in Proposition~\ref{prop:pipe:helical} are small enough that the helically deformed pipe flow fits in a pipe flow with radius enlarged by only a constant, $q$-independent factor; see~\eqref{helical.in.straight}.  Therefore, the same dodging procedure which works for ``straight pipes,'' i.e. those which are initiated without helical deformations, works for the helical pipe bundles as well. Finally, we note that the proof of the dodging (see~\cite[Lemmas~4.1, 4.2]{GKN23strong}) requires velocity increments at different generations in $q$ to be defined using different vector directions.  This condition is satisfied due to Definition~\ref{def:vec} and the choices of $\xi$ in~\eqref{eq:a:xi:def} and~\eqref{eq:a:xi:h:def}. We omit further details.   
\end{proof}

\begin{remark}[\bf Verifying Hypothesis~\ref{hyp:dodging1}]\label{rem:checking:hyp:dodging:1}
We claim that \eqref{eq:dodging:oldies} and \eqref{eq:dodging:useful:support} imply that Hypothesis~\ref{hyp:dodging1}.  The proof is identical to that in~\cite[Remark~6.3]{GKN23}, and so we omit further details.
\end{remark}

\subsection{Estimates for \texorpdfstring{$w_{q+1}$}{wqn} and \texorpdfstring{$\hat w_{q+\bn}$}{hwqn}}\label{ss:stress:w:estimates}

We now record the needed estimates for the velocity increments.  As the proofs are nearly identical to those in~\cite{GKN23}, we omit most details.
\begin{lemma}[\textbf{Coefficient function estimates}]
\label{lem:a_master_est_p}
Let $A=(\nabla\Phi_{(i,k)})^{-1}$. For $N,N',N'',M$ with $N'',N'\in\{0,1\}$ and $N,M \leq {\sfrac{\Nfin} {3}}$, and $r \in [1, \infty]$ we have the following estimates.
\begin{subequations}\label{e:a_master_est_p}
\begin{align}
&\left\|D^{N-N''} D_{t,q}^M (\xi^\ell A_\ell^h \partial_h)^{N'} D^{N''} a_{\xi,i,j,k,\vec{l}}\right\|_{ r} \notag\\
&\lessg \left| \supp \eta_{i,j,k,\xi,\vecl} \right|^{\sfrac 1r} \delta_{q+\bn}^{\sfrac 12} \Gamma^{j+{7}}_{q}  \left({\Gamma_{q}^{-5}\lambda_{q+1}}\right)^N  \left({\Gamma_q^{  5}\Lambda_{q}}\right)^{N'} \MM{M, \NindSmall, \mu_{q}^{-1}\Gamma_{q}^{i+13}, \Tau_{q}^{-1}}\label{e:a_master_est_p_R} \, , \\
&\left\| D^{N-N''} D_{t,q}^M (\xi^\ell A_\ell^h \partial_h)^{N'} D^{N''} \left( a_{\xi,i,j,k,\vecl} \left( \rhob_{(\xi)} \zetab_\xi^{I} \right) \circ \Phiik \right) \right\|_{ r}\notag\\
&\lessg \left| \supp \left( \eta_{i,j,k,\xi,\vecl} \, \zetab_\xi^{I} \right) \right|^{\sfrac 1r} \delta_{q+\bn}^{\sfrac 12} \Gamma_q^{j {+10}} \left({\lambda_{q+\lfloor \sfrac \bn 2 \rfloor}}\right)^N \left({\Gamma_q^{  5}\Lambda_{q}}\right)^{N'} \MM{M, \NindSmall, \mu_{q}^{-1}\Gamma_{q}^{i+13}, \Tau_{q}^{-1}}\label{e:a_master_est_p_R:zeta}
\, ,\\
&\left\| D^{N-N''} D_{t,q}^M (\xi^\ell A_\ell^h \partial_h)^{N'} D^{N''} a_{\xi, i,k,\vecl}\right\|_{ r} \notag\\
&\lesssim \frac{\lambda_\qbn}{\lambda_\qho^{\sfrac 32}}{\Ga_{q-1}^{-\sfrac12}} \left| \supp \eta_{i,k,\xi,\vecl} \right|^{\sfrac 1r} \left({\Gamma_{q}^{-5}\lambda_{q+1}}\right)^N \left({\Gamma_q^{ {13}}\Lambda_{q}}\right)^{N'} \MM{M, \NindSmall, \mu_{q}^{-1}\Gamma_{q}^{i+ {13}}, \Tau_{q}^{-1} {\Ga_q^8}}\label{e:a_master_est_p_h} \, , \\
&\left\| D^{N-N''} D_{t,q}^M (\xi^\ell A_\ell^h \partial_h)^{N'} D^{N''} \left( a_{\xi,i,k,\vecl} \left( \rhob_{(\xi)} \zetab_\xi^{I} \right) \circ \Phiik \right) \right\|_{ r}\notag\\
&\lessg \frac{\lambda_\qbn}{\lambda_\qho^{\sfrac 32}} {\Ga_{q-1}^{-\sfrac12}}\left| \supp  \eta_{i,k,\xi,\vecl} \, \zetab_\xi^{I}  \right|^{\sfrac 1r} \left({\lambda_{q+\lfloor \sfrac \bn 2 \rfloor}}\right)^N \left({\Gamma_q^{ {13}}\Lambda_{q}}\right)^{N'} \MM{M, \NindSmall, \mu_{q}^{-1}\Gamma_{q}^{i+ {13}}, \Tau_{q}^{-1} {\Ga_q^8}}\label{e:a_master_est_p_h:zeta} \, .
\end{align}
\end{subequations}
In the case that $r=\infty$, the above estimates give that
\begin{subequations}\label{e:a_master_est_p_uniform}
\begin{align}
&\left\| D^{N-N''} D_{t,q}^M (\xi^\ell A_\ell^h \partial_h)^{N'} D^{N''} a_{\xi,i,j,k,\vec{l}}\right\|_{\infty} \notag  \\
&\qquad\lessg {\Gamma_q^{\frac{\badshaq}{2}+ {10}}} \left(\Gamma_{q}^{-5}\lambda_{q+1}\right)^N \left({\Gamma_q^{ {13}}\Lambda_{q}} \right)^{N'} \MM{M,\Nindt, \mu_{q}^{-1}\Gamma_{q}^{i+ {13}},\Tau_q^{-1} {\Ga_q^8}} \label{e:a_master_est_p_uniform_R} \, , \\
&\left\| D^{N-N''} D_{t,q}^M (\xi^\ell A_\ell^h \partial_h)^{N'} D^{N''} a_{\xi, i,k,\vec{l}}\right\|_{ \infty}\notag \\
&\qquad\lessg \lambda_\qbn\lambda_\qho^{-\sfrac 32} {\Ga_{q-1}^{-\sfrac12}}\left(\Gamma_{q}^{-5}\lambda_{q+1}\right)^N \left({\Gamma_q^8\Lambda_{q}} \right)^{N'} \MM{M,\Nindt, \mu_{q}^{-1}\Gamma_{q}^{i+13},\Tau_q^{-1}} \label{e:a_master_est_p_uniform_h} \, ,
\end{align}
\end{subequations}
with analogous estimates (incorporating a loss of $\Ga_q^3$ for $\diamond=R$) holding for the product $a_{(\xi),\diamond}\zetab_{\xi}^{I}\rhob_{(\xi)}$. The constants in the above estimates~\eqref{e:a_master_est_p}-\eqref{e:a_master_est_p_uniform} depend on the maximum values of the first $\Nfin$ derivatives of $e(t)$ and $h(t)$ on the time interval $[-1,T+1]$; crucially, they are independent of $q$.
\end{lemma}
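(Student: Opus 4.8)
The plan is to mirror the proof of the analogous coefficient estimate in~\cite{GKN23}: distribute the derivative operator $D^{N-N''}D_{t,q}^M(\xi^\ell A_\ell^h\partial_h)^{N'}D^{N''}$ across the factors of $a_{(\xi),\diamond}$ via the Leibniz and Fa\`a di Bruno formulas, estimate each resulting term using the building-block bounds already in hand, and then take $L^r$ norms. Recall from~\eqref{eq:a:xi:def} that, for $\diamond=R$, $a_{(\xi)}$ is the product of the scalar $e_{q+1}^{\sfrac12}\Gamma_q^j$, the cumulative cutoff $\eta_{i,j,k,\xi,\vecl}$, and the composition $\gamma_\xi\bigl(R_{q,i,j,k}/(e_{q+1}\Gamma_q^{2j})\bigr)$ with $R_{q,i,j,k}$ as in~\eqref{eq:rqnpj}, while from~\eqref{eq:a:xi:h:def} the helicity coefficient $a_{\pxi}$ is the simpler product $\lambda_\qbn\lambda_\qho^{-\sfrac32}\,h_{q+1}\,\eta_{i,k,\xi,\vecl}$. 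The three derivative operators act as follows. The material derivative $D_{t,q}$ annihilates $\Phi_{(i,k)}$, hence also $\zeta_{q,i,k,\xi,\vecl}=\mathcal X\circ\Phi_{(i,k)}$, $\etab_\xi^I\circ\Phi_{(i,k)}$ and $\rhob\circ\Phi_{(i,k)}$ by Lemmas~\ref{lem:checkerboard:estimates}--\ref{lem:finer:checkerboard:estimates}, so it only sees the time-dependence of $\psi_{i,q},\omega_{j,q},\chi_{i,k,q}$ (bounds~\eqref{eq:nasty:Dt:psi:i:q:orangutan},~\eqref{eq:D:Dt:omega:sharp},~\eqref{eq:chi:cut:dt}) and of the scalars $e_{q+1},h_{q+1}$. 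The directional derivative $\xi^\ell A_\ell^h\partial_h$ acts on $\zeta$ and $\etab^I\circ\Phi$ at the enhanced rate $\Gamma_q^5\Lambda_q$ (resp.\ $\lambda_\qho$) coming from~\eqref{eq:checkerboard:derivatives} and Lemma~\ref{lem:finer:checkerboard:estimates}, which is precisely why the $N'$-derivatives in~\eqref{e:a_master_est_p} are charged at rate $\Gamma_q^5\Lambda_q$ (resp.\ $\Gamma_q^{13}\Lambda_q$) rather than at $\lambda_{q+1}$. Plain $D$ derivatives are charged at $\Gamma_q^{-5}\lambda_{q+1}$ (resp.\ $\lambda_\qho$) via~\eqref{eq:sharp:Dt:psi:i:q:old}, \eqref{eq:D:Dt:omega:sharp}, and the checkerboard bounds.

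The key sub-steps are as follows. First, \textbf{scalar prefactors}: using the energy bound~\eqref{eq:resolved:energy}, the estimate $\sum_j\int\omega_{j,q}^2\Gamma_q^{2j}\lesssim\Gamma_q^2$ recorded just before~\eqref{pi:top:bottom}, the smoothness of $e(t)$ on $[-1,T+1]$, and the temporal regularity Hypothesis~\ref{hyp:temp} together with the analogous control on $\ddt^M\tfrac12\|u_q\|_2^2$ deduced from the Euler--Reynolds equation~\eqref{eqn:ER} and the inductive velocity/stress bounds, one shows $e_{q+1}^{\sfrac12}\approx\delta_\qbn^{\sfrac12}\Gamma_q^4$ with $\bigl|\ddt^M e_{q+1}^{\sfrac12}\bigr|\lesssim\delta_\qbn^{\sfrac12}\Gamma_q^4\,\MM{M,\Nindt,\mu_q^{-1}\Gamma_q^{13},\Tau_q^{-1}\Gamma_q^8}$; likewise, from~\eqref{eq:resolved:helicity1a}, \eqref{defn:H} and~\eqref{hel:lip} one obtains $h_{q+1}\approx\Gamma_{q-1}^{-\sfrac12}$, bounded below by $\approx\Gamma_q^{-\sfrac12}$ so that its reciprocal is harmless, with comparable bounds on $\ddt^M h_{q+1}$ (the coarsening $\mu_{q-1}^{-1}\rightsquigarrow\mu_q^{-1}$ costs nothing since $\mu_{q-1}^{-1}/\mu_q^{-1}$ is a negative power of $\lambda_q$). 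These produce the prefactors $\Gamma_q^{j+7}$ and $\lambda_\qbn\lambda_\qho^{-\sfrac32}\Gamma_{q-1}^{-\sfrac12}$ and the material-derivative cost $\mu_q^{-1}\Gamma_q^{i+13}$. Second, the \textbf{composition}: by~\eqref{pi:top:bottom} the argument of $\gamma_\xi$ lies in $B(\Id,\epsilon)$ uniformly in $i,j,k$ and $q$, so $\gamma_\xi$ and all its derivatives evaluated there are bounded by $q$-independent constants (Proposition~\ref{p:split}); a Fa\`a di Bruno expansion then reduces to bounding derivatives of $R_{q,i,j,k}/(e_{q+1}\Gamma_q^{2j})$, handled by the deformation bounds~\eqref{eq:Lagrangian:Jacobian:1}--\eqref{eq:Lagrangian:Jacobian:6} on $\nabla\Phi_{(i,k)}$ (whose derivatives past the first are at rate $\lambda_q\Gamma_q\le\Gamma_q^{-5}\lambda_{q+1}$) and the stress cutoff bounds~\eqref{pt.est.pi.lem} on $R_\ell$ divided by $e_{q+1}\Gamma_q^{2j}$, using the level-set structure from Lemma~\ref{lem:D:Dt:Rn:sharp}. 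Third, the \textbf{cutoffs}: the $L^r$ norm of $\eta_{i,j,k,\xi,\vecl}$, resp.\ $\eta_{i,j,k,\xi,\vecl}\,\etab_\xi^I\circ\Phi_{(i,k)}$, is $\lesssim|\supp\eta_{i,j,k,\xi,\vecl}|^{\sfrac1r}$, resp.\ $\lesssim|\supp(\eta_{i,j,k,\xi,\vecl}\,\etab_\xi^I)|^{\sfrac1r}$, supplying the volume factor, and their derivatives obey the rates above.

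Multiplying the three contributions, and summing the boundedly many cross terms produced by the Leibniz rule (the sum is over a fixed finite number of terms thanks to~\eqref{e:chi:overlap} and the partition-of-unity structure), yields~\eqref{e:a_master_est_p}; the uniform estimates~\eqref{e:a_master_est_p_uniform} follow by taking $r=\infty$ so $|\supp|^{\sfrac1r}=1$, and by noting that on $\supp\omega_{j,q}$ one has $j\le\jmax$, whence $\Gamma_q^j\le\delta_\qbn^{-\sfrac12}\Gamma_q^{\sfrac{\badshaq}{2}+3}$ by Lemma~\ref{lem:D:Dt:Rn:sharp}, converting the $\delta_\qbn^{\sfrac12}\Gamma_q^{j+7}$ prefactor into $\Gamma_q^{\sfrac{\badshaq}{2}+10}$. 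The main obstacle is purely bookkeeping: tracking in the multivariate expansion which of the three derivative operators lands on which factor — hence which of the three rates ($\Gamma_q^{-5}\lambda_{q+1}$, $\Gamma_q^5\Lambda_q$, or the material-derivative rate built from $\mu_q^{-1}\Gamma_q^i$ and $\Tau_q^{-1}$) is charged — while keeping all constants $q$-independent and dependent only on finitely many derivatives of $e$ and $h$. The one genuinely new ingredient relative to~\cite{GKN23} is the helicity coefficient $h_{q+1}$ and the appeal to Hypothesis~\ref{hyp:temp}; once the bounds on $h_{q+1}$ and its time-derivatives from sub-step one are in place,~\eqref{e:a_master_est_p_h} and~\eqref{e:a_master_est_p_uniform_h} follow by the identical distribution-of-derivatives argument, the only structural differences being that $a_{\pxi}$ carries no $\gamma_\xi$ factor (sub-step two is vacuous) and no $\omega_{j,q}$ factor (no $j$-dependence). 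For this reason, as the authors note, we only indicate these points and refer to~\cite{GKN23} for the routine details.
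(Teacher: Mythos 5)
Your proposal is correct and follows essentially the same approach as the paper: both defer the routine Leibniz/Fa\`a di Bruno bookkeeping to~\cite[Lemma~6.4]{GKN23} and isolate the same three new ingredients, namely the absence of $r_q^{-\sfrac 13}$ factors, the lack of $j$-dependence in the helical coefficient, and the control of $\ddt^M h_{q+1}$ via Hypothesis~\ref{hyp:temp} together with the lower bound~\eqref{eq:resolved:helicity1a}, which is exactly what makes the reciprocal of $(h-H)^{\sfrac 12}$ harmless. The only thing the paper makes explicit that you leave implicit is the algebraic recursion for $\ddt^{N+1}((h-H)^{\sfrac 12})$ in terms of lower-order derivatives of $(h-H)^{\sfrac 12}$ and $\ddt^{N+1}(h-H)$, but you correctly identify the lower bound as the crucial input, so this is a presentational rather than a logical gap.
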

\begin{proof}
The proofs of these statements are nearly identical to those from~\cite[Lemma~6.4]{GKN23}.  The only differences are as follows.  First, there are no factors of $r_q^{-\sfrac 13}$, which are not needed since we aim for $L^2$ regularity rather than $L^3$ regularity.  Second, the amplitude for the helical coefficient functions has no $j$ dependence.  Finally, we estimate the terms when the material derivative falls on $H(u_q)$ in the coefficient of the helical perturbation by using the definition~\eqref{defn:H}, the inductive assumptions~\eqref{hel:lip} and~\eqref{eq:resolved:helicity1a}, and the computation
\begin{align*}
    2 &\left( (h-H)^{\sfrac 12} \right) \left[ \sfrac{d}{dt} \right]^{N+1} \left( (h-H)^{\sfrac 12} \right)\\ &= \left[ \sfrac{d}{dt} \right]^{N+1} \left( h-H \right) + \sum_{0<N'<N+1} c_{N,N'} \left[ \sfrac{d}{dt} \right]^{N'} \left( (h-H)^{\sfrac 12} \right) \left[ \sfrac{d}{dt} \right]^{N+1-N'} \left( (h-H)^{\sfrac 12} \right) \, ,
\end{align*}
which implies by induction that for $N+1 \leq \sfrac{\Nfin}{3}$ (here we crucially use the lower bound~\eqref{eq:resolved:helicity1a})
\begin{align*}
    \left| \left[ \sfrac{d}{dt} \right]^{N+1} \left( (h-H)^{\sfrac 12} \right) \right| \les \Gamma_q^{\sfrac 12} \MM{N+1, \Nindt, \mu_{q-1}^{-1}, \Tau_{q-1}^{-1}} \, . 
\end{align*}
\end{proof}

\begin{corollary}[\textbf{Full velocity increment estimates}]
\label{cor:corrections:Lp}
For $N,M\leq {\sfrac \Nfin 4}$, we have\footnote{We have added the subscript $\diamond=R, h$ to the notation from~\eqref{wqplusoneonep} and~\eqref{w:q+1:H:def} since the estimates differ.}
\begin{subequations}\label{eq:w:oxi:ps}
\begin{align}
\norm{D^ND_{t,q}^M w_{(\xi),R}^{(p),I}}_{L^r} 
&\lessg \left| \supp \left(\eta_{\pxi}\zetab_\xi^{I} \right) \right|^{\sfrac 1r} \delta_{q+\bn}^{\sfrac 12}\Gamma_q^{j+ {10}} r_q^{\frac2r-1} \lambda_{q+\bn}^N  \MM{M, \NindSmall, \mu_{q}^{-1}\Gamma_{q}^{i+ {13}}, \Tau_{q}^{-1} {\Ga_q^8}}\label{eq:w:oxi:est:master}
\\
\norm{D^ND_{t,q}^M w_{(\xi),R}^{(p),I}}_{L^\infty}
&\lessg \Gamma_q^{\frac \badshaq 2 +  {13}}
r_{q}^{-1} \lambda_{q+\bn}^N \MM{M, \NindSmall, \mu_{q}^{-1}\Gamma_{q}^{i+ {13}}, \Tau_{q}^{-1} {\Ga_q^8}} \, ,
\label{eq:w:oxi:unif:master} \\
\norm{D^ND_{t,q}^M w_{(\xi),h}^{(p),I}}_{L^r} 
&\lessg \left| \supp \left(\eta_{\pxi}\zetab_\xi^{I} \right) \right|^{\sfrac 1r} \frac{\lambda_\qbn}{\lambda_\qho^{\sfrac 32}} {\Ga_{q-1}^{-\sfrac12}}r_q^{\frac2r-1} \lambda_{q+\bn}^N  \MM{M, \NindSmall, \mu_{q}^{-1}\Gamma_{q}^{i+ {13}}, \Tau_{q}^{-1} {\Ga_q^8}}\label{eq:w:oxi:h:est:master}
\\
\norm{D^ND_{t,q}^M w_{(\xi),h}^{(p),I}}_{L^\infty}
&\lessg \frac{\lambda_\qbn}{\lambda_\qho^{\sfrac 32}}{\Ga_{q-1}^{-\sfrac12}}
r_{q}^{-1} \lambda_{q+\bn}^N \MM{M, \NindSmall, \mu_{q}^{-1}\Gamma_{q}^{i+ {13}}, \Tau_{q}^{-1} {\Ga_q^8}} \, .
\label{eq:w:oxi:h:unif:master}
\end{align}
\end{subequations}
Also, for $N,M\leq {\sfrac \Nfin 4}$, we have that
\begin{subequations}\label{eq:w:oxi:cs}
\begin{align}
\norm{D^ND_{t,q}^M w_{(\xi),R}^{(c),I}}_{L^r}
&\lessg r_q \left| \supp \left(\eta_{\pxi}\zetab_\xi^{I} \right) \right|^{\sfrac 1r} \delta_{q+\bn}^{\sfrac 12} \Gamma_q^{j+ {10}} r_q^{\frac2r-1} \lambda_{q+\bn}^N  \MM{M, \NindSmall, \mu_{q}^{-1}\Gamma_{q}^{i+ {13}}, \Tau_{q}^{-1} {\Ga_q^8}} \label{eq:w:oxi:est:master:c}
\\
\norm{D^ND_{t,q}^M w_{(\xi),R}^{(c),I}}_{L^\infty}
&\lessg \Gamma_q^{\frac \badshaq 2 +  {13}} \lambda_{q+\bn}^N \MM{M, \NindSmall, \mu_{q}^{-1}\Gamma_{q}^{i+ {13}}, \Tau_{q}^{-1} {\Ga_q^8}} \, ,
\label{eq:w:oxi:unif:master:c}\\
\norm{D^ND_{t,q}^M w_{(\xi),h}^{(c),I}}_{L^r}
&\lessg r_q \left| \supp \left(\eta_{\pxi}\zetab_\xi^{I} \right) \right|^{\sfrac 1r} \frac{\lambda_\qbn}{\lambda_\qho^{\sfrac 32}} {\Ga_{q-1}^{-\sfrac12}}r_q^{\frac2r-1} \lambda_{q+\bn}^N  \MM{M, \NindSmall, \mu_{q}^{-1}\Gamma_{q}^{i+ {13}}, \Tau_{q}^{-1} {\Ga_q^8}} \label{eq:w:oxi:est:master:hc}
\\
\norm{D^ND_{t,q}^M w_{(\xi),h}^{(c),I}}_{L^\infty}
&\lessg \frac{\lambda_\qbn}{\lambda_\qho^{\sfrac 32}} {\Ga_{q-1}^{-\sfrac12}}\lambda_{q+\bn}^N \MM{M, \NindSmall, \mu_{q}^{-1}\Gamma_{q}^{i+ {13}}, \Tau_{q}^{-1} {\Ga_q^8}} \, .
\label{eq:w:oxi:unif:master:hc}
\end{align}
\end{subequations}
\end{corollary}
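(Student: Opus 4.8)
The plan is to prove these bounds by applying the Leibniz rule to the explicit formulas~\eqref{wqplusoneonep}, \eqref{wqplusoneonec}, \eqref{w:q+1:H:def}, and~\eqref{w:q+1:h:c:def}, precisely as in~\cite[Lemma~6.4]{GKN23}; all the needed ingredients are now in place. First I would rewrite each principal corrector in product form, using item~\ref{item:pipe:6} of Proposition~\ref{prop:pipeconstruction} and~\eqref{eq:hpipes:flowed:1} together with $\td\UU_\pxi^I = (\na\Psi_q^+)^T \UU_\pxi^I \circ \Psi_q^+$:
\begin{align*}
    w_{(\xi),R}^{(p),I} = a_{(\xi)}\, (\rhob_{(\xi)}\etab_\xi^{I})\circ\Phiik\;(D\Phiik)^{-1}(\xi\,\varrho_{(\xi)}^I)\circ\Phiik\, , \qquad w_{(\xi),h}^{(p),I} = a_\pxi\, (\rhob_\pxi\zetab_\xi^{I})\circ\Phiik\;(D\Phiik)^{-1}\, \mathbb{H}_\pxi^I\circ\Phiik \, .
\end{align*}
The divergence correctors have the analogous form $\nabla(\text{amplitude})\times(\nabla\Phiik^T\UU\circ\Phiik)$, with the flowed potential obeying $\|\nabla^n(\nabla\Phiik^T \UU_{(\xi)}^I\circ\Phiik)\|_{L^r}\lesssim\lambda_\qbn^{n-1}r_q^{\sfrac2r-1}$ by~\eqref{e:pipe:estimates:2} and Corollary~\ref{cor:deformation} (and analogously in the helical case by item~\ref{item:hpipe:5} of Proposition~\ref{prop:pipe:helical}).

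Next I would distribute $D^N D_{t,q}^M$ by Leibniz. Since $D_{t,q}\Phiik = 0$, every material derivative annihilates the factors $(\rhob_{(\xi)}\etab_\xi^{I})\circ\Phiik$, $\WW_{(\xi)}^I\circ\Phiik$, $(\rhob_\pxi\zetab_\xi^{I})\circ\Phiik$, and $\mathbb{H}_\pxi^I\circ\Phiik$ (cf.\ Lemma~\ref{lem:checkerboard:estimates}, item~\ref{item:check:1}, and Lemma~\ref{lem:finer:checkerboard:estimates}, item~\ref{item:check:check:1}), so each $D_{t,q}$ lands either on the coefficient $a_\pxi$ or on $A := (D\Phiik)^{-1}$, which together with all of its spatial derivatives is controlled by Corollary~\ref{cor:deformation}. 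When a spatial derivative hits $\varrho_{(\xi)}^I\circ\Phiik$ or $\mathbb{H}_\pxi^I\circ\Phiik$, the chain rule produces $A$-contractions of $\nabla\varrho$, and I would use $\xi\cdot\nabla\vartheta = 0$ from~\eqref{eq:WW:explicit} (respectively $\vec e_2\cdot\nabla\vartheta^k = 0$ from~\eqref{eq:hderivative:along:pipe}) to reorganize the derivatives falling on the coefficient into the ``along-pipe'' form $(\xi^\ell A_\ell^h\partial_h)^{N'}D^{N''}a$ that is estimated in Lemma~\ref{lem:a_master_est_p}. In this accounting each spatial derivative costs $\lambda_\qbn$ (one factor of $\lambda_\qbn^{-1}$ being absorbed into the potential $\UU$), each material derivative costs $\mu_q^{-1}\Gamma_q^{i+13}$ up to order $\NindSmall$ and $\Tau_q^{-1}\Ga_q^8$ thereafter, and the remaining factors are the amplitude and Lebesgue factors of Lemma~\ref{lem:a_master_est_p} together with $\|\nabla^n\varrho_{(\xi)}^I\|_{L^r}\lesssim\lambda_\qbn^n r_q^{\sfrac2r-1}$ from~\eqref{e:pipe:estimates:1} (resp.~\eqref{e:hpipe:estimates:1}), or $\|\nabla^n\varrho_{(\xi)}^I\|_{L^\infty}\lesssim\lambda_\qbn^n r_q^{-1}$ in the uniform case. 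The step I expect to demand the most care is exactly this combinatorial bookkeeping of $D^N D_{t,q}^M$ acting on the iterated compositions with $\Phiik$ (and with $\Psi_q^+\circ\Phiik$ in the helical case), and in particular the precise accounting of the losses from Corollary~\ref{cor:deformation} each time a derivative falls on $A$; since this is identical to~\cite{GKN23}, I would ultimately refer to that argument rather than reproduce it.

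Collecting terms then yields~\eqref{eq:w:oxi:ps}: the factor $|\supp(\eta_\pxi\zetab_\xi^I)|^{\sfrac1r}$ comes from the coefficient estimate and the factor $r_q^{\sfrac2r-1}$ from the pipe density, while the $L^\infty$ bounds use~\eqref{e:a_master_est_p_uniform}. The helical bounds~\eqref{eq:w:oxi:h:est:master} and~\eqref{eq:w:oxi:h:unif:master} come out of the same computation, differing only by the amplitude prefactor $\lambda_\qbn\lambda_\qho^{-\sfrac32}\Ga_{q-1}^{-\sfrac12}$ in place of $\delta_\qbn^{\sfrac12}\Gamma_q^{j+7}$ (coming from~\eqref{e:a_master_est_p_h:zeta} and~\eqref{e:a_master_est_p_uniform_h}), and the helical deformation $\Psi_q^\pm$ enters only as a volume-preserving near-identity diffeomorphism at the low frequency $\lambda_\qho\ll\lambda_\qbn$ (item~\ref{item:hpipe:2} of Proposition~\ref{prop:pipe:helical}), which leaves all the pipe-bundle estimates unchanged. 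Finally, for the divergence correctors~\eqref{eq:w:oxi:cs} the extra gradient on the amplitude costs at most $\lambda_{q+\half}$, while the potential $\UU$ carries $\lambda_\qbn^{-1}r_q^{\sfrac2r-1}$ rather than $r_q^{\sfrac2r-1}$; since $r_q = \lambda_{q+\half}\Ga_q\lambda_\qbn^{-1}$, this produces the net extra factor of $r_q$ relative to the principal part, giving~\eqref{eq:w:oxi:est:master:c}--\eqref{eq:w:oxi:unif:master:hc}.
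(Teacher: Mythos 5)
Your proposal is correct and essentially identical to the paper's own (one-line) proof, which simply defers to~\cite[Corollary~6.6]{GKN23} and cites Lemma~\ref{lem:a_master_est_p} together with the $L^p$ decoupling result~\cite[Lemma~A.1]{GKN23}. The one ingredient you do not name explicitly is that decoupling lemma: when you ``collect terms'' to combine the amplitude estimate $\left|\supp\left(\eta_\pxi\zetab_\xi^I\right)\right|^{\sfrac1r}$ with the pipe density factor $r_q^{\sfrac 2r-1}$, you are implicitly using the inverse-H\"older decoupling (rather than naive H\"older, which would lose the support factor); since you correctly describe its outcome and, like the paper, defer the combinatorics to~\cite{GKN23}, this is a cosmetic rather than substantive omission.
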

\begin{proof}[Proof of Corollary~\ref{cor:corrections:Lp}]
The proof is essentially identical to~\cite[Corollary~6.6]{GKN23}; the main ingredients are the estimates from Lemma~\ref{lem:a_master_est_p} and the decoupling result from~\cite[Lemma~A.1]{GKN23}.
\end{proof}

Finally, we estimate the mollified velocity increment given in~Definition~\ref{def:wqbn}. 
\begin{lemma}[\textbf{Estimates on $\hat w_{q+\bn}$}]\label{lem:mollifying:w}
We have that $\hat w_{q+\bn}$ satisfies the following properties.
\begin{enumerate}[(i)]
\item\label{item:moll:vel:1} For all $N+M\leq 2\Nfin$, we have that
\begin{subequations}\label{eq:vellie:upgraded:statement}
\begin{align}
&\norm{ D^N D_{t,q+\bn-1}^M  \hat w_{q+\bn} }_{L^2(\supp \psi_{i,q+\bn-1})}  \notag\\
& \les \left( \Gamma_q^{20} \delta_{q+\bn}^{\sfrac 12} + \frac{\lambda_\qbn}{\lambda_\qho^{\sfrac 32}}{\Ga_{q-1}^{-\sfrac12}} \right) \left(\lambda_{q+\bn}\Gamma_{q+\bn-1}\right)^N \MM{M, \NindRt, \Gamma_{q+\bn-1}^{i-1} \mu_{q+\bn-1}^{-1}, \Tau_{q+\bn-1}^{-1} \Ga_{q+\bn-1} }
\label{eq:vellie:inductive:dtq-1:upgraded:statement} \\
&\norm{ D^N D_{t,q+\bn-1}^M  \hat w_{q+\bn} }_{L^\infty(\supp \psi_{i,q+\bn-1})}  \notag\\
&\les \left( \Gamma_q^{\sfrac{\badshaq}{2}+16} + \frac{\lambda_\qbn}{\lambda_\qho^{\sfrac 32}}{\Ga_{q-1}^{-\sfrac12}} \right) r_q^{-1}\left(\lambda_{q+\bn}\Gamma_{q+\bn-1}\right)^N \MM{M, \NindRt, \Gamma_{q+\bn-1}^{i-1} \mu_{q+\bn-1}^{-1}, \Tau_{q+\bn-1}^{-1} \Ga_{q+\bn-1} } \, . \label{eq:vellie:inductive:dtq-1:uniform:upgraded:statement}
\end{align}
\end{subequations}
\item\label{item:moll:vel:2} For all $N+M\leq \sfrac{\Nfin}{4}$, we have that
\begin{align}
\norm{D^N D_{t,q+\bn-1}^M \left(w_{q+1}- \hat w_{q+\bn}\right)}_\infty &\lec \delta_{q+3\bn}^3 \Tau_\qbn^{25\Nindt} \left(\lambda_{q+\bn}\Gamma_{q+\bn-1}\right)^N  \notag\\
&\qquad \qquad \times \MM{M, \NindRt, \mu_{q+\bn-1}^{-1}, \Tau_{q+\bn-1}^{-1} \Ga_{q+\bn-1} }  \, . \label{eq:diff:moll:vellie:statement}
\end{align}
\end{enumerate}
\end{lemma}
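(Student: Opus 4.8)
The plan is to follow the template of~\cite[Corollary~6.6, Lemma~6.7]{GKN23}. In brief: (a) pass from the per-index bounds on the building blocks recorded in Corollary~\ref{cor:corrections:Lp} to index-aggregated bounds on the full increment $w_{q+1}$, localized with a single cutoff $\psi_{i,q}$; (b) upgrade these from the level-$q$ material derivative, timescale and velocity cutoff to the level-$(q+\bn-1)$ ones, using the dodging Lemma~\ref{lem:dodging}; and (c) pass the mollifier $\mathcal{\tilde P}_{q+\bn,x,t}$ through --- harmlessly for~(i), and with a large gain coming from its vanishing moments for~(ii).

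For~(i), each building block $w_{(\xi),\diamond}^{(\ttl),I}$ in~\eqref{wqplusoneone}--\eqref{wqplusoneoneh} is supported in $\supp(\eta_{\pxi}\zetab_\xi^I\circ\Phiik)$, these supports are boundedly overlapping by Lemma~\ref{lemma:cumulative:cutoff:Lp}, and Corollary~\ref{cor:corrections:Lp} bounds $D^N D_{t,q}^M w_{(\xi),\diamond}^{(\ttl),I}$; so applying the aggregation Corollary~\ref{rem:summing:partition} in $L^2$ (with $\theta_1=0,\theta_2=1$ for the Euler--Reynolds correctors, so as to absorb the $\Gamma_q^j$ amplitude against the $j$-dependent support volume, and $\theta_1=1,\theta_2=0$ for the helical correctors) and in $L^\infty$ yields, for each $i$,
\[ \bigl\| \psi_{i,q}\, D^N D_{t,q}^M w_{q+1} \bigr\|_{L^2} \lesssim \bigl( \Gamma_q^{20}\delta_\qbn^{\sfrac12} + \tfrac{\lambda_\qbn}{\lambda_\qho^{\sfrac32}}\Gamma_{q-1}^{-\sfrac12} \bigr)\, \lambda_\qbn^N\, \MM{M,\NindSmall, \mu_q^{-1}\Gamma_q^{i+14}, \Tau_q^{-1}\Gamma_q^{8}} \, , \]
with the analogous $L^\infty$ bound in which $\delta_\qbn^{\sfrac12}$ is replaced by $r_q^{-1}$ and the Euler--Reynolds $\Gamma$-prefactor becomes $\Gamma_q^{\sfrac\badshaq2+16}$; the fixed powers of $\Gamma_q$ thrown off by the aggregation are absorbed into these prefactors using the $\imax$ bound~\eqref{eq:imax:old}, the $\jmax$ bound of Lemma~\ref{lem:D:Dt:Rn:sharp}, and $\lambda_\qho=\lambda_\qbn\Gamma_{q-100}^{-\sfrac1{100}}$. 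To reach~\eqref{eq:vellie:inductive:dtq-1:upgraded:statement} I would then use the dodging: by~\eqref{eq:dodging:oldies} and~\eqref{eq:dodging:useful:support}, the increments $\hat w_{q'}$ with $q+1\le q'\le q+\bn-1$ vanish on a space--time neighborhood of $\supp\hat w_{q+\bn}$, so $\hat u_{q'}\equiv\hat u_q$ there, whence $D_{t,q+\bn-1}=D_{t,q}$ and $\psi_{i,q+\bn-1}=\psi_{i,q}$ on $\supp\hat w_{q+\bn}$; it then only remains to replace $\lambda_\qbn^N$ by $(\lambda_\qbn\Gamma_{q+\bn-1})^N$ (slack), $\mu_q^{-1}\Gamma_q^{i+14}$ by $\mu_{q+\bn-1}^{-1}\Gamma_{q+\bn-1}^{i-1}$ via the inductive timescale ordering~\eqref{eq:inductive:timescales} applied to the overlapping cutoffs $\psi_{i,q+\bn-1}\psi_{i,q}$ (using $\Gamma_{q+\bn-1}=\Gamma_q^{b^{\bn-1}}$ with $b^{\bn-1}<\ovb^2<2$), and $\Tau_q^{-1}\Gamma_q^{8}$ by $\Tau_{q+\bn-1}^{-1}\Gamma_{q+\bn-1}$ similarly. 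Commuting $\mathcal{\tilde P}_{q+\bn,x,t}$ through $D^N$ and $D_{t,q}^M$ costs nothing to leading order, since $\mathcal{\tilde P}_{q+\bn,x,t}$ acts at spatial scale $\lambda_\qbn^{-1}\Gamma_{q+\bn-1}^{-\sfrac12}$ and temporal scale $\Tau_{q+1}^{-1}$, both finer than the resolution of $w_{q+1}$, and the commutators $[\hat u_q\cdot\nabla,\mathcal{\tilde P}_{q+\bn,x}]$ are lower order and absorbed via~\eqref{eq:nasty:D:vq:old}, exactly as in~\cite[Section~3]{GKN23}.

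For~(ii) I would exploit the vanishing moments (order $10\Nfin$) of $\mathcal{\tilde P}_{q+\bn,x,t}$: a Taylor-remainder estimate gives, for any $L\le 10\Nfin$,
\[ \bigl\| (\mathrm{Id}-\mathcal{\tilde P}_{q+\bn,x,t}) w_{q+1} \bigr\|_\infty \lesssim \bigl(\lambda_\qbn^{-1}\Gamma_{q+\bn-1}^{-\sfrac12}\bigr)^L \|\nabla^L w_{q+1}\|_\infty + \Tau_{q+1}^{-L}\|\partial_t^L w_{q+1}\|_\infty \, . \]
The first term is controlled via the aggregated $L^\infty$ estimate above by $\Gamma_{q+\bn-1}^{-L/2}$ times a fixed power of $\Gamma_q$ times $r_q^{-1}$; the second, writing $\partial_t=D_{t,q}-\hat u_q\cdot\nabla$ and paying the crude bound $\|\hat u_q\|_\infty\lesssim\lambda_q^{\sfrac12}$ from~\eqref{eq:bobby:old}, is bounded by $\bigl(\Tau_{q+1}^{-1}(\lambda_q^{\sfrac12}\lambda_\qbn+\mu_q^{-1}\Gamma_q^{\imax+14})\bigr)^L r_q^{-1}\Gamma_q^{C}$, where $\Tau_{q+1}^{-1}=\Lambda_{q+1}^{-10}$ dominates both $\lambda_q^{\sfrac12}\lambda_\qbn$ and $\mu_q^{-1}\Gamma_q^{\imax+14}$ (using $b^\bn=\ovb^2<2$ and the definitions of $\mu_q,\tau_q$). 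Both gains are super-polynomially small in $\lambda_q$, so taking $L$ a fixed fraction of $\Nfin$ with $\Nfin\gg\Nindt$ makes the whole right-hand side $\lesssim\delta_{q+3\bn}^3\Tau_\qbn^{25\Nindt}$. The full material-derivative statement follows by again replacing $D_{t,q+\bn-1}$ with $D_{t,q}$ on $\supp(w_{q+1})\cup\supp(\hat w_{q+\bn})$ and commuting $D_{t,q}^M$ through $\mathcal{\tilde P}_{q+\bn,x,t}$, each commutator with $\hat u_q\cdot\nabla$ being absorbed by~\eqref{eq:nasty:D:vq:old} --- the commutator argument already used for Lemma~\ref{lem:upgrading}.

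I expect the only genuinely delicate point to be the material-derivative frequency transfer in step~(b), namely replacing $\mu_q^{-1}\Gamma_q^{i+14}$ by $\mu_{q+\bn-1}^{-1}\Gamma_{q+\bn-1}^{i-1}$, since this is where one actually invokes the inductive hypothesis~\eqref{eq:inductive:timescales} on overlapping velocity cutoffs of different generations, and where the parameter inequalities $b^{\bn-1}<2$ and $\Tau_{q+1}^{-1}\ll\mu_q$ enter (the latter guaranteeing that mollification does not degrade the leading-order $L^2$ mass $\delta_\qbn^{\sfrac12}$ of the increment). Everything else is routine index aggregation and standard mollification, which is why~\cite{GKN23} omits it and we may too.
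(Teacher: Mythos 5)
Your outline matches the approach the paper cites from \cite[Lemma~6.7]{GKN23}: aggregate the per-index bounds of Corollary~\ref{cor:corrections:Lp} via Corollary~\ref{rem:summing:partition}, upgrade the material derivative, cutoff generation, and timescale from $q$ to $q+\bn-1$ via dodging (Lemma~\ref{lem:dodging}) together with the inductive timescale ordering \eqref{eq:inductive:timescales}, and pass the mollifier through, invoking its vanishing moments for the small difference bound in part~(ii). The one imprecision is the claim that $\psi_{i,q+\bn-1}=\psi_{i,q}$ on $\supp \hat w_{q+\bn}$: the two cutoff families are produced by level-set chopping at \emph{different} scales ($\Gamma_q$ versus $\Gamma_{q+\bn-1}$), so they cannot coincide pointwise even where the intermediate increments vanish. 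What is true, and what suffices, is that on $\supp w_{q+1}$ the vanishing of $\hat w_{q'}$ for $q+1\le q'\le q+\bn-1$ collapses the recursive construction of Definitions~\ref{def:intermediate:cutoffs}--\ref{def:psi:i:q:def}, so that $\psi_{i,q+\bn-1}\ne 0$ forces $\psi_{j,q}\ne 0$ for some $j$ with $\Gamma_{q+\bn-1}^{i-1}<\Gamma_q^{j}\le\Gamma_{q+\bn-1}^{i}$ (Definition~\ref{def:istar:j}); the index and timescale translation you then carry out via \eqref{eq:inductive:timescales} and $b^{\bn-1}<\ovb^2<2$ is exactly the intended argument, and closes the gap.
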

\begin{proof}[Proof of Lemma~\ref{lem:mollifying:w}]
The proof of this Lemma is nearly identical to that of~\cite[Lemma~6.7]{GKN23}.  The main ingredients are the aggregation lemmas, the previous lemmas from this section, a mollification lemma, and parameter inequalities for $\Tau_q$ from~\eqref{v:global:par:ineq} and $N_g$ and $N_c$ (large integers quantifying how many derivatives are needed to run the abstract mollification lemma) from~\eqref{eq:darnit}.
\end{proof}

\section{The helicity and energy increments}
\label{sec:helicity.energy.inc}

\subsection{Helicity increment}

In this subsection, we consider the helicity generated by $w_{q+1}$. 
\begin{lemma}[\textbf{$w^{(p)}_{q+1,h}$ reduces the helicity error}]\label{lem:helicity.h}
$w_{q+1,h}^{(p)}$ as defined in~\eqref{w:q+1:H:def} satisfies 
\begin{align}\label{hel:inc:est}
 \frac{\Ga_{q-1}}{4\Ga_q^2}\leq h(t) - H(u_q)- \int_{\T^3}  w_{q+1,h}^{(p)}(t,x) \cdot \curl w_{q+1, h}^{(p)}(t,x) \, dx
    \leq \frac 34 \Ga_q^{-1} \, .
\end{align}
In addition, for any multiplier $\varphi:\R^3 \rightarrow \overline{B_1(0)} \subset \mathbb{C}$, $w_{q+1,h}^{(p)}$ satisfies
\begin{equation}
\left| \int_{\T^3} \bp_\varphi(w_{q+1,h}^{(p)}) \cdot \curl w_{q+1,h}^{(p)} \right| < \frac{1}{10} \Gamma_{q-4}^{-\frac12} \, . \label{hab}
\end{equation}
Finally, for $M \leq \sfrac{\Nfin}{3}$,
\begin{equation}
\left| \ddt^M H(w_{q+1,h}^{(p)}) \right| \les \MM{M, \Nindt, \mu_q^{-1}\Gamma_q^{20}, \Tau_q^{-1}\Ga_q^8} \, . \label{derivs:1}
\end{equation}
\end{lemma}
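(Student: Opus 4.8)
The plan is to compute the helicity of $w_{q+1,h}^{(p)}$ directly, exploiting the structure of the helical pipe bundles and their transport properties. First I would use the representation~\eqref{w:q+1:H:def} together with the fact that $\curl$ and the Lie-type transport formula~\eqref{eq:hpipes:flowed:1} allow us to write $w_\pxi^{(p),I}$ as (a coefficient function times) a pulled-forward curl. The key point is that for a \emph{volume-preserving} flow $\Phi_{(i,k)}$, the helicity integrand $w_\pxi^{(p),I}\cdot\curl w_\pxi^{(p),I}$ transforms nicely: writing $v = \nabla\Phi_{(i,k)}^{-1}(\HH_\pxi^I\circ\Phi_{(i,k)})$ and using that $\curl$ of a pulled-forward potential equals the pulled-forward curl (item~\ref{item:hpipe:6} of Proposition~\ref{prop:pipe:helical}), one gets $\int v\cdot\curl v = \int (\HH_\pxi^I\cdot\curl\HH_\pxi^I)\circ\Phi_{(i,k)}$ up to errors coming from $\nabla\Phi_{(i,k)}\neq\Id$. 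Then I would invoke the key cancellation~\eqref{helicity.mathcalH}, which says that $\dashint \mathbb{H}^k\cdot\curl\mathbb{H}^k = \pm \la_\qho^3/\la_\qbn^2$, so that $(\rhob_{\vec e_2}\zetab_{\vec e_2}^I)^2 \int \HH_\pxi^I\cdot\curl\HH_\pxi^I$, multiplied by the square of the amplitude $a_\pxi$ which carries the factor $\la_\qbn^2\la_\qho^{-3}$ from~\eqref{eq:a:xi:h:def}, produces exactly $\pm h_{q+1}^2 \eta_{i,k,\xi,\vecl}^2$ at leading order. Summing over $i,k,\vecl,I$ using the partition-of-unity properties of $\eta_{i,k,\xi,\vecl}$, $\rhob_{\vec e_2}$, $\zetab_{\vec e_2}^I$ and the definition~\eqref{defn:H} of $h_{q+1}^2$ yields $\int w_{q+1,h}^{(p)}\cdot\curl w_{q+1,h}^{(p)} = (h(t)-H(u_q))(1-\Ga_{q-1}(2\Ga_q)^{-1}) + \mathrm{error}$, which gives~\eqref{hel:inc:est} once the error terms are shown to be $\ll \Ga_q^{-1}$ and one uses the inductive bound~\eqref{eq:resolved:helicity1a}.

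The error terms fall into three categories, and I would bound each using the estimates already at hand. First, \emph{cross terms} $w_\pxi^{(p),I}\cdot\curl w_{(\ov\xi)}^{(p),\ov I}$ with $((\xi),I)\neq((\ov\xi),\ov I)$: these vanish by the disjointness of supports~\eqref{eq:dodging:newbies} in Lemma~\ref{lem:dodging}(\ref{item:dodging:2}). Second, \emph{deformation errors} arising from $\nabla\Phi_{(i,k)}-\Id$, $\nabla\Psi_q^\pm-\Id$, and derivatives falling on the amplitude $a_\pxi$ rather than on $\HH_\pxi^I$: using the sharpened deformation bound $\|\nabla\Phi_{(i,k)}-\Id\|\les \tau_q^{-1}\mu_q\Gamma_q^{-5}$ from Corollary~\ref{cor:deformation}, the bound $\|\nabla\Psi_q^\pm-\Id\|\les\la_\qho\la_\qbn^{-1}$ from~\eqref{psi:q:bounds}, and the coefficient estimates from Lemma~\ref{lem:a_master_est_p}, these are of size $\la_\qho/\la_\qbn$ or $\tau_q^{-1}\mu_q$ times the main term, hence $\ll \Ga_q^{-1}$ by the parameter inequalities (this is exactly where the shortening of the time-scale $\mu_q\ll\tau_q$ is used — c.f. the discussion around~\eqref{514:1}). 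Third, the \emph{high-frequency sidebands}: since $\int \HH_\pxi^I\cdot\curl\HH_\pxi^I$ is really $\dashint$ over the full torus only after one checks that the oscillatory part integrates to the mean against the slowly-varying coefficient $a_\pxi^2(\rhob_{\vec e_2}\zetab_{\vec e_2}^I)^2\circ\Phi_{(i,k)}$; here I would use the standard stationary-phase/orthogonality argument (as in the decoupling lemma~\cite[Lemma~A.1]{GKN23}) together with the frequency separation $\la_\qho\ll\la_\qbn$ to see that the error is a large negative power of $\la_\qbn/\la_\qho$.

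For~\eqref{hab}, the point is that $\bp_\varphi(w_{q+1,h}^{(p)})\cdot\curl w_{q+1,h}^{(p)}$ involves the Fourier multiplier $\varphi$ acting on an object concentrated near frequencies of size $\la_\qbn$; applying the same computation as above but keeping track of $\varphi$, the leading term picks up a factor of $\varphi(k)$ evaluated near $|k|\approx\la_\qbn$, but crucially the \emph{whole} contribution to $\int w_{q+1,h}^{(p)}\cdot\curl w_{q+1,h}^{(p)}$ from a single bundle is itself only of size $h_{q+1}^2\les\Ga_q^{-1}$, which combined with $|\varphi|\leq 1$ and the disjoint-support structure gives a bound that is a small multiple of $\Ga_{q-4}^{-1/2}$ — here one uses that $h_{q+1}^2\les(h-H(u_q))\les \Ga_{q-1}^{-1}$ and the summability of the overlapping pieces from Lemma~\ref{lemma:cumulative:cutoff:Lp}. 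Finally, for the temporal derivative bound~\eqref{derivs:1}, I would differentiate the explicit formula $\int w_{q+1,h}^{(p)}\cdot\curl w_{q+1,h}^{(p)} = h_{q+1}^2|\T^3| + \mathrm{error}$ in time: each $\sfrac{d}{dt}$ either hits $h_{q+1}^2$ — controlled by the material-derivative bounds on $h_{q+1}$ established inside the proof of Lemma~\ref{lem:a_master_est_p} via~\eqref{hel:lip} and~\eqref{eq:resolved:helicity1a} — or hits the error terms, which carry material derivative costs $\mu_q^{-1}\Gamma_q^{i}$ and can be controlled term-by-term using the $D_{t,q}$-estimates in Lemma~\ref{lem:a_master_est_p} and Corollary~\ref{cor:corrections:Lp}, noting that $\partial_t = D_{t,q} - \hat u_q\cdot\nabla$ and the transport part contributes the frequency $\la_\qbn$ against the velocity bound~\eqref{eq:bobby:old} which is absorbed into the claimed $\MM{M,\Nindt,\mu_q^{-1}\Gamma_q^{20},\Tau_q^{-1}\Ga_q^8}$ after summing over $i$. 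The main obstacle I anticipate is the bookkeeping in the deformation-error estimate for~\eqref{hel:inc:est}: one must simultaneously track the volume-preservation of both $\Phi_{(i,k)}$ and $\Psi_q^\pm$, verify that the "almost-mean-zero" oscillatory remainders really do beat the amplitude $\la_\qbn^2\la_\qho^{-3}$, and check that the various parameter inequalities in Section~\ref{sec:not.general} (especially those forcing $\tau_q^{-1}\mu_q\la_q\de_\qbn\ll1$) close — this is the technical heart of why the helicity can be prescribed at all.
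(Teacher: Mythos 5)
Your outline for \eqref{hel:inc:est} and \eqref{derivs:1} tracks the paper's strategy reasonably well: use \eqref{eq:dodging:newbies} to reduce to diagonal bundle contributions, exploit \eqref{helicity.mathcalH} to extract the explicit helicity density of the helical pipe, sum the partition of unity to recover $h_{q+1}^2$, and control deformation and high-frequency errors via Corollary~\ref{cor:deformation} and integration by parts. However, you glide over the algebraic step that actually makes the amplitude factor as $a^2$: in the paper's decomposition into $H_1+H_2$ (see~\eqref{h1h2}), the term $H_2$, where the curl falls on $\td a\varrho(\td\Phi)$ rather than on $\td A\vec e_2$, \emph{vanishes identically} because $\td A\vec e_2 \cdot (X \times \td A\vec e_2) = 0$ for any vector $X$. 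Your phrasing ``one gets $\int v\cdot\curl v = \int(\HH\cdot\curl\HH)\circ\Phi$ up to errors from $\nabla\Phi\neq\Id$, multiplied by the square of the amplitude'' presupposes this factoring without supplying the mechanism; the helicity invariance~\eqref{eqn:hel_cons_gen} only applies to push-forwards of fixed vector fields and not to coefficient-modulated ones, so the pointwise orthogonality is indispensable.

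More seriously, your proposed proof of \eqref{hab} is the wrong approach. You plan to ``apply the same computation as above but keeping track of $\varphi$,'' reusing the leading-order cancellations. But $\bp_\varphi$ is a nonlocal operator that does \emph{not} commute with multiplication by the slowly-varying coefficients $\td a$ and $\td A\vec e_2$; once $\bp_\varphi$ is applied to one factor, the pointwise identity $\bp_\varphi(w)\cdot(\na(\cdot)\times\td A\vec e_2) = 0$ fails and the exact Pythagorean relation $\td e_2\cdot\curl\td e_2 = \pm\la_\qho^3/\la_\qbn^2$ no longer factors cleanly out of the integral. Controlling this would require commutator estimates of the kind developed later in Lemma~\ref{lem.multcomm} and Lemma~\ref{lem:helicity.pc}, none of which you invoke. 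The paper instead proves \eqref{hab} by the \emph{crude} bound
\begin{align*}
\left| \int_{\T^3} \bp_\varphi(w_{q+1,h}^{(p)}) \cdot \curl w_{q+1,h}^{(p)} \right| \leq \|w_{q+1,h}^{(p)}\|_{L^2}\|\curl w_{q+1,h}^{(p)}\|_{L^2} \lesssim \left(\frac{\la_\qbn}{\la_\qho}\right)^3\Gamma_{q-1}^{-1} \, ,
\end{align*}
which closes by the parameter inequality~\eqref{eq:qho:choice3} precisely because the $L^2$ amplitude $\la_\qbn\la_\qho^{-3/2}\Gamma_{q-1}^{-1/2}$ of the helical perturbation is chosen small; no cancellation structure is needed or used. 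This is why the paper proves \eqref{hab} and \eqref{derivs:1} \emph{before} carrying out the leading-order helicity computation. Your reasoning that ``the whole contribution from a single bundle is itself only of size $h_{q+1}^2\lesssim\Gamma_q^{-1}$, which combined with $|\varphi|\leq1$'' gives the bound conflates the cancelled total with a per-bundle estimate; the cancelled total $\approx h_{q+1}^2|\T^3|$ only emerges after the exact algebraic identities whose failure under $\bp_\varphi$ is exactly the problem.
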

\begin{proof}
We break the proof into steps. In \texttt{Step 1}, we prove \eqref{hab} and~\eqref{derivs:1} directly from the amplitude estimates. In \texttt{Step 2}, we decompose the helicity $H(w_{q+1,h}^{(p)})$ into the leading order term and the rest. 
In \texttt{Step 3} and \texttt{Step 4}, 
we compute the leading-order and remaining contributions to the helicity, respectively. In \texttt{Step 5}, we conclude the proof, obtaining \eqref{hel:inc:est}. 


For simplicity we perform all computations in the case $\xi = \vec h_0 = \vec e_2$, which corresponds to the case $q \, \textnormal{mod} \, \bn = 0$; see~\eqref{eq:a:xi:h:def}. Setting $\tilde a_{(\vec e_2)}:= a_{(\vec e_2)}\left(\chib_{(\vec e_2)} \etab_{\vec e_2}^{I} \right)\circ \Phiik$ and $\td\Phi_{(i,k)}  = \Psi_q^+ \circ \Phiik$, recall from \eqref{w:q+1:H:def}, \eqref{defn:mathcal.H}, and~\eqref{eq:hpipes:flowed:1} that the principal part of the helicity perturbation is given by
\begin{align*}
    w_{q+1,h}^{(p)} 
     = \sum_{i,k,\vecl,I}   \td a_{(\vec e_2)}   \, \left(\nabla \td\Phi_{(i,k)}\right)^{-1} \vec e_2 \varrho_{\vec e_2, \lambda_\qbn, r_q}^k\circ \td\Phi_{(i,k)}\, .
\end{align*}

\noindent\texttt{Step 1: Proof of \eqref{hab} and~\eqref{derivs:1}.}
Since the multiplier operator $\bp_{\ph}$ is bounded on $L^2$ with unit operator norm, we use Corollary~\ref{cor:corrections:Lp},~\eqref{eq:desert:cowboy:sum}, ~\eqref{def:thursday}, and \eqref{eq:qho:choice3} to estimate
\begin{align*}
    \left| \int_{\T^3} \bp_\varphi(w_{q+1,h}^{(p)}) \cdot \curl w_{q+1,h}^{(p)} \right|^2 
    &\lec \| w_{q+1,h}^{(p)} \|_{L^2}^2 \| \curl w_{q+1,h}^{(p)} \|_{L^2}^2 \\
    &\lec \left(\sum_{i,k,\xi,\vecl, I} \left| \supp \left( \eta_{i,k,\xi,\vecl} \zetab_\xi^I \right) \right| \right)^2 \left( \frac{\lambda_\qbn}{\lambda_\qho^{\sfrac 32}} \Gamma_{q-1}^{-\sfrac 12} \frac{\lambda_\qbn^2}{\lambda_\qho^{\sfrac 32}} \Gamma_{q-1}^{-\sfrac 12} \right)^2 \\
    &\lec \left(\frac{\la_{q+\bn}}{\la_\qho}\right)^6
    \Ga_{q-1}^{-2}
    \lec \left(\Ga_{q-1}^{-1} \Ga_{q-100}^{\sfrac 3{100}}\right)^2 < \left(\frac 1{10}\Ga_{q-4}^{-\sfrac 12} \right)^2 \,.
\end{align*}
This proves~\eqref{hab}.  In order to prove~\eqref{derivs:1}, we consider $M \geq 1$, since $M=0$ follows from~\eqref{hab}.  When $M\geq 1$, we convert time derivatives outside the integral to material derivatives inside the integral, since the material derivative is taken along a divergence-free vector field, obtaining
\begin{align*}
\left|  \ddt^M \int_{\T^3} w_{q+1,h}^{(p)} \cdot \curl w_{q+1,h}^{(p)} \right| &= \left| \int_{\T^3} \sum_{0 \leq M' \leq M} c_{M,M'} \Dtq^{M'} w_{q+1,h}^{(p)} \cdot \Dtq^{M-M'} \curl w_{q+1,h}^{(p)} \right| \, .
\end{align*}
Then using H\"older's inequality and Corollaries~\ref{cor:corrections:Lp} and~\ref{rem:summing:partition} gives the desired estimate.

\noindent\texttt{Step 2: Helicity decomposition.}  By using the dodging properties between $\{w_{(\vec e_2)}^{(p),I}\}$ from Lemma~\ref{lem:dodging} and \eqref{eq:dodging:newbies}, and denoting $\td A=(\nabla\td {\Phi}_{(i,k)})^{-1}$, we decompose 
\begin{align*}
H\left( w_{q+1,h}^{(p)}  \right)
    &= \int_{\T^3}  w_{q+1,h}^{(p)}  \cdot \curl w_{q+1,h}^{(p)} dx =\sum_{\substack{i,k,\vecl, I}} \int_{\T^3}  w_{(\vec e_2),h}^{(p),I}  \cdot \curl w_{(\vec e_2),h}^{(p),I} dx \\
&= \sum_{\substack{i,k,\vecl, I}} 
\int_{\T^3}  \td a_{(\vec e_2)} \td A \vec e_2 \varrho_{\vec e_2, \la_{q+\bn, r_q}}^k (\td\Phi_{(i,k)})  \cdot 
    \curl\left(\td a_{(\vec e_2)} \td A \vec e_2 \varrho_{\vec e_2, \la_{q+\bn, r_q}}^k(\td\Phi_{(i,k)})\right) dx\, . 
    \end{align*}
Suppressing all indices in the summand for convenience, we decompose the helicity further based on whether curl operator falls on $\td A e_2$ or not:     
\begin{align}
\sum_{\substack{i,k,\vecl, I}} 
    \int_{\T^3} \td a^2 {\varrho} (\td \Phi) \varrho(\td \Phi) \td A \vec e_2 \cdot \curl (\td A \vec e_2) dx + 
 \int_{\T^3} \td a \varrho(\td \Phi)  \td A \vec e_2 \cdot
 \na\left(\td a\varrho(\td \Phi)\right)
 \times
 (\td A \vec e_2) dx =: H_1 + H_2\, .
\label{h1h2}
\end{align}
Then the second term $H_2$ vanishes due to the fact that $\na\left(\td a\varrho(\td \Phi)\right)\times (\td A \vec e_2)$ is orthogonal to $\td A \vec e_2$.  Lastly, from the proof of Proposition~\ref{prop:pipe:helical}, we recall that 
$$\td e_2 \cdot \curl \td e_2 = \frac{\la_\qho^3}{\la_{q+\bn}^2}\,\text{ and } \td A \vec e_2 =(\na \Phi)^{-1} (\td e_2\circ \Phi)\,,$$
and we decompose $H_1$ into 
\begin{align*}
    H_1 
    &= \sum_{\substack{i,k,\vecl, I}} \int_{\T^3} \td a^2  \varrho^2(\td \Phi) \left(\td A \vec e_2 \cdot \curl (\td A \vec e_2) 
    -(\td e_2 \cdot \curl \td e_2)(\Phi)
    \right)
    dx \notag \\
    & \qquad + \frac{\la_\qho^3}{\la_{q+\bn}^2}
    \int_{\T^3} \td a^2((\mathbb{P}_{=0} + \mathbb{P}_{\neq 0}) (\varrho^2\circ\Psi))(\Phi)  
    dx\\
    &=: H_{11} + H_{12} + H_{13}\, . 
\end{align*}
As we will see, the $H_{12}$ will be the leading order term and the rest will be small.

\bigskip
\noindent\texttt{Step 3: Leading-order helicity contribution $H_{12}$.}  Using~\eqref{eq:a:xi:h:def} and the fact that $\zetab^2$ and $\eta^2$ are partitions of unity and $\bp_{= 0}(\rhob^2)=1$, we write
\begin{align}
 &\sum_{\substack{i,k ,\vecl, I}} \frac{\la_\qho^3}{\la_{q+\bn}^2} \int_{\T^3} \td a^2(\mathbb{P}_{=0}(  \varrho^2\circ\Psi))(\Phi)  
    dx \notag\\
 &= \sum_{\substack{i,k, \vecl, I}}  h_{q+1}^2 \int_{\T^3} \eta^2 (\rhob  \zetab)^2 \circ \Phi \,  dx  
 = \sum_{\substack{i,k, \vecl}}  h_{q+1}^2 
\int_{\T^3}  \eta^2 \left( \mathbb{P}_{=0}+\mathbb{P}_{\neq 0} \right)\left(\rhob^2\right)\circ (\Phi)\,  dx\notag \\
& =  (h(t)-H(u_q)) \left(
1-\frac{\Ga_{q-1}}{2\Ga_q}\right)   + \sum_{i,k,\vecl} h_{q+1}^2 
 \int_{\T^3} \eta^2(\Phi^{-1}) \mathbb{P}_{\neq 0}(\rhob^2) \, dx \, .\label{e:h:s:5}
\end{align}
We first show that the second term is small. Note that $\rhob$ is $(\sfrac{\T}{\la_{q+1}\Ga_q^{-4}})^3$-periodic by item \ref{i:bundling:1} in Proposition \ref{prop:bundling} and the spatial derivative cost of $\eta^2(\Phi^{-1})$ is $\la_{q+1}\Ga_q^{-5}$ from~\eqref{def:cumulative:hel},~\eqref{eq:sharp:Dt:psi:i:q:old}, and~\eqref{eq:checkerboard:derivatives}.  Therefore this term can be made small after integrating by parts many times.  Specifically, using the $L^1$ norm of the high-frequency function, the $L^\infty$ norm of the low-frequency object, Lemma~\ref{lem:checkerboard:estimates} to bound the number of different values of $\vecl$, and~\eqref{ineq:dpot:hel}, we bound the second term  by
\begin{align}
 \left|
\sum_{i,k,\vecl}  h_{q+1}^2 
 \int_{\T^3} \eta^2(\Phi^{-1}) \mathbb{P}_{\neq 0}(\rhob^2) 
\right|
&= 
\left|
\sum_{i,k,\vecl} h_{q+1}^2 
 \int_{\T^3}(-\Delta)^{\sfrac \dpot 2} \eta^2(\Phi^{-1}) (-\Delta)^{-\sfrac \dpot 2}\mathbb{P}_{\geq  \la_{q+1}\Ga_q^{-4}}(\rhob^2) 
\right| \notag \\
&\lec \la_\qbn^{10} \Gamma_q^{-\dpot} \Ga_{q+1}^{-1} \leq \lambda_\qbn^{-1} \, . \label{e:h:s:3}
\end{align}

\bigskip

\noindent\texttt{Step 4: Analyzing the remaining contributions ${H_{11}}$ and $H_{13}$.}
We first analyze $H_{11}$ using
\begin{align*}
(\curl (\td A \vec e_2))_l
&= \eps_{lmn} \pa_m(\td A(\vec e_2))_n\\
&=\eps_{lmn} \pa_m((\na \Phi)^{-1}_{no}(\td e_2\circ\Phi)_o)\\
&=\eps_{lmn} (\pa_m (\na \Phi)^{-1}_{no})(\td e_2\circ \Phi)_o
+\eps_{lmn}(\na \Phi)^{-1}_{no} (\na\Phi)_{vm} (\pa_v (\td e_2)_o)(\Phi)\, .
\end{align*}
Combining this with~\eqref{eq:Lagrangian:Jacobian:1}--\eqref{eq:Lagrangian:Jacobian:6}, \eqref{eq:hel.building2}--\eqref{eq:hel.building3}, and~\eqref{eq:qho:choice1}, 
we obtain
\begin{align*}
    &\norm{\td A \vec e_2 \cdot \curl (\td A \vec e_2) - (\td e_2 \cdot \curl \td e_2)(\Phi)}_{L^\infty(\supp(\td a))}\\
    &\lec 
    \norm{(\na \Phi)^{-1}_{lw} (\td e_2\circ \Phi)_w
    \eps_{lmn} (\pa_m (\na \Phi)^{-1}_{no})(\td e_2\circ \Phi)_o}_{L^\infty(\supp(\td a))}\\
    &\quad+\norm{[(\na \Phi)^{-1}_{lw}
    (\na \Phi)^{-1}_{no}(\na\Phi)_{vm}- \de_{lw}\de_{no}\de_{vm}]
    (\td e_2\circ \Phi)_w\eps_{lmn}  (\pa_v (\td e_2)_o)(\Phi)}_{L^\infty(\supp(\td a))}\\
    &\lec 
    (\tau_q^{-1} \mu_q \Ga_q^5)(\la_q \Ga_q)
 +(\tau^{-1}_q \mu_q \Ga_q^5)\cdot \frac{\la_\qho^2}{\la_{q+\bn}} \lec (\tau_q^{-1} \mu_q \Ga_q^5)\frac{\la_\qho^2}{\la_{q+\bn}}\, . 
\end{align*}
From this,~\eqref{eq:a:xi:h:def},~\eqref{e:hpipe:estimates:1},~\eqref{eq:resolved:helicity1a}, 
and~\cite[Lemma~A.1]{GKN23}, we can estimate $H_{11}$ by
\begin{equation}\label{e:h:s:4}
    |H_{11}| \lec \frac{\la_{q+\bn}^2}{\la_\qho^3}\cdot (\tau_q^{-1} \mu_q \Ga_q^5)\frac{\la_\qho^2}{\la_{q+\bn}}\, 
    \lec \frac{\la_{q+\bn}}{\la_\qho}(\tau_q^{-1} \mu_q \Ga_q^5)
    \underset{\eqref{eq:qho:choice2}}{\leq} \la_q\de_{q+\bn}(\tau_q^{-1} \mu_q \Ga_q^5)\underset{\eqref{eq:defn:tau}}{\leq} \Ga_{q+1}^{-1} \, .
\end{equation}
 
Next, we may estimate $H_{13}$ in a similar way as the high frequency part of $H_{12}$ by integrating by parts many times.   Specifically, we use that \blue{$(\varrho^2\circ\Psi)$ is $(\T / \lambda_{q+\half} \Gamma_q)^3$-periodic}, $\td a(\Phi^{-1})$ has derivative cost $\lambda_{q+\half}$ from item~\ref{item:check:3} in Lemma~\ref{lem:finer:checkerboard:estimates}, and the parameter condition~\eqref{ineq:dpot:hel} to obtain that
\begin{align}\label{e:h:s:7}
    |H_{13}|
    &= \left|\sum_{\substack{i,k,\vecl, I}}  \frac{\la_\qho^3}{\la_{q+\bn}^2}
    \int_{\T^3} \td a^2(\mathbb{P}_{\neq 0}) (\varrho^2\circ\Psi))( \Phi)  
    dx\right|
    = \left|\sum_{\substack{i,k,\vecl, I}}  \frac{\la_\qho^3}{\la_{q+\bn}^2}
    \int_{\T^3} \td a^2( \Phi^{-1})
    \mathbb{P}_{\neq 0}(\varrho^2\circ\Psi)  
    dx\right| \leq \la_{q+\bn}^{-1}
\end{align}
\bigskip

\noindent\texttt{Step 5: Conclusion.} Combining~\eqref{e:h:s:4},~\eqref{e:h:s:5},~\eqref{e:h:s:3}, and~\eqref{e:h:s:7}, we obtain
\begin{align*}
    h(t) - H(u_q)- H(w_{q+1,h}^{(p)})
   =h(t) - H(u_q) - H_1
   =(h(t) - H(u_q))\left(\frac{\Ga_{q-1}}{2\Ga_q}\right)
   + O(\Ga_{q+1}^{-1})\,, 
\end{align*}
and using the inductive assumption \eqref{eq:resolved:helicity1a}, we obtain $    \frac{\Ga_{q-1}}{4\Ga_q^2}\leq h(t) - H(u_q)- H(w_{q+1,h}^{(p)})
    \leq \frac 34 \Ga_q^{-1}$.
\end{proof}

\begin{lemma}[\textbf{Computing negligible Fourier modes}]\label{ph1}
There exists a choice of $\Dpot$ from Proposition~\ref{prop:pipeconstruction} and~\ref{prop:pipe:helical} so that the following holds.
\begin{enumerate}[(i)]
\item For any $m \in \mathbb{Z}^3$ satisfying $|m| \leq \lambda_\qbn \Gamma_{q-100}^{-\frac{1}{1000}}$, $
        \left| \mcf(w_{q+1})(m) \right| \leq {\lambda_\qbn^{-23}}$.
\item For any $m \in \mathbb{Z}^3$ satisfying $|m| \geq \lambda_\qbn \Gamma_q$,  $\left| \mcf(w_{q+1})(m) \right| \leq {|m|^{-24}}$.
\end{enumerate}
\end{lemma}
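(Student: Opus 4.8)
The claim is a statement about the Fourier support of the velocity increment $w_{q+1} = w_{q+1,R}+w_{q+1,h}$, so the natural strategy is to show that $w_{q+1}$ is, up to negligible errors, frequency-localized to the annulus $|m| \approx \lambda_\qbn$, and then to quantify the ``negligible'' part in pointwise Fourier terms. The plan is to treat the Reynolds and helical correctors in parallel, since their building blocks have analogous structure: each of $w_{(\xi),\diamond}^{I}$ is of the form (coefficient function) $\times$ (deformed pipe/helical bundle flowed by $\Phi_{(i,k)}$), where the coefficient function and the deformation maps $\Phi_{(i,k)}$ and $\Psi_q^\pm$ vary slowly — at frequency $\lesssim \lambda_{q+1}\Gamma_q^{-5}$ for the coefficient and the anisotropic checkerboard cutoffs — while the pipe density $\varrho^k$ (or $\mathbb{H}^k = \tilde e_2^\pm \varrho^k(\Psi_q^\pm)$) oscillates at frequency $\approx \lambda_\qbn$ in directions orthogonal to the pipe axis, with the helical deformation $\Psi_q^\pm$ contributing only frequencies $\lambda_\qho \ll \lambda_\qbn$ (cf.~\eqref{psi:q:bounds}). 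Thus the product lives morally at frequencies $\lambda_\qbn \pm O(\lambda_{q+1}\Gamma_q^{-5}) \pm O(\lambda_\qho)$, which after accounting for the mollification scales in Definition~\ref{def:wqbn} is contained in $[\lambda_\qbn\Gamma_{q-100}^{-1/1000}, \lambda_\qbn\Gamma_q]$ for $a$ large, using $\lambda_{q+1}/\lambda_\qbn \to 0$ superexponentially.

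For part~(i) (low modes), the key device is iterated integration by parts: write $\mcf(w_{q+1})(m) = \int_{\T^3} w_{q+1}(x) e^{-2\pi i m\cdot x}\,dx$, and for $|m| \leq \lambda_\qbn \Gamma_{q-100}^{-1/1000}$ exploit that each $w_{(\xi),\diamond}^{I}$ can be written with the high-frequency factor expressed as an iterated divergence of the tensor potential $\vartheta^k$ (items~\ref{item:pipe:1} of Proposition~\ref{prop:pipeconstruction} and~\ref{item:hpipe:1} of Proposition~\ref{prop:pipe:helical}), so that $\Dpot$ many integrations by parts against $e^{-2\pi i m\cdot x}$ move $\Dpot$ spatial derivatives onto the slowly-varying part, each paid for at cost $\lambda_{q+1}\Gamma_q^{5}$ (or $\lambda_\qho$), while every factor of $\vartheta^k$ gains $\lambda_\qbn^{-\Dpot}$. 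The net gain per derivative is $\lambda_{q+1}\Gamma_q^{5}/\lambda_\qbn$, or more precisely $|m|\lambda_\qbn^{-1} + \lambda_{q+1}\Gamma_q^{5}\lambda_\qbn^{-1} + \lambda_\qho\lambda_\qbn^{-1}$, all of which are $\leq \Gamma_{q-100}^{-1/1000}$ by the parameter choices, so choosing $\Dpot$ large forces $|\mcf(w_{q+1})(m)| \leq \lambda_\qbn^{-23}$. One must be careful to also absorb the mollifier $\mathcal{\tilde P}_{q+\bn,x,t}$ (which only helps, by boundedness of its symbol near $|m|\leq\lambda_\qbn$) and to use the $L^1$ amplitude bounds from Lemma~\ref{lem:a_master_est_p} and~\eqref{e:pipe:estimates:1}, \eqref{e:hpipe:estimates:1} together with the support summability~\eqref{eq:desert:cowboy:sum} to sum over $i,j,k,\xi,\vecl,I$ with only a $\lesssim 1$ loss.

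For part~(ii) (high modes), one argues dually: the high-frequency factor $\varrho^k$ is smooth with $\|\nabla^n \varrho^k\|_{L^1} \lesssim \lambda_\qbn^n r_q^{-1}$ for all $n \leq 3\Nfin$, and the coefficient and deformation are even smoother ($\lambda_{q+1}\Gamma_q^5 \ll \lambda_\qbn$, $\lambda_\qho \ll \lambda_\qbn$), so the product $w_{(\xi),\diamond}^{I}$ has $\|\nabla^n w_{(\xi),\diamond}^{I}\|_{L^1} \lesssim \lambda_\qbn^n (\text{amplitude})$ for $n$ up to $\approx \Nfin$; integrating by parts $n$ times in $\mcf(w_{q+1})(m) = (2\pi i m)^{-n}$-type identity (i.e. writing $e^{-2\pi i m\cdot x}$ as a derivative) gives $|\mcf(w_{q+1})(m)| \lesssim |m|^{-n}\lambda_\qbn^n \cdot (\text{amplitude summed}) \lesssim (\lambda_\qbn/|m|)^n$, and for $|m|\geq\lambda_\qbn\Gamma_q$ and $n$ a fixed large constant this is $\leq |m|^{-24}$ once $a$ (hence $\Gamma_q$) is large, after which the mollifier $\mathcal{\tilde P}_{q+\bn,x,t}$ provides additional decay on the scale $\lambda_\qbn\Gamma_{q+\bn-1}^{1/2}$ should it be needed. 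The number of required derivatives $n$ is absorbed into the choice of $\Nfin$, and the statement fixes $\Dpot$ accordingly. The main obstacle I expect is bookkeeping the deformation: one must verify that composing with $\Phi_{(i,k)}$ and $\Psi_q^\pm$ does not spread frequencies too far, which uses the sharpened deformation estimates of Corollary~\ref{cor:deformation} ($\|D\Phi_{(i,k)}-\Id\| \leq \tau_q^{-1}\Gamma_q^{-5}\mu_q \ll 1$) and~\eqref{psi:q:bounds} crucially — this is precisely where the shortened timescale $\mu_q$ and the choice $\lambda_{q+\bn-1}\ll\lambda_\qho\ll\lambda_\qbn$ enter — together with the fact that the potential representation~\eqref{eq:pipes:flowed:2}, \eqref{eq:hpipes:flowed:1} commutes with the flow.
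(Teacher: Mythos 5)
Your proposal takes essentially the same approach as the paper: for~(i), represent the pipe density via the $\Dpot$-fold iterated divergence of the tensor potential $\vartheta^k$, integrate by parts $\Dpot$ times against $e^{-2\pi i m\cdot x}$ after a change of variables that undoes $\Phi_{(i,k)}$, and use that each derivative costs at most $\lambda_\qbn\Gamma_{q-100}^{-1/1000}$ (dominated by $|m|$, since the amplitude and deformation costs $\lambda_{q+1}\Gamma_q^5$, $\lambda_\qho$ are smaller) against the $\lambda_\qbn^{-\Dpot}$ gain; for~(ii), integrate by parts $\approx\Nind$ times, bounding $\|D^{\Nind}w_{q+1}\|_{L^1}\lesssim\lambda_\qbn^{\Nind}\delta_\qbn^{1/2}$ and then spending the remaining powers of $\lambda_\qbn\Gamma_q$ to absorb the amplitude and the spare $\lambda_\qbn^{24}$.

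Two minor observations: the lemma concerns $w_{q+1}$, not $\hat w_\qbn$, so the mollifier $\mathcal{\tilde P}_{q+\bn,x,t}$ does not appear here (it enters later, in Proposition~\ref{prop:hyp:freq:loc}, where one uses $\|\mathcal{\tilde P}_{q+\bn,x,t}\|_{L^1\to L^1}=1$); and in~(ii) the decisive step is choosing $\Nind$ large enough that $\Gamma_q^{-\Nind+24}\lambda_\qbn^{30}\leq 1$ holds as a $q$-independent parameter inequality (this is~\eqref{eq:Nind:3}), with the choice of $a$ then only absorbing implicit constants — not the reverse order that your phrasing suggests. Neither affects the substance of the argument.
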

\begin{proof}
We begin with the first claim. We will only estimate $|\mcf(w_{q+1,R}^{(p)})(m)|$. The estimates of $|\mcf( w_{q+1,R}^{(c)})(m)|$ and $|\mcf(w_{q+1,h}^{(p)})(m)|$ then follow in a similar way. Recalling that
\begin{align*}
w_{q+1,R}^{(p)}   = \sum_{i,k,\xi,\vecl,I} \underbrace{a_{(\xi)} (t,y)\left(\chib_{(\xi)} \etab_{\xi}^{I} \right) \circ \Phiik  \nabla\Phi_{(i,k)}^{-1} }_{=: A}
    \mathbb{W}_{(\xi)}^I \circ \Phi_{(i,k)} \, , 
\end{align*}
we write
\begin{align*}
|\mcf(w_{q+1,R}^{(p)}(t))(m)|
&=\left|\sum_{i,k,\xi,\vecl,I}\int_{\R^3} A \mathbb{W}_{(\xi)}^I \circ \Phi_{(i,k)} e^{-2\pi i m\cdot x} \right|\\
&= \left|\sum_{i,k,\xi,\vecl,I}\int_{\R^3}  \xi \lambda_{q+\bn}^{-\Dpot }\div^\Dpot  \left(\vartheta^k_{\xi,\lambda_{q+\bn},r_{q}}\right) A\circ\Phi_{(i,k)}^{-1}e^{-2\pi im\cdot\Phi_{(i,k)}^{-1}} dx\right|\\
&= \lambda_{q+\bn}^{-\Dpot }\left|\sum_{i,k,\xi,\vecl,I}\int_{\R^3}    \left(\vartheta^k_{\xi,\lambda_{q+\bn},r_{q}}\right): \na^\Dpot\left( A\circ\Phi_{(i,k)}^{-1}e^{-2\pi im\cdot\Phi_{(i,k)}^{-1}}\right) dx
\right|\\
&\lec \la_{\qbn}^{-\Dpot} \left(\lambda_\qbn \Gamma_{q-100}^{-\frac{1}{1000}}\right)^{\Dpot} \de_{\qbn}^{1/2} < \frac{1}{\lambda_\qbn^{23}}\, . 
\end{align*}
The proof of the second estimate is even easier: integrating by parts $\Nind$ times, we obtain
\begin{align*}
    \left| \mcf(w_{q+1}(t))(m) \right| &= \left| \int_{\T^3} e^{-2\pi im\cdot x}w_{q+1}(t,x)\,dx \right|
    = \left| \int_{\T^3} \Delta^{-\lceil \Nind/2 \rceil}{e^{-2\pi im\cdot x}}\Delta^{\lceil \Nind/2 \rceil} w_{q+1}(t,x)\,dx \right|\\
    &\leq \frac1{|2\pi m|^{24}(\la_\qbn \Ga_q)^{\Nind-24}} \|D^{\Nind} w_{q+1}(t,x)\|_{L^1} \lesssim \frac1{|m|^{24}(\la_\qbn \Ga_q)^{\Nind-24}} \la_{\qbn}^{\Nind} \de_{\qbn}^{\sfrac{1}{2}} \\
    &\leq |m|^{-24}\Ga_q^{-\Nind+24}\la_{\qbn}^{24}\de_{\qbn}^{\sfrac{1}{2}} \underset{\eqref{eq:Nind:3}}{<} \frac{1}{|m|^{24}}\,.
\end{align*}
\end{proof}

\begin{proposition}[\textbf{Frequency localization}]\label{prop:hyp:freq:loc}
    Hypothesis~\ref{hyp:freq:loc} is true at step $q+1$.
\end{proposition}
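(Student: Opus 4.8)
The plan is to deduce both estimates in~\eqref{eq:resolved:helicity3} directly from the pointwise Fourier bounds of Lemma~\ref{ph1}, using only that the space-time mollification $\hat w_{q+\bn}$ of $w_{q+1}$ from Definition~\ref{def:wqbn} neither enlarges the set of active frequencies nor inflates the Fourier coefficients. First I would note that for $q'\leq q$ the assertion of Hypothesis~\ref{hyp:freq:loc} is inherited verbatim from the inductive hypothesis: the increment $\hat w_{q'+\bn-1}$ with $q'\leq q$ is one of the velocity increments of index at most $q+\bn-1$ that was fixed at an earlier stage and is untouched by the present step. Hence it suffices to establish the two estimates for $\hat w_{q+\bn}$, i.e.\ the case $q'=q+1$. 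For this I would use only that the spatial and temporal kernels of the mollifier in~\eqref{def.w.mollified} have $L^1$-norms bounded by a $q$-independent constant; since mollification multiplies each Fourier coefficient by the transform of the kernel and averages in time, this gives $|\mcf(\hat w_{q+\bn}(t))(m)| \lesssim \sup_s |\mcf(w_{q+1}(s))(m)|$ for every $m\in\mathbb{Z}^3$, with implicit constant independent of $q$.

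For the first estimate, the assumption on $\varphi_1$ localizes $\bp_{\varphi_1}(\hat w_{q+\bn})$ to frequencies $|k|\leq k_1 \leq \lambda_{q+\bn}\Gamma_{q+\bn}^{-1}$; since $\Gamma$ is non-decreasing in its index and bounded below by $1$, one has $\lambda_{q+\bn}\Gamma_{q+\bn}^{-1}\leq \lambda_{q+\bn}\Gamma_{q-100}^{-1/1000}$, so every such $k$ lies in the range covered by Lemma~\ref{ph1}(i), whence $|\mcf(w_{q+1})(k)|\leq \lambda_{q+\bn}^{-23}$. Summing the trivial bound $|\varphi_1(k)\,\mcf(\hat w_{q+\bn})(k)|\lesssim\lambda_{q+\bn}^{-23}$ over the $\lesssim k_1^3$ lattice points with $|k|\leq k_1$ gives $\|\bp_{\varphi_1}(\hat w_{q+\bn})\|_\infty\lesssim k_1^3\lambda_{q+\bn}^{-23}$, and since $k_1\leq\lambda_{q+\bn}$ (because $\Gamma_{q+\bn}\geq1$) we may write $k_1^3\leq k_1\lambda_{q+\bn}^2$ and absorb the implicit constant, taking $a$ large, to conclude $\|\bp_{\varphi_1}(\hat w_{q+\bn})\|_\infty\leq k_1\lambda_{q+\bn}^{-20}$. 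For the second estimate, the assumption on $\varphi_2$ localizes $\bp_{\varphi_2}(\hat w_{q+\bn})$ to frequencies $|j|\geq k_2\geq \lambda_{q+\bn}\Gamma_{q+\bn}\geq \lambda_{q+\bn}\Gamma_q$, so Lemma~\ref{ph1}(ii) applies there and $|\mcf(w_{q+1})(j)|\leq |j|^{-24}$; hence $\|\bp_{\varphi_2}(\hat w_{q+\bn})\|_\infty\leq\sum_{|j|\geq k_2}|\mcf(\hat w_{q+\bn})(j)|\lesssim\sum_{j\in\mathbb{Z}^3,\,|j|\geq k_2}|j|^{-24}\lesssim k_2^{-21}$, and since $k_2\geq\lambda_{q+\bn}$ this is at most $\lambda_{q+\bn} k_2^{-20}$ after absorbing the constant.

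I do not expect any genuine obstacle: the statement is a short post-processing of Lemma~\ref{ph1}, where all of the real content — the structure of the pipe and helical bundles, the deformations $\Phi_{(i,k)}$ and $\Psi_q^\pm$, and the choice of $\Dpot$ — already resides. The only points requiring (routine) verification are that the two frequency windows in~\eqref{eq:resolved:helicity3} fall inside the two ranges of Lemma~\ref{ph1}, which reduces to the inequalities $\lambda_{q+\bn}\Gamma_{q+\bn}^{-1}\leq\lambda_{q+\bn}\Gamma_{q-100}^{-1/1000}$ and $\lambda_{q+\bn}\Gamma_{q+\bn}\geq\lambda_{q+\bn}\Gamma_q$; the bound $|\mcf(\hat w_{q+\bn})(m)|\lesssim\sup_s|\mcf(w_{q+1}(s))(m)|$ for the mollifier; and the two harmless power absorptions $C k_1^3\lambda_{q+\bn}^{-23}\leq k_1\lambda_{q+\bn}^{-20}$ and $C k_2^{-21}\leq\lambda_{q+\bn} k_2^{-20}$, all of which follow from $k_1<\lambda_{q+\bn}<k_2$ and the freedom to choose $a$ large.
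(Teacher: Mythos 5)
Your proposal is correct and follows essentially the same route as the paper: reduce to $q'=q+1$, pass the mollification through (the paper phrases this as an $L^1\to L^1$ operator norm bound on $\mathcal{\tilde P}_{q+\bn,x,t}$, you phrase it at the Fourier-coefficient level — these are equivalent), invoke Lemma~\ref{ph1} on the two frequency windows, and sum crudely over lattice points to absorb the constants. Your explicit checks that $\lambda_{q+\bn}\Gamma_{q+\bn}^{-1}\leq \lambda_{q+\bn}\Gamma_{q-100}^{-\sfrac{1}{1000}}$ and $\lambda_{q+\bn}\Gamma_{q+\bn}\geq \lambda_{q+\bn}\Gamma_q$ are left implicit in the paper but are exactly what is needed; the absorption steps match the paper's arithmetic.
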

\begin{proof}
    Let $\varphi_i$ for $i=1,2$ be as in Hypothesis~\ref{hyp:freq:loc} at step $q+1$. By the definition of $\hat w_\qbn$ in~\eqref{def.w.mollified},
    \begin{align*}
        \|\bp_{\varphi_i} \hat w_\qbn \|_\infty &= \|\bp_{\varphi_i} \mathcal{\tilde P}_{q+\bn,x,t} w_{q+1}\|_\infty 
        \leq \|\mathcal{\tilde P}_{q+\bn,x,t}\|_{L^1\to L^1} \| \bp_{\varphi_i}  w_{q+1}\|_\infty = \| \bp_{\varphi_i}  w_{q+1}\|_\infty\,.
    \end{align*}
    Thus, it suffices to estimate $\| \bp_{\varphi_i}  w_{q+1}\|_\infty$. For $i=1$, we have
    \begin{align*}
        \| \bp_{\varphi_1}  w_{q+1}\|_\infty &= \left\| \sum_{m\in \Z^3} \varphi_1(m) \mathcal{F}(w_{q+1})(m) e^{2\pi i m\cdot x} \right\|_\infty = \left\| \sum_{|m|\leq k_1} \varphi_1(m) \mathcal{F}(w_{q+1})(m) e^{2\pi i m\cdot x} \right\|_\infty\\
        &\leq \sum_{|m|\leq k_1} |\varphi_1(m)| |\mathcal{F}(w_{q+1})(m)| \leq \norm{\varphi_1}_{\infty}\sum_{|m|\leq k_1} \frac1{\la_\qbn^{23}} \leq \frac{k_1}{\la_\qbn^{20}}\,.
    \end{align*}
    Similarly, for $i=2$, we have
    \begin{align*}
        \| \bp_{\varphi_2}  w_{q+1}\|_\infty 
        &\leq \sum_{|m|\geq k_2} |\varphi_2(m)| |\mathcal{F}(w_{q+1})(m)| \leq\norm{\varphi_2}_{\infty} \sum_{|m|\geq k_2} \frac1{|m|^{24}} \leq \frac{\la_\qbn}{k_2^{20}}\,.
    \end{align*}
\end{proof}

Before estimating the helicity from $w_{q+1,h}^{(c)}$ and $w_{q+1,R}$, we introduce the following lemma.
\begin{lemma}[\textbf{Commutators and dodging}]\label{lem.multcomm}
Let $p\in [1,\infty]$ and $\la$, $\La$, and $\Ga$ be positive constants. Let $G:\T^3\to\R$ and $H:\T^3\to\R$ satisfy $\|G\|_{C^1} \leq \const_G\lambda$ and $\|H\|_{L^p}\leq \const_H$.
Then, for any multiplier $\varphi:\R^3 \rightarrow B$ such that
\begin{equation}\label{schwartz:decay:redux:redux:commlem}
    \| \check \varphi \|_{L^1(\R^3)}
    \leq 1\,, \qquad
    \| \mathbf{1}_{F} \check \varphi \|_{L^1(\R^3)} \leq \La^{-100} \, , \qquad \textnormal{ where } F = \left( B_{{(\Lambda \Gamma^{-2})^{-1}}} \right)^c \, ,
\end{equation}
we have the following.
    \begin{enumerate}[(i)]
        \item The commutator $[\bp_\varphi, G] H:=\bp_\varphi(GH) - G \bp_\varphi H $ satisfies
\begin{equation}\label{comm.est}
        \|[\bp_\varphi, G] H\|_{L^p} \lesssim \la\Ga^2\La^{-1} \const_G \const_H\,. 
    \end{equation}
    \item 
    The multiplier operator can be decomposed into  $\bp_\varphi = \bp_\varphi^{\text{loc}}+\bp_\varphi^{\text{nl}}$ such that the local part satisfies
\begin{equation}\label{comm.loc.supp}
\supp(\bp_\varphi^{\text{loc}}(GH)) \subset B(\supp(GH),\La^{-1}\Ga^2) \,
    \end{equation}
and the nonlocal part satisfies   \begin{equation}\label{comm.nl.est}
\|\bp_\varphi^{\text{nl}} (G H)\|_{L^p} \lesssim \la\La^{-100} \const_G \const_H\,.
    \end{equation}
    
    \end{enumerate}
    
\end{lemma}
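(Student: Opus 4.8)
The plan is to realize $\bp_\varphi$ as a convolution operator and split its kernel at the scale $\Lambda^{-1}\Gamma^2$ singled out by the hypothesis \eqref{schwartz:decay:redux:redux:commlem}. By Poisson summation, $\bp_\varphi f = K_{\text{per}}\ast f$ on $\T^3$, where $K := \check\varphi$ and $K_{\text{per}}(x) := \sum_{n\in\Z^3}K(x+n)$ is the periodization; passing to the periodization only decreases $L^1$ masses, so that $\|K_{\text{per}}\|_{L^1(\T^3)}\le\|\check\varphi\|_{L^1(\R^3)}\le 1$ and $\|\mathbf 1_{\{|z|>\Lambda^{-1}\Gamma^2\}}K_{\text{per}}\|_{L^1(\T^3)}\le\|\mathbf 1_F\check\varphi\|_{L^1(\R^3)}\le\Lambda^{-100}$, with $F=(B_{\Lambda^{-1}\Gamma^2})^c$. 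Thus I would reduce both parts to statements about $\bp_\varphi f = K\ast f$ using only the two bounds $\|K\|_{L^1}\le 1$ and $\|\mathbf 1_F K\|_{L^1}\le\Lambda^{-100}$.

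For part (i), I would write the commutator as the off-diagonal integral $[\bp_\varphi,G]H(x) = \int K(x-y)\bigl(G(y)-G(x)\bigr)H(y)\,dy$ and split the domain according to whether $|x-y|\le\Lambda^{-1}\Gamma^2$ or $|x-y|>\Lambda^{-1}\Gamma^2$. On the near region the mean value theorem gives $|G(y)-G(x)|\le\|\nabla G\|_\infty |x-y|\le\const_G\lambda\,\Lambda^{-1}\Gamma^2$, so Young's convolution inequality with $\|K\|_{L^1}\le 1$ bounds this piece in $L^p$ by $\lesssim\const_G\const_H\lambda\Lambda^{-1}\Gamma^2$. On the far region I would instead use the crude bound $|G(y)-G(x)|\le 2\|G\|_\infty\le 2\const_G\lambda$ (the $C^1$ norm controls the sup norm) together with Young's inequality applied to the kernel restricted to $F$, of $L^1$ mass $\le\Lambda^{-100}$, to bound this piece by $\lesssim\const_G\const_H\lambda\Lambda^{-100}$. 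Since $\Lambda^{-100}\le\Lambda^{-1}\Gamma^2$ (trivial when $\Lambda^{-1}\Gamma^2\ge 1$, and otherwise because $\Lambda,\Gamma\ge 1$ in every application), summing the two contributions gives \eqref{comm.est}.

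For part (ii), I would decompose the kernel itself: set $K^{\text{loc}} := \mathbf 1_{B_{\Lambda^{-1}\Gamma^2}}K$ and $K^{\text{nl}} := \mathbf 1_F K$, and let $\bp_\varphi^{\text{loc}}$, $\bp_\varphi^{\text{nl}}$ be the convolution operators against the respective periodizations, so that $\bp_\varphi = \bp_\varphi^{\text{loc}}+\bp_\varphi^{\text{nl}}$. Since $K^{\text{loc}}$ is supported in the ball of radius $\Lambda^{-1}\Gamma^2$, convolution against it enlarges any support by at most $\Lambda^{-1}\Gamma^2$, which is exactly \eqref{comm.loc.supp} (and is automatic when $\Lambda^{-1}\Gamma^2$ exceeds the diameter of $\T^3$). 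For the nonlocal part, $\|K^{\text{nl}}\|_{L^1}=\|\mathbf 1_F\check\varphi\|_{L^1}\le\Lambda^{-100}$ while $\|GH\|_{L^p}\le\|G\|_\infty\|H\|_{L^p}\le\const_G\const_H\lambda$, so Young's inequality yields $\|\bp_\varphi^{\text{nl}}(GH)\|_{L^p}\lesssim\const_G\const_H\lambda\Lambda^{-100}$, which is \eqref{comm.nl.est}.

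This is the classical kernel-splitting proof of a commutator/localization estimate, so I do not anticipate a real obstacle; the only bookkeeping point worth attention is the transition between $\T^3$ and $\R^3$, i.e.\ verifying that periodizing $\check\varphi$ preserves both $L^1$ bounds in \eqref{schwartz:decay:redux:redux:commlem} and the support statement \eqref{comm.loc.supp}. This is harmless because the localization scale $\Lambda^{-1}\Gamma^2$ is, in every use of the lemma, far below the period, so the periodization of the compactly supported piece $K^{\text{loc}}$ is a single translate on each fundamental cell and the mass outside $B_{\Lambda^{-1}\Gamma^2}$ is only reduced. The complex-valuedness of $\varphi$ plays no role, since every estimate passes through $\|\check\varphi\|_{L^1}$ rather than $\varphi$ itself.
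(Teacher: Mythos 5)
Your proof is correct and follows essentially the same kernel-splitting strategy as the paper: decompose $\check\varphi$ into a piece supported in $B_{\Lambda^{-1}\Gamma^2}$ and a tail supported in $F$, use the mean value theorem and $\|K\|_{L^1}\leq 1$ on the near piece, and use $\|\mathbf{1}_F\check\varphi\|_{L^1}\leq\Lambda^{-100}$ on the far piece. The only cosmetic difference is that the paper splits the commutator at the operator level ($[\bp_{\varphi^{\text{loc}}},G]+[\bp_{\varphi^{\text{nl}}},G]$) and estimates the two terms of the nonlocal commutator separately, whereas you split a single off-diagonal integral by the size of $|x-y|$ and combine the far-region terms with $|G(y)-G(x)|\leq 2\|G\|_\infty$; your added remarks on periodization and on $\Lambda^{-100}\leq\Lambda^{-1}\Gamma^2$ are sound and tidy up bookkeeping the paper leaves implicit.
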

\begin{proof}
Denote by $\varphi^{loc}$ the multiplier with kernel $\check \varphi^{loc} = \mathbf{1}_{B_{\La^{-1}\Ga^2}} \check \varphi$ and by $\varphi^{nl}$ the multiplier with kernel $\check \varphi^{nl} = \mathbf{1}_{F}\check\varphi$. Then $\varphi = \varphi^{loc} + \varphi^{nl}$. Now we define $\bp_\varphi^{loc} := \bp_{\varphi^{loc}}$ and $\bp_\varphi^{nl} := \bp_{\varphi^{nl}}$. Note that $[\bp_\varphi, G] = [\bp_{\varphi^{loc}}, G] + [\bp_{\varphi^{nl}}, G]$. Since $\supp(\check \varphi^{loc})\subset B_{\La^{-1}\Ga^2}$, we have~\eqref{comm.loc.supp}. We estimate the local part of the commutator:
    \begin{align*}
        \|[\bp_{\varphi^{loc}}, G]H\|_{L^p} &= \left\| \int_{\R^3} \check \varphi^{loc}(y)\left(G(x-y) - G(x) \right) H(x-y)\,dy \right\|_{L^p(\T^3)}\\
        &\leq \left\| \int_{\R^3} \left|\check \varphi^{loc}(y)\left(G(x-y) - G(x) \right) H(x-y)\right|\,dy \right\|_{L^p(\T^3)}\\
        &\leq \|\na G\|_{L^\infty} \left\| \int_{B_{\La^{-1}\Ga^2}} |\check \varphi(y)||y| |H(x-y)|\,dy \right\|_{L^p} \\
        &\leq \|\na G\|_{L^\infty}\La^{-1}\Ga^2 \left\| \int |\check \varphi(y)| |H(x-y)|\,dy \right\|_{L^p}\\
        &\lesssim \la \Ga^2{\La}^{-1}\const_G \const_H\,.
    \end{align*} 
The estimate of the nonlocal part \eqref{comm.nl.est}, on the other hand, can be obtained as follows:
\begin{align*}
\|\bp_{\varphi^{nl}}(GH)\|_{L^p} &= \left\|\int_{\R^3} \check\varphi^{nl}(y)(GH)(x-y)\,dy\right\|_{L^p}\lesssim \|\check\varphi^{nl}\|_{L^1}\|GH\|_{L^p} \leq \la\La^{-100}\const_G\const_H\,.
\end{align*}
    Similarly,
    \begin{align*}
        \|G\bp_{\varphi^{nl}}H\|_{L^p} &\leq \|G\|_{L^\infty} \|\bp_{\varphi^{nl}}H\|_{L^p} \lesssim \const_G \|\check\varphi^{nl}\|_{L^1}\|H\|_{L^p} \leq \La^{-100}\const_G\la\|H\|_{L^p}\,.
    \end{align*}
    Putting these estimates together yields~\eqref{comm.est}.
\end{proof}

\begin{lemma}[\textbf{Computing negligible contributions to helicity}]\label{lem:helicity.pc}
Let $\varphi:\R^3 \rightarrow B$ be any multiplier such that\footnote{In a slight abuse of notation, we allow $\varphi \equiv 1$, so that $\check \varphi = \delta_0$ is the Dirac delta.}
\begin{equation}\label{schwartz:decay:redux:redux}
    \| \check \varphi \|_{L^1(\R^3)} \leq 1\, , 
    \qquad
    \| \mathbf{1}_{F_{q+1}} \check \varphi \|_{L^1(\R^3)} \leq \lambda_\qbn^{-100} \, , \qquad \textnormal{ where } F_{q+1} = \left( B_{(\lambda_{q+\bn} \Gamma_{q+\bn}^{-2})^{-1}} \right)^c \, .
\end{equation}
Then the new perturbation $w_{q+1} = w_{q+1}^{(p)} + w_{q+1}^{(c)}+ w_{q+1,h}^{(p)} + w^{(c)}_{q+1,h}$ satisfies
\begin{equation}\label{hab1}
\left| \int_{\T^3} \bp_\varphi(w_{q+1}) \cdot \curl w_{q+1} -  \int_{\T^3} \bp_\varphi(w_{q+1,h}^{(p)}) \cdot \curl w_{q+1,h}^{(p)} \right| \lec \Ga_{q+1}^{-2} \, .
\end{equation}
In addition, for all $M \leq \sfrac{\Nfin}{3}$,
\begin{equation}\label{hab1:derivs}
\left| \ddt^M \left( \int_{\T^3} (w_{q+1}) \cdot \curl w_{q+1} -  \int_{\T^3} (w_{q+1,h}^{(p)}) \cdot \curl w_{q+1,h}^{(p)} \right) \right| \les \MM{M, \Nindt, \mu_q^{-1}\Gamma_q^{20}, \Tau_q^{-1}\Ga_q^8} \, .
\end{equation}
\end{lemma}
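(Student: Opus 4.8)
The idea is to expand the helicity of $w_{q+1}=w_{q+1,h}^{(p)}+r$, where $r:=w_{q+1,R}^{(p)}+w_{q+1,R}^{(c)}+w_{q+1,h}^{(c)}$ collects all the ``remainder'' increments, and to show that every cross term and every self-interaction term not already isolated in $\int \bp_\varphi(w_{q+1,h}^{(p)})\cdot\curl w_{q+1,h}^{(p)}$ contributes at most $\Gamma_{q+1}^{-2}$. Writing
\begin{align*}
\int_{\T^3}\bp_\varphi(w_{q+1})\cdot\curl w_{q+1}
&= \int_{\T^3}\bp_\varphi(w_{q+1,h}^{(p)})\cdot\curl w_{q+1,h}^{(p)}
+ \int_{\T^3}\bp_\varphi(r)\cdot\curl r\\
&\quad + \int_{\T^3}\bp_\varphi(w_{q+1,h}^{(p)})\cdot\curl r
+ \int_{\T^3}\bp_\varphi(r)\cdot\curl w_{q+1,h}^{(p)}\,,
\end{align*}
it suffices to bound the last three integrals. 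First I would dispose of the cases where $\bp_\varphi$ acts essentially trivially: by Lemma~\ref{ph1}, $w_{q+1}$ has negligible Fourier content below $\lambda_\qbn\Gamma_{q-100}^{-1/1000}$ and above $\lambda_\qbn\Gamma_q$, so the pieces of the multiplier $\varphi$ supported outside the annulus $[\lambda_\qbn\Gamma_{q-100}^{-1/1000},\lambda_\qbn\Gamma_q]$ produce contributions bounded by (a power of $\lambda_\qbn$) times $\lambda_\qbn^{-23}$, which is $\ll\Gamma_{q+1}^{-2}$. Hence we may assume $\varphi$ is frequency-localized to that annulus, and in particular the hypothesis~\eqref{schwartz:decay:redux:redux} lets us apply Lemma~\ref{lem.multcomm} with $\Lambda=\lambda_\qbn$, $\Gamma=\Gamma_\qbn$ to commute $\bp_\varphi$ past the slowly-varying coefficient functions at a cost of $\lambda_\qbn\Gamma_\qbn^2\lambda_\qbn^{-1}=\Gamma_\qbn^2$ relative to the $L^2\times L^2$ Hölder bound -- but this needs to be combined with the smallness from intermittency, so the real gain is elsewhere.

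The heart of the matter is the two cross terms and the $\int\bp_\varphi(r)\cdot\curl r$ term. For these I would exploit two facts. (a) The amplitude hierarchy: by Corollary~\ref{cor:corrections:Lp}, the divergence correctors $w_{q+1,\diamond}^{(c)}$ are smaller than the principal parts by a factor $r_q\ll 1$ in $L^2$ (and cost only $\lambda_\qbn$ per derivative), while the Reynolds principal part $w_{q+1,R}^{(p)}$ has $L^2$ size $\lesssim\Gamma_q^{20}\delta_\qbn^{1/2}r_q^{1/r-1}\cdots$ and the helical principal part has $L^2$ size $\lesssim \lambda_\qbn\lambda_\qho^{-3/2}\Gamma_{q-1}^{-1/2}$. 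Using the decoupling/dodging of supports from Lemma~\ref{lem:dodging} (so that distinct pipe bundles do not interact and the $L^2$ sums are genuinely $\ell^2$-orthogonal up to bounded overlap, as in~\eqref{eq:desert:cowboy:sum}), and then estimating $|\int \bp_\varphi(f)\cdot\curl g|\le \|f\|_{L^2}\|\curl g\|_{L^2}$, each of the three integrals is controlled by a product of an $L^2$ norm (with one factor of $r_q$, or the helical smallness $\lambda_\qbn\lambda_\qho^{-3/2}$) times a $\dot H^1$ norm which costs $\lambda_\qbn$. The parameter inequalities -- the relations among $\lambda_\qho$, $\lambda_\qbn$, $r_q$, $\delta_\qbn$, $\Gamma_q$ recorded in Section~\ref{sec:not.general} and (to be quoted as) \eqref{eq:qho:choice1}--\eqref{eq:qho:choice3}, together with $r_q=\lambda_{q+\half}\Gamma_q/\lambda_\qbn$ -- are designed so that each such product is $\le\Gamma_{q+1}^{-2}$. (b) For the self-interaction $\int\bp_\varphi(r)\cdot\curl r$, the same counting works since every term carries at least one factor of $r_q$ or one factor of $\lambda_\qho^{-3/2}\lambda_\qbn$.

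The main obstacle I anticipate is the cross term $\int\bp_\varphi(w_{q+1,h}^{(p)})\cdot\curl r$ (and its transpose), because $w_{q+1,h}^{(p)}$ is not small in $\dot H^{1/2}$ -- it is precisely the increment carrying order-one helicity -- so a naive $\dot H^{1/2}\times\dot H^{1/2}$ estimate fails, and one must use that $r$ is the genuinely small factor: bound $\|\bp_\varphi(w_{q+1,h}^{(p)})\|_{L^2}\le\|w_{q+1,h}^{(p)}\|_{L^2}\lesssim \lambda_\qbn\lambda_\qho^{-3/2}$ and $\|\curl r\|_{L^2}\lesssim \lambda_\qbn\cdot r_q\cdot(\text{amplitude})$, then check via the parameter inequalities that the product is $\le\Gamma_{q+1}^{-2}$; care is needed because $w_{q+1,h}^{(p)}$ and $w_{q+1,R}$ use distinct vector directions (Definition~\ref{def:vec}) and distinct supports (Lemma~\ref{lem:dodging}), so in fact the leading contributions vanish and one only sees the corrector-sized remainders, which is what saves the estimate. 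Finally, \eqref{hab1:derivs} follows by the same device used in \texttt{Step 1} of Lemma~\ref{lem:helicity.h}: for $M\ge1$, convert the $M$ time derivatives outside the integral into material derivatives $\Dtq$ inside (legitimate since $\Dtq$ is along the divergence-free field $\hat u_q$), distribute by Leibniz, and apply the material-derivative estimates of Corollary~\ref{cor:corrections:Lp} together with the aggregation Corollary~\ref{rem:summing:partition}; the $M=0$ case is \eqref{hab1} with $\varphi\equiv1$ combined with \eqref{hel:inc:est}/\eqref{derivs:1}. I would organize the write-up as: Step 1, reduce to $\varphi$ localized in the annulus via Lemma~\ref{ph1}; Step 2, expand into the four integrals and bound the three remainder integrals using Lemma~\ref{lem.multcomm}, Corollary~\ref{cor:corrections:Lp}, Lemma~\ref{lem:dodging}, and the parameter inequalities; Step 3, the material-derivative argument for \eqref{hab1:derivs}.
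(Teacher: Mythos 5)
Your proposal has a genuine gap that makes it irreparable as written: the claim in your point (b), that every term in $\int\bp_\varphi(r)\cdot\curl r$ ``carries at least one factor of $r_q$ or one factor of $\lambda_\qho^{-3/2}\lambda_\qbn$,'' is false for the Reynolds principal-principal self-interaction $\int\bp_\varphi(w_{q+1,R}^{(p)})\cdot\curl w_{q+1,R}^{(p)}$, and the crude H\"older estimate you propose there actually diverges. Indeed $\|w_{q+1,R}^{(p)}\|_{L^2}\sim\Gamma_q^{O(1)}\delta_\qbn^{1/2}$ and $\|\curl w_{q+1,R}^{(p)}\|_{L^2}\sim\Gamma_q^{O(1)}\delta_\qbn^{1/2}\lambda_\qbn$, so the $L^2\times\dot H^1$ product is $\sim\delta_\qbn\lambda_\qbn=\lambda_0^{2\beta}\lambda_\qbn^{1-2\beta}$, which grows superexponentially in $q$ since $\beta<\sfrac12$. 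No amount of frequency localization, $\bp_\varphi$-commutation, or support-disjointness salvages this, because it is a self-interaction of the leading-amplitude Reynolds perturbation with itself.

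What actually makes this term small — and what is the central mechanism the lemma is encoding — is the cancellation of the helicity of the deformed pipe flow. In the paper's Case 1, after peeling off the scalar $B=a_{(\xi)}(\chib_{(\xi)}\etab_\xi^I)\circ\Phi\,\varrho_{(\xi)}^I\circ\Phi$ and the vector $A\xi=(\nabla\Phi)^{-1}\xi$, one writes $\curl(BA\xi)=\nabla B\times(A\xi)+B\,\curl(A\xi)$. The first contribution vanishes \emph{exactly} because $\nabla B\times(A\xi)\perp A\xi$. The second contribution is then controlled by $|A\xi\cdot\curl(A\xi)|=|A\xi\cdot\curl(A\xi)-\xi\cdot\curl\xi|\lec\tau_q^{-1}\Gamma_q^{-5}\mu_q$, which uses that a constant vector field $\xi$ has zero helicity density and that $A$ is close to the identity on the \emph{shortened} timescale $\mu_q\ll\tau_q$ (Corollary~\ref{cor:deformation}). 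This is precisely the point of the timescale shortening advertised in the introduction and is what produces the good factor $\tau_q^{-1}\mu_q\lambda_q\Gamma_q^6\delta_\qbn\leq\Gamma_{q+1}^{-2}$ in~\eqref{514:1} instead of the fatal $\delta_\qbn\lambda_\qbn$. The Reynolds principal-corrector cross term (Case 3) needs a parallel orthogonality argument (the identity $A_j^m\xi^j\partial_m\Phi^r(\partial_s\mathbb{U}_r)(\Phi)=\xi_r(\partial_s\mathbb{U}_r)(\Phi)=0$, and a transport-from-the-initial-time estimate giving a factor $\tau_q^{-1}\mu_q$), not merely amplitude counting. Your reduction of the lemma to a four-term $L^2\times\dot H^1$ H\"older bound therefore cannot work for Cases 1 and 3; only the corrector-corrector terms and the genuinely helical pieces are amenable to the pure amplitude argument you describe.
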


\begin{proof}\, 
Using~\eqref{defn:w} and
the dodging properties from Lemma~\ref{lem:dodging}, specifically~\eqref{eq:dodging:newbies}, we have that
\begin{align}
    &H(\bp_{\varphi}w_{q+1}, w_{q+1})- H(\bp_{\varphi}w_{q+1,h}^{(p)},w_{q+1,h}^{(p)})\\
    &= H(\bp_{\varphi}w_{q+1, R}^{(p)},w_{q+1, R}^{(p)})
    +2H(\bp_{\varphi}w_{q+1,R}^{(p)},w_{q+1,R}^{(c)})
    +H(\bp_{\varphi}w_{q+1,R}^{(c)},w_{q+1,R}^{(c)}) \notag \\
    &\quad
+2H(\bp_{\varphi}w_{q+1,h}^{(p)},w_{q+1,h}^{(c)})
    +H(\bp_{\varphi}w_{q+1,h}^{(c)},w_{q+1,h}^{(c)}) \, . \label{eq:helicity.decomp}
\end{align}
We therefore estimate each term on the right-hand side of \eqref{eq:helicity.decomp}. 

\bigskip

\noindent\texttt{Case 1: Euler-Reynolds principal-principal part $H(\bp_{\varphi}w_{q+1,R}^{(p)},w_{q+1,R}^{(p)})$.}

\noindent\texttt{Step 1a: Commutator and off-diagonal terms.} By using the dodging properties between $\{w_{(\xi)}^{(p),I}\}$ from~\ref{eq:dodging:newbies}, and denoting $ A=((\nabla {\Phi}_{(i,k)})^{-1})$, we compute the mollified helicity of $w_{q+1,R}^{(p)}$
\begin{align*}
&H\left( \bp_{\varphi} \left( w_{q+1,R}^{(p)} \right) , w_{q+1,R}^{(p)}  \right)\\
&= \int_{\T^3} \bp_{\varphi} \left( w_{q+1,R}^{(p)} \right) \cdot \curl w_{q+1,R}^{(p)} dx \\
&=\underbrace{\sum_{\substack{i,j,k,\xi,\vecl,I}} \int_{\T^3} \bp_{\varphi} \left( w_{(\xi)}^{(p),I} \right) \cdot \curl w_{(\xi)}^{(p),I} dx}_{\text{(diagonal)}}+ \underbrace{\sum_{\substack{i,j,k,\xi,\vecl,I}} \int_{\T^3} \bp_{\varphi} \left(w_{q+1,R}^{(p)} - w_{(\xi)}^{(p),I} \right) \cdot \curl w_{(\xi)}^{(p),I} dx}_{\text{(off-diagonal)}}\,.
\end{align*}
We first deal with the off-diagonal term and show that it is negligibly small. Indeed, by the dodging property~\eqref{eq:dodging:newbies}, we have that 
$$ \supp \left(w_{q+1,R}^{(p)} - w_{(\xi)}^{(p),I} \right) \cap B\left(\supp (w_{(\xi)}^{(p),I}), \frac14 \la_\qbn^{-1}\Ga_q^2\right) = \emptyset\,.  $$
Then, Lemma~\ref{lem.multcomm} implies that 
\begin{align*}
    \|\bp_{\varphi} \left(w_{q+1,R}^{(p)} - w_{(\xi)}^{(p),I} \right)&\|_{L^\infty(\supp w_{(\xi)}^{(p),I})} \leq \|\bp_{\varphi^{nl}} \left(w_{q+1,R}^{(p)} - w_{(\xi)}^{(p),I} \right)\|_{L^\infty}\\
    &\leq \|\mathbf{1}_{F_{q+1}} \check \varphi\|_{L^1}\|w_{q+1,R}^{(p)} - w_{(\xi)}^{(p),I}\|_{L^\infty} \lesssim \la_\qbn^{-100} \Gamma_q^{\frac \badshaq 2 +   {16}} r_{q}^{-1} \, .
\end{align*}
Thus, we obtain
\begin{align*}
    |\text{(off-diagonal)}| \leq \|\bp_{\varphi} \left(w_{q+1,R}^{(p)} - w_{(\xi)}^{(p),I} \right)\|_{L^\infty(\supp w_{(\xi)}^{(p),I})} \|\curl w_{(\xi)}^{(p),I}\|_1 &\lec \la_\qbn^{-99} \Gamma_q^{\frac \badshaq 2 +   {20}}  \underset{\eqref{def:thursday}}{\leq}  \Ga_{q+1}^{-2}
\end{align*}
which is negligibly small. We move on to estimate the diagonal term:
\begin{align*}
\text{(diagonal)} &= \sum_{\substack{i,j,k,\xi,\vecl,I}} 
\int_{\T^3} \bp_{\varphi} \left( a_{(\xi)} \left(\chib_{(\xi)} \etab_{\xi}^{I} \right) \circ \Phiik  (\nabla {\Phi}_{(i,k)})^{-1} \xi \varrho_{(\xi)}^I \circ \Phi_{(i,k)}  \right) \cdot 
    \curl w_{(\xi)}^{(p),I} dx\\
    &= \underbrace{\sum_{\substack{i,j,k,\xi,\vecl,I}} 
\int_{\T^3} A \xi \bp_{\varphi} \left( a_{(\xi)} \left(\chib_{(\xi)} \etab_{\xi}^{I} \right) \circ \Phiik  \varrho_{(\xi)}^I \circ \Phi_{(i,k)} \right) \cdot 
    \curl w_{(\xi)}^{(p),I} dx}_{\text{(main)}}\\
    &\qquad + \underbrace{\sum_{\substack{i,j,k,\xi,\vecl,I}} 
\int_{\T^3} [\bp_{\varphi}, A \xi] \left( a_{(\xi)} \left(\chib_{(\xi)} \etab_{\xi}^{I} \right) \circ \Phiik  \varrho_{(\xi)}^I \circ \Phi_{(i,k)} \right) \cdot 
    \curl w_{(\xi)}^{(p),I} dx}_{\text{(commutator)}}
    \end{align*}
We now estimate the commutator term above using Lemma~\ref{lem.multcomm} with the following choices:
\begin{align*}
    p=2,\,&\varphi = \varphi,\,\La=\la_\qbn,\,F=F_{q+1},\,G= (\nabla {\Phi}_{(i,k)})^{-1} \xi,\,\const_G = \tau_q^{-1}\mu_q,\, \la=\la_q,\, \Ga= \Ga_{q+\bn}\\
    & H = a_{(\xi)} \left(\chib_{(\xi)} \etab_{\xi}^{I} \right) \circ \Phiik  \varrho_{(\xi)}^I \circ \Phi_{(i,k)},\,
    \const_H = \left| \supp \left(\eta_{i,j,k,\xi,\vecl}\zetab_\xi^{I} \right) \right|^{\sfrac 12} \delta_{q+\bn}^{\sfrac 12}\Gamma_q^{j+ {10}}\,.
\end{align*}
We then obtain from~\eqref{comm.est} that
\begin{align*}
    \left\|[\bp_{\varphi}, A \xi] \left( a_{(\xi)} \left(\chib_{(\xi)} \etab_{\xi}^{I} \right) \circ \Phiik  \varrho_{(\xi)}^I \circ \Phi_{(i,k)} \right)\right\|_{L^2} \leq \frac{\tau_q^{-1}\mu_q\la_q}{\la_\qbn} \left| \supp \left(\eta_{\pxi}\zetab_\xi^{I} \right) \right|^{\sfrac 12} \delta_{q+\bn}^{\sfrac 12}\Gamma_q^{j+ {10}}
\end{align*}
Also, from~\eqref{eq:w:oxi:est:master}, we get that $\left\|\curl w_{(\xi)}^{(p),I}\right\|_{L^2} \lec \left| \supp \left(\eta_{\pxi}\zetab_\xi^{I} \right) \right|^{\sfrac 12} \delta_{q+\bn}^{\sfrac 12}\Gamma_q^{j+ {10}} \la_{\qbn}$. Note that $\curl w_{(\xi)}^{(p),I}$ is supported on $\supp \left( \eta_{\pxi} \, \zetab_{\xi}^{I }\circ\Phiik \right)$ and so, by Corollary~\ref{rem:summing:partition} with $p=1, \theta_1=0, \theta_2=2$, 
\begin{align*}
    |\text{(commutator)}| &\leq \sum_{\substack{i,j,k,\xi,\vecl,I}} 
\left\|[\bp_{\varphi}, A \xi] \left( a_{(\xi)} \left(\chib_{(\xi)} \etab_{\xi}^{I} \right) \circ \Phiik  \varrho_{(\xi)}^I \circ \Phi_{(i,k)} \right)\right\|_{L^2} 
    \left\|\curl w_{(\xi)}^{(p),I}\right\|_{L^2} \notag\\
    &\leq \frac{\tau_q^{-1} \mu_q \la_q}{\lambda_\qbn} \delta_\qbn \lambda_{\qbn}\Ga_q^{26} \underset{\eqref{eq:defn:tau}}{\leq} \Gamma_{q+1}^{-4}
\end{align*}
\medskip

\noindent\texttt{Step 1b: Main term and time derivatives.} Now we estimate the main term with mollifiers but no time derivatives. Writing $B^I_{(\xi)} = a_{(\xi)} \left(\chib_{(\xi)} \etab_{\xi}^{I} \right) \circ \Phiik  \varrho_{(\xi)}^I \circ \Phi_{(i,k)}$ and suppressing all indices in the summand, we decompose based on whether curl operator falls on $A \xi$ or not:     
\begin{align}
\text{(main)}  &= 
{\sum_{\substack{i,j,k,\xi,\vecl,I}} 
    \int_{\T^3} A\xi (\bp_\varphi B) B  \cdot \curl (A \xi) dx} + {\sum_{\substack{i,j,k,\xi,\vecl,I}} 
 \int_{\T^3}  A \xi (\bp_\varphi B)  \cdot\left((\na B)\times
 (A \xi) \right) dx}\, .
\label{h1h2:er}
\end{align}
The second term vanishes due to the fact that $(\na B) \times (A\xi)$ is orthogonal to $A\xi$. 

So it remains to estimate the first term. 
Since $\xi$ is constant, $\xi \cdot \curl \xi=0$ and \eqref{eq:Lagrangian:Jacobian:2} gives
$$ \left| A\xi \cdot \curl(A\xi) \right| = \left| A\xi \cdot \curl (A\xi) - \xi \cdot \curl \xi \right| \les \tau_q^{-1}\Gamma_q^{-5}\mu_q \, . $$
Combining this with Corollary~\ref{rem:summing:partition} with $\theta_1=0$ and $\theta_2=2$, we estimate the first term by
\begin{align}
\left|\sum_{\substack{i,j,k,\vecl,\xi, I}} \int_{\T^3} (\bp_\varphi B) B  \left( A \xi \cdot \curl (A \xi) - \xi \cdot \curl \xi \right) \,  dx \right| 
    &\lec
\tau_q^{-1}\mu_q\Ga_q^5 (\la_q \Ga_q)\cdot \de_{q+\bn}\underset{\eqref{eq:defn:tau}}{\leq} \Ga_{q+1}^{-2} \label{514:1}
\end{align}

Finally, we estimate time derivatives of $H(w_{q+1,R}^{(p)})$.  Since there is no mollifier present, we need only consider the analogue of~\eqref{h1h2:er}, but with time derivatives and no mollifiers:
$$ \left| \ddt^M {\sum_{\substack{i,j,k,\xi,\vecl,I}} 
    \int_{\T^3} A\xi  B^2 \cdot \curl (A \xi) dx} + \ddt^M {\sum_{\substack{i,j,k,\xi,\vecl,I}} 
 \int_{\T^3}  A \xi B \cdot\left((\na B)\times
 (A \xi) \right) dx}\, \right| . $$
The second term vanishes again, and we add and subtract $\xi\cdot\curl\xi$ in the first term again.  Then converting time derivatives outside the integral to material derivatives inside the integral and using Corollary~\ref{cor:deformation}, Lemma~\ref{lem:a_master_est_p}, and Corollary~\ref{rem:summing:partition} as above, we obtain an estimate analogous to~\eqref{514:1}, but multiplied by the cost of the time derivatives, consistent with~\eqref{hab1:derivs}. 
\bigskip

\noindent\texttt{Case 2: Helical corrector parts $2H(\bp_{\varphi}w_{q+1,h}^{(p)}, w_{q+1,h}^{(c)}) + H(\bp_{\varphi}w_{q+1,h}^{(c)})$.}
The estimates are made in the same fashion as \texttt{Step 1} of the proof of Lemma~\ref{lem:helicity.h}, in which we proved~\eqref{hab} and~\eqref{derivs:1}; notice these estimates required no orthogonality properties and were made based purely on amplitude.  However, the estimates in this case are even better, since the helicity corrector $w_{q+1,h}^{(c)}$ obeys a better bound than $w_{q+1,h}^{(p)}$ from Corollary~\ref{cor:corrections:Lp}.  We omit further details.

\bigskip
\noindent\texttt{Case 3: Euler-Reynolds principal-corrector part $2H(\bp_{\varphi}w_{q+1,R}^{(p)}, w_{q+1,R}^{(c)})$.} As in \texttt{Case 1, Step 1a}, we first decompose into a diagonal and off-diagonal part:
\begin{align*}
&\int_{\T^3}  \bp_\varphi w_{q+1,R}^{(p)} \cdot \curl  w_{q+1,R}^{(c)} dx \\
&= \underbrace{\sum_{i,j,k,\xi,\vecl,I}  \int_{\T^3}  \bp_\varphi w_{q+1,(\xi)}^{(p),I} \cdot \curl w_{q+1,(\xi)}^{(c),I}\,dx}_{\text{(diagonal)}}+ \underbrace{\sum_{i,j,k,\xi,\vecl,I}  \int_{\T^3}  \bp_\varphi \left(w_{q+1,R}^{(p)} - w_{q+1,(\xi)}^{(p),I} \right)\cdot \curl w_{q+1,(\xi)}^{(c),I}\,dx}_{\text{(off-diagonal)}}\,.
\end{align*}
The off-diagonal term is negligibly small by the same argument as before. We move on to the diagonal term. We set $B=B_{(\xi)}^I = a_{(\xi)} \left(\chib_{(\xi)} \etab_{\xi}^{I} \right) \circ \Phiik$, drop indices for concision, and write
\begin{align}
    \text{(diagonal)}
    &=  \sum_{i,j,k,\xi,\vecl,I} 
    \int_{\T^3} 
    \bp_\varphi w_{q+1,(\xi)}^{(p),I}  \eps_{mln} \eps_{nop}
    \pa_l\left(\pa_o B    \pa_p \Phi^r {\mathbb{U}}_r( \Phi)
    \right) dx \notag \\
   &= \underbrace{\sum_{i,j,k,\xi,\vecl,I} 
    \int_{\T^3} 
    \bp_\varphi w_{q+1,(\xi)}^{(p),I} \eps_{mln}\eps_{nop} \pa_l\left(\pa_o B     \pa_p \Phi^r \right) {\mathbb{U}}_r( \Phi)
     dx}_{I_1} \notag \\
    &\quad+
\underbrace{\sum_{i,j,k,\xi,\vecl,I} 
\int_{\T^3} 
    \bp_\varphi w_{q+1,(\xi)}^{(p),I} \eps_{mln}\eps_{nop}
    \pa_o B \pa_p \Phi^r \pa_l\left({\mathbb{U}}_r( \Phi)\right) 
     dx}_{I_2}  \, .  \label{i1i2}
\end{align}
We estimate $I_1$ directly. Using aggregation lemmas as usual,~\eqref{e:a_master_est_p_R:zeta}, and~\eqref{e:pipe:estimates:2},
\begin{align}
    |I_1| &\leq \sum_{i,j,k,\xi,\vecl,I} 
    \int_{\T^3} 
    \left| \bp_\varphi w_{q+1,(\xi)}^{(p),I} \eps_{mln}\eps_{nop} \pa_l\left(\pa_o B \pa_p \Phi^r \right) {\mathbb{U}}_r( \Phi)\right|
     dx \notag \\
     &\leq \sum_{i,j,k,\xi,\vecl,I} \left\|\bp_\varphi w_{q+1,(\xi)}^{(p),I}\right\|_{L^2} \|\eps_{mln}\eps_{nop} \pa_l\left(\pa_o B  \pa_p\Phi^r \right) {\mathbb{U}}_r( \Phi)\|_{L^2} \notag \\
     &\lesssim \sum_{i,j,k,\xi,\vecl,I} \left\| w_{q+1,(\xi)}^{(p),I}\right\|_{L^2} \|D\left(D B    (\na\Phi)^T\right)\|_2 \| {\mathbb{U}}( \Phi)\|_{2} \lec \frac{\la_{q+\bn/2}^2}{\la_{\qbn}}\de_{\qbn} \Gamma_\qbn^{500} \underset{\eqref{par:helcc}}{\lec} \Gamma_{\qbn}^{-100} \, . \label{514:2}
\end{align}
We move now to $I_2$; recalling that $A=(\nabla\Phi)^{-1}$, we write this term as
\begin{align*}
    I_2 &= \underbrace{\sum_{i,j,k,\xi,\vecl,I} 
\int_{\T^3} 
    A\xi \, \bp_\varphi \left( B \, \varrho(\Phi) \right) \eps_{mln}\eps_{nop}
    \pa_o B    \pa_p\Phi^r\pa_l\left({\mathbb{U}}_r( \Phi)\right) 
     dx}_{\text{(main)}}\\
     &\qquad + \underbrace{\sum_{i,j,k,\xi,\vecl,I} 
\int_{\T^3} 
    \left[\bp_\varphi, A\xi \right]\left( A   \,  \varrho(\Phi) \right) \eps_{mln}\eps_{nop}
    \pa_o B    \pa_p\Phi^r\pa_l\left({\mathbb{U}}_r( \Phi)\right) 
     dx}_{\text{(commutator)}}
\end{align*}
The commutator term can be estimated similarly as in \texttt{Case 1, Step 1a}, yielding
$$ |\text{(commutator)}| \lesssim \frac{\mu_q\tau_q^{-1}\lambda_q}{\lambda_\qbn} \delta_\qbn \lambda_{q+\half}\Ga_q^{26} \leq \Gamma_\qbn^{-4} \, . $$
Next, using $\eps_{mln}\eps_{nop} = \de_{mo}\de_{lp}-\de_{mp}\de_{lo}$, the main term from $I_2$ can be rewritten as
\begin{align}
    |\text{(main)}| &= \sum_{i,j,k,\xi,\vecl,I} \int 
    \left(A \xi \right)_m \bp_\varphi \left( B   \, \varrho(\Phi) \right) \left( \pa_m B  
       \pa_l\Phi^r - \pa_l B \pa_m \Phi^r \right)\pa_l \Phi^s (\pa_s {\mathbb{U}}_r)( \Phi)
     dx  \,. \label{i21i22}
\end{align}
We note that the term with $\pa_m\Phi^r$ above vanishes due to orthogonality; indeed as ${\mathbb{U}} \cdot \vec \xi =0$, $\pa_s {\mathbb{U}}$ remains orthogonal to $\vec e_2$ for any $s$, and so using $A_j^m \pa_m\Phi^r=\delta_{jr}$,
$$ (A\xi)_m \pa_m \Phi^r (\pa_s \UU_r)(\Phi)= A_j^m \xi^j \pa_m \Phi^r (\pa_s {\mathbb{U}}_r)(\Phi) 
= \xi_r (\pa_s {\mathbb{U}}_r)(\Phi) =0 \, . $$
So we must estimate the term with $\pa_m B\pa_l\Phi^r$. We recall from~\eqref{e:a_master_est_p_R:zeta} that
$$ \left\|\left(A \xi \right)_m \pa_m B \right\|_{2} \lesssim \left| \supp \eta_{i,j,k,\xi,\vecl} \right|^{\sfrac 12} \delta_{q+\bn}^{\sfrac 12} \Gamma^{j+10}_{q} {\Gamma_q^{  5}\Lambda_{q}} \, . $$
Next, we have from Corollary~\ref{cor:deformation} and~\eqref{e:pipe:estimates:2} that 
$$ \|D_{t,q}\left(\pa_{ {l}}\Phi^r \pa_l \Phi^s (\pa_s {\mathbb{U}}_r)( \Phi)\right)\|_2 = \|D_{t,q}\left(\pa_{ {l}}\Phi^r \pa_l \Phi^s\right) (\pa_s {\mathbb{U}}_r)( \Phi)\|_2 \lec \tau_q^{-1} \Ga_q^{-5}$$
and that when $\Phi \equiv \Id$ at $t=k\mu_q\Ga_q^{-i-2}$, $\pa_{ {l}}\Phi^r \pa_l \Phi_s (\pa_s {\mathbb{U}}_r)( \Phi) = \pa_l \UU_l \equiv 0$. Thus, by transport estimates on the $\mu_q$ timescale, we have that $\|\pa_l \Phi^r \pa_l \Phi^s (\pa_s {\mathbb{U}}_r)( \Phi) \|_2 \lec \tau_q^{-1} \Ga_q^{-5} \mu_q$. Using the above bounds and aggregation estimates as usual, we conclude that
\begin{align*}
    |I_2| &\leq \sum_{i,j,k,\xi,\vecl,I} \left\| 
    \left(A \xi \right)_m\pa_m B \pa_l \Phi^r \pa_l \Phi_s (\pa_s {\mathbb{U}}_r)(\Phi) \right\|_2 \left\|\bp_\varphi \left(B \, \varrho(\Phi) \right)  \right\|_2\\
    &\lec \sum_{i,j,k,\xi,\vecl,I} \left\| 
    \left(A \xi \right)_m\pa_m B \right\|_2 \left\|\pa_l\Phi^r \pa_l \Phi_s (\pa_s {\mathbb{U}}_r)(\Phi) \right\|_2 \left\|B \, \varrho(\Phi) \right\|_2\\
    &\lec \Ga_q^3 \de_\qbn^{\sfrac 12} \Ga_q^{15}\La_q \tau_q^{-1}\Ga_q^{-5}\mu_q \de_{\qbn}^{\sfrac 12} \underset{\eqref{eq:defn:tau}}{\lec} \Ga_{q+1}^{-2}\,.
\end{align*}

Finally, we must estimate time derivatives of $H(w_{q+1,R}^{(p)},w_{q+1,R}^{(c)})$.  As there is no mollifier present, we need only take time derivatives of the diagonal terms $I_1$ and $I_2$ where $\varphi \equiv 1$.  Estimating $I_1$ but incorporating time derivatives (which are converted to material derivatives inside the integral as usual), we get an estimate analogous to~\eqref{514:2}, but multiplied the cost of material derivatives, consistent with~\eqref{hab1:derivs}.  Next, $I_2$ has no commutator term, and the second term from~\eqref{i21i22} still vanishes by orthogonality.  Converting the time derivatives outside the integral in the first to to material derivatives inside the integral, but retracing the above estimates with extra material derivatives, we obtain a bound consistent with~\eqref{hab1:derivs}.

\bigskip
\noindent\texttt{Case 4: Euler-Reynolds corrector-corrector part $H(\bp_{\varphi}w_{q+1,R}^{(c)},w_{q+1,R}^{(c)})$.}
A heuristic estimate gives
\begin{align*}
\left|\int_{\T^3} \curl w_{q+1,R}^{(c)}\cdot w_{q+1,R}^{(c)} dx \right|
\lec \de_{q+\bn} \frac{\la_{q+\frac{\bn}2}^2}{\la_{q+\bn}^2} \la_{q+\bn} \Ga_q^{10}
\underset{\eqref{par:helcc}}{\leq} \Ga_{q+1}^{-2} \, .
\end{align*}
Using aggregation lemmas and taking time derivatives as in the previous steps, the above estimate can be made rigorous and is consistent with~\eqref{hab1} and~\eqref{hab1:derivs}. 
\end{proof}

\begin{proposition}[\textbf{Verifying Hypothesis~\ref{hyp:helinashell}}]   
\label{prop:hel:shell}
Hypothesis~\ref{hyp:helinashell} is true at step $q+1$.
\end{proposition}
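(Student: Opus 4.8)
The plan is to reduce the statement to the two helicity computations already established, namely~\eqref{hab} of Lemma~\ref{lem:helicity.h} and Lemma~\ref{lem:helicity.pc}. Hypothesis~\ref{hyp:helinashell} at step $q+1$ asks for a bound on $\hat w_{q'+\bn-1}$ for every $q'\le q+1$; for $q'\le q$ the increment $\hat w_{q'+\bn-1}$ has index at most $q+\bn-1$, hence coincides with the one constructed at the previous stage and already obeys~\eqref{summy} by the inductive assumption. So the entire content is the case $q'=q+1$: one must show that $\bigl|\int_{\T^3}\bp_\varphi(\hat w_\qbn)\cdot\curl\hat w_\qbn\bigr|<\Ga_{q-4}^{-1/2}$ for every Schwartz multiplier $\varphi:\R^3\to B$ with $\|\check\varphi\|_{L^1}\le 1$ and $\|\mathbf{1}_F\check\varphi\|_{L^1}\le\la_{q+\bn+1}^{-100}$, where $F=\bigl(B_{(\la_\qbn\Ga_\qbn^{-2})^{-1}}\bigr)^c$.

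First I would pass from the mollified increment $\hat w_\qbn$ to the unmollified $w_{q+1}$. Writing $z:=w_{q+1}-\hat w_\qbn$ and expanding the bilinear form $H(a,b)=\int_{\T^3}a\cdot\curl b$, the difference $H(\bp_\varphi\hat w_\qbn,\hat w_\qbn)-H(\bp_\varphi w_{q+1},w_{q+1})$ is a sum of three terms, each bilinear in $z$ and one of $w_{q+1},\hat w_\qbn$. Since $|\varphi|\le 1$ makes $\bp_\varphi$ an $L^2$-contraction, H\"older's inequality bounds each such term by a product of the form $\|z\|_{L^2}\,\|\curl(\cdot)\|_{L^2}$ (or its transpose); the factor carrying $z$ is minuscule by~\eqref{eq:diff:moll:vellie:statement} of Lemma~\ref{lem:mollifying:w}, and the factor carrying $w_{q+1}$ or $\hat w_\qbn$ is at most a fixed power of $\la_\qbn$ and $\Ga_q$ by Corollary~\ref{cor:corrections:Lp} and~\eqref{eq:desert:cowboy:sum}. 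Hence this difference is $\les\delta_{q+3\bn}^3\Tau_\qbn^{25\Nindt}\la_\qbn^{C}\Ga_q^{C}$, which is $\ll\Ga_{q-4}^{-1/2}$ once $a$ is large, by the super-exponential decay of $\delta_{q+3\bn}$.

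Next I would apply the two quoted lemmas to $H(\bp_\varphi w_{q+1},w_{q+1})$. Because $\la_{q+\bn+1}^{-100}\le\la_\qbn^{-100}$ and the set $F$ above is precisely the $F_{q+1}$ of~\eqref{schwartz:decay:redux:redux}, the kernel $\varphi$ satisfies the hypotheses of Lemma~\ref{lem:helicity.pc}, which gives $\bigl|H(\bp_\varphi w_{q+1},w_{q+1})-H(\bp_\varphi w_{q+1,h}^{(p)},w_{q+1,h}^{(p)})\bigr|\lec\Ga_{q+1}^{-2}$; and since $\varphi$ is in particular a multiplier valued in $\overline{B_1(0)}$, the estimate~\eqref{hab} of Lemma~\ref{lem:helicity.h} yields $\bigl|H(\bp_\varphi w_{q+1,h}^{(p)},w_{q+1,h}^{(p)})\bigr|<\tfrac1{10}\Ga_{q-4}^{-1/2}$. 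Collecting this with the error from the first step, the total is $\le\tfrac1{10}\Ga_{q-4}^{-1/2}+C\Ga_{q+1}^{-2}+C\delta_{q+3\bn}^3\Tau_\qbn^{25\Nindt}\la_\qbn^{C}\Ga_q^{C}<\Ga_{q-4}^{-1/2}$ for $a$ large, using the parameter inequality $\Ga_{q+1}^{-2}\ll\Ga_{q-4}^{-1/2}$ (valid once $\Ga_0$ is large).

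I do not expect a real obstacle: all the substantive work---the vanishing of the ``$\curl$ on $\td A\vec e_2$'' versus ``$\curl$ on the density'' cross term via the Pythagorean identity $\td e_2\cdot\curl\td e_2=\frac{\la_\qho^3}{\la_\qbn^2}$, the commutator bounds for $\bp_\varphi$ from Lemma~\ref{lem.multcomm}, and the purely amplitude-based control of the Reynolds pieces---is already carried out in Lemmas~\ref{lem:helicity.h} and~\ref{lem:helicity.pc}. The only points requiring mild care are bookkeeping: verifying that the class of kernels admissible in Hypothesis~\ref{hyp:helinashell} at level $q'=q+1$ is contained in the classes demanded by those two lemmas (immediate from $\la_{q+\bn+1}^{-100}\le\la_\qbn^{-100}$ and the coincidence of the two sets $F$), and checking the handful of parameter inequalities that make the error terms negligible against $\Ga_{q-4}^{-1/2}$, both of which follow from the super-exponential growth of the $\la_q$ and the geometric decay of the $\delta_q$ once $a$ is chosen sufficiently large.
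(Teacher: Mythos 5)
Your proposal is correct and takes essentially the same approach as the paper: reduce to $q'=q+1$, invoke~\eqref{hab} from Lemma~\ref{lem:helicity.h} and~\eqref{hab1} from Lemma~\ref{lem:helicity.pc} to bound $|H(\bp_\varphi w_{q+1},w_{q+1})|$ by a fraction of $\Gamma_{q-4}^{-1/2}$, and then absorb the mollification error via~\eqref{eq:diff:moll:vellie:statement}. The only thing the paper leaves implicit that you spell out is the compatibility of the two kernel classes (the sets $F$ coincide and $\lambda_{q+\bn+1}^{-100}\le\lambda_\qbn^{-100}$), which is indeed the correct bookkeeping.
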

\begin{proof}
Let $\varphi$ be as in the hypothesis. Note that from~\eqref{hab} and~\eqref{hab1}, we immediately have that
    $$ \left|\int_{\T^3} \bp_\varphi (w_{q+1}) \cdot\curl w_{q+1}\right| < \frac15 \Ga_{q-4}^{-1/2}\,. $$
Since $w_{q+1}-\hat w_\qbn$ is negligibly small from~\eqref{eq:diff:moll:vellie:statement},  Hypothesis~\ref{hyp:helinashell} then follows from direct computation; we omit further details.
\end{proof}

\begin{lemma}[\textbf{Helicity interaction between $\hat w_{q+\bn}$ and $u_q$}]\label{lem:helicity.u} For $M \leq \sfrac{\Nfin}{3}$,
\begin{align}\label{eq:helicity.u}
    \left| \ddt^{M} H(u_q, \hat w_{q+\bn}) \right| \leq \Ga_{q+1}^{-10} \MM{M, \Nindt, \mu_q^{-1}, \Tau_q^{-1}\Ga_q^8}  \,. 
\end{align}    
\end{lemma}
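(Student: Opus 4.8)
The plan is to estimate the cross-helicity $H(u_q,\hat w_{q+\bn}) = \int_{\T^3} u_q \cdot \curl \hat w_{q+\bn} = \int_{\T^3} \hat w_{q+\bn} \cdot \curl u_q$ by exploiting the frequency separation between $u_q$ and $\hat w_{q+\bn}$. The key observation is that $\hat w_{q+\bn}$ is essentially supported at frequencies near $\lambda_{q+\bn}$, while $u_q$ has frequencies at most $\lambda_{q+\bn-1}\Gamma_{q+\bn-1}$ (its highest-frequency component being $\hat w_{q+\bn-1}$, which satisfies the frequency localization Hypothesis~\ref{hyp:freq:loc}). Concretely, I would write $H(u_q, \hat w_{q+\bn}) = \sum_{m\in\mathbb{Z}^3} \mcf(u_q)(m)\cdot(2\pi i m \times \mcf(\hat w_{q+\bn})(-m))$ and split the sum according to whether $|m| \leq \lambda_\qbn\Gamma_{q-100}^{-1/1000}$ or $|m| \geq \lambda_\qbn\Gamma_q$. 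For the low-frequency part, I invoke Lemma~\ref{ph1}(i) (through the mollification $\hat w_\qbn = \mathcal{\tilde P}_{q+\bn,x,t}w_{q+1}$, which only shrinks Fourier support) to bound $|\mcf(\hat w_\qbn)(m)| \lesssim \lambda_\qbn^{-23}$; for the high-frequency part I use Lemma~\ref{ph1}(ii) to get $|\mcf(\hat w_\qbn)(m)|\lesssim |m|^{-24}$. Since $u_q$ has no frequency content above $\lambda_{q+\bn-1}\Gamma_{q+\bn-1} \ll \lambda_\qbn\Gamma_{q-100}^{-1/1000}$ by the inductive frequency localization, the only modes $m$ contributing to $H(u_q,\hat w_\qbn)$ fall into the low-frequency regime, and there one uses $\sum_m |\mcf(u_q)(m)| \lesssim \|u_q\|_{C^{N}} \lesssim \lambda_\qbn$ (from the lossy bound~\eqref{eq:bobby:old}) together with the factor $|m| \lesssim \lambda_\qbn$ and $|\mcf(\hat w_\qbn)(m)|\lesssim\lambda_\qbn^{-23}$ to conclude the $M=0$ case.

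For the time derivatives, I would convert $\ddt$ acting outside the integral into material derivatives $D_{t,q+\bn-1}$ inside the integral, using that the relevant flow is divergence-free, so that
\[
\ddt^M \int_{\T^3} u_q \cdot \curl \hat w_\qbn = \int_{\T^3} \sum_{M'+M''=M} c_{M,M'} D_{t,q+\bn-1}^{M'} u_q \cdot D_{t,q+\bn-1}^{M''} \curl \hat w_\qbn \, .
\]
Then I would reprove a time-differentiated version of Lemma~\ref{ph1}: the estimates there come from integrating by parts many times against $\vartheta^k_{\xi,\lambda_\qbn,r_q}$, and applying $D_{t,q+\bn-1}^{M''}$ only costs the material derivative factors appearing in the velocity increment estimates of Corollary~\ref{cor:corrections:Lp} (and of Lemma~\ref{lem:mollifying:w} after mollification), i.e. $\MM{M'', \Nindt, \mu_q^{-1}, \Tau_q^{-1}\Ga_q^8}$, while the negligibility in $\lambda_\qbn$ is untouched. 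For $u_q$, the material derivative $D_{t,q+\bn-1}$ costs at most $\MM{1,\Nindt, \mu_q^{-1}, \ldots}$ by the inductive bounds~\eqref{eq:nasty:Dt:uq:orangutan} (after upgrading the material derivative). Multiplying the surplus power of $\lambda_\qbn$ against these material-derivative costs still leaves room for a factor $\Ga_{q+1}^{-10}$, which is the claimed bound.

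The main obstacle I anticipate is bookkeeping the frequency-support cutoff precisely: one must verify that $\hat w_{q+\bn-1}$ (the highest-frequency piece of $u_q$) genuinely has negligible Fourier content at frequencies $\gtrsim \lambda_\qbn\Gamma_{q-100}^{-1/1000}$. This is exactly what Hypothesis~\ref{hyp:freq:loc} at step $q$ (applied with $q'=q$, so to $\hat w_{q+\bn-1}$) provides — taking $k_2 = \lambda_\qbn\Gamma_{q-100}^{-1/1000}$ (which is $\geq \lambda_{q+\bn-1}\Gamma_{q+\bn-1}$ by superexponential growth) gives $\|\bp_{\varphi_2}(\hat w_{q+\bn-1})\|_\infty \lesssim \lambda_{q+\bn-1}k_2^{-20}$, hence any high-frequency tail of $u_q$ is already negligible. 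So the two halves — the tail of $u_q$ is negligible, and the bulk of $\hat w_\qbn$ is negligible on low frequencies — combine to give the result, and the only real work is confirming that the numerology (the $23$ and $24$ in Lemma~\ref{ph1}, the $20$ in Hypothesis~\ref{hyp:freq:loc}, the derivative costs $\Nindt$ versus $\Nfin/3$) closes with the $\Ga_{q+1}^{-10}$ margin to spare; this should follow from the parameter inequalities already invoked in Lemma~\ref{ph1} and Proposition~\ref{prop:hyp:freq:loc}. I would also double-check that the mollifier $\mathcal{\tilde P}_{q+\bn,x,t}$ commutes appropriately with the pairing and does not spoil the material-derivative structure, which is routine given Lemma~\ref{lem:mollifying:w}.
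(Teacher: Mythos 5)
Your plan captures the right mechanism — frequency separation between $u_q$ and $w_{q+1}$, implemented through the iterated-divergence structure $\WW_{(\xi)}^I = \xi\,\lambda_\qbn^{-\dpot}\div^{\dpot}\vartheta^k$ — but it takes a genuinely different route through Fourier space that the paper never uses, and that detour is where the gap sits. For $M=0$ your argument is viable (split into low/mid/high frequency shells, invoke Lemma~\ref{ph1} for $\mcf(\hat w_\qbn)$ on the low and high ends, and control the mid/high content of $u_q$ by applying Hypothesis~\ref{hyp:freq:loc} to each increment $\hat w_{q'+\bn-1}$ separately). The paper instead estimates $\ddt^M H(u_q, w_{q+1})$ entirely in physical space: it converts $\ddt^M$ into $\Dtq^{M'}$ inside the integral (using $\div\,\hat u_q = 0$), exploits that $\Dtq$ annihilates $\WW_{(\xi)}^I\circ\Phi_{(i,k)}$, and integrates by parts $\dpot$ times against the potential $\vartheta^k$, producing $(\la_{q+\bn-1}/\la_\qbn)^\dpot$ at once for all $M$. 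It then passes from $w_{q+1}$ to $\hat w_\qbn$ in one final stroke via~\eqref{eq:diff:moll:vellie:statement}, rather than working with the mollified object throughout.

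The genuine gap in your proposal is the $M\geq 1$ case. You write ``reprove a time-differentiated version of Lemma~\ref{ph1}'' as though it were a routine adaptation, but material derivatives do not commute with Fourier projections: $D_{t,\cdot}^{M''}\hat w_\qbn$ contains advective products that redistribute Fourier content, so one cannot simply claim that $\mcf(D_{t,\cdot}^{M''}\hat w_\qbn)(m)$ retains the smallness of $\mcf(\hat w_\qbn)(m)$ at low $|m|$. To preserve the ``negligibility in $\lambda_\qbn$,'' you would in practice have to drop back into physical space, note that $\Dtq$ annihilates $\vartheta^k\circ\Phi_{(i,k)}$ (and that $D_{t,q+\bn-1}$ agrees with $\Dtq$ on $\supp\hat w_\qbn$ by dodging, which you do not invoke), and rerun the integration-by-parts argument with the material derivatives landing on $\tilde a$ and $\curl u_q$ — that is, reproduce the paper's argument, at which point the Fourier decomposition buys nothing. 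A smaller inaccuracy: the mollifier $\mathcal{\tilde P}_{q+\bn,x,t}$ acts in time as well as in space, so it does not merely ``shrink Fourier support''; the paper avoids this by estimating $w_{q+1}$ directly and transferring to $\hat w_\qbn$ only at the end.
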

\begin{proof}
We first claim that we have $ |\ddt^{M} H(u_q, w_{q+1})| \leq \Ga_{q+1}^{-15}\MM{M, \Nindt, \mu_q^{-1}, \Tau_q^{-1}\Ga_q^8} $. To prove this, we only estimate $H(u_q, w_{q+1,R}^{(p)})$. The estimates of $H(u_q, w_{q+1,R}^{(c)})$ and $H(u_q, w_{q+1,h})$ then follow similarly, because $w_{q+1,R}^{(c)}$ and $w_{q+1,h}$ have the same structure as $w_{q+1,R}^{(p)}$. Recalling that
\begin{align*}
w_{q+1,R}^{(p)}   = \sum_{i,k,\xi,\vecl,I} \underbrace{a_{(\xi)} (t,y)\left(\chib_{(\xi)} \etab_{\xi}^{I} \right) \circ \Phiik  \nabla\Phi_{(i,k)}^{-1} }_{=:\tilde{a}}
    \mathbb{W}_{(\xi)}^I \circ \Phi_{(i,k)} \, , 
\end{align*}
and using that $\hat u_q$ is divergence-free (so that $\int_{\T^3} d/dt (\cdot) = \int_{\T^3} \Dtq (\cdot)$), we write
\begin{align*}
&\left| \ddt^M \int_{\T^3}  w_{q+1}^{(p)}\cdot \curl u_q\, dx\right| \\
&=\left| \sum_{0\leq M' \leq M}  c_{M,M'} \sum_{i,k,\xi,\vecl,I}\int_{\T^3} \left(\Dtq^{M'}  \tilde{a} \right) \mathbb{W}_{(\xi)}^I \circ \Phi_{(i,k)}  \cdot \Dtq^{M-M'} \left( \curl u_q \right)\, dx\right|\\
&= \left| \sum_{0 \leq M' \leq M} c_{M,M'} \sum_{i,k,\xi,\vecl,I}\int_{\T^3}  \xi \lambda_{q+\bn}^{-\dpot }\div^\dpot  \left(\vartheta^k_{\xi,\lambda_{q+\bn},r_{q}}\right) \cdot \left(\Dtq^{M'} \tilde{a} \, \Dtq^{M-M'} \curl u_q\right)\circ\Phi_{(i,k)}^{-1} dx\right|\\
&= \lambda_{q+\bn}^{-\dpot }\left| \sum_{0 \leq M' \leq M} c_{M,M'} \sum_{i,k,\xi,\vecl,I}\int_{\R^3}    \left(\vartheta^k_{\xi,\lambda_{q+\bn},r_{q}}\right): \na^\dpot\left( \left(\Dtq^{M'}\tilde{a} \xi \cdot \Dtq^{M-M'} \curl u_q\right)\circ\Phi_{(i,k)}^{-1}\right) dx
\right|\\
&\lec \la_{q+\bn-1}\left(\frac{\la_{q+\bn-1}}{\la_{q+\bn}}\right)^{\dpot} \de_{q+\bn} \MM{M, \Nindt, \mu_q^{-1}\Gamma_q^{20}, \Tau_q^{-1}\Ga_q^8}\\
&\leq \Ga_{q+1}^{-20} \MM{M, \Nindt, \mu_q^{-1}\Gamma_q^{20}, \Tau_q^{-1}\Ga_q^8} \, . 
\end{align*}
Note in the last inequality, we have used a large choice of $\dpot$ in~\eqref{i:par:10}. Thus we obtain the claim.

Finally, the estimate for $\ddt^M H(u_q, \hat w_\qbn)$ follows from~\eqref{eq:diff:moll:vellie:statement}.
\end{proof}

\begin{proposition}[\textbf{Verifying Hypothesis~\ref{hyp:helicity.prescription}}]
\label{prop:hel,prescription}
Hypothesis~\ref{hyp:helicity.prescription} is true at step $q+1$.
\end{proposition}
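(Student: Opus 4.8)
For $q'\le q$ the velocity field $u_{q'}$ is unchanged in passing from step $q$ to step $q+1$, and the statement of~\eqref{eq:resolved:helicity1} for such $q'$ is literally the inductive hypothesis; so it suffices to verify~\eqref{eq:resolved:helicity1} for $q'=q+1$, i.e. to show that for every multiplier $\varphi:\mathbb{Z}^3\to B$ with $\varphi(k)=1$ for $|k|\le\lambda_\qbn\Gamma_\qbn$,
\begin{align*}
\Gamma_{q+1}^{-1}\le h(t)-\int_{\T^3}\bp_\varphi(u_{q+1})\cdot\curl u_{q+1}\le\Gamma_q^{-1}\,.
\end{align*}
The plan is to write $u_{q+1}=u_q+\hat w_\qbn$ and expand the bilinear pairing $(a,b)\mapsto\int_{\T^3}\bp_\varphi(a)\cdot\curl b\,dx$ on the right-hand side into the $u_qu_q$ diagonal term, the two $u_q\hat w_\qbn$ cross terms, and the $\hat w_\qbn\hat w_\qbn$ diagonal term, which I will treat separately.

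The key structural point is that $\varphi\equiv1$ on a ball containing all the ``active'' frequencies of $u_q$, $\hat w_\qbn$ and $w_{q+1}$. Indeed, by Hypothesis~\ref{hyp:freq:loc} applied at the levels $\le q$ (together with the elementary localization of $\hat u_0$), $u_q$ has Fourier mass at frequencies $|k|>\lambda_\qbn\Gamma_\qbn$ bounded by a large negative power of $\lambda_\qbn$; and by Lemma~\ref{ph1}(ii), which transfers to $\hat w_\qbn=\mathcal{\tilde P}_{q+\bn,x,t}w_{q+1}$ since mollification only shrinks Fourier coefficients, one has $|\mcf(\hat w_\qbn)(m)|\le|m|^{-24}$ for $|m|\ge\lambda_\qbn\Gamma_q$. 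Since $\lambda_\qbn\Gamma_q<\lambda_\qbn\Gamma_\qbn$, each factor $\bp_\varphi$ in the expansion may be replaced by the identity at the cost of an error that is negligible (say $\le\lambda_\qbn^{-10}$), using Plancherel, Cauchy--Schwarz, and the polynomial upper bounds on $\|u_q\|_{L^2}$, $\|\curl u_q\|_{L^2}$ and $\|\curl\hat w_\qbn\|_{L^2}$ coming from the inductive velocity bounds and Lemma~\ref{lem:mollifying:w}. The problem is thereby reduced to estimating the plain helicity quantities $H(u_q)$, $H(u_q,\hat w_\qbn)$, and $H(\hat w_\qbn)$.

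Next I would assemble the relevant estimates. The cross term obeys $|H(u_q,\hat w_\qbn)|\le\Gamma_{q+1}^{-10}$ by Lemma~\ref{lem:helicity.u} with $M=0$. For the diagonal term I would first replace $\hat w_\qbn$ by $w_{q+1}$ up to a negligible error using the bound~\eqref{eq:diff:moll:vellie:statement} on $w_{q+1}-\hat w_\qbn$ and Cauchy--Schwarz, and then apply Lemma~\ref{lem:helicity.pc} with $\varphi\equiv1$ (which is permitted there) to obtain $H(w_{q+1})=H(w_{q+1,h}^{(p)})+O(\Gamma_{q+1}^{-2})$. Combining these with the reduction above, in which $\int_{\T^3}\bp_\varphi(u_q)\cdot\curl u_q=H(u_q)$ up to a negligible error, I arrive at
\begin{align*}
h(t)-\int_{\T^3}\bp_\varphi(u_{q+1})\cdot\curl u_{q+1}=\Big(h(t)-H(u_q)-H(w_{q+1,h}^{(p)})\Big)+O(\Gamma_{q+1}^{-2})\,.
\end{align*}

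Finally, the proof concludes by invoking~\eqref{hel:inc:est} of Lemma~\ref{lem:helicity.h}, which bounds the parenthesized quantity between $\frac{\Gamma_{q-1}}{4\Gamma_q^2}$ and $\frac34\Gamma_q^{-1}$, and absorbing the $O(\Gamma_{q+1}^{-2})$ error via the parameter inequalities $\Gamma_{q+1}^2\gg\Gamma_q$ (so that $\frac34\Gamma_q^{-1}+C\Gamma_{q+1}^{-2}\le\Gamma_q^{-1}$) and $\Gamma_{q-1}\Gamma_{q+1}\gg\Gamma_q^2$, equivalently $b+b^{-1}>2$ (so that $\frac{\Gamma_{q-1}}{4\Gamma_q^2}-C\Gamma_{q+1}^{-2}\ge\Gamma_{q+1}^{-1}$), both of which hold once $a$ is chosen sufficiently large. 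I expect the only nontrivial work to be the careful bookkeeping needed to certify that removing $\bp_\varphi$ from each of the three terms really produces errors far smaller than $\Gamma_{q+1}^{-2}$; this is precisely where the superexponential frequency separation is combined with the quantitative decay estimates of Lemma~\ref{ph1} and Hypothesis~\ref{hyp:freq:loc}.
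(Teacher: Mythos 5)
Your proposal is correct and uses essentially the same ingredients and argument as the paper: the reductions via Lemma~\ref{lem:helicity.h}, Lemma~\ref{lem:helicity.pc} (with $\varphi\equiv1$), Lemma~\ref{lem:helicity.u}, and~\eqref{eq:diff:moll:vellie:statement}, followed by the superexponential parameter inequality $\Gamma_{q-1}\Gamma_{q+1}\gg\Gamma_q^2$. The only organizational difference is that the paper splits the claim cleanly into the $\varphi\equiv1$ estimate~\eqref{eq:new.hel} and the mollifier-removal bound~\eqref{eq:hel.moll.diff}, proving the latter in one stroke via $L^\infty$--$L^1$ H\"older using Hypothesis~\ref{hyp:freq:loc} at step $q+1$ (already established in Proposition~\ref{prop:hyp:freq:loc}), whereas you strip $\bp_\varphi$ term-by-term in the bilinear expansion via Plancherel and $L^2$--$L^2$ Cauchy--Schwarz; both routes rely on the same frequency-localization facts from Lemma~\ref{ph1} and Hypothesis~\ref{hyp:freq:loc}, so the difference is cosmetic.
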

\begin{proof}
    Let $\varphi$ be as in the hypothesis at step $q+1$. Note that it suffices to prove that
    \begin{align}\label{eq:new.hel}
        2\Ga_{q+1}^{-1} \leq h(t) - H(u_{q+1}) &\leq \frac{15}{16} \Ga_q^{-1} \, , \\
\label{eq:hel.moll.diff}
        \left| H(u_{q+1}) - \int_{\T^3} \bp_\varphi (u_{q+1})\cdot \curl u_{q+1} \right| &< \Ga_q^{-2}\,.
    \end{align}
To prove~\eqref{eq:new.hel}, we see from~\eqref{hel:inc:est},~\eqref{hab1} with $\varphi \equiv 1$, and~\eqref{eq:diff:moll:vellie:statement}  that 
$$ \frac{\Ga_{q-1}}{2\Ga_q^2} \leq h(t) - H(u_q) - H(\hat w_{q+\bn}) \leq \frac78 \Ga_q^{-1}\,. $$
Since $H(u_{q+1}) = H(u_q) + H(\hat w_{q+\bn}) + 2H(u_q, \hat w_{q+\bn})$,~\eqref{eq:new.hel} follows from this and Lemma~\ref{lem:helicity.u}.

It remains to prove~\eqref{eq:hel.moll.diff}. Notice that
$$ H(u_{q+1}) - \int_{\T^3} \bp_\varphi (u_{q+1})\cdot \curl u_{q+1} =  \int_{\T^3} \bp_{1-\varphi} (u_{q+1})\cdot \curl u_{q+1} $$
and that the multiplier $\td\varphi := (1-\varphi)/2$ satisfies $\td\varphi : \Z^3 \to B$ and $\td\varphi(m) = 0$ for all $|m|\leq \la_\qbn\Ga_\qbn$. Therefore, by Hypothesis~\ref{hyp:freq:loc} proved at step $q+1$ in Proposition~\ref{prop:hyp:freq:loc}, we have that 
$$ \|\bp_{\td\varphi} u_{q+1}\|_\infty = \sum_{i\leq q+1} \left\|\bp_{\td\varphi} \hat w_{i+\bn-1}\right\|_\infty \leq (q+\bn) \frac{\la_\qbn}{(\la_\qbn\Ga_\qbn)^{20}} < \frac12 \la_\qbn^{-19}\,. $$
Thus we have
\begin{align*}
    \left| \int_{\T^3} \bp_{1-\varphi} (u_{q+1})\cdot \curl u_{q+1}\right| \leq \|\bp_{1-\varphi} (u_{q+1})\|_\infty \|\curl u_{q+1}\|_1 \leq \la_\qbn^{-10}< \Ga_q^{-2}\,.
\end{align*}

\end{proof}

\begin{lemma}[\textbf{Verifying Hypothesis~\ref{hyp:temp}}]\label{lem:hel:derivs}

$H(u_{q+1})$ satisfies
\begin{equation}
    \left\| \left[ \sfrac{d}{dt} \right]^M H(u_{q+1}) \right\|_{L^{\infty}([0,T])} \les \MM{M, \Nindt, \mu_{q}^{-1}\Gamma_q^{20}, \Tau_{q}^{-1}\Ga_q^8} \, .
\end{equation}
\end{lemma}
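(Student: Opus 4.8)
The plan is to use the bilinearity of the helicity pairing. Since $u_{q+1} = u_q + \hat w_{q+\bn}$ by Definition~\ref{def:wqbn}, we have
\[
H(u_{q+1}) = H(u_q) + 2\, H(u_q, \hat w_{q+\bn}) + H(\hat w_{q+\bn}) \,,
\]
so it is enough to bound $\ddt^M$ of each of the three terms on the right by $\MM{M, \Nindt, \mu_q^{-1}\Gamma_q^{20}, \Tau_q^{-1}\Ga_q^8}$. The cross term is already done: Lemma~\ref{lem:helicity.u}, i.e.~\eqref{eq:helicity.u}, gives $\left| \ddt^M H(u_q, \hat w_{q+\bn}) \right| \leq \Gamma_{q+1}^{-10}\MM{M, \Nindt, \mu_q^{-1}, \Tau_q^{-1}\Ga_q^8}$, which is far smaller than needed. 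For $H(u_q)$ I would simply invoke the inductive Hypothesis~\ref{hyp:temp} for $u_q$, namely~\eqref{hel:lip}, which bounds $\ddt^M H(u_q)$ by $\MM{M, \Nindt, \mu_{q-1}^{-1}\Gamma_{q-1}^{20}, \Tau_{q-1}^{-1}\Ga_{q-1}^8}$; replacing the rates at generation $q-1$ by the larger ones at generation $q$ is justified by the monotonicity of the $\MM$-function together with the parameter inequalities collected in Section~\ref{sec:pam}.

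The only term demanding real work is $H(\hat w_{q+\bn})$, which I would reduce to $H(w_{q+1})$. Using the self-adjointness of $\curl$ under the $L^2$ pairing and writing $\delta w := \hat w_{q+\bn} - w_{q+1}$,
\[
H(\hat w_{q+\bn}) = H(w_{q+1}) + \int_{\T^3} \delta w \cdot \curl\bigl( \hat w_{q+\bn} + w_{q+1} \bigr)\, dx \,.
\]
For $H(w_{q+1})$ I would split $H(w_{q+1}) = H(w_{q+1,h}^{(p)}) + \bigl( H(w_{q+1}) - H(w_{q+1,h}^{(p)}) \bigr)$ and apply~\eqref{derivs:1} from Lemma~\ref{lem:helicity.h} and~\eqref{hab1:derivs} from Lemma~\ref{lem:helicity.pc}, each of which supplies precisely the bound $\MM{M, \Nindt, \mu_q^{-1}\Gamma_q^{20}, \Tau_q^{-1}\Ga_q^8}$. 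For the correction term, note that $\delta w = \mathcal{\tilde P}_{q+\bn,x,t}w_{q+1} - w_{q+1}$, so $\partial_t^M$ commutes with the mollifier; since $\mathcal{\tilde P}_{q+\bn,x,t}$ has many vanishing moments and $w_{q+1}$ has active frequencies $\lesssim\lambda_{q+\bn}$, the estimate~\eqref{eq:diff:moll:vellie:statement} (extended in the routine way to all $M \leq \sfrac{\Nfin}{3}$, and after converting freely between the various time and material derivatives at the cost of fixed powers of $\lambda_{q+\bn}$ via the lossy velocity bounds~\eqref{eq:bobby:old}) shows that $\delta w$ and its time derivatives carry the super-polynomially small prefactor $\delta_{q+3\bn}^3\Tau_\qbn^{25\Nindt}$, which absorbs the powers of $\lambda_{q+\bn}$ produced by the Leibniz expansion of $\ddt^M$ and by the bounds~\eqref{eq:vellie:upgraded:statement} and Corollary~\ref{cor:corrections:Lp} on $\hat w_{q+\bn}+w_{q+1}$. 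Hence this term is negligible compared with the target bound.

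Summing the three contributions, and using that all summands obey a bound of the form $\MM{M, \Nindt, \mu_q^{-1}\Gamma_q^{20}, \Tau_q^{-1}\Ga_q^8}$ (or smaller), yields the claim. The main obstacle is the $H(\hat w_{q+\bn})$ term: one must be careful that converting plain time derivatives into material derivatives of the \emph{mollified} increment does not destroy the smallness gained from mollification — which is precisely why~\eqref{eq:diff:moll:vellie:statement} was arranged with such a tiny prefactor — while the only other point to check, a matter of parameter bookkeeping, is that~\eqref{hel:lip} upgrades cleanly from generation $q-1$ to generation $q$.
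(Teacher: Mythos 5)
Your proposal follows the same route as the paper: split $H(u_{q+1})$ bilinearly into $H(u_q)$, the cross term $2H(u_q,\hat w_{q+\bn})$, and $H(\hat w_{q+\bn})$, and then split the last piece into $H(w_{q+1})$ plus the mollification correction, invoking~\eqref{hel:lip},~\eqref{eq:helicity.u},~\eqref{derivs:1} together with~\eqref{hab1:derivs}, and~\eqref{eq:diff:moll:vellie:statement} respectively — exactly the four ingredients the paper cites. Your explicit identity $H(\hat w_{q+\bn}) - H(w_{q+1}) = \int \delta w \cdot \curl(\hat w_{q+\bn} + w_{q+1})$ and the remark about upgrading the $\MM$-rates from generation $q-1$ to $q$ for the $H(u_q)$ term are correct details that the paper leaves implicit.
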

\begin{proof}
We split $H(u_{q+1}) = H(u_q)+2H(u_q, \hat w_\qbn)+H(w_{q+1})+H( \hat w_\qbn) - H(w_{q+1})$.  Applying~\eqref{hel:lip} to the first term,~\eqref{eq:helicity.u} to the second term,~\eqref{derivs:1} and~\eqref{hab1:derivs} to the third term, and~\eqref{eq:diff:moll:vellie:statement} to the final term yields the result.
\end{proof}

\subsection{Energy increment}\label{sec:energy}
In this subsection, we verify \eqref{eq:resolved:energy} for $u_{q+1} = u_q + \hat {w}_{q+\bn}$. 
We first write
\begin{align*}
    e(t) - \frac 12 \| u_{q+1}(t,\cdot) \|_2^2
    = e(t) -\frac 12 \norm{u_q}_2^2 
    -\frac 12 \|w_{q+1,R}^{(p)}\|_2^2
    - \frac 12 \int |\hat w_{q+\bn} |^2 - |w_{q+1,R}^{(p)}|^2
    - \int u_q \cdot \hat w_{q+\bn}.
\end{align*}
By the construction of $w_{q+1,R}^{(p)}$ (see \eqref{eq:a:xi:def}), recalling that $R_\ell$ is trace-free (see \eqref{def:mollified:stuff}), and following the computation as in \cite[Section 8.2]{GKN23}
we have
\begin{align*}
    \int |w_{q+1, R}^{(p)}|^2 
    &= \sum_{i,j,k,\xi, \vecl, I}
    \int a_{(\xi)}^2  
\left(\chib_{(\xi)} \etab_{\xi}^{I} \right)^2 \circ \Phiik  |\nabla\Phi_{(i,k)}^{-1}\xi|^2
\left(\varrho_{(\xi),R}^I\right)^2 \circ \Phi_{(i,k)}\\
&=
2e(t) - \|u_q\|_2^2 -\de_{q+\bn+1}
+
\sum_{i,j,k,\xi, \vecl}
    \int a_{(\xi)}^2  
\left(\bp_{\neq 0}\chib_{(\xi)}^2 \right) \circ \Phiik  |\nabla\Phi_{(i,k)}^{-1}\xi|^2
\\
&\quad+\sum_{i,j,k,\xi, \vecl, I}
    \int a_{(\xi)}^2  
\left(\chib_{(\xi)} \etab_{\xi}^{I} \right)^2 \circ \Phiik  |\nabla\Phi_{(i,k)}^{-1}\xi|^2
\left(\bp_{\neq 0}\left(\varrho_{(\xi),R}^I\right)^2 \right)\circ \Phi_{(i,k)}\\
&=2e(t) - \|u_q\|_2^2 -\de_{q+\bn+1} + I_1 + I_2\, .
\end{align*}
To estimate $I_1$ and $I_2$, we integrate by parts many times to bound these terms by $\de_{q+\bn+1}/50$.  Thus
\begin{align*}
  \frac 12 \de_{q+\bn+1} - \frac 1{50} \de_{q+\bn+1}   \leq e(t) -\frac 12 \norm{u_q}_2^2 
    -\frac 12 \|w_{q+1,R}^{(p)}\|_2^2
    \leq  \frac 12 \de_{q+\bn+1} + \frac 1{50} \de_{q+\bn+1} 
\end{align*}
Then, using the estimates in Corollary~\ref{cor:corrections:Lp} and Lemma~\ref{lem:mollifying:w} together with Corollary~\ref{rem:summing:partition}, we have
\begin{align*}
 \frac 12 \left|\int |\hat w_{q+\bn} |^2 - |w_{q+1,R}^{(p)}|^2\right|
&\leq \frac 12 \left(
\|\hat w_{q+\bn} - w_{q+1}\|_2 
+\|w_{q+1,R}^{(c)}\|_2 +\norm{w_{q+1,h}}_2\right)
\left(
\|\hat w_{q+\bn}\|_2 + \|w_{q+1,R}^{(p)}\|_2
\right)\\
&\lec \de_{q+\bn}^\frac12 \Ga_q^{50}\left(\frac{\la_{q+\bn}}{\la_\qho^{\frac32}} + \de_{q+\bn}^{\frac12} r_q\right)
\leq \frac 1{50}\de_{q+\bn+1}\, . 
\end{align*}
Arguing as in Lemma \ref{lem:helicity.u} and using \eqref{eq:diff:moll:vellie:statement}, 
$\left| \int u_q \cdot \hat w_{q+\bn}     \right|\leq \frac 1{50} \de_{q+\bn+1}$. Combining gives \eqref{eq:resolved:energy}.

\section{Error estimates}\label{sec:err}

\subsection{Identification of error terms}\label{sec:error.ER}

We define the new Reynolds stress $S_{q+1}$ by adding $\hat w_{q+\bn}$ to the Euler-Reynolds system for $(u_q, p_q, R_q)$ in Lemma~\ref{lem:upgrading}.  We use~\eqref{eq:dodging:newbies} in the string of equalities below, which prevents $w_{q+1, R}$ and $w_{q+1, h}$ from interacting nonlinearly to produce an oscillation error.  
\begin{align}
\div(S_{q+1}) 
&= \pa_t \hat w_{q+\bn} + (u_{q}\cdot \na )\hat w_{q+\bn} + (\hat w_{q+\bn}\cdot \na) u_q 
\notag \\
&\quad
+ \div(\hat w_{q+\bn}\otimes \hat w_{q+\bn}- P_{q+1}\Id + R_\ell) + \div \left( R_q^q -\frac13(\tr R_q^q) \Id- R_\ell \right)  \notag \\
&= \underbrace{(\pa_t + \hat u_q \cdot \na) w_{q+1} + w_{q+1}\cdot\nabla \hat u_q}_{=:\, \div S_{TN}}
+ \underbrace{ \div \left( w_{q+1,h} \otimes w_{q+1, h} \right) }_{=: \div S_{O,h}} \notag \\ 
&\quad + \underbrace{\div \left(\wp_{q+1, R}\otimes \wp_{q+1, R}  - P_{q+1}\Id+ R_\ell \right)}_{=:\, \div S_{O, R}} \notag \\
&\quad + \underbrace{\div \left(\wp_{q+1, R}\otimes_s \wc_{q+1, R}+\wc_{q+1, R}\otimes \wc_{q+1, R}\right)
}_{=:\, \div S_{C}} 
+ \underbrace{\div \left( R_q^q - \frac13 (\tr R_q^q)\Id - R_\ell \right) }_{=:\div S_{M1}} \notag \\
&\quad + 
\underbrace{(\pa_t + \hat u_{q}\cdot \na )(\hat w_{q+\bn}-w_{q+1}) + \div((\hat w_{q+\bn}-w_{q+1} )\otimes  \hat u_q + \hat w_{q+\bn}\otimes \hat w_{q+\bn} - w_{q+1}\otimes w_{q+1} )}_{=:\div S_{M2}}
\, .  \label{new.error}
\end{align}
Here $P_{q+1}$ will be specified later in section \ref{sec:osc.err}. Note that we used \eqref{eq:dodging:oldies} to replace $u_q$ with $\hat u_q$ in the second equality.  Also, we have that $\pa_t w_{q+1} + (\hat u_q\cdot\na)w_{q+1} + w_{q+1}\cdot \na \hat u_q$ has zero mean, and so it can be written in divergence form $\div S_{TN}$.  The same statement holds for  $\div S_{M2}$.  We set
\begin{equation}\label{Rq+1:def}
    R_{q+1} = R_q - R_q^q + S_{q+1} \,, \quad
    p_{q+1} = p_q - \frac13 (\tr R_q^q) + P_{q+1}\, . 
\end{equation}
We then have that $(u_{q+1}, p_{q+1}, R_{q+1})$ solves the Euler-Reynolds system 
\begin{align}
    \partial_t u_{q+1} 
    + \div \left( u_{q+1} \otimes u_{q+1} \right) + \nabla p_{q+1}    
    = \div ( R_{q+1} ) \, , \qquad \div \, u_{q+1} = 0 \, . \label{ER:new:equation}
\end{align}
We will prove that the new stress error $S_{q+1}$ can be decomposed into components ${S}_{q+1}^k$ as $S_{q+1} =\sum_{k=q+1}^{q+\bn} {S}_{q+1}^k$, each of which satisfies estimates consistent with the inductive assumptions.

\subsection{Oscillation error}\label{sec:osc.err}

\begin{lemma}[\textbf{Oscillation error from $w_{q+1, h}$}]
The symmetric stress $S_{O,h} := w_{q+1,h} \otimes w_{q+1,h}$ satisfies the following estimates for $N,M \leq \frac{\Nfin}{10}$:
\begin{subequations}\label{ho}
\begin{align}
    \left\| \psi_{i,q} D^N \Dtq^M S_{O,h} \right\|_{1} &\les \delta_{q+2\bn} \lambda_\qbn^N \MM{M, \Nindt, \mu_q^{-1}\Gamma_q^{i+13}, \Tau_q^{-1}\Gamma_q^8} \, , \label{ho1}  \\
    \left\| \psi_{i,q} D^N \Dtq^M S_{O,h} \right\|_{\infty} &\les \lambda_\qbn^N \MM{M, \Nindt, \mu_q^{-1}\Gamma_q^{i+13}, \Tau_q^{-1}\Gamma_q^8}  \, . \label{ho2}
\end{align}
\end{subequations}

\end{lemma}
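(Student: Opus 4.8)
The plan is to estimate $S_{O,h} = w_{q+1,h}\otimes w_{q+1,h}$ directly from the amplitude bounds, without exploiting any oscillation or cancellation structure, since the claimed estimates are relatively crude (the right-hand sides carry no gain in $\lambda$ beyond $\lambda_\qbn^N$, and the $L^1$ bound only needs $\delta_{q+2\bn}$, which is much larger than what a careful oscillation analysis would give). First I would expand $w_{q+1,h} = \sum_{i,k,\vecl,I} w_\pxi^{(h),I}$ from~\eqref{wqplusoneoneh}, so that $S_{O,h} = \sum w_\pxi^{(h),I} \otimes w_{(\bar\xi)}^{(h),\bar I}$. Using the dodging property~\eqref{eq:dodging:newbies}, the only surviving terms in the product are the diagonal ones where $(\pxi, I) = (\bar\xi, \bar I)$ (in fact, up to overlapping neighbors in the cumulative cutoffs, but these are controlled by~\eqref{eq:desert:cowboy:sum}), so $S_{O,h}$ is essentially a single sum $\sum_{i,k,\vecl,I} w_\pxi^{(h),I} \otimes w_\pxi^{(h),I}$ of non-overlapping pieces.

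For the pointwise ($L^\infty$) estimate~\eqref{ho2}, I would apply Leibniz to distribute $D^N \Dtq^M$ across the tensor product, then for each piece $w_\pxi^{(h),I}$ use the $L^\infty$ bounds from Corollary~\ref{cor:corrections:Lp}, specifically~\eqref{eq:w:oxi:h:unif:master} and~\eqref{eq:w:oxi:unif:master:hc} (the corrector part obeying an even better bound, so the principal part dominates). Each factor contributes $\frac{\lambda_\qbn}{\lambda_\qho^{\sfrac 32}}\Gamma_{q-1}^{-\sfrac12} r_q^{-1} \lambda_\qbn^N \MM{M,\NindSmall,\mu_q^{-1}\Gamma_q^{i+13},\Tau_q^{-1}\Ga_q^8}$, so the product gives a factor of $\left(\frac{\lambda_\qbn}{\lambda_\qho^{\sfrac 32}}\Gamma_{q-1}^{-\sfrac12} r_q^{-1}\right)^2$ times $\lambda_\qbn^N$ times the material-derivative factor (summing the split of $M$ across the two factors via the usual cost-summation for $\MM{\cdot}$). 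Then I would invoke the parameter inequality relating $\frac{\lambda_\qbn^2}{\lambda_\qho^3 r_q^2}$ and the relevant powers of $\Gamma_q$ (analogous to~\eqref{eq:qho:choice3} used in the proof of Lemma~\ref{lem:helicity.h}) to absorb this constant into $1$, together with the fact that near-diagonal overlaps are bounded by~\eqref{eq:desert:cowboy:sum}, obtaining~\eqref{ho2}. The material derivative count follows since $\Dtq$ on $w_{q+1,h}$ respects the same $\MM{\cdot}$ hierarchy, using $\mu_q^{-1}\Gamma_q^{i+13}$ as in Corollary~\ref{cor:corrections:Lp} and noting $\Gamma_q^{i+13} \leq \Gamma_q^{i+13}$ on $\supp\psi_{i,q}$.

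For the $L^1$ estimate~\eqref{ho1}, I would argue similarly but track the support sizes: by Corollary~\ref{rem:summing:partition} (aggregated $L^p$ estimates) with $p=1$, $\theta_1 = 0$, $\theta_2 = 0$ since the helical cutoffs do not depend on $j$, the sum over indices of $\|w_\pxi^{(h),I}\|_{L^2}^2$-type quantities produces a factor $\Gamma_q^3$ (from the $\CLebesgue$-free aggregation) times $\left(\frac{\lambda_\qbn}{\lambda_\qho^{\sfrac 32}}\Gamma_{q-1}^{-\sfrac 12}\right)^2$, using~\eqref{eq:w:oxi:h:est:master} and~\eqref{eq:w:oxi:est:master:hc} with $r=2$ so that $r_q^{\sfrac 2r - 1} = 1$. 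This gives an $L^1$ bound of order $\Gamma_q^3 \frac{\lambda_\qbn^2}{\lambda_\qho^3}\Gamma_{q-1}^{-1} \lambda_\qbn^N \MM{M,\Nindt,\mu_q^{-1}\Gamma_q^{i+13},\Tau_q^{-1}\Gamma_q^8}$, and the remaining task is to check $\Gamma_q^3 \frac{\lambda_\qbn^2}{\lambda_\qho^3}\Gamma_{q-1}^{-1} \les \delta_{q+2\bn}$, i.e. that $\frac{\lambda_\qbn^2}{\lambda_\qho^3}$ is small enough to beat $\delta_{q+2\bn} = \lambda_0^{2\beta}\lambda_{q+2\bn}^{-2\beta}$; recalling $\lambda_\qho = \lambda_\qbn \Gamma_{q-100}^{-1/100}$, one has $\frac{\lambda_\qbn^2}{\lambda_\qho^3} \approx \lambda_\qbn^{-1}\Gamma_{q-100}^{3/100}$, which is indeed much smaller than $\delta_{q+2\bn}$ by the superexponential frequency growth. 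The main obstacle is bookkeeping: correctly applying the aggregation lemma with the right choice of $\theta_1,\theta_2$ and constants so that the index sums converge without spurious losses, and verifying that the final parameter inequality $\lambda_\qbn^{-1}\Gamma_{q-100}^{3/100}\Gamma_q^2 \les \delta_{q+2\bn}$ is among those guaranteed by the parameter choices in Section~\ref{sec:not.general} — everything else is routine Leibniz-rule and material-derivative-cost accounting identical to analogous oscillation-error lemmas in~\cite{GKN23}.
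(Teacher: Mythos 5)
Your proposal takes essentially the same route as the paper's proof: expand $w_{q+1,h}\otimes w_{q+1,h}$ into a diagonal sum using the dodging property~\eqref{eq:dodging:newbies}, estimate each piece by squaring the $L^2$ amplitude bounds~\eqref{eq:w:oxi:h:est:master},~\eqref{eq:w:oxi:est:master:hc} from Corollary~\ref{cor:corrections:Lp} (resp.\ the $L^\infty$ bound~\eqref{eq:w:oxi:h:unif:master} for~\eqref{ho2}), aggregate via Corollary~\ref{rem:summing:partition}, and close with the parameter inequality~\eqref{eq:qho:choice3}. One small bookkeeping correction: Corollary~\ref{rem:summing:partition} at $p=1$ requires $\theta_1+\theta_2=2$, so for the helical ($j$-independent) cutoffs the correct choice is $\theta_1=2$, $\theta_2=0$ rather than $\theta_1=\theta_2=0$, which yields an aggregation loss on the order of $\Gamma_q^{\CLebesgue}$ rather than $\Gamma_q^3$; this is still harmless because $\varepsilon_\Gamma\ll(b-1)^2$ makes any fixed power of $\Gamma_q$ negligible against $\lambda_{q+\bn}^{1-2\beta\bar b^2}$.
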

\begin{proof}
We appeal to Corollary~\ref{cor:corrections:Lp}; for the $L^1$ estimate, we use~\eqref{eq:w:oxi:h:est:master} and~\eqref{eq:w:oxi:est:master:hc} with $r=2$, the former of which gives the weakest estimate.  Fixing values of $i,k, \vecl, I$ and recalling the notation from~\eqref{wqplusoneoneh}, we have that for $N, M \leq \frac{\Nfin}{10}$, 
\begin{align}
    \left\| D^N \Dtq^M \left( w_{\pxi}^{(h),I} \otimes w_{\pxi}^{(h),I} \right) \right\|_1 \les \left| \supp \left( \eta_{i,k,\xi,\vecl} \zetab_\xi^I \right) \right| \frac{\lambda_\qbn^2}{\lambda_\qho^3} \lambda_\qbn^N \MM{M, \Nindt, \mu_q^{-1}\Gamma_q^{i+13}, \Tau_q^{-1}\Gamma_q^8} \, .
\end{align}
Upon applying the usual aggregation lemmas and parameter inequality~\eqref{eq:qho:choice3}, we obtain~\eqref{ho1}.  In order to obtain~\eqref{ho2}, we apply~\eqref{eq:w:oxi:h:unif:master}, which essentially asserts that the $L^\infty$ norm of $w_\pxi^{(h), I} \otimes w_\pxi^{(h), I}$ is $\lambda_\qbn^2\lambda_\qho^{-3} r_q^{-2}$.  This quantity is much less than $1$ since there is an extra $\lambda_\qho$ in the denominator, and the remainder $\lambda_\qbn^2 \lambda_\qho^{-2} r_q^{-2}$ is of order $b-1$.  Derivative estimates follow from Corollary~\ref{cor:corrections:Lp}, and we omit further details.  
\end{proof}

We now consider the oscillation error $S_{O,R}$  from $w_{q+1, R}$.
In order to define it, we first consider
\begin{align}\label{eqn:wpwp}
\div\left(\wp_{q+1,R}\otimes \wp_{q+1,R}\right)^\bullet &= \sum_{\xi,i,j,k,\vecl } \partial_\alpha \left(  a_{(\xi) } (\nabla\Phi_{(i,k)}^{-1})_{\theta}^\alpha \BB_{(\xi) }^\theta(\Phi_{(i,k)}) \,  a_{(\xi) } (\nabla\Phi_{(i,k)}^{-1})_\gamma^\bullet \BB_{(\xi) }^\gamma(\Phi_{(i,k)}) \right) \, ,
\end{align}
where $\bullet$ denotes the unspecified components of a vector field and we have used \eqref{eq:dodging:newbies} from Lemma~\ref{lem:dodging} to eliminate all cross terms. Recalling from \eqref{eq:W:xi:q+1:nn:def} that
$\BB_{(\xi)}=\chib_{(\xi)}\sum_I \etab_\xi^{I} \WW_{(\xi)}^I$
and using \eqref{eq:sa:summability}, 
item \eqref{i:bundling:3} of Proposition \ref{prop:bundling}, and
item \eqref{item:pipe:4} of Proposition \ref{prop:pipeconstruction}, 
we decompose
\begin{align}\label{eq:BB.decomp}
(\BB\otimes\BB)_{(\xi) }
&= \chib_{(\xi)}^2 \sum_{I} (\etab_{\xi}^{I })^{2} \mathbb{P}_{\neq 0}(\WW_{(\xi) }^I\otimes  \WW_{(\xi)}^I)
+ \mathbb{P}_{\neq 0} \chib_{(\xi)}^2 
\xi\otimes \xi
+ 
\xi\otimes \xi.
\end{align}
Since the vector field used to define the simple symmetric tensors in \eqref{eq:BB.decomp} does not vary in the $\xi$-direction, the symmetric tensor satisfies $\xi\cdot\nabla (\BB\otimes \BB)_{\pxi}=0$. Then using the identity $\partial_\alpha ((\nabla\Phiik^{-1})_\theta^\alpha (\BB\otimes \BB)_{\pxi}\circ \Phiik \xi^\theta) = \xi^\theta (\partial_\theta (\BB\otimes \BB)_{\pxi})\circ \Phiik =0$, \eqref{eqn:wpwp} can be expanded as
\begin{subequations}
\label{eqn:osc.expand}
\begin{align}
 \div\left(\wp_{q+1,R}\otimes \wp_{q+1,R}\right)^\bullet
&=
\sum_{\xi,i,j,k,\vecl} \partial_\alpha \left(  a_{(\xi)}^2 
(\nabla\Phi_{(i,k)}^{-1})_{\theta}^\alpha 
(\nabla\Phi_{(i,k)}^{-1})_\gamma^\bullet
(\xi^\theta \xi^\gamma) \right)
\label{eqn:osc.exp1}\\
&\quad+
\sum_{\xi,i,j,k,\vecl} B_{(\xi)}^{\bullet}
\left(\mathbb{P}_{\neq 0} \chib_{(\xi)}^2 \right) \circ \Phi_{(i,k)}
\label{eqn:osc.exp2}\\
&\quad+
\sum_{\xi,i,j,k,\vecl} B_{(\xi)}^{\bullet}
\left( \chib_{(\xi)}^2 \sum_{I} (\etab_{\xi}^{I})^{2} \mathbb{P}_{\neq 0}(\varrho_{(\xi),R}^I)^2\right) \circ \Phi_{(i,k)}
\label{eqn:osc.exp3}
\end{align}
\end{subequations}
where 
\begin{equation}\label{eq:rhoxidiamond:def}
B_{(\xi),\diamond}^{\bullet} := \xi^\theta \xi^\gamma\partial_\alpha \left(  a_{(\xi)}^2 
(\nabla\Phi_{(i,k)}^{-1})_{\theta}^\alpha 
(\nabla\Phi_{(i,k)}^{-1})_\gamma^\bullet\right)\, , \qquad \varrho_{(\xi),R}^{I}=\xi \cdot \WW_{(\xi)}^I \, .
\end{equation}

The first term \eqref{eqn:osc.exp1} cancels out $\div R_\ell$ from \eqref{new.error} up to the addition of a potential $P_{q+1}$:
\begin{align}
    &\sum_{\xi,i,j,k,\vecl} a_{(\xi)}^2 \nabla\Phi_{(i,k)}^{-1} \left(\xi\otimes \xi\right) \nabla\Phi_{(i,k)}^{-\top}
\underset{\eqref{eq:a:xi:def}}{=} \sum_{\xi,i,j,k,\vecl} e_{q+1}\Gamma_{q}^{2j} \eta_{i,j,k,\xi,\vecl}^2 \gamma_{\xi}^2\left(\frac{R_{q,i,j,k}}{e_{q+1}\Gamma_{q}^{2j}}\right)\nabla\Phi_{(i,k)}^{-1}\left(\xi\otimes\xi\right)\nabla\Phi_{(i,k)}^{-\top} \notag\\
    &\quad \underset{
\eqref{def:cumulative:current}, \eqref{eq:rqnpj}, \eqref{eq:summy:summ:1}}{=}  \sum_{i,j,k} 
    e_{q+1}
    \psi_{i,q}^2 \omega_{j,q}^2 \chi_{i,k,q}^2 \Ga_q^{2j} \Id - R_\ell
    =: P_{q+1} \Id - R_\ell\, . 
    \label{defn:P}
\end{align}
The inverse divergence of the remaining terms \eqref{eqn:osc.exp2}-\eqref{eqn:osc.exp3} will therefore form the oscillation errors. 

\begin{lemma}[\textbf{Oscillation error from $w_{q+1, R}$}]
\label{lem:oscillation:general:estimate}
There exist symmetric stresses $S_{O,R}^m$ for $m=1,\dots,q+\bn$ such that the following hold.\index{$S_O^m$}
\begin{enumerate}[(i)]
    \item\label{item:oscee:1} 
    For the potential $P_{q+1}$ defined as in \eqref{defn:P}, we have $\div \left( w_{q+1}^{(p)} \otimes w_{q+1}^{(p)} - P_{q+1} \Id+ R_\ell  \right) = \sum_{m=q+1}^{q+\bn} \div S_{O,R}^m$.
    \item For $m=q+1, \dots, q+\bn$ and $N+M\leq 2\Nind$, the errors ${S}^{m}_{O,R}$ satisfy
\begin{subequations}
\label{eq:Onpnp:est}
\begin{align}
    \left\| \psi_{i,m-1} D^N D_{t, m-1}^M {S}^{m}_{O,R} \right\|_1
    &\lesssim   \Ga_m^{-1}\delta_{m+\bn} \Lambda_{m}^N \MM{M, \Nindt, \mu_{m-1}^{-1}\Gamma_{m-1}^{i+20}, \Tau_{m-1}^{-1}\Ga_{m-1}^{10}}\label{eq:Onpnp:estimate:2} \\
\left\|\psi_{i,m-1} D^N D_{t, m-1}^M {S}^{m}_{O,R} \right\|_{\infty} 
    &\lesssim \Gamma_{m}^{\badshaq-1} \Lambda_{m}^N
    \MM{M, \Nindt, \mu_{m-1}^{-1}\Gamma_{m-1}^{i+20}, \Tau_{m-1}^{-1}\Ga_{m-1}^{10}} \, . \label{eq:Onpnp:estimate:2:new}
\end{align}   \end{subequations}
\end{enumerate}
\end{lemma}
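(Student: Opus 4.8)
The plan is to follow the strategy of the corresponding oscillation-error estimate in~\cite{GKN23}, adapting it to the simpler $L^2/L^1$ framework used here. The starting point is the expansion~\eqref{eqn:osc.expand}: after subtracting off~\eqref{eqn:osc.exp1}, which is absorbed into the pressure $P_{q+1}$ and cancels $\div R_\ell$ via~\eqref{defn:P}, we are left with the two genuinely oscillatory contributions~\eqref{eqn:osc.exp2} and~\eqref{eqn:osc.exp3}. Each of these is of the form $\sum B^\bullet_{(\xi)} \cdot G\circ\Phi_{(i,k)}$ where $G$ is a high-frequency, mean-zero periodic function (either $\bp_{\neq 0}\chib_{(\xi)}^2$, which oscillates at frequency $\approx \lambda_{q+1}\Gamma_q^{-4}$, or $\chib_{(\xi)}^2\sum_I(\etab_\xi^I)^2\bp_{\neq 0}(\varrho^I_{(\xi),R})^2$, which oscillates at frequency $\approx \lambda_\qbn r_q = \lambda_{q+\half}\Gamma_q$) and $B^\bullet_{(\xi)}$ is a low-frequency coefficient controlled by Lemma~\ref{lem:a_master_est_p} together with the deformation bounds from Corollary~\ref{cor:deformation}. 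First I would apply the inverse-divergence operator $\divR$ (the operator of~\cite[Prop.~A.13]{GKN23}, together with its ``flowed'' variant adapted to the maps $\Phi_{(i,k)}$, exactly as in~\cite{GKN23}) to each summand, gaining a factor of the reciprocal of the oscillation frequency at the cost of paying derivatives on $B^\bullet_{(\xi)}$; iterating this $\Dpot$-many times produces the required smallness.

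The decomposition into the pieces $S^m_{O,R}$ for $m = q+1, \dots, q+\bn$ is organized by the frequency at which the error lives: the term~\eqref{eqn:osc.exp2}, whose high-frequency factor $\bp_{\neq 0}\chib_{(\xi)}^2$ has minimal active frequency $\approx \lambda_{q+1}\Gamma_q^{-4} \approx \Lambda_{q+1}$, contributes to $S^{q+1}_{O,R}$, while the term~\eqref{eqn:osc.exp3} lives at frequency $\approx \lambda_{q+\half}\Gamma_q$, which sits between $\Lambda_{q+\half}$ and $\Lambda_{q+\half+1}$ and is therefore assigned to the appropriate intermediate index $m$; the remaining indices receive zero. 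For the estimates~\eqref{eq:Onpnp:est}, at the level of heuristics one has $\|B^\bullet_{(\xi)}\|_{L^1(\supp\eta)} \lesssim |\supp\eta|\,\delta_\qbn\Gamma_q^{2j+C}\,\tau_q^{-1}\mu_q\lambda_{q+1}$ from~\eqref{e:a_master_est_p_R} (the extra $\tau_q^{-1}\mu_q$ coming from $\|(\nabla\Phi)^{-1}-\Id\|$ in~\eqref{eq:Lagrangian:Jacobian:1} — crucially, without this deformation smallness the leading term would not cancel), so that after applying $\divR$ and summing via the aggregation Corollary~\ref{rem:summing:partition} (with $\theta_1=0$, $\theta_2=2$, $p=1$ for the $L^1$ bound, absorbing the $\Gamma_q^{2j}$ against $\|\omega_{j,q}\|_{L^2}$), the $L^1$ norm is bounded by $\delta_\qbn \cdot \frac{\tau_q^{-1}\mu_q\lambda_{q+1}}{\lambda_{q+\half}\Gamma_q}\cdot\Gamma_q^{C}$, which by~\eqref{eq:defn:tau} and the definitions in~\eqref{eq:def:la:de}, \eqref{def:m:t} is $\ll \Gamma_{q+1}^{-1}\delta_{q+1+\bn}$; the $L^\infty$ bound is analogous using the $L^\infty$ amplitude estimate~\eqref{e:a_master_est_p_uniform_R}, which replaces $\delta_\qbn$ by $\Gamma_q^{\badshaq/2+C}$. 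Spatial and material derivatives are handled by the standard rule that each $D$ costs $\Lambda_m$ and each $D_{t,m-1}$ costs the appropriate factor from the $\mathcal{M}$-notation, using the commutator identities relating $D_{t,q}$ and $D_{t,m-1}$ and the mollification estimates; these are routine given the machinery already set up.

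The main obstacle will be the bookkeeping of the inverse-divergence operator against the flow maps $\Phi_{(i,k)}$: one must commute $\divR$ past the composition with $\Phi_{(i,k)}$, which is precisely the content of the ``flowed'' inverse divergence, and verify that the resulting commutator terms and the derivatives landing on $\nabla\Phi_{(i,k)}^{-1}$ (controlled by~\eqref{eq:Lagrangian:Jacobian:2}, \eqref{eq:Lagrangian:Jacobian:5}, \eqref{eq:Lagrangian:Jacobian:6}) do not destroy the gain; the key point, as in~\cite{GKN23}, is that each additional spatial derivative on the deformation tensor costs only $\lambda_q\Gamma_q \ll \lambda_{q+\half}\Gamma_q$, so the frequency gain from $\divR$ is never overwhelmed, and a large but $q$-independent choice of $\Dpot$ closes all estimates. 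A secondary technical point is checking the support and the trace-free/symmetry properties of the resulting $S^m_{O,R}$, but these follow from the construction of $\divR$ and the symmetry of the tensor $(\BB\otimes\BB)_{(\xi)}$ as in~\cite{GKN23}; since the proof is essentially identical to~\cite[Section~7]{GKN23} modulo the replacement of $L^{3/2}$/$L^3$ by $L^1$/$L^2$ and the absence of the current corrector, I would state the estimates and refer to that reference for the remaining details.
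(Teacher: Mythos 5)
Your high-level strategy is the right one — this is indeed the lemma where one follows~\cite[Lemma~8.1]{GKN23} (apply the inverse divergence to the two genuinely oscillatory pieces of~\eqref{eqn:osc.expand}, use the coefficient estimates of Lemma~\ref{lem:a_master_est_p} and the deformation bounds of Corollary~\ref{cor:deformation}, and aggregate via Corollary~\ref{rem:summing:partition} in $L^1$ rather than $L^{3/2}$). However, two of the affirmative claims you make about the mechanism are incorrect.

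First, you attribute the smallness of the low-frequency factor $B^\bullet_{(\xi)}$ to the deformation smallness $\|(\nabla\Phi_{(i,k)})^{-1}-\Id\|\les\tau_q^{-1}\mu_q$ from~\eqref{eq:Lagrangian:Jacobian:1}, claiming a leading cancellation that ``would not cancel without'' this estimate. That is not what happens here. Looking at~\eqref{eq:rhoxidiamond:def}, $B^\bullet_{(\xi)}=\xi^\theta\xi^\gamma\partial_\alpha\big(a_{(\xi)}^2(\nabla\Phi^{-1})_\theta^\alpha(\nabla\Phi^{-1})_\gamma^\bullet\big)$ is produced by the \emph{pipe-direction} (Lagrangian) derivative $\xi^\theta(\nabla\Phi^{-1})_\theta^\alpha\partial_\alpha$. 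Its dominant contribution is when that derivative lands on $a_{(\xi)}^2$, which by~\eqref{e:a_master_est_p_R} (the $N'$-count) costs $\Gamma_q^5\Lambda_q$, not $\lambda_{q+1}$; this reduced cost — together with the Piola-type identity $\partial_\alpha\big((\nabla\Phi^{-1})_\theta^\alpha(\mathbb B\otimes\mathbb B)_{(\xi)}\circ\Phi\,\xi^\theta\big)=\xi^\theta(\partial_\theta(\mathbb B\otimes\mathbb B)_{(\xi)})\circ\Phi=0$ recorded just before~\eqref{eqn:osc.expand}, which removes the term that would otherwise be catastrophic — is the whole mechanism. The deformation factor $\tau_q^{-1}\mu_q$ only appears in the strictly subordinate term where the derivative hits $(\nabla\Phi^{-1})_\gamma^\bullet$, and indeed your claimed bound $\|B^\bullet_{(\xi)}\|\lesssim\tau_q^{-1}\mu_q\lambda_{q+1}\cdots$ is actually \emph{smaller} than the true bound $\Lambda_q\Gamma_q^{C}\cdots$ that the paper proves; so the asserted estimate does not hold as stated. (The shortened timescale $\mu_q$ is crucial in this paper — but in the helicity estimates such as~\eqref{514:1}, not in the oscillation error.)

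Second, your decomposition into the $S^m_{O,R}$ is too coarse. You assign~\eqref{eqn:osc.exp2} to $m=q+1$ (correct) and then assign all of~\eqref{eqn:osc.exp3} to a single ``appropriate intermediate index $m$.'' The paper instead performs a synthetic Littlewood--Paley decomposition (cf.~\cite[Section~4.3]{GKN23}) of the high-frequency factor $\chib_{(\xi)}^2\sum_I(\etab_\xi^I)^2\bp_{\neq 0}(\varrho^I_{(\xi),R})^2$, whose active frequencies range from $\approx\lambda_{q+\half}\Gamma_q$ up to $\lambda_{q+\bn}$, and assigns the piece at scale $\lambda_m$ to $S^m_{O,R}$ for each $m=q+\lfloor\sfrac\bn2\rfloor+1,\dots,q+\bn$, applying the inverse divergence with the frequency gain appropriate to each piece. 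This splitting is not a bookkeeping convenience: the inverse-divergence gain needed to land at amplitude $\delta_{m+\bn}$ is exactly $\lambda_m$ (cf.~\eqref{par:osc0}--\eqref{par:osc}), so collapsing to a single $m$ would either fail to meet the inductive target for the smaller $m$'s or fail to justify the larger gain you invoke. Your final numerology also mixes the gain $\lambda_{q+\half}\Gamma_q$ with the comparison target $\Gamma_{q+1}^{-1}\delta_{q+1+\bn}$, which corresponds to $m=q+1$ and should really be compared against the gain $\lambda_{q+1}\Gamma_q^{-4}$ of~\eqref{eqn:osc.exp2}.
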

\begin{proof}
The proof essentially follows from that of \cite[Lemma 8.1]{GKN23}. 
Applying~\cite[Proposition A.13]{GKN23}, the inverse divergence of \eqref{eqn:osc.exp2} will define $S_{O,R}^{q+1}$. Also, we decompose \eqref{eqn:osc.exp3} using the synthetic Littlewood-Paley decomposition (see~\cite[Section 4.3]{GKN23}), then the inverse divergence of each piece will generate $S_{O,R}^m$ for $m= q+\sfrac{\bn}2 +1, \cdots, q+\bn$. The undefined $S_{O,R}^m$ are set to be zero. By \eqref{eqn:osc.expand} and \eqref{defn:P}, item \eqref{item:oscee:1} holds.

To estimate $S^m_{q+1}$, we apply~\cite[Proposition A.13]{GKN23} and Corollary \ref{rem:summing:partition}, and use Lemma \ref{lem:a_master_est_p}. In our applications of these estimates, the parameter choices are essentially the same as in \cite[Lemma 8.1]{GKN23} except that here we estimate in $L^1$ instead of $L^{\sfrac32}$. With suitable modifications, we obtain \eqref{eq:Onpnp:est} for $m=q+1$:
\begin{align*}
   \left\| \psi_{i,q} D^N D_{t, q}^M {S}^{q+1}_{O,R} \right\|_1
   &\lec \Lambda_q\de_{q+\bn}\Ga_q^{56} \la_{q+1}^{-1+\alpha+N}\MM{M, \Nindt, \mu_{q}^{-1}\Gamma_{q}^{i+14}, \Tau_{q}^{-1}\Ga_q^9}\\
   &\lesssim \Gamma_{q+1}^{-9}  \delta_{q+1+\bn} \lambda_{q+1}^N \MM{M, \Nindt, \mu_{q}^{-1}\Gamma_{q}^{i+14}, \Tau_{q}^{-1}\Ga_q^9}
  \\
  \left\| \psi_{i,q} D^N D_{t, q}^M {S}^{q+1}_{O,R} \right\|_\infty
  &\lec \Ga_{q+1}^{\badshaq -9}  \la_{q+1}^{N}\MM{M, \Nindt, \mu_{q}^{-1}\Gamma_{q}^{i+14}, \Tau_{q}^{-1}\Ga_q^9}
\end{align*}
for $N, M\leq 2\Nind$,
where $\alpha$ is a constant arbitrarily close to $0$ (see the choice of $\alpha$ in item~\ref{item:choice:of:alpha}). 

For $m=q+\sfrac{\bn}2 +1, \cdots, q+\bn$, $S_{O,R}^{m}$ have a decomposition $S_{O,R}^{m} = S_{O,R}^{m,l} + S_{O,R}^{m,*}$ such that the following holds. In the case of $m= q+\sfrac{\bn}2+1$, we have  
\begin{align*}
   \left\| \psi_{i,q} D^N D_{t, q}^M {S}^{q+\sfrac{\bn}2+1,l}_{O,R} \right\|_1
   &\lec \Lambda_q \de_{q+\bn}\Ga_q^{55} 
\la_{q+\sfrac{\bn}2 +1}^{-1 + N}\MM{M, \Nindt, \mu_{q}^{-1}\Gamma_{q}^{i+14}, \Tau_{q}^{-1}\Ga_q^9}
  \\
  \left\| \psi_{i,q} D^N D_{t, q}^M {S}^{q+\sfrac{\bn}2+1,l}_{O,R} \right\|_\infty
  &\lec \Ga_{q+\sfrac{\bn}2+1}^{\badshaq -9}  \la_{q+\sfrac{\bn}2+1}^{N}\MM{M, \Nindt, \mu_{q}^{-1}\Gamma_{q}^{i+14}, \Tau_{q}^{-1}\Ga_q^9}\, ,
\end{align*}
and otherwise 
\begin{align*}
    \left\| \psi_{i,q} D^N D_{t, q}^M {S}^{m,l}_{O,R} \right\|_1
    &\lec \Lambda_q \de_{q+\bn}\Ga_q^{56} 
     \la_{m-1}^{-2} \la_m^{N+1}\MM{M, \Nindt, \mu_{q}^{-1}\Gamma_{q}^{i+14}, \Tau_{q}^{-1}\Ga_q^9}\\
     \left\| \psi_{i,q} D^N D_{t, q}^M {S}^{m,l}_{O,R} \right\|_\infty
     &\lec \Ga_m^{\badshaq-9}\la_m^N\MM{M, \Nindt, \mu_{q}^{-1}\Gamma_{q}^{i+14}, \Tau_{q}^{-1}\Ga_q^9}\,  
\end{align*}
for $N, M\leq 2\Nind$. Here the $L^\infty$-norm estimates follows from \eqref{eq:par:div:2}. Furthermore, $S_{O,R}^{m,l}$ and $S_{O,R}^{m,*}$ satisfy that for  $m= q+\sfrac{\bn}2 +1, \cdots, q+\bn$, $q+1\leq q'\leq m-1$, and $N,M \leq 2\Nind$,
\begin{align*}
    &B(\supp\hat w_{q'}, \la_{q'}^{-1}\Ga_{q'+1}) \cap \supp S_{O}^{m,l} = \emptyset \, , \qquad \left\| D^N D_{t, q}^M {S}^{m,*}_{O,R} \right\|_\infty
    \leq \de_{q+3\bn} \Tau_{q+\bn}^{4\Nindt}\la_m^N \mu_q^{-M} \, , 
    \end{align*}
where the latter inequality follows from item~\ref{i:par:9.5}. 
Combining these outcomes with Lemma~\ref{lem:upgrading.material.derivative} and using \eqref{par:osc0} and \eqref{par:osc}, we obtain the desired estimates \eqref{eq:Onpnp:est} for the remaining cases of $m$.
\end{proof}

\subsection{Transport and Nash errors}\label{ss:ER:TN}

\begin{lemma}[\bf Transport/Nash error]\label{l:transport:error}
There exist symmetric stresses $S_{TN}$ such that\index{$S_{TN}$}
\begin{enumerate}[(i)]
\item\label{item1:tran.err}
$\div S_{TN}=
(\pa_t + \hat u_q \cdot \na) w_{q+1} + w_{q+1}\cdot\nabla \hat u_q$, 
\item\label{item2:tran.err} and for all $N+M\leq 2\Nind$, the error $S_{TN}$ satisfies
\begin{equation}\label{eq:trans:est}
\begin{aligned}
\left\| \psi_{i,q} D^N D_{t,q+\bn-1}^M S_{TN} \right\|_{1} &\les 
 \Ga_{q+\bn}^{-1}\delta_{q+2\bn} \Lambda_{q+\bn}^N \MM{M,\Nindt, \mu_{q+\bn-1}^{-1}\Gamma_{q+\bn-1}^{i+20},\Tau_{q+\bn-1}^{-1}\Ga_{q+\bn-1}^{10}} \\
\left\| \psi_{i,q} D^N D_{t,q+\bn-1}^M S_{TN} \right\|_{\infty}  &\les \Gamma_{q+\bn}^{\badshaq-1} \Lambda_{q+\bn}^N \MM{M,\Nindt, \mu_{q+\bn-1}^{-1}\Gamma_{q+\bn-1}^{i+20},\Tau_{q+\bn-1}^{-1}\Ga_{q+\bn-1}^{10}} \, .
\end{aligned}
\end{equation}
\end{enumerate}
\end{lemma}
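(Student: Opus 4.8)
The goal is to write the transport-Nash term
\[
(\partial_t + \hat u_q\cdot\nabla)w_{q+1} + w_{q+1}\cdot\nabla\hat u_q =: \div S_{TN}
\]
in divergence form with a symmetric potential $S_{TN}$, and then estimate it. The plan is to follow the structure of \cite[Lemma~8.3]{GKN23} (or the corresponding transport/Nash error in \cite{NV22}), working term by term on the constituent pieces $w_{(\xi)}^{(\bullet),I}$ of $w_{q+1}=w_{q+1,R}+w_{q+1,h}$ as in \eqref{wqplusoneone} and \eqref{wqplusoneoneh}. First I would note that the left-hand side has zero spatial mean (since $\hat u_q$ is divergence-free and $w_{q+1}$ is mean-zero and divergence-free), so an inverse-divergence operator as in \cite[Proposition~A.13]{GKN23} applies. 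The key algebraic point is that $w_{q+1}$ is built from functions of the form $a\,(\text{cutoffs})\circ\Phi_{(i,k)}$ times $\nabla\Phi_{(i,k)}^T \mathbb{U}^I_{(\xi)}\circ\Phi_{(i,k)}$ (resp. $\td{\mathbb U}^I_\pxi\circ\Phi_{(i,k)}$ for the helical part), and $\curl$ commutes with the pullback structure via items \eqref{item:pipe:6}--\eqref{item:pipe:7} of Proposition~\ref{prop:pipeconstruction} and \eqref{item:hpipe:6} of Proposition~\ref{prop:pipe:helical}. Using $D_{t,q}(\text{anything}\circ\Phi_{(i,k)}) = (D_{t,q}(\text{anything}))\circ\Phi_{(i,k)}$ and the fact that the building blocks $\mathcal W^k$, $\mathbb H^k$ are themselves stationary solutions of Euler, the material derivative of $w_{q+1}$ along $\hat u_q$ (as opposed to along $\hat u_{q+\bn-1}$; note the discrepancy $w_{q+1}\cdot\nabla\hat u_q$ versus the full $\hat u_{q+\bn-1}$) falls only on the slowly varying amplitudes $a_{(\xi)}$, the cutoffs $(\chib\etab)\circ\Phi$ and $(\rhob\zetab)\circ\Phi$, and on the deformation factors $\nabla\Phi_{(i,k)}^T$, $\Psi_q^\pm$; crucially it does \emph{not} produce a high-frequency $\lambda_\qbn$ derivative hitting the pipe density. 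This gains a factor of roughly $\mu_q^{-1}$ (the sharp material-derivative cost on the shortened timescale) rather than $\lambda_\qbn\delta_\qbn^{1/2}$, which is exactly the gain needed.

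The Nash term $w_{q+1}\cdot\nabla\hat u_q$ is handled directly: $\nabla\hat u_q$ is estimated by \eqref{eq:nasty:D:vq:old}, giving $\delta_q^{1/2}\lambda_q\Gamma_q^{i+3}$ on $\supp\psi_{i,q}$, and $w_{q+1}$ by Corollary~\ref{cor:corrections:Lp}. The product already has zero mean once we recall $\div w_{q+1}=0$, so $w_{q+1}\cdot\nabla\hat u_q = \div(w_{q+1}\otimes\hat u_q) - (\div w_{q+1})\hat u_q = \div(w_{q+1}\otimes\hat u_q)$; combined with the transport term we can package $S_{TN}$ as $\mathcal R(\,\cdots)$ where $\mathcal R$ is the inverse-divergence operator, after first symmetrizing. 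I would then invoke \cite[Proposition~A.13]{GKN23} on each piece $w_{(\xi)}^{(\bullet),I}$, choosing the frequency threshold $\approx\lambda_\qbn$ and paying a harmless $\lambda_\qbn^\alpha$ loss with $\alpha$ arbitrarily small (item~\ref{item:choice:of:alpha}), and apply the aggregation Corollary~\ref{rem:summing:partition} with $\theta_1=0,\theta_2=2$ (or $\theta_1$ absorbing $\CLebesgue$) to sum over all indices $i,j,k,\xi,\vecl,I$. The support localization of the inverse divergence (so that the resulting stress remains concentrated near $\supp w_{q+1}$) lets us keep the $\psi_{i,q}$-localized bounds. The resulting bounds are then converted to material derivatives along $D_{t,q+\bn-1}$ rather than $D_{t,q}$ using the mollification/upgrading apparatus of Section~\ref{ss:mollification} and Lemma~\ref{lem:upgrading.material.derivative}, exactly as in the oscillation error lemma above; the extra factors $\Gamma_{q+\bn-1}^{i+20}$ and $\Tau_{q+\bn-1}^{-1}\Gamma_{q+\bn-1}^{10}$ in \eqref{eq:trans:est} come from this conversion and from the $\tilde\chi$-timescale overlaps.

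The main obstacle is verifying that the $L^1$ bound actually closes with the gain $\Gamma_{q+\bn}^{-1}\delta_{q+2\bn}$: one must track that the worst term — the material derivative landing on the deformation matrix $\nabla\Phi_{(i,k)}^T$, which by Corollary~\ref{cor:deformation} costs $\tau_q^{-1}\mu_q\cdot(\text{something})$ rather than the naive $\lambda_q\delta_q^{1/2}$ — together with the inverse-divergence gain $\lambda_\qbn^{-1}\Lambda_\qbn$ and the amplitude $\delta_\qbn^{1/2}$ from Lemma~\ref{lem:a_master_est_p}, multiplies out to something $\leq\Gamma_{q+\bn}^{-1}\delta_{q+2\bn}\Lambda_\qbn^N$. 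This is precisely the transport-error parameter inequality flagged in the introduction, namely $\delta_\qbn^{1/2}\mu_q^{-1}\lambda_\qbn^{-1}\cdot(\lambda_{q+\half}/\lambda_\qbn) < \delta_{q+2\bn}$, rearranged; so the estimate is tight by design and one must cite the relevant parameter inequalities (the analogues of \eqref{par:trans} / \eqref{par:osc}) carefully rather than with slack. The helical part $w_{q+1,h}$ is strictly easier since its amplitude $\lambda_\qbn\lambda_\qho^{-3/2}\Gamma_{q-1}^{-1/2}$ is much smaller than $\delta_\qbn^{1/2}$, so its transport/Nash contribution is absorbed trivially; I would remark on this and omit the parallel computation.
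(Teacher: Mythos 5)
Your proposal is essentially correct and follows the same route as the paper: identify that both $D_{t,q}w_{q+1}$ and $w_{q+1}\cdot\nabla\hat u_q$ are mean-zero, apply the inverse divergence operator \cite[Proposition~A.13]{GKN23}, estimate via Lemma~\ref{lem:a_master_est_p} and the aggregation Corollary~\ref{rem:summing:partition}, observe that the helical amplitude $\lambda_\qbn\lambda_\qho^{-\sfrac 32}\Ga_{q-1}^{-\sfrac 12}\leq\delta_\qbn^{\sfrac 12}$ makes the helical contribution no worse than the Reynolds one, close using the parameter inequality \eqref{par:trans} (which, as you note, is tight by design), and finally upgrade $D_{t,q}\to D_{t,q+\bn-1}$ via Lemma~\ref{lem:upgrading.material.derivative}.

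One structural point that you gloss over but the paper tracks explicitly: the output of \cite[Proposition~A.13]{GKN23} is a decomposition $S_{TN}=S_{TN}^l+S_{TN}^*$ into a localized piece (supported near $\supp w_{q+1}$, hence inheriting the dodging property $B(\supp\hat w_{q'},\la_{q'}^{-1}\Ga_{q'+1})\cap\supp S_{TN}^l=\emptyset$ for $q+1\leq q'\leq q+\bn-1$) and a nonlocal remainder obeying only a crude $L^\infty$ bound (controlled by the parameter choices in items~\ref{i:par:9.5} and~\eqref{eq:par:div:2}). This split is not optional: Lemma~\ref{lem:upgrading.material.derivative}, which you correctly invoke to upgrade the material derivative, explicitly requires as input a decomposition $F=F^l+F^*$ satisfying the dodging condition~\eqref{supp.F} and a separate $L^\infty$ estimate for $F^*$. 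Writing ``the support localization of the inverse divergence lets us keep the $\psi_{i,q}$-localized bounds'' understates what's needed; you should make the $l/*$ split and the dodging statement explicit before citing the upgrading lemma. Also, your intermediate observation that $w_{q+1}\cdot\nabla\hat u_q=\div(w_{q+1}\otimes\hat u_q)$ is true but doesn't buy you the $L^1$ gain, since $w_{q+1}\otimes\hat u_q$ is too large in $L^1$ (there is no $\lambda_\qbn^{-1}$); the smallness only comes from the inverse divergence acting on the full vector $w_{q+1}\cdot\nabla\hat u_q$ whose high-frequency building block has frequency $\lambda_\qbn$, which is what you do in the end, so this is a presentational detour rather than a gap.
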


\begin{proof}
The proof follows from the same argument as in \cite[Lemma 8.6]{GKN23}, but incorporates the helicity corrector instead of the current corrector. Since $D_{t,q} w_{q+1}$ and $w_{q+1}\cdot \na \hat u_q$ are mean-zero, we apply~\cite[Proposition A.13]{GKN23} to define $S_{TN}$, verifying \eqref{item1:tran.err}. Furthermore, by \cite[Proposition A.13]{GKN23}, Corollary \ref{rem:summing:partition}, Lemma \ref{lem:a_master_est_p}, \eqref{par:trans}, \eqref{eq:par:div:2} and 
item~\ref{i:par:9.5}, $S_{TN}= S_{TN}^l + S_{TN}^*$, where $S_{TN}^l$ and $S_{TN}^*$ satisfy that for any $N, M \leq 2\Nind$, 
\begin{align*}
    \left\| \psi_{i,q} D^N \Dtq^M S_{TN}^l \right\|_{1}
&\lec \Ga_q^{\CLebesgue+53} \de_{q+\bn}^{\frac12}r_q \mu_q^{-1}\la_{q+\bn}^{-1+N}\MM{M,\Nindt, \mu_q^{-1}\Gamma_{q}^{i+15},\Tau_q^{-1}\Ga_q^9}\\
&\les \Gamma_{q+\bn}^{-100} \delta_{q+2\bn} \lambda_{q+\bn}^N \MM{M,\Nindt, \mu_q^{-1}\Gamma_{q}^{i + 15},\Tau_q^{-1}\Ga_q^9}\\
\left\| \psi_{i,q} D^N \Dtq^M S_{TN}^l \right\|_{\infty}
&\lec \Ga_{q+\bn}^{\badshaq-200}\la_{q+\bn}^N\MM{M,\Nindt, \mu_q^{-1}\Gamma_{q}^{i+15},\Tau_q^{-1}\Ga_q^9}\\ 
\left\|  D^N \Dtq^M S_{TN}^* \right\|_{\infty}
&\lec \de_{q+3\bn}^2 \Tau_{q+\bn}^{4\Nindt}
\la_{q+\bn}^{N}\mu_q^{-M}\, , 
\end{align*}
and for any $q+1\leq q'\leq q+\bn-1$, $
B(\supp \hat w_{q'},  \la_{q'}^{-1}\Ga_{q'+1}) \cap \supp S_{TN}^l = \emptyset$. Here, the error from helicity corrector does not change the upper bound, since $\la_{q+\bn}\la_\qho^{-3/2}\leq \de_{q+\bn}^{1/2}$ (see  \eqref{eq:qho:choice3}). The estimates in \eqref{item2:tran.err} then follow from Lemma~\ref{lem:upgrading.material.derivative}.
\end{proof}

\subsection{Divergence corrector error}
Recalling \eqref{new.error}, we write $S_C = S_{C1} + S_{C2}$,\index{$S_C$} where
\begin{equation}
\div S_{C1} = \div \left( w_{q+1,R}^{(p)}\otimes_s w_{q+1,R}^{(c)}\right) \, , \qquad S_{C2} = w_{q+1,R}^{(c)} \otimes w_{q+1,R}^{(c)} \, . \label{eq:div:cor:expand}
\end{equation}

\begin{lemma}[\bf Divergence corrector error from $w_{q+1,R}$]\label{l:divergence:corrector:error}
There exist symmetric stresses $S_C^{m}$ for $m= q+\sfrac{\bn}{2}+1, \cdots , q+\bn$ such that the following hold.
\begin{enumerate}[(i)]
    \item $\div \left( w_{q+1,R}^{(p)}\otimes_s w_{q+1,R}^{(c)} +  w_{q+1}^{(c)} \otimes w_{q+1}^{(c)} \right) = \sum_{m=q+\lfloor\sfrac{\bn}{2}\rfloor+1}^{q+\bn} \div S_C^{m}$. 
\item For the same range of $m$ and for all $N+M\leq 2\Nind$, the errors satisfy
\begin{subequations}\label{eq:div:corrector}
\begin{align}
&\left\| \psi_{i,m-1} D^N D_{t,m-1}^M S_C^{m}
\right\|_{1} \les \Gamma_m^{-1} \delta_{m+\bn}\Lambda_{m}^N \MM{M,\Nindt, \mu_{m-1}^{-1}\Gamma_{m-1}^{i + 20},\Tau_{m-1}^{-1}\Ga_{m-1}^{10}}
\label{eq:div:corrector:L1}
\\
&\left\| \psi_{i,m-1} D^N D_{t,m-1}^M S_C^{m} \right\|_{\infty} \les \Gamma_m^{-1}  \Lambda_{m}^N \MM{M,\Nindt, \mu_{m-1}^{-1}\Gamma_{m-1}^{i + 20},\Tau_{m-1}^{-1}\Ga_{m-1}^{10}} \, . \label{eq:div:corrector:Linfty}
\end{align}
\end{subequations}
\end{enumerate}
\end{lemma}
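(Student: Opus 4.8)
\textbf{Proof proposal for Lemma~\ref{l:divergence:corrector:error}.}
The plan is to follow the template of Lemma~\ref{lem:oscillation:general:estimate} (and its ancestor \cite[Lemma~8.7]{GKN23}), treating the two pieces $S_{C1}$ and $S_{C2}$ in~\eqref{eq:div:cor:expand} separately. For $S_{C1}$, I would first expand $w_{q+1,R}^{(p)}\otimes_s w_{q+1,R}^{(c)}$ using the dodging property~\eqref{eq:dodging:newbies} to kill all cross terms between distinct $(\xi,i,j,k,\vecl,I)$, so that only diagonal products $w_{(\xi)}^{(p),I}\otimes_s w_{(\xi)}^{(c),I}$ survive. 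Since $\curl$ of the principal plus corrector is $w_{(\xi)}^{I}$ in~\eqref{wqplusoneone}, the combination $w_{q+1,R}^{(p)}\otimes_s w_{q+1,R}^{(c)}$ has a structure amenable to the same extraction of a low-frequency coefficient times a high-frequency, mean-zero pipe density squared that was used for the oscillation error; the point is that on each diagonal block the object is $\xi$-directional up to deformation, so $\xi\cdot\nabla$ of the leading tensor vanishes and one applies the identity $\partial_\alpha((\nabla\Phi_{(i,k)}^{-1})_\theta^\alpha (\cdots)\circ\Phi_{(i,k)}\xi^\theta)=0$ to convert into a form on which $\mathbb{P}_{\neq 0}$ acts. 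Then the inverse divergence operator \cite[Proposition~A.13]{GKN23}, combined with the synthetic Littlewood–Paley decomposition of \cite[Section~4.3]{GKN23}, produces the pieces $S_C^m$ for $m=q+\lfloor\sfrac\bn2\rfloor+1,\dots,q+\bn$; the undefined $S_C^m$ are set to zero, which gives item~(i).

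For the estimates in item~(ii), I would invoke Corollary~\ref{rem:summing:partition} (aggregation) together with the coefficient bounds in Lemma~\ref{lem:a_master_est_p} — specifically~\eqref{e:a_master_est_p_R:zeta} for the amplitudes — and the pipe estimates~\eqref{e:pipe:estimates:1}--\eqref{e:pipe:estimates:2}. The key gain relative to the oscillation error is the extra factor of $r_q$ coming from each divergence corrector $w_{q+1,R}^{(c)}$ (see~\eqref{eq:w:oxi:est:master:c}); since $r_q=\sfrac{\lambda_{q+\half}\Gamma_q}{\lambda_\qbn}$ and $\mathcal{W}$ oscillates at frequency $\lambda_\qbn$, one $r_q$ comes with each corrector, while the inverse divergence at the synthetic shells $\lambda_m$ supplies an additional $\lambda_m^{-1}$. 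Running the same parameter bookkeeping as in \cite[Lemma~8.7]{GKN23}, but estimating in $L^1$ rather than $L^{\sfrac32}$ (hence using $r=2$ in the $L^r$ estimates of Corollary~\ref{cor:corrections:Lp} and Lemma~\ref{lem:a_master_est_p}), one finds that $S_{C1}$ contributes a bound better by a factor $r_q$ than~\eqref{eq:Onpnp:est}, which after using the parameter inequalities (the analogue of~\eqref{par:helcc} and~\eqref{par:osc}) is more than enough to produce the $\Gamma_m^{-1}$ gain in~\eqref{eq:div:corrector}. For the uniform bound~\eqref{eq:div:corrector:Linfty} I would use~\eqref{eq:w:oxi:unif:master:c} for $w_{q+1,R}^{(c)}$ together with~\eqref{eq:w:oxi:unif:master} for $w_{q+1,R}^{(p)}$; the extra $r_q$ versus $r_q^{-1}$ in the two factors yields an overall improvement that beats the $\Gamma_m^{\badshaq}$ appearing in the oscillation error. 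For $S_{C2}=w_{q+1,R}^{(c)}\otimes w_{q+1,R}^{(c)}$ the argument is even simpler, since there is no need to invert a divergence — one just performs a heuristic estimate analogous to \texttt{Case 4} of Lemma~\ref{lem:helicity.pc}, namely $\|S_{C2}\|_{L^1}\lesssim r_q^2\delta_{q+\bn}$ at frequency $\lambda_\qbn$, and assigns it to the shell $m=q+\bn$; here the two factors of $r_q$ make the bound comfortably smaller than $\delta_{q+2\bn}\Gamma_{q+\bn}^{-1}$ by~\eqref{par:helcc}. Material-derivative estimates in all cases follow by converting $d/dt$ to $D_{t,q}$ and using the commutator/transport machinery exactly as in Lemma~\ref{lem:oscillation:general:estimate}. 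Finally I would use Lemma~\ref{lem:upgrading.material.derivative} to upgrade the material derivatives from $D_{t,q}$ to $D_{t,m-1}$ and to localize to $\psi_{i,m-1}$, which is what puts the bound in the form~\eqref{eq:div:corrector}.

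I expect the main obstacle to be purely bookkeeping: tracking the exact powers of $\Gamma_q$, $\Gamma_m$, $\lambda_m$, and $r_q$ through the synthetic Littlewood–Paley decomposition and the inverse divergence operator, and checking that the relevant parameter inequalities (the divergence-corrector analogues of~\eqref{par:osc0}, \eqref{par:osc}, \eqref{par:helcc}, \eqref{eq:par:div:2}) indeed close, since the margin here is provided entirely by the intermittency factor $r_q$ and must survive the loss of $\Lambda_m^N=(\lambda_m\Gamma_m^{10})^N$ in the spatial derivatives and the $\mathrm{MM}$-weight losses in the material derivatives. There is no genuinely new analytic difficulty; the only structural subtlety is ensuring that the deformed correctors $w_{q+1,R}^{(c)}$ still carry the $\xi$-directional structure needed for the identity $\xi\cdot\nabla(\cdots)=0$, which follows from~\eqref{eq:UU:explicit} and~\eqref{eq:WW:explicit} exactly as in the $w_{q+1,R}^{(p)}$ case — and since this lemma concerns only the Reynolds corrector $w_{q+1,R}$ and not the helical corrector $w_{q+1,h}$ (whose oscillation error is absorbed directly in~\eqref{ho}), no helix-specific complications arise.
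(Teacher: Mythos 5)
Your proposal is correct and follows essentially the same approach as the paper: expand $\div(w_{q+1,R}^{(p)}\otimes_s w_{q+1,R}^{(c)})$, use the dodging~\eqref{eq:dodging:newbies} to kill cross terms, apply the synthetic Littlewood--Paley decomposition and inverse divergence from~\cite[Proposition~A.13]{GKN23} to get $S_{C1}^m$, drop $S_{C2}=w_{q+1,R}^{(c)}\otimes w_{q+1,R}^{(c)}$ directly into $S_{C2}^{q+\bn}$ without inverting the divergence, run aggregation estimates in $L^1$ and $L^\infty$ relying on the extra $r_q$ per corrector factor, and upgrade material derivatives via Lemma~\ref{lem:upgrading.material.derivative}. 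One small imprecision: you call $w_{q+1,R}^{(c)}$ ``$\xi$-directional,'' but it is built from the potential $\mathbb{U}$, which from~\eqref{eq:UU:explicit} lies in the $\xi',\xi''$-plane; what survives is the transport property $\xi\cdot\nabla\mathbb{U}=0$ (and $\xi\cdot\nabla\vartheta=0$ in~\eqref{eq:WW:explicit}), which is the relevant fact. This does not affect the argument's validity, since the smallness of the divergence corrector error comes from the $r_q$ gain, not from a pressure-type cancellation as in the oscillation error.
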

\begin{proof}
We first define $S_C^m$ as in \texttt{Step 2} of the proof of \cite[Lemma 8.10]{GKN23}; For $S_{C1}$, we expand $\div \left( w_{q+1,R}^{(p)}\otimes_s w_{q+1,R}^{(c)}\right)$,
decompose them using the synthetic Littlewood-Paley decomposition, and take the inverse divergence applying~\cite[Proposition A.13]{GKN23} to define $S_{C1}^m$. For $S_{C2}$, on the other hand, the term $w_{q+1,R}^{(c)}\otimes w_{q+1,R}^{(c)}$ directly goes into $S_{C2}^{q+\bn}$. Then, we set $S_{C}^m := S_{C1}^m + S_{C2}^m$, where all undefined $S_{C1}^m$ and $S_{C2}^m$ are identically zero. Then, the estimates for $S_C^m$ are obtained essentially by the same argument as in the proof of \cite[Lemma 8.10]{GKN23} and Lemma~\ref{lem:upgrading.material.derivative} with suitable modifications to replace $L^{\sfrac32}$-norm estimate by $L^1$-norm estimate. 
\end{proof}

\subsection{Mollification errors}\label{ss:essemm}

Recalling from subsection \ref{sec:error.ER} that $\div S_{M2}$ has mean-zero, we use~\cite[Proposition A.13]{GKN23},~\cite[Remank A.15]{GKN23} to first define the mollification error $S_M = S_{M1} + S_{M2}$ by
\begin{align}
    S_{M1}&:=R_q^q - \sfrac 13 (\tr R_q^q)\Id - R_\ell
   \label{ER:new:error:moll1} \\
    S_{M2}&:= \divR  \left[(\pa_t + \hat u_{q}\cdot \na )(\hat w_{q+\bn}-w_{q+1})\right]
    + (\hat w_{q+\bn}-w_{q+1})\otimes \hat u_q  + \hat w_{q+\bn}\otimes \hat w_{q+\bn} - w_{q+1}\otimes w_{q+1}
    \, . \notag
\end{align}

\begin{lemma}[\bf Mollification error $S_M$]\label{lem:moll:ER} For all $N+ M\leq 2\Nind$,
$S_{M}$ satisfies \index{$S_M$}
\begin{subequations}\label{est.stress.moll}
    \begin{align}
        \norm{D^N D_{t,q}^M S_{M1}}_\infty
        &\leq \Ga_{q+1}^9 \delta_{q+3\bn} \Tau_{q+1}^{2\Nindt} \left(\lambda_{q+1}\Gamma_{q+1}\right)^N \MM{M, \NindRt, \mu_{q}^{-1}, \Tau_{q}^{-1} } \, ,
    \label{est:stress.mollification1}\\
        \norm{ D^N D_{t,q+\bn-1}^M S_{M2}}_\infty
        &\leq \Ga_\qbn^9 \delta_{q+3\bn}\Tau_\qbn^{2\Nindt} \left(\lambda_{q+\bn}\Gamma_{q+\bn}\right)^N \MM{M, \NindRt, \tau_{q+\bn-1}^{-1}, \Tau_{q+\bn-1}^{-1} } \, . 
    \label{est:stress.mollification}
    \end{align}
\end{subequations}
\end{lemma}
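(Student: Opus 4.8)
\textbf{Proof plan for Lemma~\ref{lem:moll:ER}.} The two estimates are proved by essentially independent arguments, and both are modeled on the corresponding mollification estimates in~\cite[Section~8]{GKN23}. The plan for~\eqref{est:stress.mollification1} is the most direct: $S_{M1} = R_q^q - \frac13(\tr R_q^q)\Id - R_\ell$ is exactly the tensor estimated in item~\ref{item:moll:four} of Lemma~\ref{lem:upgrading}, where it is shown that $\| D^N D_{t,q}^M S_{M1}\|_\infty \lesssim \Gamma_{q+1}\Tau_{q+1}^{4\Nindt}\delta_{q+3\bn}^2 \lambda_{q+1}^N \MM{M,\Nindt,\mu_q^{-1},\Gamma_q^{-1}\Tau_q^{-1}}$ for $N+M\leq 2\Nind$. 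I would simply quote that bound and then absorb the slack: $\delta_{q+3\bn}^2 \leq \delta_{q+3\bn}$, $\Tau_{q+1}^{4\Nindt}\leq \Tau_{q+1}^{2\Nindt}\cdot\Tau_{q+1}^{2\Nindt}$ (keeping one factor and noting $\Gamma_{q+1}\Tau_{q+1}^{2\Nindt}\leq \Gamma_{q+1}^9\delta_{q+3\bn}\Tau_{q+1}^{2\Nindt}\cdot\delta_{q+3\bn}^{-1}$ — more precisely, one checks that the lossy factors on the right of~\eqref{est:stress.mollification1} dominate those from Lemma~\ref{lem:upgrading} using the parameter hierarchy $1\ll\NcutSmall\ll\Nindt\ll\Nind\ll\Nfin$), and finally note that $(\lambda_{q+1}\Gamma_{q+1})^N$ dominates $\lambda_{q+1}^N$ and that $\MM{M,\NindRt,\mu_q^{-1},\Tau_q^{-1}}$ dominates $\MM{M,\Nindt,\mu_q^{-1},\Gamma_q^{-1}\Tau_q^{-1}}$ in the relevant range since $\NindRt \leq \Nindt$ and $\Tau_q^{-1}\geq \Gamma_q^{-1}\Tau_q^{-1}$. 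This step is bookkeeping only.

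The plan for~\eqref{est:stress.mollification} is to estimate $S_{M2}$ term by term using the commutator-type structure of the mollification error. Write $S_{M2} = \divR\!\left[(\partial_t + \hat u_q\cdot\nabla)(\hat w_{q+\bn} - w_{q+1})\right] + (\hat w_{q+\bn}-w_{q+1})\otimes\hat u_q + \hat w_{q+\bn}\otimes\hat w_{q+\bn} - w_{q+1}\otimes w_{q+1}$. For the last two (quadratic) terms I would write $\hat w_{q+\bn}\otimes\hat w_{q+\bn} - w_{q+1}\otimes w_{q+1} = (\hat w_{q+\bn}-w_{q+1})\otimes_s(\hat w_{q+\bn}+w_{q+1})$ and estimate directly using the bound~\eqref{eq:diff:moll:vellie:statement} on $\hat w_{q+\bn} - w_{q+1}$, which is superexponentially small (order $\delta_{q+3\bn}^3\Tau_\qbn^{25\Nindt}$ up to frequency factors), against the $L^\infty$ bounds on $\hat w_{q+\bn}$ and $w_{q+1}$ from Corollary~\ref{cor:corrections:Lp} and Lemma~\ref{lem:mollifying:w} (which are merely polynomial in $\Gamma$, $\lambda$). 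Similarly $(\hat w_{q+\bn}-w_{q+1})\otimes\hat u_q$ is controlled using~\eqref{eq:diff:moll:vellie:statement} together with the lossy velocity bound~\eqref{eq:bobby:old} on $\hat u_q$. For the first term, one commutes the mollifier $\mathcal{\tilde P}_{q+\bn,x,t}$ past the transport operator: $(\partial_t + \hat u_q\cdot\nabla)(\hat w_{q+\bn}-w_{q+1}) = (\partial_t+\hat u_q\cdot\nabla)\hat w_{q+\bn} - \mathcal{\tilde P}_{q+\bn,x,t}\left[(\partial_t + \hat u_q\cdot\nabla)w_{q+1}\right] + \mathcal{\tilde P}_{q+\bn,x,t}\left[(\partial_t+\hat u_q\cdot\nabla)w_{q+1}\right] - (\partial_t+\hat u_q\cdot\nabla)w_{q+1}$, i.e. the sum of a commutator $[\mathcal{\tilde P}_{q+\bn,x,t},\,\hat u_q\cdot\nabla]w_{q+1}$ and of $\mathcal{\tilde P}_{q+\bn,x,t}D_{t,q}w_{q+1} - D_{t,q}w_{q+1}$; both gain a factor from the difference of $w_{q+1}$ and its mollification and from the mollification scales $\lambda_{q+\bn}^{-1}\Gamma_{q+\bn-1}^{-1/2}$, $\Tau_{q+1}^{-1}$. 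The inverse-divergence operator $\divR$ (applicable since $\div S_{M2}$ has zero mean, as noted in subsection~\ref{sec:error.ER}) costs only a harmless frequency factor by~\cite[Proposition A.13, Remark A.15]{GKN23}, and material derivatives are upgraded via Lemma~\ref{lem:upgrading.material.derivative} to convert bounds in $\Dtq$ into bounds in $D_{t,q+\bn-1}$; this is where the switch from $\mu_{q+\bn-1}^{-1}$ to $\tau_{q+\bn-1}^{-1}$ on the right of~\eqref{est:stress.mollification} (note $\tau_{q+\bn-1}^{-1}\leq \mu_{q+\bn-1}^{-1}$) is harmless.

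The main obstacle is making the transport/commutator term genuinely small \emph{after} applying $\divR$ and upgrading to the $D_{t,q+\bn-1}$ material derivative: one must check that the many derivative costs incurred — the $10\Nfin$ vanishing moments used to extract smallness from the mollifier, the frequency cost of $\divR$, and the cost of upgrading material derivatives — are all beaten by the superexponentially small prefactor coming from~\eqref{eq:diff:moll:vellie:statement} and the mollification-scale gains, leaving the target $\Gamma_\qbn^9\delta_{q+3\bn}\Tau_\qbn^{2\Nindt}$. This is precisely the role of the parameter inequalities for $\Tau_q$ (see~\eqref{v:global:par:ineq}) and the integers $N_g,N_c$ (see~\eqref{eq:darnit}), exactly as in~\cite[Lemma~8.13 or the analogous mollification-error lemma]{GKN23}; since our $w_{q+1}$ has the same structure as the perturbation there (with the helical corrector playing the role of the current corrector, and obeying no worse bounds by~\eqref{eq:qho:choice3}), the argument carries over with only cosmetic changes, and I would state this and omit the routine verification, as is done throughout this section.
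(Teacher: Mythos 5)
Your plan for \eqref{est:stress.mollification1} matches the paper's exactly: quote item~\eqref{item:moll:four} of Lemma~\ref{lem:upgrading} and absorb the slack factors. For \eqref{est:stress.mollification}, your strategy has the right ingredients but takes a small detour compared to the paper. You propose to re-derive control of $D_{t,q}(\hat w_{q+\bn}-w_{q+1})$ via a commutator decomposition $[\tilde{\mathcal P}_{q+\bn,x,t},\hat u_q\cdot\nabla]w_{q+1} + (\tilde{\mathcal P}_{q+\bn,x,t}-1)D_{t,q}w_{q+1}$; but this is precisely the mechanism already built into the proof of~\eqref{eq:diff:moll:vellie:statement} in Lemma~\ref{lem:mollifying:w}, which delivers $D^N D_{t,q+\bn-1}^M(\hat w_{q+\bn}-w_{q+1})$ bounds directly — so the commutator argument can be skipped entirely. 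The other spot where you are less sharp than the paper: to pass between the $D_{t,q}$ appearing inside $S_{M2}$ and the $D_{t,q+\bn-1}$ in the target estimate, the paper does not invoke Lemma~\ref{lem:upgrading.material.derivative} for the increment terms — it exploits the dodging (Lemma~\ref{lem:dodging}) to observe that $D_{t,q}w_{q+1}=D_{t,q+\bn-1}w_{q+1}$ and $D_{t,q}\hat w_{q+\bn}=D_{t,q+\bn-1}\hat w_{q+\bn}$ \emph{identically} on the relevant supports, because the intermediate velocity increments $\hat w_{q'}$, $q+1\le q'\le q+\bn-1$, avoid those supports. That exact equality is cleaner than the abstract upgrading lemma and is the observation you should state explicitly; Lemma~\ref{lem:upgrading.material.derivative} is used only once in the paper's proof, for the background field $\hat u_q$ in the cross-term $(\hat w_{q+\bn}-w_{q+1})\otimes\hat u_q$, where no such dodging holds. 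Both routes land on the same estimate, so the proposal is correct, just slightly redundant in the way you would handle the material derivatives.
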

\begin{proof}[Proof of Lemma~\ref{lem:moll:ER}]
From item~\ref{item:moll:four} from Lemma~\ref{lem:upgrading}, we have that for all $N+M\leq 2\Nind$,
\begin{align*}
     \left\| D^N \Dtq^M S_{M} \right\|_{\infty} 
    \les \Gamma_{q+1} \Tau_{q+1}^{2\Nindt} \delta_{q+3\bn}^2 \lambda_{q+1}^N \MM{M,\Nindt, \mu_{q}^{-1},\Gamma_{q}^{-1}\Tau_{q}^{-1}} \, ,
\end{align*}
which immediately leads to \eqref{est:stress.mollification1}. To deal with $S_{M2}$, we recall from \eqref{eq:diff:moll:vellie:statement} that
    \begin{align*}
    \norm{D^N D_{t,q+\bn-1}^M \left(w_{q+1}- \hat w_{q+\bn}\right)}_\infty \lec \delta_{q+3\bn}^3 \Tau_\qbn^{25\NindRt} \left(\lambda_{q+\bn}\Gamma_{q+\bn-1}\right)^N \MM{M, \NindRt, \tau_{q+\bn-1}^{-1}, \Tau_{q+\bn-1}^{-1} }  \, .
\end{align*}
for all $N+M\leq \sfrac{\Nfin}{4}$. Using Lemma \ref{lem:dodging}, we note that $D_{t,q-\bn-1}w_{q+1} = D_{t,q}w_{q+1}$ and $D_{t,q-\bn-1}\hat w_{q+\bn} = D_{t,q}\hat w_{q+\bn}$. Then, writing $\hat w_{q+\bn}\otimes \hat w_{q+\bn} - w_{q+1}\otimes w_{q+1}
=(\hat w_{q+\bn}-w_{q+1})\otimes \hat w_{q+\bn} + w_{q+1}\otimes (\hat w_{q+\bn}-w_{q+1}) $ and using \eqref{eq:vellie:upgraded:statement} and \eqref{eq:diff:moll:vellie:statement},
we have that for all $N+M\leq 2\Nind$,
\begin{align}
    &\norm{\psi_{i, q+\bn-1}
    D^N D_{t,q+\bn-1}^M
    [\hat w_{q+\bn}\otimes \hat w_{q+\bn} - w_{q+1}\otimes w_{q+1}]
    }_\infty\notag\\
    &\qquad\leq \delta_{q+3\bn} \Tau_\qbn^{2\NindRt} \left(\lambda_{q+\bn}\Gamma_{q+\bn}\right)^N \MM{M, \NindRt, \tau_{q+\bn-1}^{-1}, \Tau_{q+\bn-1}^{-1} }\, .
    \label{est:SM21.proof}
\end{align}

As for the remaining term, we first upgrade the material derivative in the estimate for $\hat u_q$.
Applying Lemma \ref{lem:upgrading.material.derivative} to $F^l = 0$, $F^* = \hat u_q$, $k=q+\bn$, $N_\star = \sfrac{3\Nfin}{4}$ with \eqref{eq:bob:Dq':old}, we get
\begin{align*}
    \norm{ D^N D_{t,q+\bn-1}^M \hat u_q}_\infty
    \lec \Tau_{q}^{-1} \lambda_{q+\bn}^N \Tau_{q+\bn-1}^{-M}
\end{align*}
Here, we used \eqref{v:global:par:ineq}. Then, we use~\cite[Remank A.15]{GKN23} with \eqref{v:global:par:ineq}, setting 
\begin{align*}
    G &=D_{t,q+\bn-1} (\hat w_{q+\bn} - w_{q+1})
    \quad (\text{or } G= (\hat w_{q+\bn} - w_{q+1}) \otimes \hat u_q)
    , \quad v = \hat u_{q+\bn-1}
\\
    \const_{G, \infty}&= \de_{q+\bn}^3 \Tau_{q+\bn}^{20\Nindt}, \quad
    \la=\la' = \la_{q+\bn}\Ga_{q+\bn-1}, \quad 
    M_t = \Nindt,\quad
    \nu =\nu' = \Tau_{q+\bn}^{-1}, \quad \const_v = \La_{q+\bn-1}^{\sfrac12} \\
    &\hspace{2cm}N_* = \sfrac{\Nfin}{9}, \quad
M_* = \sfrac{\Nfin}{10}, \quad
N_\circ = M_\circ = 2\Nind\,  .
\end{align*}
As a result, with a suitable choice of positive integer $K_\circ$ so that $\de_{q+\bn}^3 \Tau_{q+\bn}^{20\Nindt }\la_{q+\bn}^{5} 2^{2\Nind}
    \leq \la_{q+\bn}^{-K_\circ}
    \leq \de_{q+3\bn} \Tau_\qbn^{10\NindRt}$, we deduce that
\begin{align}\label{est:SM22.proof}
    \norm{D^N D_{t,q+\bn-1}^M\divR ( D_{t,q} (\hat w_{q+\bn} - w_{q+1})) }_\infty
    &=
    \norm{D^N D_{t,q+\bn-1}^M\divR ( D_{t,q+\bn-1} (\hat w_{q+\bn} - w_{q+1})) }_\infty  \\
    &\lec \de_{q+3\bn} \Tau_\qbn^{10\Nindt} (\la_{q+\bn}\Ga_\qbn)^N \Tau_{q+\bn}^{-M} \notag \\
    &\leq \de_{q+3\bn}\Tau_\qbn^{2\Nindt}(\la_{q+\bn}\Ga_\qbn)^N \MM{M, \Nindt,\tau_{q+\bn}^{-1} ,\Tau_{q+\bn}^{-1}}\, ,   \notag 
\end{align}
for all $N+M\leq 2\Nind$. This completes the proof of \eqref{est:stress.mollification}.
\end{proof}

\section{Parameters}\label{sec:pam}

In this section, we specify parameters and list some resulting inequalities, none of which depend on $q$.  We note that we choose $a$ last, for which $\la_q$ and $\de_q$ are defined as in \eqref{eq:def:la:de}; as a consequence, $a^{(b^q)} \leq \la_q \leq 2a^{(b^q)}$ and $\frac{1}{3} \la_q^b \leq \la_{q+1} \leq 2\la_q^b$.
\begin{enumerate}[(i)]
\item\label{i:par:0} Choose $\ovb$ as in item~\eqref{ip1}.   

\item\label{i:par:1} Choose $\beta$ as in item~\eqref{ip2}. The upper bound $\be<\sfrac 1{2\ovb}$ is needed in the oscillation error estimates and guarantees
\begin{align}\label{par:osc:pre3}
    \la_q \de_{q+\bn} (\la_m \de_{m+\bn})^{-1}\ll 1 
\end{align}
for any integer $m>q$. The lower bound $\beta > \frac{2+\ovb-2\ovb^2 }{1+3\ovb^2 - 2\ovb^4 }$ comes from the transport error estimate, where we need 
\begin{align}\label{par:trans:pre}
    \la_q^2 \de_q^{\frac12} \de_{q+\bn}^{\frac32} {\la_{q+\frac{\bn}2}} {\la_{q+\bn}^{-2}} \ll \de_{q+2 \bn}\, . 
\end{align}
Lastly, we imposed the other lower bound $\be> \frac{2-\ovb}{2\ovb}$ to guarantee 
\begin{align}
    \delta_\qbn \lambda_{q+\half}^2\lambda_\qbn^{-1} \ll 1\,.
    \label{lb1}
\end{align}
This is required for the estimate of the helicity generated by Euler-Reynolds principal-corrector.
\item\label{i:par:1.5} Choose an even integer $\bn\gg 1$ as in \eqref{ip3}, so that $\ovb^2<2$ and \eqref{par:osc:pre} implies that
\begin{align}\label{par:osc:pre1}
    \la_q \de_{q+\bn}\la_{m-1}^{-2}\la_m \ll \delta_{m+\bn}
\end{align}
for $m=q + \sfrac{\bn}2 +1, \cdots, q+\bn$, which is required for the oscillation error estimates.

\item\label{i:par:2.5} 
Consequently, for $q'\geq q-\sfrac \bn 2+1$, $\la_{q'+\half}\la_{q+\half}\left(\la_q\la_{q'+\bn}\right)^{-1} < 1$.
\item\label{i:par:3} Choose $\CLebesgue=\frac{6+b}{b-1}$.
\item\label{i:par:4} Define $\Ga_q$, $r_q$, $\tau_q$, $\mu_q$, $\La_q$ and $\la_\qho$ as in~\eqref{eq:deffy:of:gamma}-\eqref{def:thursday},
and $0<\varepsilon_\Gamma\ll (b-1)^2<1$ so that
\begin{subequations}
    \begin{align}
\de_{q+\bn}^{\sfrac12}r_q \mu_q^{-1}\la_{q+\bn}^{-1}\Gamma_{q+\bn}^{1000} &\underset{\eqref{par:trans:pre}} < \de_{q+2\bn}
\label{par:trans}
\\   
    \la_q \de_{q+\bn} (\la_m \de_{m+\bn})^{-1}\Ga_{q+\bn}^{1000}&\underset{\eqref{par:osc:pre3}}{<} 1 \quad \text{for } m>q
    \label{par:osc0}
\\
    \la_q \de_{q+\bn}\la_{m-1}^{-2}\la_m  \Ga_{q+\bn}^{1000}&\underset{\eqref{par:osc:pre1}}{<}
  \delta_{m+\bn} \quad \text{for } m= q+\sfrac{\bn}2 +1, \cdots, q+\bn
\label{par:osc} \\
    \la_q\Ga_q &\leq \frac{\la_\qho^2}{\la_{q+\bn}}
\label{eq:qho:choice1}\\
  \frac{\la_{q+\bn}}{\la_\qho}&\leq \la_q\de_{q+\bn}  \, . \label{eq:qho:choice2}\\
    \frac{\la_{q+\bn}^3}{\la_\qho^3} = \Ga_{q-100}^{\frac{3}{100}} &\leq
    \Ga_{q-1}^{\frac12}\leq
    \frac{\la_{q+\bn}\de_{q+2\bn}}{\Gamma_q^{29}} \, . \label{eq:qho:choice3}\\
\frac{\lambda_{q+\half}^2}{\lambda_q\lambda_\qbn} \Gamma_\qbn^{5000} &< 1 \label{super} \\
    \Gamma_\qbn \delta_{\qbn-1}^{-\sfrac 12} r_{q-1}^{-1} &\leq \delta_{\qbn}^{-\sfrac 12} r_q^{-1} \label{ineq.eps.gamma.imax} \\
    \Gamma_q^{25}\lambda_q \delta_q^{\sfrac 12} \leq \mu_q^{-1}\, ,   
    &\qquad \Gamma_\qbn^{300} \mu_{\qbn-1}^{-1} \leq \mu_\qbn^{-1} \label{ineq:tau:q} \\
    \Gamma_{q+1}^{500} &\leq \lambda_q^{1-2\beta b^{\bn}} \label{eq:eg:timescale} \\
\frac{\la_{q+\frac{\bn}2}^2\de_{q+\bn}}{\la_{q+\bn}} \Ga_q^{10}
&\underset{\eqref{lb1}}{<} \Ga_{q+1}^{-1000} 
\label{par:helcc}
\end{align}
   \end{subequations}

      \item\label{i:par:5} Choose $\badshaq\geq 10$ so that for all $\sfrac \bn 2 \leq k \leq \bn$ (see \cite[Section 11]{GKN23} for the details)
\begin{align}
\Ga_q^{\badshaq} \la_q^2 \la_{q+k}^4 \la_{q+\half}^{-4} \la_{q+k}^{2} \la_{q+k-1}^{-4} &< \Ga_{q+\half}^{\badshaq}\notag \\
\label{eq:par:div:2}
\Gamma_q^{\badshaq} \leq \Gamma_{q+\half}^{\badshaq} \Ga_\qbn^{-2000} \, , \qquad \Gamma_q^{\badshaq+500} \Lambda_q &\left( \frac{\la_{q+k}}{\la_{q+\half}} \right)^2 \lambda_{q+k-1}^{-2}\la_{q+k} \leq \Gamma_{q+\half}^{\badshaq} \Ga_\qbn^{-200}  \, . 
    \end{align}
     \item\label{item:choice:of:alpha} Choose $\alpha\in (0,1)$ such that $\lambda_{q+\bn}^\alpha \leq \Gamma_q^{\sfrac 1{10}}$.
\item\label{i:par:tau} Choose $\Tau_q$ to be {no larger than}
    \begin{align}
    \frac12 \Tau_{q-1}^{-1} \geq  
\mu_q^{-1}\Gamma_q^{\badshaq+100}\delta_q^{-\sfrac 12}r_q^{-1} \Lambda_q^3 \, , \label{v:global:par:ineq}
    \end{align}
    and so that all mollification estimates which require a small choice of $\Tau_q$ hold.
\item\label{i:par:6} Choose $\NcutSmall
    $ and $\NcutLarge$ such that
    \begin{subequations}\label{condi.Ncut0}
    \begin{align}
     \NcutSmall &\leq \NcutLarge \, , \label{condi.Ncut0.1} \\
     \la_\qbn^{200}\left(\frac{\Ga_{q-1}}{\Ga_q}\right)^{\frac{\NcutSmall}5}
    &\leq \min\left(\la_{q+\bn}^{-4} \de_{q+3\bn}^2, \Ga_{q+\bn}^{-100\badshaq-1700-100\CLebesgue} \delta_{q+3\bn}^{100}r_q^{100} \right) \, , \label{condi.Ncut0.2} \\
    \delta_{q+\bn}^{-\sfrac 12} r_q^{-10} \Ga_{q+\bn}^{\sfrac{\badshaq}{2}+16+\CLebesgue} \left( \frac{\Ga_{q+\bn-1}}{\Ga_{q+\bn}}\right)^{\NcutLarge} &\leq \Ga_{q+\bn}^{-1} \lambda_\qbn^{-10} \, . \label{condi.Ncut0.3}
    \end{align}
    \end{subequations}
    \item\label{i:par:7} Choose $\Nindt$ such that
    \begin{align}\label{condi.Nindt}
        \Nindt \geq \NcutSmall, \quad       
        \Ga_q^{-\Nindt} (\tau_q^{-1}\Ga_q^{i+40})^{-\NcutSmall-1} 
        (\Tau_q^{-1}\Ga_q)^{\NcutSmall+1}\leq 1 \, .
    \end{align}
    \item\label{i:par:12} Choose $N_{\rm g}, N_{\rm c}$ so that all mollification estimates hold, and
\begin{subequations}\label{eq:darnit}
\begin{align}
    \Gamma_{q-1}^{-N_{\rm g}} \Gamma_q^2 &\leq \Gamma_{q+1} \Tau_{q+1}^{50\Nindt} \delta_{q+3\bn}^3 \, , \label{eq:darnit:2} \\
    2(\Tau_{q+\bn-1}^{-1}{\Ga_{q+\bn-1}^{10}})^{5\Nindt} \Gamma_{q+\bn}^{2\badshaq+\CLebesgue+100} r_q^{-2} \Gamma_{q-1}^{-\sfrac{N_{\rm c}}{2}} &\leq \Gamma_{q+\bn}^{-N_{\rm g}}
    \delta_{q+3\bn}^3\tau_{q+\bn-1}^{50\Nindt} \, , \label{eq:darnit:3} \\
    N_{\rm g} &\leq N_{\rm c} \leq \frac{\Nind}{40} \, .\label{eq:darnit:1}
\end{align}
\end{subequations}
\item\label{i:par:8} Choose $\Nind$ such that \eqref{eq:darnit:1} is satisfied and\index{$\Nind$}
    \begin{subequations}
    \begin{align}
    \Nindt &\leq \Nind \, , \qquad 
    \left(\Ga_{q-1}^{\Nind} \Ga_q^{-\Nind}\right)^{\sfrac{1}{10}} \leq \delta_{q+5\bn}^3 \Gamma_q^{-2\badshaq-3} r_q \, , \qquad 
    \Ga_q^{-\Nind+24}\la_{q+\bn}^{30}\leq 1  \, . \label{eq:Nind:3}
    \end{align}
    \end{subequations}
    \item\label{i:par:9} Choose $\Ndec$ such that\index{$\Ndec$} $(\la_{q+\bn+2}\Ga_q)^4 \leq \left(  \frac{\Ga_q^{\sfrac{1}{10}}}{{4\pi}}
        \right)^\Ndec$ and $\Nind \leq \Ndec$.
\item\label{i:par:9.5} Choose $K_\circ$ large enough so that\index{$K_\circ$} $\lambda_q^{-K_\circ} \leq \delta_{q+3\bn}^{3} \Tau_{q+\bn}^{{5\Nind}} \lambda_{q+\bn+2}^{-100}$.
\item\label{i:par:10} Choose $\dpot$ and $N_{**}$ such that\index{$\dpot$}\index{$N_{**}$}
\begin{subequations}
    \begin{align}
    \lambda_\qbn^{100}  \Gamma_q^{-\dpot} & \leq \lambda_\qbn^{-1} \label{ineq:dpot:hel}\\
     2\dpot + 3 &\leq N_{**} \, , \label{ineq:Nstarstar:dpot} \\
     \lambda_{q+\bn}^{100} \Gamma_q^{-\sfrac{\dpot}{200}} \Lambda_{q+\bn+2}^{5+K_\circ} \left( 1 + \frac{\max(\la_\qbn^2\Tau_q^{-1},\Lambda_q^{\sfrac 12}\Lambda_{q+\bn})}{\tau_q^{-1}} \right)^{20\Nind} &\leq \Tau_\qbn^{200\Nindt} \, , \label{ineq:dpot:1} \\
     \lambda_{q+\bn}^{100} \Gamma_q^{-\sfrac{N_{**}}{20}} \Lambda_{q+\bn+2}^{5+K_\circ} \left( 1 + \frac{\max(\la_\qbn^2\Tau_q^{-1},\Lambda_q^{\sfrac 12}\Lambda_{q+\bn})}{\tau_q^{-1}} \right)^{20\Nind} &\leq \Tau_\qbn^{20\Nindt} \, . \label{ineq:Nstarz:1}
\end{align}
\end{subequations}
\item\label{i:par:11} Choose $\Nfin$ such that\index{$\Nfin$}
\begin{subequations}
    \begin{align}
        2\Ndec + 4 + 10\Nind &\leq \sfrac{\Nfin}{40000} - \dpot^2 - 10\NcutLarge - 10\NcutSmall - N_{**} - 300 \, . \label{condi.Nfin0}
        \end{align}
    \end{subequations}
\item\label{i:choice:of:a} Having chosen all the parameters mentioned in items~\eqref{i:par:1}--\eqref{i:par:11} except for $a$, there exists a sufficiently large parameter $a_*$ such that $a_*^{(b-1)\varepsilon_\Gamma b^{-2\bn}}$ is at least fives times larger than \emph{all} the implicit constants throughout the paper, as well as those which have been suppressed in the computations in this section. Choose $a$ to be any natural number larger than $a_*$.\index{$a_*$}
\end{enumerate}

\section{Appendix}\label{sec:app}
In this section, we collect a number of useful tools from \cite{BMNV21} and \cite{GKN23}.

\subsection{Upgrading material derivatives}
We now recall a version of \cite[Lemma~A.23]{GKN23}, which upgrades material derivatives for functions decomposed into a ``localized'' piece with support assumptions and a ``nonlocal'' remainder. The only difference is the substitution of $\tau$'a for $\mu$'s.
\begin{lemma}[\bf Upgrading material derivatives]\label{lem:upgrading.material.derivative}
Fix $p\in [1, \infty]$ and a positive integer $N_\star\leq \sfrac{3\Nfin}4$. Assume that a tensor $F$ is given with a decomposition $F = F^l + F^*$ such that
\begin{subequations}
\begin{align} 
\left\| \psi_{i,q'} D^N D_{t,q'}^M F^l \right\|_{p}
&\lesssim 
\const_{p,F}  \lambda_{F}^N  \MM{M,\Nindt,\Gamma_{q'}^{i+c}  {\mu_{q'}^{-1}}, \Gamma_{q'}^{-1}\Tau_{q'}^{-1} }
\label{eq:before.upgraded}\\
\left\| D^N D_{t,q'}^M F^* \right\|_{\infty} 
&\lesssim \const_{*,F}\Tau_{q+\bn}^{\Nindt}\lambda_{F}^N {\mu_{q'}}^{-M} 
\label{eq:before.upgraded.uniform}
\end{align}
\end{subequations}
for all $M+N\leq N_{\star}$, an absolute constant {$c\leq 20$}, and constants $\const_{p,F}$ and $\const_{*,F}$. Assume furthermore that there exists $k$ such that $q'+1<k\leq q'+\bn$ and
\begin{align}\label{supp.F}
    \supp(\hat w_{q''}, \la_{q''}^{-1}{\Ga_{q''}}) \cap \supp(F^l) =\emptyset \quad \forall q'+1\leq q'' < k \, .
\end{align}
Finally, assume that $\la_F \Ga_{q'+\bn}^{\imax+2}\de_{q'+\bn}^{\sfrac12} \leq \Tau_{q'+\bn}^{-1}$. Then $F$ obeys the following estimate with an upgraded material derivative for all $M+N\leq N_{\star}$;
\begin{align}
&\left\| \psi_{i,k-1} D^N D_{t,k-1}^M F \right\|_{p} \lesssim \
(\const_{p,F} + \const_{*,F}) \max(\lambda_{F}, \La_{k-1})^N
  \MM{M,\Nindt,\Gamma_{k-1}^{i}  {\mu}_{k-1}^{-1}, \Gamma_{k-1}^{-1}\Tau_{k-1}^{-1} } \, . \notag 
\end{align}
In particular, the nonlocal part $F^*$ obeys for $N+M\leq N_\star$ the better estimate
\begin{align}\label{est:nonlocal:upgrade}
\norm{D^N D_{t,k-1}^M F^*}_\infty
    \lec \const_{*,F}\max(\lambda_{F}, \la_{k-1}\Ga_{k-1})^N 
    \MM{M, \Nindt,  {\mu}_{k-1}^{-1},  \Tau_{k-1}^{-1}\Gamma_{k-1}^{-1}} \, .
\end{align}
\end{lemma}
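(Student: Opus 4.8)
## Proof proposal for Lemma~\ref{lem:upgrading.material.derivative}

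The plan is to follow the template of \cite[Lemma~A.23]{GKN23} essentially verbatim, the only bookkeeping difference being the replacement of $\tau_{q'}$ by $\mu_{q'}$ throughout (which is purely cosmetic since the abstract lemma in \cite{GKN23} is agnostic to the name of the timescale, requiring only the structural hypotheses~\eqref{eq:before.upgraded}--\eqref{eq:before.upgraded.uniform}, the dodging condition~\eqref{supp.F}, and the terminal frequency bound $\la_F \Ga_{q'+\bn}^{\imax+2}\de_{q'+\bn}^{\sfrac12} \leq \Tau_{q'+\bn}^{-1}$). First I would observe that on the support of $F^l$, the dodging hypothesis~\eqref{supp.F} together with Hypothesis~\ref{hyp:dodging1} (effective dodging) and the support property~\eqref{eq:ind:dodging} implies that for $q'+1 \leq q'' < k$ the velocity increments $\hat w_{q''}$ vanish identically near $\supp F^l$; hence on this set $\hat u_{k-1} = \hat u_{q'} + \sum_{q'' = q'+1}^{k-1} \hat w_{q''} = \hat u_{q'}$, so that $D_{t,k-1} F^l = D_{t,q'} F^l$ pointwise on $\supp F^l$. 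This is the mechanism that lets us ``upgrade'' the material derivative on the localized piece without paying any derivative cost: the estimate~\eqref{eq:before.upgraded} already controls $D_{t,q'}$-derivatives, and these agree with $D_{t,k-1}$-derivatives wherever $F^l$ is supported.

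Next I would handle the nonlocal part $F^*$. Here there is no support information, so I cannot identify $D_{t,k-1}$ with $D_{t,q'}$; instead I write $D_{t,k-1} = D_{t,q'} + (\hat u_{k-1} - \hat u_{q'})\cdot\nabla = D_{t,q'} + \sum_{q''=q'+1}^{k-1} \hat w_{q''}\cdot\nabla$ and expand $D_{t,k-1}^M F^*$ using the Leibniz rule, distributing the $M$ material derivatives among the base material derivative $D_{t,q'}$ and the transport terms $\hat w_{q''}\cdot\nabla$. Each factor of $\hat w_{q''}\cdot\nabla$ is estimated using the lossy velocity bound~\eqref{eq:bob:Dq':old} (or~\eqref{eq:bobby:old}) at level $q''$, combined with the spatial-derivative bound~\eqref{eq:before.upgraded.uniform} for $F^*$; the key point is that the frequency $\la_{q''}\Ga_{q''} \leq \la_{k-1}\Ga_{k-1}$ and the amplitude $\de_{q''}^{\sfrac12}\la_{q''}$ are all dominated by the terminal-scale quantities, so that the worst contribution is bounded by $\const_{*,F} \max(\la_F, \la_{k-1}\Ga_{k-1})^N \MM{M,\Nindt,\mu_{k-1}^{-1},\Tau_{k-1}^{-1}\Gamma_{k-1}^{-1}}$. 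This is precisely~\eqref{est:nonlocal:upgrade}; the hypothesis $\la_F \Ga_{q'+\bn}^{\imax+2}\de_{q'+\bn}^{\sfrac12} \leq \Tau_{q'+\bn}^{-1}$ is what guarantees that the time-frequency in the resulting $\mathcal{M}$-factor is genuinely $\mu_{k-1}^{-1}$ (not something larger), since all intermediate transport costs $\la_{q''}\de_{q''}^{\sfrac12} \lesssim \mu_{q''}^{-1} \lesssim \mu_{k-1}^{-1}$ by~\eqref{ineq:tau:q}.

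Finally I would combine the two pieces: since $\psi_{i,k-1} = \psi_{i,k-1}(\psi_{i,k-1}^{\text{(something)}})$ is supported where the level-$(k-1)$ velocity cutoff is active, and since $F = F^l + F^*$, the bound for $\psi_{i,k-1} D^N D_{t,k-1}^M F$ follows by adding the estimate for the localized part (inherited from~\eqref{eq:before.upgraded} via the identification of material derivatives and the inductive relation~\eqref{eq:inductive:timescales} relating $\Gamma_{q'}^{i+c}\mu_{q'}^{-1}$ to $\Gamma_{k-1}^i \mu_{k-1}^{-1}$) and the estimate for the nonlocal part from the previous paragraph. The frequency appearing in the final bound is $\max(\la_F,\La_{k-1})$ because passing from level $q'$ to level $k-1$ one must allow for the mollified/upgraded frequency $\La_{k-1}$ at which derivative costs are now measured. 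The main obstacle — and really the only nontrivial point — is the careful combinatorial bookkeeping in the nonlocal expansion: one must verify that every term produced by distributing $M$ copies of $\sum_{q''}\hat w_{q''}\cdot\nabla$ across $F^*$ and its $D_{t,q'}$-derivatives is correctly absorbed into a single $\mathcal{M}$-factor at the terminal timescale, which requires invoking the parameter inequalities~\eqref{ineq:tau:q}, \eqref{v:global:par:ineq}, and the terminal-frequency hypothesis in the right order; since this is identical to the computation in \cite[Lemma~A.23]{GKN23} I would cite that proof and only indicate the substitution $\tau \leftrightarrow \mu$, omitting the routine details.
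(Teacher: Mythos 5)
The paper provides no independent proof of this lemma --- it is recalled verbatim from \cite[Lemma~A.23]{GKN23} with the stated notational substitution $\tau \mapsto \mu$ --- and your proposal correctly reconstructs the mechanism of that argument: dodging identifies $D_{t,k-1}$ with $D_{t,q'}$ on a neighborhood of $\supp F^l$, the nonlocal part is handled by a Leibniz expansion of $D_{t,k-1} = D_{t,q'} + \sum_{q''=q'+1}^{k-1}\hat w_{q''}\cdot\nabla$ with transport costs absorbed at the terminal scale, and the passage from $\psi_{\cdot,q'}$ to $\psi_{i,k-1}$ uses the partition of unity $\sum_{i'}\psi_{i',q'}^2\equiv 1$ together with the timescale comparison \eqref{eq:inductive:timescales} (which applies because $c\leq 20<25$). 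This is essentially the same approach the paper intends.
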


\subsection{Velocity cutoffs}\label{sec:inductive.cutoffs}

We must define the velocity cutoffs in order to verify the assumption in subsection~\ref{sec:cutoff:inductive} and~\ref{sec:inductive:secondary:velocity}. The proofs are quite similar to those in~\cite{GKN23, BMNV21, NV22}, and so many details are omitted.  The main differences are cosmetic; most of the $\tau$'s have been replaced with $\mu$'s.  

\subsubsection{Mollified velocity increment and velocity cutoff functions}
\label{sec:cutoff:velocity:properties}

We recall the definition of $\hat w_{q+\bn}$ from \eqref{def.w.mollified}; $\hat w_{q+\bn} = \mathcal{\tilde P}_{q+\bn,x,t} w_{q+1}$. Next, we shall need the following.

\begin{definition}[\bf Translating $\Gamma$'s between $q'-1$ and $q'$]\label{def:istar:j}
Given $i,j,q' \geq 0$, we define $i_* = i_*(j,q') = i_*(j) = \min\{ i \geq 0 \colon \Gamma_{q'}^{i} \geq \Gamma_{q'-1}^{j} \}$ and $j_*(i,q') = \max\{ j: i_*(j) \leq i \}$. A consequence of these definition is the inequality $\Gamma_{q'}^{i-1} < \Gamma_{q'-1}^{j_*(i,q)} \leq \Gamma_{q'}^i$. We also note that for $j=0$, $i_*(j)=0$.  Finally, the upper bound for $i_*(j)$ depends on $j$ but not $q$.
\end{definition}

We now define the velocity cutoff functions using the cutoff functions from~\cite[Lemma~5.5]{GKN23} and~\cite[Lemma~6.2]{BMNV21}, analogously to~\cite[Definition~9.2]{GKN23}.

\begin{definition}[\bf Intermediate cutoffs]\label{def:intermediate:cutoffs}
For $\qbn \geq 1$, $m\leq\NcutSmall$, and $j_m\geq 0$, we define
\begin{align}
    h_{m,j_m,q+\bn}^2(x,t) &= \Gamma_{q+\bn}^{-2i_*(j_m)} \delta_{q+\bn}^{-1}\left( {\mu_{q+\bn-1}^{-1}}\Gamma_{q+\bn}^{i_*(j_m)+2}\right)^{-2m} \sum_{N=0}^{ \NcutLarge} \left( \lambda_{q+\bn} \Gamma_{q+\bn} \right)^{-2N} \left| D^N D_{t,q+\bn-1}^m \hat w_{q+\bn} \right|^2 
    \, . 
\label{eq:h:j:q:def}
\end{align}
\noindent We then define $\psi_{m,i_m,j_m,q+\bn}$ by
\begin{subequations}
\begin{align}
 \psi_{m,i_m,j_m,q+\bn}(x,t) 
 &= \gamma_{m,q+\bn}^3 \left( \Gamma_{q+\bn}^{-2(i_m-i_*(j_m))(m+1)} h_{m,j_m,q+\bn}^2  (x,t) \right) \, \qquad i_m> i_*(j_m) \, , 
\label{eq:psi:i:j:def} \\
 \psi_{m,i_*(j_m),j_m,q+\bn}(x,t) 
 &= \tilde \gamma_{m,q+\bn}^3 \left( h_{m,j_m,q+\bn}^2(x,t) \right) \qquad i_m=i_*(j_m) \, .
\label{eq:psi:i:i:def}
\end{align}
\end{subequations}
The intermediate cutoff functions $\psi_{m,i_m,j_m,q+\bn}$ are equal to zero for $ i_m < i_*(j_m)$.  
\end{definition}
\noindent Similar to~\cite[equations~(9.7),~(9.8)]{GKN23}, it follows that $\sum_{i_m\geq0} \psi_{m,i_m,j_m,q+\bn}^{2} = \sum_{i_m\geq i_*(j_m)} \psi_{m,i_m,j_m,q+\bn}^{2} = \sum_{\{ i_m \colon \Gamma_{q+\bn}^{i_m} \geq \Gamma_{q+\bn-1}^{j_m} \}} \psi_{m,i_m,j_m,q+\bn}^{2} \equiv 1$
for any $m$, and for $|i_m-i'_m|\geq 2$, $\psi_{m,i_m,j_m,q+\bn}\psi_{m,i_m',j_m,q+\bn}=0$.

\begin{definition}[\bf $m^{\textnormal{th}}$ Velocity Cutoff Function]\label{def:psi:m:im:q:def}
At stage $q+1$ and for $i_m\geq 0$, we define
\begin{equation}\label{eq:psi:m:im:q:def}
\psi_{m,i_m,q+\bn}^{2} = \sum\limits_{\{j_m\colon i_m\geq i_*(j_m)\}} \psi_{j_m,q+\bn-1}^{2} \psi_{m,i_m,j_m,q+\bn}^{2} \, .
\end{equation}
\end{definition}

We shall employ the notation $\Vec{i} =  \{i_m\}_{m=0}^{\NcutSmall} = \left( i_0,...,i_{\NcutSmall} \right) \in \mathbb{N}_0^{\NcutSmall+1}$ to signify a tuple of non-negative integers of length $\NcutSmall+1$.

\begin{definition}[\bf Velocity cutoff function]
\label{def:psi:i:q:def}
At stage $q+1$ and for $0 \leq i \leq i_{\rm max}$,\index{$\psi_{i,q}$}\index{$i$} we define
\begin{align}
    \psi_{i,q+\bn}^{2} = \sum_{\mathcal{I}} \prod_{m=0}^{\NcutSmall} \psi_{m,i_m,q+\bn}^{2} \, , \qquad \mathcal{I} = \left\{\Vec{i}\colon\max\limits_{0\leq m\leq\NcutSmall} i_m =i\right\} \, .
  \label{eq:psi:i:q:recursive}
\end{align}
\end{definition}
\noindent For $\Vec{i}$ as in the sum of \eqref{eq:psi:i:q:recursive}, we shall denote 
\begin{align}
\supp\left( \prod\limits_{m=0}^{\NcutSmall} \psi_{m,i_m,q+\bn}  \right) = \bigcap_{m=0}^{\NcutSmall} \supp(\psi_{m,i_m,q+\bn}) =: \supp (\psi_{\Vec{i},q+\bn} )\,,
\label{eq:new:supp:notation}
\end{align}
so that $(x,t) \in \supp(\psi_{i,q+\bn})$ if and only if there exists $\Vec{i}\in \N_0^{\NcutSmall+1}$ such that $\max_{0\leq m\leq\NcutSmall} i_m =i$, and $(x,t) \in \supp(\psi_{\Vec{i},q+\bn})$.

\subsubsection{Partition of unity, dodging, simple bounds on increments, and spatial derivatives}

\begin{lemma}[\bf Partition of unity]
\label{lem:partition:of:unity:psi}
For all $m$, $\sum_{i_m\geq 0} \psi_{m,i_m,q+\bn}^{2}\equiv 1$ and $\psi_{m,i_m,q+\bn}\psi_{m,i'_m,q+\bn}=0$ for $|i_m-i'_m|\geq 2$.  In addition, we have that $\sum_{i\geq 0} \psi_{i,q+\bn}^{2}\equiv 1$ and $\psi_{i,q+\bn}\psi_{i',q+\bn} \equiv 0$ for $|i-i'|\geq 2$.
\end{lemma}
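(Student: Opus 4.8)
The plan is to follow the scheme of \cite[Section~9]{GKN23} and \cite[Section~6]{BMNV21}, bootstrapping from the $m$-indexed intermediate cutoffs of Definition~\ref{def:intermediate:cutoffs} up to the full cutoffs $\psi_{i,q+\bn}$. First I would record the elementary properties of the scalar building blocks $\gamma_{m,q+\bn},\tilde\gamma_{m,q+\bn}$ (imported from \cite[Lemma~5.5]{GKN23} and \cite[Lemma~6.2]{BMNV21}): they are smooth, monotone, $[0,1]$-valued, equal to $1$ on the relevant sublevel set and to $0$ above it, and arranged so that the telescoping identities $\sum_{i_m\ge0}\psi_{m,i_m,j_m,q+\bn}^2\equiv1$ (already noted after Definition~\ref{def:intermediate:cutoffs}) and $\psi_{m,i_m,j_m,q+\bn}\psi_{m,i_m',j_m,q+\bn}\equiv0$ for $|i_m-i_m'|\ge2$ hold. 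From the explicit formula~\eqref{eq:h:j:q:def} I would also extract the amplitude relation
\[
h_{m,j_m',q+\bn}^2=\Gamma_{q+\bn}^{-2(i_*(j_m')-i_*(j_m))(m+1)}\,h_{m,j_m,q+\bn}^2 \, ,
\]
together with the consequence that $\psi_{m,i_m,j_m,q+\bn}\neq0$ confines $h_{m,j_m,q+\bn}^2$ to a band $[c,C]\,\Gamma_{q+\bn}^{2(i_m-i_*(j_m))(m+1)}$ when $i_m>i_*(j_m)$, and to $\{h_{m,j_m,q+\bn}^2\le C\}$ when $i_m=i_*(j_m)$.

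For the partition of unity, I would use Definition~\ref{def:psi:m:im:q:def} and interchange the (for fixed $(x,t)$ finite) sums to get
\[
\sum_{i_m\ge0}\psi_{m,i_m,q+\bn}^2=\sum_{j_m\ge0}\psi_{j_m,q+\bn-1}^2\sum_{i_m\ge i_*(j_m)}\psi_{m,i_m,j_m,q+\bn}^2=\sum_{j_m\ge0}\psi_{j_m,q+\bn-1}^2\equiv1 \, ,
\]
the last equality being the inductive partition of unity~\eqref{eq:inductive:partition} at level $q'=q+\bn-1$. The full identity then follows from Definition~\ref{def:psi:i:q:def}, since the index sets $\mathcal I=\{\vec i:\max_m i_m=i\}$ over $i\ge0$ partition $\mathbb{N}_0^{\NcutSmall+1}$, so that
\[
\sum_{i\ge0}\psi_{i,q+\bn}^2=\sum_{\vec i\in\mathbb{N}_0^{\NcutSmall+1}}\prod_{m=0}^{\NcutSmall}\psi_{m,i_m,q+\bn}^2=\prod_{m=0}^{\NcutSmall}\Bigl(\sum_{i_m\ge0}\psi_{m,i_m,q+\bn}^2\Bigr)=1 \, .
\]

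For the disjointness, I would first treat the $m$-level statement by expanding
\[
\psi_{m,i_m,q+\bn}^2\psi_{m,i_m',q+\bn}^2=\sum_{j_m,j_m'}\psi_{j_m,q+\bn-1}^2\psi_{j_m',q+\bn-1}^2\,\psi_{m,i_m,j_m,q+\bn}^2\psi_{m,i_m',j_m',q+\bn}^2
\]
and invoking inductive disjointness of $\{\psi_{\cdot,q+\bn-1}\}$ to keep only $|j_m-j_m'|\le1$. For $j_m=j_m'$ the term vanishes once $|i_m-i_m'|\ge2$ by the first paragraph; for $|j_m-j_m'|=1$ the amplitude relation shows that on the overlap $h_{m,j_m,q+\bn}^2$ and $h_{m,j_m',q+\bn}^2$ agree up to a factor $\Gamma_{q+\bn}^{O(1)}$ (using that $i_*(j_m+1)-i_*(j_m)$ is bounded uniformly in $q$, cf.\ the remark after Definition~\ref{def:istar:j}), so the two induced $i$-bands meet only when $|i_m-i_m'|\le1$ --- here I would take $a$ large enough (item~\eqref{i:choice:of:a} of Section~\ref{sec:pam}) that the finitely many implied constants are dominated by one power of $\Gamma_{q+\bn}^{2(m+1)}$. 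Finally, if $(x,t)\in\supp\psi_{i,q+\bn}\cap\supp\psi_{i',q+\bn}$ there are tuples $\vec i,\vec i'$ with $\max_m i_m=i$, $\max_m i_m'=i'$ and $(x,t)\in\supp\psi_{m,i_m,q+\bn}\cap\supp\psi_{m,i_m',q+\bn}$ for every $m$; the $m$-level disjointness forces $|i_m-i_m'|\le1$ for all $m$, whence $|i-i'|=|\max_m i_m-\max_m i_m'|\le\max_m|i_m-i_m'|\le1$, contradicting $|i-i'|\ge2$.

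The only genuinely delicate point is the $|j_m-j_m'|=1$ case of the $m$-level disjointness, where comparability of the amplitudes across adjacent $j$-levels must be converted into adjacency of the induced $i$-indices; all the remaining steps are bookkeeping with telescoping sums. Since the construction here differs from \cite{GKN23} only cosmetically ($\mu$'s in place of $\tau$'s, and the particular divergence-free increments), I would import this estimate essentially verbatim from \cite[Section~9]{GKN23} (equivalently \cite[Section~6]{BMNV21}), and this is the step I expect to require the most care.
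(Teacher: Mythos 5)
Your proposal is correct and takes essentially the same route as the paper. The paper's proof simply defers to \cite[Lemmas~9.5,~9.6]{GKN23} (with cubes replaced by squares and $\tau$'s by $\mu$'s), whose argument you have reconstructed in the right order: partition of unity by interchanging sums and invoking~\eqref{eq:inductive:partition} at level $q+\bn-1$, then reducing the disjointness of $\psi_{i,q+\bn}$ to the $m$-level disjointness via $|\max_m i_m - \max_m i_m'| \leq \max_m |i_m - i_m'|$, and finally handling the $|j_m-j_m'|=1$ overlap via the exact amplitude identity $h_{m,j_m',q+\bn}^2 = \Gamma_{q+\bn}^{-2(m+1)(i_*(j_m')-i_*(j_m))} h_{m,j_m,q+\bn}^2$ together with the band estimates of Lemma~\ref{lem:h:j:q:size}.
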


\begin{proof}[Proof of Lemma~\ref{lem:partition:of:unity:psi}]
The proof of the claims for the intermediate cutoffs are nearly identical to that of~\cite[Lemma~9.5]{GKN23}, and we omit further details. The proof of the first claim for $\psi_{i,\qbn}$ is nearly identical to that of~\cite[Lemma~9.6]{GKN23}, and so we omit further details.  The proof of the second claim for $\psi_{i,\qbn}$ is again essentially identical.
\end{proof}

\begin{lemma}[\bf Lower order derivative bounds on $\hat w_{q+\bn}$]
\label{lem:h:j:q:size}
If $(x,t)\in \supp( \psi_{m,i_m,j_m,q+\bn})$ then
\begin{align}\label{eq:h:psi:supp}
h_{m,j_m,q+\bn}\leq  \Gamma_{q+\bn}^{(m+1)\left(i_m+1-i_*(j_m)\right)}.
\end{align}
Moreover, if $i_m>i_*(j_m)$ we have $h_{m,j_m,q+\bn} \geq  (\sfrac 12) \Gamma_{q+\bn}^{(m+1)(i_m-i_*(j_m))}$ on the support of $\psi_{m,i_m,j_m,q+\bn}$. 
As a consequence, we have that for all $0 \leq m, M \leq \NcutSmall$ and $0 \leq N \leq \NcutLarge$,
\begin{subequations}
\begin{align}
\norm{D^N D_{t,q+\bn-1}^{m} \hat w_{q+\bn}}_{L^\infty( \supp \psi_{m,i_m,q+\bn})}
&\leq \delta_{q+\bn}^{\sfrac 12} \Gamma_{q+\bn}^{i_m+1} (\lambda_{q+\bn} \Gamma_{q+\bn})^N ( {\mu_{q+\bn-1}^{-1}}\Gamma_{q+\bn}^{i_m+3})^{m} \label{eq:derivatives:psi:i:m:q} \\
\norm{D^N D_{t,q+\bn-1}^M \hat w_{q+\bn}}_{L^\infty( \supp \psi_{i,q+\bn})}
&\leq \delta_{q+\bn}^{\sfrac 12} \Gamma_{q+\bn}^{i+1} (\lambda_{q+\bn} \Gamma_{q+\bn})^N ( {\mu_{q+\bn-1}^{-1}} \Gamma_{q+\bn}^{i+3})^{M}  \, .
\label{eq:derivatives:psi:i:q}
\end{align}
\end{subequations}
\end{lemma}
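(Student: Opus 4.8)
The statement to prove is Lemma~\ref{lem:h:j:q:size}, which controls $h_{m,j_m,q+\bn}$ on the support of $\psi_{m,i_m,j_m,q+\bn}$ and deduces the derivative bounds \eqref{eq:derivatives:psi:i:m:q}--\eqref{eq:derivatives:psi:i:q} on the supports of $\psi_{m,i_m,q+\bn}$ and $\psi_{i,q+\bn}$. The argument follows the template of \cite[Lemma~9.8]{GKN23} (and \cite[Lemma~6.14]{BMNV21}), with the only change being the cosmetic substitution of $\mu_{q+\bn-1}$ for $\tau_{q+\bn-1}$, so the plan is to transcribe that argument while checking that the substitution does not break any parameter inequality. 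First I would unwind the definitions: from \eqref{eq:psi:i:j:def}, $\psi_{m,i_m,j_m,q+\bn}$ is built from $\gamma_{m,q+\bn}^3$ composed with $\Gamma_{q+\bn}^{-2(i_m-i_*(j_m))(m+1)} h_{m,j_m,q+\bn}^2$, and the support properties of the base cutoffs $\gamma_{m,q+\bn}$ and $\tilde\gamma_{m,q+\bn}$ from \cite[Lemma~5.5]{GKN23} say that $\gamma_{m,q+\bn}(y)$ is supported in $y \lesssim 1$ with a lower bound $y \gtrsim 1$ once we are past the first threshold. Feeding in $y = \Gamma_{q+\bn}^{-2(i_m-i_*(j_m))(m+1)} h_{m,j_m,q+\bn}^2$ and solving the resulting inequalities for $h_{m,j_m,q+\bn}$ gives exactly \eqref{eq:h:psi:supp} and the matching lower bound when $i_m > i_*(j_m)$; the case $i_m = i_*(j_m)$ uses \eqref{eq:psi:i:i:def} and the support of $\tilde\gamma_{m,q+\bn}$ instead.

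\textbf{From the size of $h$ to the derivative bounds.} Next I would plug \eqref{eq:h:psi:supp} back into the definition \eqref{eq:h:j:q:def} of $h_{m,j_m,q+\bn}^2$. Since every summand $(\lambda_{q+\bn}\Gamma_{q+\bn})^{-2N}|D^N D_{t,q+\bn-1}^m \hat w_{q+\bn}|^2$ is nonnegative, the bound on $h_{m,j_m,q+\bn}$ controls each term individually: for $0 \leq N \leq \NcutLarge$,
\begin{align*}
\left| D^N D_{t,q+\bn-1}^m \hat w_{q+\bn} \right| &\leq (\lambda_{q+\bn}\Gamma_{q+\bn})^N \Gamma_{q+\bn}^{i_*(j_m)} \delta_{q+\bn}^{\sfrac 12} \left( \mu_{q+\bn-1}^{-1} \Gamma_{q+\bn}^{i_*(j_m)+2} \right)^{m} h_{m,j_m,q+\bn} \\
&\leq (\lambda_{q+\bn}\Gamma_{q+\bn})^N \delta_{q+\bn}^{\sfrac 12} \Gamma_{q+\bn}^{i_*(j_m)} \left( \mu_{q+\bn-1}^{-1}\Gamma_{q+\bn}^{i_*(j_m)+2}\right)^m \Gamma_{q+\bn}^{(m+1)(i_m+1-i_*(j_m))} \, .
\end{align*}
Then I would absorb the powers of $\Gamma_{q+\bn}$: using $i_*(j_m) \leq i_m$ and a few lines of bookkeeping with the exponents, the right-hand side is dominated by $\delta_{q+\bn}^{\sfrac 12} \Gamma_{q+\bn}^{i_m+1} (\lambda_{q+\bn}\Gamma_{q+\bn})^N (\mu_{q+\bn-1}^{-1}\Gamma_{q+\bn}^{i_m+3})^m$, which is \eqref{eq:derivatives:psi:i:m:q}; this is the step where one must verify, as in \cite{GKN23}, that the exponent arithmetic closes with room to spare (the $+1$ and $+3$ in the target are deliberately generous). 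To pass from \eqref{eq:derivatives:psi:i:m:q} to \eqref{eq:derivatives:psi:i:q}, I would use Definition~\ref{def:psi:m:im:q:def} and Definition~\ref{def:psi:i:q:def}: on $\supp \psi_{i,q+\bn}$ there is a tuple $\vec i$ with $\max_m i_m = i$ and $(x,t) \in \bigcap_m \supp \psi_{m,i_m,q+\bn}$, so for each fixed $M \leq \NcutSmall$ we apply \eqref{eq:derivatives:psi:i:m:q} with $m = M$ and $i_M \leq i$, which gives \eqref{eq:derivatives:psi:i:q} directly since all the relevant exponents are monotone in $i_M$.

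\textbf{Main obstacle.} I expect the only real work to be the exponent bookkeeping in the step above: one must track the interplay between $i_m$, $i_*(j_m)$, the factor $m+1$, and the recursive structure of $\psi_{m,i_m,j_m,q+\bn}$ versus $\psi_{j_m,q+\bn-1}$, and confirm that the translation relation $\Gamma_{q+\bn}^{i-1} < \Gamma_{q+\bn-1}^{j_*(i,q)} \leq \Gamma_{q+\bn}^i$ from Definition~\ref{def:istar:j} is used with the correct inequalities. Since this is verbatim the computation in \cite[Lemma~9.8]{GKN23} with $\tau_{q+\bn-1} \mapsto \mu_{q+\bn-1}$, and $\mu_{q+\bn-1} \leq \tau_{q+\bn-1}$ only makes the relevant quantities larger in a way already accounted for in the statement, no new parameter inequality is needed and I would simply cite \cite{GKN23} for the detailed exponent chase and record the modifications. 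The support claim \eqref{eq:h:psi:supp} itself, being a one-line consequence of the support properties of the base cutoffs, is not an obstacle.
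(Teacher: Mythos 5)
Your proof is correct and follows essentially the same approach as the paper, which only gives a short outline pointing to the definitions of the cutoffs, \eqref{eq:psi:m:im:q:def}, and \eqref{def:psi:i:q:def}. One small correction to your bookkeeping: after substituting \eqref{eq:h:psi:supp} into \eqref{eq:h:j:q:def}, the $\Gamma_{q+\bn}$-exponent $i_*(j_m) + m(i_*(j_m)+2) + (m+1)(i_m+1-i_*(j_m))$ telescopes \emph{exactly} to $(m+1)i_m + 3m + 1 = (i_m+1) + m(i_m+3)$ — the $i_*(j_m)$ terms cancel completely — so there is no ``room to spare'' to verify, nor is the inequality $i_*(j_m)\leq i_m$ actually used in that step.
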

\begin{proof}[Proof of Lemma~\ref{lem:h:j:q:size}]
The proof of the first two estimates uses the definitions of cutoff functions in~\cite[Lemma~5.5]{GKN23} and $h_{m,j_m,q+\bn}$ in \eqref{eq:h:j:q:def}. \eqref{eq:derivatives:psi:i:m:q} follows from~\eqref{eq:psi:m:im:q:def} and~\eqref{eq:h:psi:supp}. The proof of \eqref{eq:derivatives:psi:i:q} follows since we have employed the \textit{maximum} over $m$ of $i_m$ to define $\psi_{i,q+\bn}$ in \eqref{def:psi:i:q:def}.
\end{proof}

\begin{corollary}[\bf Higher order bounds on $\hat w_{q+\bn}$]
\label{cor:D:Dt:wq:psi:i:q}
For $N+M \leq 2\Nfin$ and $i \geq 0$, we have
\begin{align}
&\norm{D^N D_{t,q+\bn-1}^M \hat w_{q+\bn}}_{L^\infty(\supp \psi_{i,q+\bn})} \notag\\
&\qquad \qquad \leq \Gamma_{q+\bn}^{i+1} \delta_{q+\bn}^{\sfrac 12}  (\la_{q+\bn}\Ga_{q+\bn})^N \MM{M,\Nindt,\Gamma_{q+\bn}^{i+3}  {\mu_{q+\bn-1}^{-1}},\Tau_{q+\bn-1}^{-1}\Gamma_{q+\bn-1}} 
 \,.
\label{eq:D:Dt:wq:psi:i:q}
\end{align}
\end{corollary}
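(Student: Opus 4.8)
The plan is to upgrade the material-derivative count in the bound~\eqref{eq:derivatives:psi:i:q} from $M \leq \NcutSmall$ to the full range $N+M \leq 2\Nfin$, at the cost of the usual $\MM{\cdot}$ replacement which caps the ``fast'' timescale $\Gamma_{q+\bn}^{i+3}\mu_{q+\bn-1}^{-1}$ after $\Nindt$ derivatives and switches to the ``slow'' timescale $\Tau_{q+\bn-1}^{-1}\Gamma_{q+\bn-1}$. This is exactly the content of~\cite[Corollary~9.9]{GKN23} (or the analogous statement in~\cite{BMNV21, NV22}), so the proof is an adaptation with $\tau$'s replaced by $\mu$'s. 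First I would recall the two input estimates already available: Lemma~\ref{lem:h:j:q:size} (in particular~\eqref{eq:derivatives:psi:i:q}) gives the bound for $m \leq \NcutSmall$ material derivatives $D_{t,q+\bn-1}^m$ mixed with up to $\NcutLarge$ spatial derivatives; and the lossy bound~\eqref{eq:bobby:old} (at level $q' = q+\bn-1$, which holds by the inductive hypothesis and Lemma~\ref{lem:mollifying:w}) controls $\| D^N \partial_t^M \hat u_{q+\bn-1}\|_\infty$ by $\lambda_{q+\bn-1}^{1/2}(\lambda_{q+\bn-1}\Gamma_{q+\bn-1})^{N}\Tau_{q+\bn-1}^{-M}$, which is what governs the cost of commuting $\partial_t$ past $\hat u_{q+\bn-1}\cdot\nabla$.

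The core of the argument is the standard commutator induction used to trade material derivatives for a mix of material and spatial derivatives at the slower timescale. Concretely: for $M > \NcutSmall$, write $D_{t,q+\bn-1}^M = D_{t,q+\bn-1}^{\NcutSmall}\, (\partial_t + \hat u_{q+\bn-1}\cdot\nabla)^{M-\NcutSmall}$ and expand the outer operator, each application of which either produces a $\partial_t$ or $\hat u_{q+\bn-1}\cdot\nabla$ acting on $\hat w_{q+\bn}$, with the derivative of $\hat u_{q+\bn-1}$ landing on a lower-order term. One tracks the number of spatial derivatives accumulated this way; using~\eqref{eq:derivatives:psi:i:q} to absorb the ``base'' $\NcutSmall$ material derivatives together with the spatial derivatives (there is room because $\NcutLarge \geq \NcutSmall$ and the total stays well under $\NcutLarge$ via the parameter inequality~\eqref{condi.Nfin0}), and using~\eqref{eq:bobby:old} for the velocity factors, each additional derivative past $\NcutSmall$ costs at most $\Tau_{q+\bn-1}^{-1}\Gamma_{q+\bn-1}$ rather than $\Gamma_{q+\bn}^{i+3}\mu_{q+\bn-1}^{-1}$. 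This is precisely what the $\MM{M,\Nindt,\cdot,\cdot}$ notation encodes once one checks $\NcutSmall \leq \Nindt$, which is~\eqref{condi.Nindt}, and the parameter inequality~\eqref{condi.Nindt} comparing $\Gamma_{q+\bn}^{-\Nindt}(\tau^{-1}\Gamma^{i})^{-\NcutSmall-1}(\Tau^{-1}\Gamma)^{\NcutSmall+1} \leq 1$, which guarantees that restarting the count at $\Nindt$ (rather than $\NcutSmall$) loses nothing. The spatial derivatives are handled by a parallel, simpler commutator expansion, or by first proving the $N=0$ case and then using that spatial derivatives commute with $\partial_t$ and pay only $\lambda_{q+\bn}\Gamma_{q+\bn}$ each when combined with the Leibniz rule against the $\hat u_{q+\bn-1}$ factors (whose spatial derivative cost $\lambda_{q+\bn-1}\Gamma_{q+\bn-1} \leq \lambda_{q+\bn}\Gamma_{q+\bn}$ is dominated).

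The main obstacle is purely bookkeeping: verifying that in the commutator expansion no term exceeds the claimed bound, which requires (i) that the total derivative count landing on $\hat w_{q+\bn}$ via~\eqref{eq:derivatives:psi:i:q} never exceeds the $\NcutLarge + \NcutSmall$ budget of that lemma, and (ii) that the product of timescale factors telescopes correctly into $\MM{M,\Nindt,\Gamma_{q+\bn}^{i+3}\mu_{q+\bn-1}^{-1},\Tau_{q+\bn-1}^{-1}\Gamma_{q+\bn-1}}$ — i.e. that once one has spent $\Nindt$ material derivatives one is genuinely on the slow timescale, which is where~\eqref{condi.Nindt} and the hierarchy $\NcutSmall \leq \Nindt \leq \Nind \leq \Nfin$ with the slack in~\eqref{condi.Nfin0} are used. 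Since all of this is entirely analogous to~\cite[Corollary~9.9, Lemma~A.14]{GKN23} with $\tau_{q+\bn-1} \mapsto \mu_{q+\bn-1}$ and the velocity bound~\eqref{eq:bobby:old} unchanged, I would state that the proof is identical to the cited reference and omit the routine expansion, exactly in the spirit of the surrounding subsection.
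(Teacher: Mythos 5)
Your proposal does not match the paper's proof, and as stated it has a genuine gap. The low-derivative regime ($N\leq\NcutLarge$, $M\leq\NcutSmall$) is handled correctly by~\eqref{eq:derivatives:psi:i:q}, but your treatment of the high-derivative regime via a commutator expansion does not close. You assert that the spatial-derivative count accumulated in the expansion ``stays well under $\NcutLarge$ via the parameter inequality~\eqref{condi.Nfin0},'' but~\eqref{condi.Nfin0} enforces the \emph{opposite} hierarchy: it makes $\NcutLarge, \NcutSmall \ll \Nfin$, while the corollary claims bounds for all $N+M\leq 2\Nfin$. Since~\eqref{eq:derivatives:psi:i:q} is only available for $N\leq\NcutLarge$ and $M\leq\NcutSmall$, any expansion that has to feed spatial derivatives back into that bound dies long before $N+M$ reaches $2\Nfin$. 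There is also a second, subtler issue: after fully expanding $D_{t,q+\bn-1}^{M}$ into products of $D^{\alpha}\hat u_{q+\bn-1}$ factors acting against $\partial_t^{a}D^{b}\hat w_{q+\bn}$, you would need an $L^\infty$ bound for $\partial_t^{a}D^{b}\hat w_{q+\bn}$ with the $\Tau$-scale cost for $a$, which~\eqref{eq:derivatives:psi:i:q} (a $D_{t,q+\bn-1}$ estimate, not a $\partial_t$ estimate) does not give; the natural source for such a bound is Lemma~\ref{lem:mollifying:w}, at which point the commutator detour is superfluous.

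The paper's actual argument for the high-derivative regime is different and shorter: since $\psi_{i,q+\bn}$ is built from intermediate cutoffs whose supports lie in $\supp\psi_{j_m,q+\bn-1}$ with $\Gamma_{q+\bn-1}^{j_m}\leq\Gamma_{q+\bn}^{i_m}$ (Definitions~\ref{def:intermediate:cutoffs}--\ref{def:psi:i:q:def} and Definition~\ref{def:istar:j}), one appeals directly to Lemma~\ref{lem:mollifying:w}, whose estimate~\eqref{eq:vellie:inductive:dtq-1:uniform:upgraded:statement} already covers the full range $N+M\leq 2\Nfin$ of $D_{t,q+\bn-1}$ derivatives on $\supp\psi_{i,q+\bn-1}$. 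That bound has a larger amplitude prefactor ($\Gamma_q^{\badshaq/2+16}r_q^{-1}$ rather than $\Gamma_{q+\bn}^{i+1}\delta_{q+\bn}^{\sfrac 12}$), but because one is in the regime $N>\NcutLarge$ or $M>\NcutSmall$, at least one extra factor of $\la_{q+\bn}\Ga_{q+\bn}$ or $\Gamma_{q+\bn}^{i+3}\mu_{q+\bn-1}^{-1}$ has already been spent relative to~\eqref{eq:derivatives:psi:i:q}, and the parameter inequality~\eqref{condi.Ncut0.2} is precisely designed to trade that derivative surplus for the amplitude gap. No commutator expansion is needed; the heavy lifting is absorbed into the proof of Lemma~\ref{lem:mollifying:w} (which itself rests on the aggregation and mollification machinery of Section~\ref{ss:stress:w:estimates}).
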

\begin{proof}[Proof of Corollary~\ref{cor:D:Dt:wq:psi:i:q}]
When $0 \leq N \leq \NcutLarge, 0\leq M \leq \NcutSmall \leq \Nindt$, \eqref{eq:derivatives:psi:i:q} gives the result. For the remaining cases, either $N>\NcutLarge$ or $M> \NcutSmall$, if $0\leq m \leq \NcutSmall$ and $(x,t) \in \supp \psi_{m,i_m,q+\bn}$, and so there exists $j_m\geq 0$ with $i_*(j_m) \leq i_m$ such that $(x,t) \in \supp \psi_{j_m,q+\bn-1}$. Thus we may appeal to  {Lemma~\ref{lem:mollifying:w}. Using that $i_*(j_m) \leq i_m$ implies $\Gamma_{q+\bn-1}^{j_m} \leq \Gamma_{q+\bn}^{i_m}$, we deduce~\eqref{eq:D:Dt:wq:psi:i:q}
after using that either $N>\NcutLarge$ or $M> \NcutSmall$, \eqref{condi.Ncut0.2}, a large choice of $a$ to absorb the implicit constant in the spare factor of $\Ga_{q+\bn}$, and taking the maximum over $m$ from Definition~\ref{def:psi:i:q:def}.}
\end{proof}

\begin{lemma}[\bf Spatial derivatives for the cutoffs]
\label{lem:sharp:D:psi:i:q}
Fix $q+\bn\geq 1$,  $0 \leq m \leq \NcutSmall$, and $i_m\geq0$. For all $j_m \geq 0$ such that $i_m \geq i_*(j_m)$, all $i\geq 0$, and all  $N \leq \Nfin$, we have 
\begin{align}
{\bf 1}_{\supp( \psi_{j_m,q+\bn-1})} \frac{|D^N \psi_{m,i_m,j_m,q+\bn}|}{\psi_{m,i_m,j_m,q+\bn}^{1 -  N/\Nfin}} 
\les (\la_{q+\bn}\Ga_{q+\bn})^N
\,, \qquad
\frac{|D^N \psi_{i,q+\bn}|}{\psi_{i,q+\bn}^{1-N/\Nfin}} \les (\la_{q+\bn}\Ga_{q+\bn})^N \, .
\label{eq:sharp:D:psi:i:q}
\end{align}
\end{lemma}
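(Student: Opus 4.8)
\textbf{Proof proposal for Lemma~\ref{lem:sharp:D:psi:i:q}.}

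The plan is to follow the standard strategy for differentiating cutoff functions built by composition with a smooth function applied to a rescaled quantity, as in~\cite[Lemma~9.7]{GKN23} and~\cite[Lemma~6.6]{BMNV21}. First I would handle the intermediate cutoffs $\psi_{m,i_m,j_m,q+\bn}$. Recall from Definition~\ref{def:intermediate:cutoffs} that these are $\gamma^3_{m,q+\bn}$ (or $\tilde\gamma^3_{m,q+\bn}$) evaluated at a rescaled $h^2_{m,j_m,q+\bn}$, where $h^2_{m,j_m,q+\bn}$ is in turn a sum of squares of normalized derivatives $D^ND_{t,q+\bn-1}^m\hat w_{q+\bn}$ (normalized by $(\lambda_{q+\bn}\Gamma_{q+\bn})^{-N}$ and the appropriate $\delta$ and $\mu$ factors). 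The key inputs are: (a) the cutoff-function derivative bounds from~\cite[Lemma~5.5]{GKN23}, which say that $\gamma_{m,q+\bn}$ applied to an argument of size $\sim y$ loses $y^{-1/3}$-type weights in a controlled fashion, so that $|D^N_y\gamma^3_{m,q+\bn}| \psi^{-1+N/\Nfin}_{m,i_m,j_m,q+\bn}$ is bounded appropriately; (b) the Leibniz rule and Fa\`a di Bruno applied to the composition; and (c) the pointwise derivative bounds on $\hat w_{q+\bn}$, i.e. Corollary~\ref{cor:D:Dt:wq:psi:i:q} restricted to purely spatial derivatives $D^N$ with $N\le \Nfin$, together with the lower/upper bounds on $h_{m,j_m,q+\bn}$ on the support from Lemma~\ref{lem:h:j:q:size}. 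Combining these, each spatial derivative hitting $h^2_{m,j_m,q+\bn}$ costs a factor $\la_{q+\bn}\Ga_{q+\bn}$, the normalization is arranged precisely so that no extra $\Gamma$-powers accumulate, and the self-improving form $|D^N\psi|\lesssim \psi^{1-N/\Nfin}(\la_{q+\bn}\Ga_{q+\bn})^N$ on the support of $\psi_{j_m,q+\bn-1}$ follows. One must also use that on $\supp(\psi_{j_m,q+\bn-1})$, the old cutoff bounds~\eqref{eq:sharp:Dt:psi:i:q:old} control derivatives of $\psi_{j_m,q+\bn-1}$ at the coarser frequency $\la_{q+\bn-1}\Ga_{q+\bn-1} \le \la_{q+\bn}\Ga_{q+\bn}$, which is harmless.

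Second, I would pass from the intermediate cutoffs to $\psi_{m,i_m,q+\bn}$ via Definition~\ref{def:psi:m:im:q:def}, $\psi^2_{m,i_m,q+\bn} = \sum_{j_m} \psi^2_{j_m,q+\bn-1}\psi^2_{m,i_m,j_m,q+\bn}$: since for each fixed $(x,t)$ only $O(1)$ values of $j_m$ contribute (by the partition-of-unity/overlap properties), and each factor satisfies the self-improving spatial bound at frequency $\la_{q+\bn}\Ga_{q+\bn}$, the product and the finite sum also do. Then, from Definition~\ref{def:psi:i:q:def}, $\psi^2_{i,q+\bn} = \sum_{\mathcal I}\prod_{m=0}^{\NcutSmall}\psi^2_{m,i_m,q+\bn}$; again using that at any point only boundedly many tuples $\vec i\in\mathcal I$ are active and applying the Leibniz rule across the $\NcutSmall+1$ factors, one obtains $|D^N\psi_{i,q+\bn}|\lesssim \psi^{1-N/\Nfin}_{i,q+\bn}(\la_{q+\bn}\Ga_{q+\bn})^N$ for $N\le\Nfin$. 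This is exactly the claimed estimate~\eqref{eq:sharp:D:psi:i:q}. The argument is essentially verbatim that of~\cite[Lemma~9.7]{GKN23}, with the cosmetic replacement of $\tau$'s by $\mu$'s, so I would state this and omit the routine bookkeeping.

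The main obstacle—really the only point requiring care—is bookkeeping the weights so that purely spatial derivatives genuinely cost only $\la_{q+\bn}\Ga_{q+\bn}$ per derivative with \emph{no} leftover powers of $\Gamma_{q+\bn}^{i}$ or $\mu_{q+\bn-1}^{-1}$; this is why the normalization constants $\Gamma_{q+\bn}^{-2i_*(j_m)}\delta_{q+\bn}^{-1}(\mu_{q+\bn-1}^{-1}\Gamma_{q+\bn}^{i_*(j_m)+2})^{-2m}$ and $\Gamma_{q+\bn}^{-2(i_m-i_*(j_m))(m+1)}$ in~\eqref{eq:h:j:q:def}--\eqref{eq:psi:i:j:def} are chosen as they are, and one must check that applying $D^N$ to $\hat w_{q+\bn}$ (bounded via Corollary~\ref{cor:D:Dt:wq:psi:i:q} and Lemma~\ref{lem:h:j:q:size}) exactly matches these weights against $h_{m,j_m,q+\bn}$'s size on the support. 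The other subtlety is ensuring that the self-improving exponent $1-N/\Nfin$ survives the composition and the finitely many products/sums, which is handled by the standard observation that $\Nfin$ is chosen far larger than $\NcutSmall$, $\NcutLarge$, so dividing the self-improving exponents among $O(\NcutSmall)$ factors still leaves a valid inequality. No genuinely new difficulty arises beyond what is already present in~\cite{GKN23}.
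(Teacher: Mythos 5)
Your proposal follows essentially the same route as the paper's (extremely terse) proof, which simply defers to \cite[Lemma~9.9]{GKN23} (not Lemma~9.7 — a harmless slip, since you were working blind). The mechanism you describe — Fa\`a di Bruno applied to the composition $\gamma_{m,q+\bn}^3(\cdot)$ evaluated on the normalized sum $h_{m,j_m,q+\bn}^2$, matching the normalization weights in~\eqref{eq:h:j:q:def}--\eqref{eq:psi:i:j:def} against the pointwise bounds on $\hat w_{q+\bn}$ and the upper/lower bounds on $h$ from Lemma~\ref{lem:h:j:q:size}, then passing through Definitions~\ref{def:psi:m:im:q:def} and~\ref{def:psi:i:q:def} with the overlap and Leibniz bookkeeping — is the right strategy.

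The one substantive point you pass over lightly: the telescoping structure inside $h_{m,j_m,q+\bn}^2$ (the sum over $0\leq N\leq \NcutLarge$) only allows you to ``re-absorb'' a spatial derivative at cost $\la_{q+\bn}\Ga_{q+\bn}$ while the order of differentiation on $\hat w_{q+\bn}$ stays within $\NcutLarge$. Once the total number of spatial derivatives exceeds $\NcutLarge$, the self-improving re-summation fails and one instead bounds the highest-order derivative terms crudely via Lemma~\ref{lem:mollifying:w} on $\supp \psi_{j_m,q+\bn-1}$; this gives a lossy estimate that has to be paid for. It is exactly here that the parameter inequality~\eqref{condi.Ncut0.3} (the analogue of \cite[(11.14c)]{GKN23}) enters: it absorbs the loss using the spare factor $(\Ga_{q+\bn-1}/\Ga_{q+\bn})^{\NcutLarge}$. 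This is the single non-routine ingredient the paper singles out in its proof note, and it is worth making explicit rather than subsuming under ``$\Nfin$ is chosen large.''
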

\begin{proof}[Proof of Lemma~\ref{lem:sharp:D:psi:i:q}]
The proof of Lemma~\ref{lem:sharp:D:psi:i:q} is nearly identical to the proof of~\cite[Lemma~9.9]{GKN23}.  The only parameter inequality which plays a role is~\eqref{condi.Ncut0.3}, which we note is the analogue of~\cite[(11.14c)]{GKN23}, which is used to absorb a lossy estimate for a high number of derivatives by the extra factors of $\Gamma_\qbn$ present in~\eqref{eq:sharp:D:psi:i:q}; we refer to~\cite[Lemma~9.9]{GKN23} for further details.
\end{proof}

\subsubsection{Maximal index appearing in the cutoff}

\begin{lemma}[\bf Maximal $i$ index in the definition of $\psi_{i,q+\bn}$]
\label{lem:maximal:i}
There exists $\imax = \imax(q+\bn) \geq 0$ such that if $\lambda_0$ is sufficiently large, then\index{$\imax$}
\begin{align}
\psi_{i,q+\bn} &\equiv 0 \quad \mbox{for all} \quad i > i_{\rm max} \, ,\qquad \Gamma_{q+\bn}^{i_{\rm max}} &\leq  \Ga_{q}^{\sfrac{\badshaq}{2}+18} \delta_{q+\bn}^{-\sfrac 12} r_{q}^{-1} \, ,
\qquad 
\imax(q)  &\leq \frac{\badshaq+12}{(b-1)\varepsilon_\Gamma}
\,.
\label{eq:imax:upper:bound:uniform}
\end{align}
\end{lemma}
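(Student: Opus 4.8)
The plan is to bound $\imax(q+\bn)$ by tracking how large the velocity increment $\hat w_{q+\bn}$ can be, using the definition of the cutoffs in Definitions~\ref{def:intermediate:cutoffs}--\ref{def:psi:i:q:def}. First I would recall from Definition~\ref{def:psi:m:im:q:def} and Definition~\ref{def:psi:i:q:def} that $(x,t)\in\supp\psi_{i,q+\bn}$ means there is a tuple $\vec i$ with $\max_m i_m = i$ and $(x,t)$ in the support of $\prod_m \psi_{m,i_m,q+\bn}$. Hence there is some $m$ with $i_m = i$ and $(x,t)\in\supp\psi_{m,i,j_m,q+\bn}$ for an appropriate $j_m$ with $i_*(j_m)\leq i$. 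By the lower bound in Lemma~\ref{lem:h:j:q:size} (valid when $i_m > i_*(j_m)$), this forces $h_{m,j_m,q+\bn}(x,t)\geq \tfrac12\Gamma_{q+\bn}^{(m+1)(i-i_*(j_m))}$, and so, unwinding the definition~\eqref{eq:h:j:q:def} of $h_{m,j_m,q+\bn}$, there must exist $N\leq \NcutLarge$ such that $|D^N D_{t,q+\bn-1}^m\hat w_{q+\bn}|$ is at least of order $\delta_{q+\bn}^{\sfrac12}(\lambda_{q+\bn}\Gamma_{q+\bn})^N(\mu_{q+\bn-1}^{-1}\Gamma_{q+\bn}^{i_*(j_m)+2})^m \Gamma_{q+\bn}^{i_*(j_m)}\Gamma_{q+\bn}^{(m+1)(i-i_*(j_m))}$, which after simplification is of order $\delta_{q+\bn}^{\sfrac12}\Gamma_{q+\bn}^{i}(\lambda_{q+\bn}\Gamma_{q+\bn})^N(\mu_{q+\bn-1}^{-1}\Gamma_{q+\bn}^{i+2})^m$.

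The second step is to contradict this against the uniform upper bound on $\hat w_{q+\bn}$. Lemma~\ref{lem:mollifying:w}, equation~\eqref{eq:vellie:inductive:dtq-1:uniform:upgraded:statement}, gives
\[
\norm{D^N D_{t,q+\bn-1}^M \hat w_{q+\bn}}_{L^\infty} \les \left(\Gamma_q^{\sfrac{\badshaq}{2}+16} + \frac{\lambda_\qbn}{\lambda_\qho^{\sfrac 32}}\Ga_{q-1}^{-\sfrac12}\right) r_q^{-1}(\lambda_{q+\bn}\Gamma_{q+\bn-1})^N \MM{M,\NindRt,\Gamma_{q+\bn-1}^{i-1}\mu_{q+\bn-1}^{-1},\Tau_{q+\bn-1}^{-1}\Ga_{q+\bn-1}}\,,
\]
and by the parameter inequality~\eqref{eq:qho:choice3} the second term in the first parenthesis is dominated by the first, so the prefactor is $\les \Gamma_q^{\sfrac{\badshaq}{2}+16}r_q^{-1}$. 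Using $r_q = \lambda_{q+\half}\Gamma_q/\lambda_\qbn$ and absorbing any implicit constant into a spare factor of $\Gamma_q$ (here a sufficiently large choice of $\lambda_0$, i.e.\ of $a$), one finds that if $\Gamma_{q+\bn}^{i} > \Gamma_q^{\sfrac{\badshaq}{2}+18}\delta_{q+\bn}^{-\sfrac12}r_q^{-1}$ then the lower bound from Step~1 exceeds the upper bound from~\eqref{eq:vellie:inductive:dtq-1:uniform:upgraded:statement}, a contradiction; hence $\Gamma_{q+\bn}^{\imax}\leq \Gamma_q^{\sfrac{\badshaq}{2}+18}\delta_{q+\bn}^{-\sfrac12}r_q^{-1}$, which is the middle claim in~\eqref{eq:imax:upper:bound:uniform}. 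The case $i_m = i_*(j_m)$ contributes no new constraint since those cutoffs are supported where $h_{m,j_m,q+\bn}$ is bounded by an absolute power of $\Gamma_{q+\bn}$; this only affects the base of the induction and is handled exactly as in~\cite[Lemma~9.10]{GKN23}.

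For the third claim, $\imax(q)\leq (\badshaq+12)/((b-1)\varepsilon_\Gamma)$, I would simply take logarithms base $\Gamma_{q+\bn}$ of the bound just obtained: since $\delta_{q+\bn}^{-\sfrac12}= \lambda_0^{-\beta}\lambda_{q+\bn}^{\beta}$ and $r_q^{-1} = \lambda_\qbn\lambda_{q+\half}^{-1}\Gamma_q^{-1}\leq \lambda_{q+\bn}$, while $\Gamma_{q+\bn}\approx \lambda_{q+\bn}^{(b-1)\varepsilon_\Gamma}$ from~\eqref{eq:deffy:of:gamma}, the exponent $i$ is at most $\big(\tfrac{\badshaq}{2}+18 + O(1)\big)/((b-1)\varepsilon_\Gamma)$ up to lower-order terms; using $\badshaq\geq 10$ and a little slack in the parameter choices this is $\leq (\badshaq+12)/((b-1)\varepsilon_\Gamma)$, matching~\eqref{eq:imax:upper:lower}. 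The main obstacle, and the only place requiring genuine care, is Step~2: one must verify that the improved (sharpened) prefactor $\Gamma_q^{\sfrac{\badshaq}{2}+16}r_q^{-1}$ — rather than a cruder bound — is what forces the exponent $\sfrac{\badshaq}{2}+18$ (and not something larger) in the middle inequality, since this exponent is later used in~\eqref{eq:imax:old} and in the coefficient estimates. This is entirely analogous to~\cite[Lemma~9.10]{GKN23}, the only differences being the substitution of $\mu$'s for $\tau$'s in the timescales and the use of the $L^\infty$ bound~\eqref{eq:vellie:inductive:dtq-1:uniform:upgraded:statement} in place of its counterpart there; I would cite that proof and indicate the modifications rather than reproduce the computation.
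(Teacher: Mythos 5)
Your proposal takes the same approach as the paper, which simply defers to \cite[Lemma~9.10]{GKN23}: on the support of $\psi_{m,i_m,j_m,q+\bn}$ the function $h_{m,j_m,q+\bn}$ has a lower bound (Lemma~\ref{lem:h:j:q:size}), which after unwinding~\eqref{eq:h:j:q:def} forces a pointwise lower bound on $|D^N D_{t,\qbn-1}^m\hat w_\qbn|$, and contradicting this against the $L^\infty$ bound~\eqref{eq:vellie:inductive:dtq-1:uniform:upgraded:statement} yields the middle inequality in~\eqref{eq:imax:upper:bound:uniform}. Your observation that the factors $(\mu_{q+\bn-1}^{-1}\Gamma_{q+\bn}^{i+2})^m$ cancel on both sides is precisely the ``inequality in which factors of $\mu$ or $\tau$ are balanced'' to which the paper's proof alludes, and your use of~\eqref{eq:qho:choice3} to dominate the helical prefactor by $\Gamma_q^{\sfrac\badshaq2+16}$ is also correct.

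There is, however, a genuine slip in the last paragraph. You write that the logarithm of $\Gamma_q^{\sfrac\badshaq2+18}$ base $\Gamma_\qbn$ contributes $(\sfrac\badshaq2+18)/((b-1)\varepsilon_\Gamma)$, but in fact $\log_{\Gamma_\qbn}\Gamma_q = \log\lambda_q/\log\lambda_\qbn = b^{-\bn}$, so that term contributes only $(\sfrac\badshaq2+18)b^{-\bn}$, which is $O(\badshaq)$ and \emph{not} of order $((b-1)\varepsilon_\Gamma)^{-1}$. The correct decomposition is
\[
\imax \;\leq\; \frac{\sfrac\badshaq2+18}{b^\bn} \;+\; \frac{\beta+1}{(b-1)\varepsilon_\Gamma} + \text{(small)}\,,
\]
and since $(b-1)\varepsilon_\Gamma\ll1$ and $\beta+1 < \sfrac32 < \badshaq+12$, this comfortably gives $\imax\leq(\badshaq+12)/((b-1)\varepsilon_\Gamma)$. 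Your over-estimated intermediate bound $(\sfrac\badshaq2+18+O(1))/((b-1)\varepsilon_\Gamma)$ would only yield the conclusion if $\badshaq \geq 12 + 2\cdot O(1)$; this is not guaranteed by the assumption $\badshaq\geq10$ in item~\eqref{i:par:5}, so as written your Step~3 does not quite close. Replacing the exponent $((b-1)\varepsilon_\Gamma)^{-1}$ with $b^{-\bn}$ on the $\Gamma_q$-term repairs this. One other small point: the case $i_m=i_*(j_m)$ is not merely ``the base of the induction'' — it is exactly where the inductive bound on $\imax(q+\bn-1)$ (and hence on $j_m$) is invoked to cap $i_*(j_m)$; you correctly note it is handled as in \cite[Lemma~9.10]{GKN23}, but the phrasing undersells its role.
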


\begin{proof}[Proof of Lemma~\ref{lem:maximal:i}]
The structure of this proof is identical to that of~\cite[Lemma~9.10]{GKN23}.  Indeed the proof does not change as a result of the substitution of $\mu$ for $\tau$.  It only depends on an inductive assumption for $\imax(q-1)$, and an inequality in which factors of $\mu$ or $\tau$ are balanced on both sides and therefore cancel; see~\cite[(9.37)]{GKN23}.  We therefore omit further details.
\end{proof}

\subsubsection{Mixed derivative estimates}
\newcommand{\Dqp}{D_{q+\bn}}
We set $D_{q+\bn} = \hat w_{q+\bn} \cdot \nabla$, so that $D_{t,q+\bn} = D_{t,q+\bn-1} + D_{q+\bn}$. Then we recall from \cite[equations~(6.54)-(6.55)]{BMNV21} that
\begin{align}
D_{q+\bn}^K = \sum_{j=1}^{K} f_{j,K} D^j \, ,
\qquad 
\textnormal{ where } \quad f_{j,K} = \sum_{\{ \gamma \in \N^K \colon |\gamma|=K-j \} } c_{j,K,\gamma} \prod_{\ell=1}^K D^{\gamma_{\ell}} \hat w_{q+\bn} \, .
\label{eq:D:q:K:ii}
\end{align}
The  $c_{j,K,\gamma}$'s are computable and depend only on $K,j$, and $\gamma$.

\begin{lemma}[\bf Bounds for $D_{q+\bn}^K$]
\label{lem:D:a:fj}
For $q+\bn\geq 1$, $1 \leq K \leq  2\Nfin$, any $a \leq 2 \Nfin-K+j$, and any $0\leq i\leq \imax(q+\bn)$, the functions $\{ f_{j,K}\}_{j=1}^{K}$ defined in \eqref{eq:D:q:K:ii} obey the estimate
\begin{align}
\norm{D^a f_{j,K}}_{L^\infty (\supp \psi_{i,q+\bn})} &\les  (\Gamma_{q+\bn}^{i+1} \delta_{q+\bn}^{\sfrac 12})^K (\la_{q+\bn}\Ga_{q+\bn})^{a+K-j} \, .
\label{eq:D:a:fj} 
\end{align}
\end{lemma}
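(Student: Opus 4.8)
The plan is to expand $D^a f_{j,K}$ by the Leibniz rule and estimate each resulting factor using the zeroth-order material-derivative case of Corollary~\ref{cor:D:Dt:wq:psi:i:q}. First recall from~\eqref{eq:D:q:K:ii} that $f_{j,K}$ is a finite sum, over multi-indices $\gamma\in\N^K$ with $|\gamma|=K-j$, of terms $c_{j,K,\gamma}\prod_{\ell=1}^{K}D^{\gamma_\ell}\hat w_{q+\bn}$, where the number of such $\gamma$ and the coefficients $c_{j,K,\gamma}$ depend only on $K,j,\gamma$, hence are bounded purely in terms of $\Nfin$ (since $K\le 2\Nfin$, $j\le K$, and $|\gamma|=K-j\le 2\Nfin$). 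Applying $D^a$ to one such product and using the Leibniz rule distributes the $a$ derivatives among the $K$ factors: for each tuple $(a_1,\dots,a_K)\in\N^K$ with $\sum_\ell a_\ell=a$ one obtains a term proportional to $\prod_{\ell=1}^{K}D^{\gamma_\ell+a_\ell}\hat w_{q+\bn}$, with a combinatorial coefficient again bounded in terms of $\Nfin$ and with at most $(a+1)^K$ such tuples.

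Next I would check that Corollary~\ref{cor:D:Dt:wq:psi:i:q} applies to every factor. Since $|\gamma|=K-j$ and $\sum_\ell a_\ell=a\le 2\Nfin-K+j$, we have $\sum_{\ell=1}^{K}(\gamma_\ell+a_\ell)=(K-j)+a\le 2\Nfin$, and in particular $\gamma_\ell+a_\ell\le 2\Nfin$ for every $\ell$. Therefore Corollary~\ref{cor:D:Dt:wq:psi:i:q} with $M=0$ (which has no implicit constant) yields, for each $\ell$,
\begin{align*}
\norm{D^{\gamma_\ell+a_\ell}\hat w_{q+\bn}}_{L^\infty(\supp\psi_{i,q+\bn})} \le \Gamma_{q+\bn}^{i+1}\delta_{q+\bn}^{\sfrac12}(\la_{q+\bn}\Ga_{q+\bn})^{\gamma_\ell+a_\ell}\,.
\end{align*}
Multiplying these $K$ bounds, the amplitude factors combine to $(\Gamma_{q+\bn}^{i+1}\delta_{q+\bn}^{\sfrac12})^K$ and the frequency factors to $(\la_{q+\bn}\Ga_{q+\bn})^{\sum_\ell(\gamma_\ell+a_\ell)}=(\la_{q+\bn}\Ga_{q+\bn})^{K-j+a}$. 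Summing over the (finitely many, $\Nfin$-controlled) Leibniz terms and multi-indices $\gamma$, and absorbing the combinatorial constants into the implicit constant in $\lesssim$ — or, if explicit constants are preferred, into the large choice of base parameter $a$ in item~\eqref{i:choice:of:a} — gives exactly~\eqref{eq:D:a:fj}.

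I do not expect a genuine analytic obstacle here; the one point requiring care is the verification that no individual factor in the Leibniz expansion needs more than $2\Nfin$ spatial derivatives, which is precisely the role of the hypothesis $a\le 2\Nfin-K+j$, and the bookkeeping of the combinatorial constants, both of which are handled exactly as in the analogous estimate of~\cite[Section~6]{BMNV21}. We would therefore keep the write-up brief, essentially recording the Leibniz expansion, the derivative-count check, and the one display above.
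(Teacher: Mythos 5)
Your argument is correct and follows the same route as the paper: expand $f_{j,K}$ via~\eqref{eq:D:q:K:ii}, apply the Leibniz rule, verify that the hypothesis $a\le 2\Nfin-K+j$ guarantees each factor receives at most $2\Nfin$ derivatives, and estimate each factor with Corollary~\ref{cor:D:Dt:wq:psi:i:q} at $M=0$. The paper's terse proof also invokes Lemma~\ref{lem:mollifying:w}, but that is already subsumed in Corollary~\ref{cor:D:Dt:wq:psi:i:q} (it is what extends the pointwise bound past $\NcutLarge$ derivatives), so your reliance on the corollary alone is sufficient.
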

\begin{proof}[Proof of Lemma~\ref{lem:D:a:fj}]
The proof mirrors that of~\cite[Lemma~9.11]{GKN23} and follows from Corollary~\ref{cor:D:Dt:wq:psi:i:q} with $M=0$ and Lemma~\ref{lem:mollifying:w}; we omit further details.
\end{proof}

\begin{lemma}[\bf Mixed derivatives for $\hat w_\qbn$]
\label{lem:Dt:Dt:wq:psi:i:q}
For $q+\bn\geq 1$ and $0 \leq i \leq \imax$, we have that 
\begin{align*}
&\norm{D^N D_\qbn^K D_{t,\qbn-1}^M \hat w_\qbn}_{L^\infty(\supp \psi_{i,\qbn})} \notag\\
&\qquad \qquad \les  
(\Gamma_{\qbn}^{i+1} \delta_\qbn^{\sfrac 12})^{K+1} 
(\la_{q+\bn}\Ga_\qbn)^{N+K}
\MM{M,\NindSmall,\Gamma_{\qbn}^{i+3}   {\mu_{\qbn-1}^{-1}},\Tau_{\qbn-1}^{-1}\Ga_{\qbn-1}}\notag\\
&\qquad \qquad \les  
(\Gamma_{\qbn}^{i+1} \delta_\qbn^{\sfrac 12}) 
(\la_\qbn\Ga_\qbn)^N (\Gamma_\qbn^{i-5}  {{\mu_{\qbn}^{-1}} })^{K}  \MM{M,\NindSmall,\Gamma_\qbn^{i+3}    {\mu_{\qbn-1}^{-1}},\Tau_{\qbn-1}^{-1}\Ga_{\qbn-1}}
\end{align*}
holds for $0 \leq K + N  + M \leq 2\Nfin$.
\end{lemma}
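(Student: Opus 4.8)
The plan is to combine the lower-order estimate~\eqref{eq:D:Dt:wq:psi:i:q} from Lemma~\ref{lem:h:j:q:size} (more precisely its higher-order extension in Corollary~\ref{cor:D:Dt:wq:psi:i:q}), the expansion~\eqref{eq:D:q:K:ii} of the iterated operator $D_\qbn^K$ into a sum $\sum_{j=1}^K f_{j,K} D^j$, and the coefficient bounds for $f_{j,K}$ from Lemma~\ref{lem:D:a:fj}, exactly as in~\cite[Lemma~9.12]{GKN23}. First I would write, for a multi-index decomposition, $D^N D_\qbn^K D_{t,\qbn-1}^M \hat w_\qbn = \sum_{j=1}^K \sum_{N_1+N_2 = N} \binom{N}{N_1} (D^{N_1} f_{j,K})(D^{N_2} D^j D_{t,\qbn-1}^M \hat w_\qbn)$, so that the estimate reduces to multiplying the bound for $\|D^{N_1} f_{j,K}\|_{L^\infty(\supp\psi_{i,\qbn})}$ against the bound for $\|D^{N_2+j} D_{t,\qbn-1}^M \hat w_\qbn\|_{L^\infty(\supp\psi_{i,\qbn})}$ and summing.

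Next, plugging in~\eqref{eq:D:a:fj} with $a = N_1$ gives a factor $(\Gamma_\qbn^{i+1}\delta_\qbn^{\sfrac12})^K (\lambda_\qbn\Gamma_\qbn)^{N_1 + K - j}$, while Corollary~\ref{cor:D:Dt:wq:psi:i:q} applied with spatial order $N_2 + j$ and material order $M$ gives $\Gamma_\qbn^{i+1}\delta_\qbn^{\sfrac12}(\lambda_\qbn\Gamma_\qbn)^{N_2+j}\MM{M,\Nindt,\Gamma_\qbn^{i+3}\mu_{\qbn-1}^{-1},\Tau_{\qbn-1}^{-1}\Gamma_{\qbn-1}}$. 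Multiplying these, the powers of $\lambda_\qbn\Gamma_\qbn$ combine to $(\lambda_\qbn\Gamma_\qbn)^{N_1 + N_2 + K} = (\lambda_\qbn\Gamma_\qbn)^{N+K}$, and the amplitude factors combine to $(\Gamma_\qbn^{i+1}\delta_\qbn^{\sfrac12})^{K+1}$; the $j$-summation and $N_1,N_2$-summation each contribute only harmless combinatorial constants, absorbed by a large choice of $a$ into a spare factor of $\Gamma_\qbn$. The only constraint to track is that we need $N_1 \le 2\Nfin - K + j$ for Lemma~\ref{lem:D:a:fj} and $N_2 + j + M \le 2\Nfin$ for Corollary~\ref{cor:D:Dt:wq:psi:i:q}; since $N + K + M \le 2\Nfin$ and $N_1 + N_2 = N$, $j \le K$, both are satisfied. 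This proves the first displayed inequality.

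For the second displayed inequality, I would simply convert the first by the parameter relation $\Gamma_\qbn^{i+1}\delta_\qbn^{\sfrac12}\cdot\lambda_\qbn\Gamma_\qbn \approx \lambda_\qbn\Gamma_\qbn \cdot \Gamma_\qbn^{i-5}\mu_\qbn^{-1}$ up to powers of $\Gamma_\qbn$, i.e. using $\mu_\qbn^{-1}\approx\tau_\qbn^{-1}(\lambda_\qbn\delta_{q+2\bn})\cdots$ together with~\eqref{eq:defn:tau} and~\eqref{ineq:tau:q}; concretely one checks $(\Gamma_\qbn^{i+1}\delta_\qbn^{\sfrac12})^{K+1}(\lambda_\qbn\Gamma_\qbn)^{N+K} \le \Gamma_\qbn^{i+1}\delta_\qbn^{\sfrac12}(\lambda_\qbn\Gamma_\qbn)^N(\Gamma_\qbn^{i-5}\mu_\qbn^{-1})^K$ because $\delta_\qbn^{\sfrac12}\lambda_\qbn\Gamma_\qbn \le \Gamma_\qbn^{-6}\mu_\qbn^{-1}$, which follows from~\eqref{eq:defn:tau} and~\eqref{ineq:tau:q} after noting $\tau_\qbn^{-1} = \delta_\qbn^{\sfrac12}\lambda_\qbn\Gamma_\qbn^{35}$ and $\mu_\qbn \ll \tau_\qbn$. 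I expect the main (minor) obstacle to be bookkeeping the powers of $\Gamma_\qbn$ carefully enough that the spare factor used to absorb implicit constants is genuinely available — this is the same issue handled in~\cite[Lemma~9.12]{GKN23}, and since the only structural change here is $\tau \rightsquigarrow \mu$ (which appears identically on both sides of every inequality invoked), the argument transfers verbatim. I would therefore state the lemma and remark that the proof is identical to that of~\cite[Lemma~9.12]{GKN23}, using Corollary~\ref{cor:D:Dt:wq:psi:i:q}, Lemma~\ref{lem:D:a:fj}, and the parameter inequalities~\eqref{eq:defn:tau}, \eqref{ineq:tau:q}, omitting further details.
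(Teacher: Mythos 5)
Your proposal is correct and follows essentially the same route as the paper: the paper's own proof of this lemma cites exactly~\eqref{eq:D:Dt:wq:psi:i:q} (via Corollary~\ref{cor:D:Dt:wq:psi:i:q}),~\eqref{eq:D:q:K:ii}, and~\eqref{eq:D:a:fj} for the first inequality and~\eqref{ineq:tau:q} for the second, then defers to~\cite[Lemma~9.12]{GKN23}. Your Leibniz-expansion bookkeeping, the constraint check $N_1\le 2\Nfin-K+j$ and $N_2+j+M\le 2\Nfin$, and the reduction of the second bound to $\Gamma_\qbn^{7}\delta_\qbn^{\sfrac12}\lambda_\qbn\le\mu_\qbn^{-1}$ are all consistent with what the paper leaves implicit.
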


\begin{proof}[Proof of Lemma~\ref{lem:Dt:Dt:wq:psi:i:q}]
The second estimate follows from~\eqref{ineq:tau:q}. In order to prove the first estimate, we use~\eqref{eq:D:Dt:wq:psi:i:q},~\eqref{eq:D:q:K:ii}, and~\eqref{eq:D:a:fj}.  The proof mirrors that of~\cite[Lemma~9.12]{GKN23}, the first difference being that no factors of $r_q^{-\sfrac 13}$ appear, and the second difference being that we save one factor of $\delta_\qbn^{\sfrac 12} \Gamma_\qbn^{i+1}$ out in front while converting all other such factors to $\mu_\qbn^{-1}\Gamma_\qbn^{i-5}$ instead of $\tau_\qbn^{-1}\Gamma_\qbn^{i-5}$. We omit further details.
\end{proof}

\begin{lemma}[\bf More mixed derivatives for $\hat w_\qbn$ and derivatives for $\hat u_\qbn$]
\label{lem:Dt:Dt:wq:psi:i:q:multi}
For $q+\bn\geq 1$, $k\geq 1$, $\alpha,\beta \in {\mathbb N}^k$ with $|\alpha| = K$, $|\beta| = M$, and $K+M\leq \sfrac{3\Nfin}{2}+1$, we have
\begin{align}
&\norm{ \Big( \prod_{i=1}^k D^{\alpha_i} D_{t,\qbn-1}^{\beta_i} \Big) \hat w_\qbn }_{L^\infty(\supp \psi_{i,\qbn})} \notag\\
& \qquad  \lesssim \Gamma_\qbn^{i+1} \delta_\qbn^{\sfrac 12} (\la_\qbn\Ga_\qbn)^K \MM{M,\NindSmall,\Gamma_\qbn^{i+3}   {\mu_{\qbn-1}^{-1}}, \Gamma_{\qbn-1} \Tau_{\qbn-1}^{-1}}   \, . \label{eq:nasty:D:wq}
\end{align}
Next, we have that
\begin{subequations}
\begin{align}
&\norm{D^N \Big( \prod_{i=1}^k D_\qbn^{\alpha_i} D_{t,\qbn-1}^{\beta_i} \Big) \hat w_\qbn }_{L^\infty(\supp \psi_{i,\qbn})}\notag\\
&\qquad\lesssim   (\Gamma_\qbn^{i+1} \delta_\qbn^{\sfrac 12})^{K+1} (\la_\qbn\Ga_\qbn)^{N+K} \MM{M,\NindSmall,\Gamma_\qbn^{i+3}    {\mu_{\qbn-1}^{-1}},\Gamma_{\qbn-1} \Tau_{\qbn-1}^{-1}}  \label{eq:nasty:Dt:wq} \\
&\qquad  
\lesssim  \Gamma_\qbn^{i+1} \delta_\qbn^{\sfrac 12} (\la_\qbn\Ga_\qbn)^N (\Gamma_\qbn^{i-5}   {\mu_\qbn^{-1}})^{K} \MM{M,\NindSmall,\Gamma_\qbn^{i+3}    {\mu_{\qbn-1}^{-1}},\Gamma_{\qbn-1} \Tau_{\qbn-1}^{-1}} 
\label{eq:nasty:Dt:wq:WEAK}
\end{align}
\end{subequations}
holds for all  $0 \leq K + M + N \leq \sfrac{3\Nfin}{2}+1$. Lastly, we have the estimate 
\begin{align}
\norm{ \Big( \prod_{i=1}^k D^{\alpha_i} D_{t,\qbn}^{\beta_i} \Big) D \hat u_\qbn }_{L^\infty(\supp \psi_{i,\qbn})}&\lesssim \Ga_\qbn^{i+2} \delta_\qbn^{\sfrac 12} \lambda_\qbn  (\la_{\qbn}\Ga_\qbn)^K\notag
\\
&\qquad \MM{M,\NindSmall,\Gamma_\qbn^{i-5}    {\mu_{\qbn}^{-1}},\Gamma_{\qbn-1} \Tau_{\qbn-1}^{-1}} 
\label{eq:nasty:D:vq}
\end{align}
for all  $ K + M \leq \sfrac{3\Nfin}{2}$, the estimate 
\begin{align}
& \norm{ \Big( \prod_{i=1}^k D^{\alpha_i} D_{t,\qbn}^{\beta_i} \Big)  \hat u_\qbn }_{L^\infty(\supp \psi_{i,\qbn})} \notag\\
&\qquad \lesssim  \Gamma_\qbn^{i+2} \delta_\qbn^{\sfrac 12} \lambda_\qbn^{2} (\la_\qbn\Ga_\qbn)^K \MM{M,\NindSmall,\Gamma_\qbn^{i-5}    {\mu_{\qbn}^{-1}},\Gamma_{\qbn-1} \Tau_{\qbn-1}^{-1}} 
\label{eq:nasty:no:D:vq}
\end{align}
for all  $ K + M \leq \sfrac{3\Nfin}{2}+1$, and the estimate
\begin{align}
    \left\| D^K \partial_t^M \hat u_\qbn \right\|_\infty \leq \la_\qbn^{\sfrac 12} (\la_\qbn\Ga_\qbn)^K \Tau_{\qbn}^{-M} \label{eq:bobby:new}
\end{align}
for all $K+M\leq 2\Nfin$.
\end{lemma}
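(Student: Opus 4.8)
\textbf{Proof proposal for Lemma~\ref{lem:Dt:Dt:wq:psi:i:q:multi}.}

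The plan is to follow the template of \cite[Lemma~9.12 and the associated multi-index lemma]{GKN23}, making only the cosmetic substitutions of $\mu$ for $\tau$ and dropping all intermittency factors $r_q^{-\sfrac13}$, since we work in $L^2$ rather than $L^3$. First I would establish \eqref{eq:nasty:D:wq}: using the Leibniz-type expansion of the product of mixed operators $\prod_i D^{\alpha_i} D_{t,\qbn-1}^{\beta_i}$ into a sum of terms of the form $D^{N'} D_{t,\qbn-1}^{M'}$ with $N'\le K$, $M'\le M$, each coefficient controlled by lower-order material derivative bounds, and then invoking Corollary~\ref{cor:D:Dt:wq:psi:i:q} to bound each factor; the cost $(\la_\qbn\Ga_\qbn)^K$ for spatial derivatives and the $\MM{\cdot}$-profile for material derivatives assemble exactly as in the cited lemma because the worst case is all spatial derivatives on a single copy of $\hat w_\qbn$. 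Here the only inductive assumptions entering are Corollary~\ref{cor:D:Dt:wq:psi:i:q} (itself proved above from Lemma~\ref{lem:h:j:q:size} and Lemma~\ref{lem:mollifying:w}) and the parameter ordering $\NcutSmall\le\Nindt$.

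Next I would prove \eqref{eq:nasty:Dt:wq} and \eqref{eq:nasty:Dt:wq:WEAK}, which concern the operator $D_\qbn = \hat w_\qbn\cdot\nabla$ composed with material derivatives. The key input is the expansion \eqref{eq:D:q:K:ii} of $D_\qbn^K$ as $\sum_j f_{j,K} D^j$ together with the bound \eqref{eq:D:a:fj} for the coefficients $f_{j,K}$ from Lemma~\ref{lem:D:a:fj}; combining these with \eqref{eq:nasty:D:wq} and distributing the $D^N$ and the additional material derivatives via Leibniz gives the first inequality. The second, ``weak'' form then follows by converting each factor $\Gamma_\qbn^{i+1}\delta_\qbn^{\sfrac12}$ into $\Gamma_\qbn^{i-5}\mu_\qbn^{-1}$ using the parameter inequality \eqref{ineq:tau:q} (specifically $\Gamma_q^{25}\la_q\delta_q^{\sfrac12}\le\mu_q^{-1}$, applied at level $q+\bn$), while keeping one copy of $\Gamma_\qbn^{i+1}\delta_\qbn^{\sfrac12}$ out front. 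This is precisely the mechanism of \cite[Lemma~9.12]{GKN23}, so I would cite it and note the two differences (no $r_q^{-\sfrac13}$; $\mu$ in place of $\tau$).

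For the velocity estimates \eqref{eq:nasty:D:vq}--\eqref{eq:nasty:no:D:vq} I would use $\hat u_\qbn = \hat u_{\qbn-1} + \hat w_\qbn$ and the inductive bound~\eqref{eq:nasty:D:vq:old} for $D\hat u_{q'}$ at level $q' = \qbn-1$, after first upgrading its material derivative from $D_{t,\qbn-1}$-type data to the present form using the argument of Remark~\ref{rem:D:t:q':orangutan} and Lemma~\ref{lem:upgrading.material.derivative}; for the $\hat w_\qbn$ contribution one applies \eqref{eq:nasty:D:wq} together with the relation $D_{t,\qbn} = D_{t,\qbn-1} + D_\qbn$ and the expansions above. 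The dominant term is $D\hat w_\qbn$, of size $\delta_\qbn^{\sfrac12}\la_\qbn\Ga_\qbn^{i+2}$, which matches the stated right-hand side. The lossy estimate \eqref{eq:bobby:new} is then immediate from \eqref{eq:bobby:old} at level $\qbn-1$ (which already has the form $\la_{q'}^{\sfrac12}(\la_{q'}\Ga_{q'})^K\Tau_{q'}^{-M}$) together with the crude global bound for $\hat w_\qbn$ from Lemma~\ref{lem:mollifying:w}, using $\Tau_\qbn\ge\Tau_{\qbn-1}$ and $\la_\qbn\ge\la_{\qbn-1}\Ga_{\qbn-1}$.

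The main obstacle, as in \cite{GKN23}, is purely bookkeeping: tracking the $\Gamma_\qbn$ powers through the $i_*(j_m)$ translation between cutoff generations and ensuring that the ``weak'' conversion via \eqref{ineq:tau:q} never consumes more factors of $\Gamma_\qbn$ than are available, so that the extra spare factor $\Gamma_\qbn$ (absorbed by a large choice of $a$ via item~\eqref{i:choice:of:a}) suffices to swallow all implicit constants. Since none of these manipulations interact with the helicity-specific structure of the construction, I would simply state that the proof mirrors \cite[Lemmas~9.11--9.13]{GKN23} with the two noted cosmetic changes and omit the routine details.
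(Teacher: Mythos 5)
Your proposal follows the same strategy as the paper: prove \eqref{eq:nasty:D:wq} by induction on $k$ from Corollary~\ref{cor:D:Dt:wq:psi:i:q}; pass to \eqref{eq:nasty:Dt:wq} via the expansion \eqref{eq:D:q:K:ii} together with Lemma~\ref{lem:D:a:fj}; obtain \eqref{eq:nasty:Dt:wq:WEAK} from \eqref{ineq:tau:q}; handle \eqref{eq:nasty:D:vq}--\eqref{eq:nasty:no:D:vq} through $\hat u_\qbn = \hat u_{\qbn-1} + \hat w_\qbn$ and the identity $D_{t,\qbn}=D_{t,\qbn-1}+D_\qbn$; and get \eqref{eq:bobby:new} from the triangle inequality, \eqref{eq:bobby:old}, and Lemma~\ref{lem:mollifying:w} --- all mirroring \cite[Lemmas~9.11--9.13]{GKN23} with the substitution of $\mu$ for $\tau$ and the removal of $r_q^{-\sfrac13}$ factors. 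Two small imprecisions that don't affect the overall argument: the inequality you invoke for \eqref{eq:bobby:new} should read $\Tau_\qbn\leq\Tau_{\qbn-1}$ (equivalently $\Tau_\qbn^{-1}\geq\Tau_{\qbn-1}^{-1}$), since the timescales $\Tau_q$ shrink with $q$; and Lemma~\ref{lem:upgrading.material.derivative} is not the right tool for upgrading the material derivative on $\hat u_{\qbn-1}$ (it requires a localized/nonlocal stress decomposition with dodging hypotheses), whereas the commutation argument of Remark~\ref{rem:D:t:q':orangutan} and the chain-rule lemma \cite[Lemma~A.6]{GKN23}, which you also cite, are what the paper actually relies on.
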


\begin{proof}[Proof of Lemma~\ref{lem:Dt:Dt:wq:psi:i:q:multi}]

We note that \eqref{eq:nasty:Dt:wq:WEAK} follows directly from \eqref{eq:nasty:Dt:wq} by appealing to \eqref{ineq:tau:q}. The proof then proceeds by proving~\eqref{eq:nasty:D:wq}, then~\eqref{eq:nasty:Dt:wq}, and finally~\eqref{eq:nasty:D:vq}--\eqref{eq:bobby:new}.
\smallskip

\noindent{\bf Proof of \eqref{eq:nasty:D:wq}.\,}
The statement is proven by induction on $k$ and mirrors the proof of~\cite[(9.42)]{GKN23}, the first difference being that no factors of $r_q^{-\sfrac 13}$ appear, and the second difference being the substitution of $\mu$ for $\tau$. We omit further details.
\smallskip

\noindent{\bf Proof of \eqref{eq:nasty:Dt:wq}.\,} 
To prove this estimate, we use~\cite[Lemma~A.6]{GKN23}; since the proof is nearly identical to~\cite[(9.43a)]{GKN23}, we omit further details.
\smallskip

\noindent{\bf Proofs of \eqref{eq:nasty:D:vq}--\eqref{eq:nasty:no:D:vq}.\,}
The proofs of these estimates use the parameter inequality~\eqref{ineq:tau:q},~\cite[Lemma~A.6]{GKN23}, and induction on $k$.  Since the method is identical to~\cite[(9.44)--(9.45)]{GKN23}, we omit further details.
\smallskip

\noindent\textbf{Proof of \eqref{eq:bobby:new}.\,} The proof of this bound is immediate from Lemma~\ref{lem:mollifying:w}, the definition of $\hat w_\qbn$ in Definition~\ref{def:wqbn}, the inductive assumption \eqref{eq:bobby:old}, and the triangle inequality.
\end{proof}

\subsubsection{Material derivatives}
\begin{lemma}[\bf Mixed spatial and material derivatives for velocity cutoffs]
\label{lem:sharp:Dt:psi:i:q}
Let $q+\bn\geq 1$, $0 \leq i \leq i_{\rm max}(q+\bn)$, $N,K,M,k \geq 0$, and let $\alpha,\beta \in {\mathbb N}^k$ be such that $|\alpha|=K$ and $|\beta|=M$.  Then
\begin{align}
&\frac{1}{\psi_{i,q+\bn}^{1- (K+M)/\Nfin}} \left|\left(\prod_{l=1}^k D^{\alpha_l} D_{t,q+\bn-1}^{\beta_l}\right) \psi_{i,q+\bn}\right| \notag\\
&\qquad \les (\la_\qbn\Ga_\qbn)^K
\MM{M,\Nindt-\NcutSmall,\Gamma_{{q+\bn}}^{i+3}   {\mu_{q+\bn-1}^{-1}}, \Gamma_{\qbn+1} \Tau_{\qbn-1}^{-1}}
\label{eq:sharp:Dt:psi:i:q} \\
&\frac{1}{\psi_{i,\qbn}^{1- (N+K+M)/\Nfin}} \left| D^N \left( \prod_{l=1}^k D_\qbn^{\alpha_l} D_{t,\qbn-1}^{\beta_l}\right)  \psi_{i,\qbn} \right| \notag\\
&\qquad \les  (\la_\qbn\Ga_\qbn)^N
(\Gamma_\qbn^{i-5} \tau_\qbn^{-1})^K 
\MM{M,\Nindt-\NcutSmall,\Gamma_{{q+\bn}}^{i+3}   {\mu_{q+\bn-1}^{-1}}, \Gamma_{\qbn+1} \Tau_{\qbn-1}^{-1}}
\label{eq:sharp:Dt:psi:i:q:mixed}
\end{align}
where the first bound holds for $K + M \leq \Nfin$, and the second for $N+ K+ M \leq \Nfin$.
\end{lemma}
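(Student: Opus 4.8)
\textbf{Proof plan for Lemma~\ref{lem:sharp:Dt:psi:i:q}.}

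The plan is to mirror the proof of~\cite[Lemma~9.13]{GKN23} essentially verbatim, tracking carefully the two cosmetic-but-load-bearing differences: the substitution of $\mu_\qbn$ for $\tau_\qbn$ in the relevant timescales, and the absence of any factors of $r_q^{-\sfrac13}$ (since we measure in $L^2$ rather than $L^3$). The starting point is the structure of the velocity cutoffs from Definitions~\ref{def:intermediate:cutoffs}--\ref{def:psi:i:q:def}: $\psi_{i,\qbn}^2$ is built as a sum of products $\prod_{m=0}^{\NcutSmall} \psi_{m,i_m,\qbn}^2$, and each $\psi_{m,i_m,\qbn}^2 = \sum_{j_m} \psi_{j_m,\qbn-1}^2\,\psi_{m,i_m,j_m,\qbn}^2$ with $\psi_{m,i_m,j_m,\qbn}$ a smooth function of $h_{m,j_m,\qbn}^2$, which in turn is a rescaled sum of squares of $D^N D_{t,\qbn-1}^m \hat w_\qbn$ for $N\leq \NcutLarge$. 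So $\psi_{i,\qbn}$ is, modulo harmless multiplicative constants and rescalings, a smooth function composed with $\hat w_\qbn$ and its (at most $\NcutLarge$) spatial and (at most $\NcutSmall$) material derivatives $D_{t,\qbn-1}$.

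First I would establish~\eqref{eq:sharp:Dt:psi:i:q}. Applying the Fa\`a di Bruno / Leibniz combinatorics from~\cite[Lemma~A.5]{GKN23} (or the direct expansion as in~\cite{BMNV21}) to the composition defining $\psi_{i,\qbn}$, one reduces to bounding products of derivatives $D^{\alpha} D_{t,\qbn-1}^{\beta}$ falling on the building blocks $D^{N'} D_{t,\qbn-1}^{m'}\hat w_\qbn$ with $N'\leq \NcutLarge$, $m'\leq \NcutSmall$. For such terms the needed estimate is Corollary~\ref{cor:D:Dt:wq:psi:i:q} together with Lemma~\ref{lem:Dt:Dt:wq:psi:i:q:multi} (specifically~\eqref{eq:nasty:D:wq}), which supply $D$-cost $\la_\qbn\Ga_\qbn$ and material-derivative cost $\MM{\cdot,\NindSmall,\Gamma_\qbn^{i+3}\mu_{\qbn-1}^{-1},\Gamma_{\qbn-1}\Tau_{\qbn-1}^{-1}}$; the self-improving structure of $h_{m,j_m,\qbn}$ on $\supp\psi_{m,i_m,j_m,\qbn}$ via~\eqref{eq:h:psi:supp} absorbs the negative powers of $\psi$ and yields the factor $\psi_{i,\qbn}^{1-(K+M)/\Nfin}$ on the left. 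The index $\NindSmall-\NcutSmall$ (rather than $\NindSmall$) and the $\Gamma_{\qbn+1}$ (rather than $\Gamma_{\qbn-1}$) in the target arise precisely because up to $\NcutSmall$ of the available material derivatives and a bounded number of extra $\Gamma$ powers are consumed in the composition step, exactly as in~\cite[Lemma~9.13]{GKN23}; the parameter inequality~\eqref{condi.Nindt} is what licenses trading an index-$\NindSmall$ estimate down to index $\NindSmall-\NcutSmall$ at an acceptable cost.

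For~\eqref{eq:sharp:Dt:psi:i:q:mixed}, the argument is the same but now the inner operators are $D_\qbn = \hat w_\qbn\cdot\nabla$; here I would invoke~\eqref{eq:D:q:K:ii} to rewrite $D_\qbn^K$ as $\sum_j f_{j,K}D^j$ with the $f_{j,K}$ bounds from Lemma~\ref{lem:D:a:fj}, and then~\eqref{eq:nasty:Dt:wq:WEAK} / Lemma~\ref{lem:Dt:Dt:wq:psi:i:q} to convert the resulting factors of $\delta_\qbn^{\sfrac12}\Gamma_\qbn^{i+1}$ into the sharpened cost $\Gamma_\qbn^{i-5}\mu_\qbn^{-1}$ per $D_\qbn$, using~\eqref{ineq:tau:q}; combining with the $\la_\qbn\Ga_\qbn$ cost per plain $D$ and the material-derivative factor as before gives the claimed bound. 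The main obstacle — really the only nontrivial point — is bookkeeping: keeping track of how many material derivatives are ``spent'' in the composition so that the surviving index is honestly $\NindSmall-\NcutSmall$, and verifying that each lossy high-derivative contribution is absorbed by the reserve factors of $\Ga_\qbn$ built into the cutoffs (this is the role of~\eqref{condi.Ncut0.3} and a sufficiently large choice of $a$). Since every one of these steps is carried out in detail in~\cite[Lemma~9.13]{GKN23} and nothing in the helicity construction alters the velocity-cutoff machinery beyond the $\tau\mapsto\mu$ relabeling and the removal of $r_q^{-\sfrac13}$, I would state that the proof is identical and omit further details.
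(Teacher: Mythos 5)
Your proposal follows the same approach as the paper, which also simply refers to GKN23 (specifically Lemma~9.16 there, not 9.13 as you cite) and notes that the only changes are the substitution of $\mu$ for $\tau$ and, relevantly for this particular lemma, the use of squared rather than sixth-powered cutoff functions in the partition of unity; the description you give of the Fa\`a di Bruno/composition structure and the $D_\qbn^K=\sum_j f_{j,K}D^j$ reduction is exactly the GKN23 mechanism. One small imprecision worth correcting: you aim to convert the per-$D_\qbn$ factor to $\Gamma_\qbn^{i-5}\mu_\qbn^{-1}$, but the lemma states the stronger $\Gamma_\qbn^{i-5}\tau_\qbn^{-1}$, which your method actually delivers directly since each $D_\qbn$ factor contributes roughly $\Gamma_\qbn^{i+2}\delta_\qbn^{\sfrac12}\la_\qbn\Ga_\qbn=\Gamma_\qbn^{i-33}\tau_\qbn^{-1}$ using $\tau_\qbn^{-1}=\delta_\qbn^{\sfrac12}\la_\qbn\Ga_\qbn^{35}$ from~\eqref{eq:defn:tau}, with no appeal to~\eqref{ineq:tau:q} needed.
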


\begin{proof}[Proof of Lemma~\ref{lem:sharp:Dt:psi:i:q}]
The method of proof is identical to~\cite[Lemma~9.16]{GKN23} and only requires substituting squared cutoff functions for cutoff functions raised to the sixth power, and replacing $\tau$ with $\mu$; we omit further details.
\end{proof}

\subsubsection{\texorpdfstring{$L^r$}{Lr} size of the velocity cutoffs}
We now show that~\eqref{eq:psi:i:q:support:old} holds with $q'=\qbn$. 
\begin{lemma}[\bf Support estimate]
\label{lem:psi:i:q:support}
For all $0 \leq i \leq \imax(\qbn)$ and $1\leq r \leq \infty$, we have that $\|\psi_{i,\qbn}\|_r  \lesssim  \Gamma_\qbn^{\frac{-2i+\CLebesgue}{r}}$ where $ \CLebesgue$ is defined in \eqref{eq:psi:i:q:support:old} and thus depends only on $b$. 
\end{lemma}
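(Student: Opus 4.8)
The plan is to follow the template of~\cite[Section~9]{GKN23} for the $L^r$ size of the velocity cutoffs, adjusted to the present $L^1$--$L^2$ setting. Since $0\le\psi_{i,\qbn}\le 1$, for $1\le r<\infty$ we have $\|\psi_{i,\qbn}\|_{L^r}^r=\int_{\T^3}\psi_{i,\qbn}^r\le|\supp_x\psi_{i,\qbn}(\cdot,t)|$ for every $t$, while $\|\psi_{i,\qbn}\|_\infty\le1$; hence it suffices to prove $|\supp_x\psi_{i,\qbn}(\cdot,t)|\lesssim\Gamma_\qbn^{-2i+\CLebesgue}$ uniformly in $t$, for each $1\le i\le\imax(\qbn)$ (the case $i=0$ is trivial, as $|\T^3|=1\le\Gamma_\qbn^{\CLebesgue}$ for $a$ large). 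By Definition~\ref{def:psi:i:q:def} and~\eqref{eq:psi:i:q:recursive}, if $(x,t)\in\supp\psi_{i,\qbn}$ then there is a tuple $\vec i$ with $\max_m i_m=i$ and $(x,t)\in\bigcap_m\supp\psi_{m,i_m,\qbn}$; choosing $m$ with $i_m=i$ shows $\supp_x\psi_{i,\qbn}(\cdot,t)\subseteq\bigcup_{m=0}^{\NcutSmall}\supp_x\psi_{m,i,\qbn}(\cdot,t)$. As $\NcutSmall$ is fixed independently of $q$, it remains to bound $|\supp_x\psi_{m,i,\qbn}(\cdot,t)|$ for each fixed $m\le\NcutSmall$ and $i\ge1$.

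By~\eqref{eq:psi:m:im:q:def}, $\supp_x\psi_{m,i,\qbn}(\cdot,t)\subseteq\bigcup_{j:\,i_*(j)\le i}\big(\supp_x\psi_{j,\qbn-1}(\cdot,t)\cap\supp_x\psi_{m,i,j,\qbn}(\cdot,t)\big)$, and by Lemma~\ref{lem:maximal:i} the number of indices $j$ with $i_*(j)\le i\le\imax(\qbn)$ is bounded by a $q$-independent constant, so it is enough to bound each set in this union by $\lesssim\Gamma_\qbn^{-2i+\CLebesgue}$. If $i=i_*(j)$, then $\psi_{m,i,j,\qbn}$ carries no lower bound on $h_{m,j,\qbn}$, so I simply invoke the inductive assumption~\eqref{eq:psi:i:q:support:old} at level $\qbn-1$: $|\supp_x(\psi_{j,\qbn-1}\psi_{m,i,j,\qbn})(\cdot,t)|\le\|\psi_{j,\qbn-1}\|_{L^1}\lesssim\Gamma_{\qbn-1}^{-2j+\CLebesgue}$. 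Since $i=i_*(j)$ forces $\Gamma_{\qbn-1}^{j}>\Gamma_\qbn^{i-1}$ (Definition~\ref{def:istar:j}) and $\Gamma_{\qbn-1}^{\CLebesgue}=\Gamma_\qbn^{\CLebesgue/b}$, this is $<\Gamma_\qbn^{-2i+2+\CLebesgue/b}\le\Gamma_\qbn^{-2i+\CLebesgue}$, the last step using that $\CLebesgue$ (item~\ref{i:par:3}) satisfies $2+\CLebesgue/b\le\CLebesgue$, which holds since $b$ is close to $1$.

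The remaining case $i>i_*(j)$ is the heart of the argument. On $\supp_x\psi_{m,i,j,\qbn}$, the lower bound in Lemma~\ref{lem:h:j:q:size} for $i>i_*(j)$ gives $h_{m,j,\qbn}(x,t)\ge\tfrac12\Gamma_\qbn^{(m+1)(i-i_*(j))}$; unpacking the definition~\eqref{eq:h:j:q:def}, this forces, for some $N\le\NcutLarge$,
\[
\big|D^N D_{t,\qbn-1}^m\hat w_\qbn(x,t)\big|\gtrsim\Gamma_\qbn^{(m+1)(i-i_*(j))+i_*(j)}\,\delta_\qbn^{1/2}\big(\mu_{\qbn-1}^{-1}\Gamma_\qbn^{i_*(j)+2}\big)^{m}(\lambda_\qbn\Gamma_\qbn)^{N}\,.
\]
Applying Chebyshev's inequality with the $L^2$ estimate~\eqref{eq:vellie:inductive:dtq-1:upgraded:statement} from Lemma~\ref{lem:mollifying:w} restricted to $\supp\psi_{j,\qbn-1}$ (using~\eqref{eq:qho:choice3} to absorb $\tfrac{\lambda_\qbn}{\lambda_\qho^{3/2}}\Gamma_{q-1}^{-1/2}$ into $\delta_\qbn^{1/2}$), then summing the finitely many $N\le\NcutLarge$ (each contributing a factor $(\Gamma_{\qbn-1}/\Gamma_\qbn)^{2N}\le1$) and using $\Gamma_{\qbn-1}^{j}\le\Gamma_\qbn^{i_*(j)}$ to dominate the $m$-dependent factors, the measure of the set where the above holds is $\lesssim\Gamma_\qbn^{-2(m+1)(i-i_*(j))-2i_*(j)}\Gamma_q^{40}\le\Gamma_\qbn^{-2i}\Gamma_q^{40}$, the last inequality using $m\ge0$. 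Since $\Gamma_q^{40}\le\Gamma_\qbn^{\CLebesgue}$ once $a$ is large (item~\ref{i:choice:of:a}; recall $\CLebesgue$ depends only on $b$ and is much larger than $40$), this set has measure $\lesssim\Gamma_\qbn^{-2i+\CLebesgue}$. Collecting the two cases over the $q$-independent family of admissible $j$, then over $m\le\NcutSmall$, and recalling the reductions of the first paragraph yields the claim.

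The main obstacle is this last paragraph: one must correctly turn the pointwise lower bound on $h_{m,j,\qbn}$ into a measure estimate by Chebyshev against the $L^2$ norm of the appropriate mixed derivative of $\hat w_\qbn$ on $\supp\psi_{j,\qbn-1}$, and check that the resulting exponents in $m$, $N$, $i$ and $i_*(j)$, together with the gain $\Gamma_{\qbn-1}^{\CLebesgue}=\Gamma_\qbn^{\CLebesgue/b}$, absorb the $O(1)$ powers of $\Gamma_\qbn$ and all implicit constants; the remainder is formal and parallels~\cite[Lemma~9.17]{GKN23} with $\tau$'s replaced by $\mu$'s and $L^{3/2}$/$L^3$ replaced by $L^1$/$L^2$.
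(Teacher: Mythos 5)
Your overall architecture—decompose $\psi_{i,\qbn}$ via Definitions~\ref{def:psi:m:im:q:def} and~\ref{def:psi:i:q:def}, split into the cases $i>i_*(j)$ (Chebyshev) and $i=i_*(j)$ (inductive hypothesis), and track the exponents—matches the argument of~\cite[Lemma~9.17]{GKN23} that the paper cites, and your Chebyshev step and exponent accounting in the $i>i_*(j)$ case are correct.

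However, there is a genuine gap in the case $i=i_*(j)$. You reduce the entire lemma at the outset to the support-measure bound $|\supp_x\psi_{i,\qbn}(\cdot,t)|\lesssim\Gamma_\qbn^{-2i+\CLebesgue}$, and then in the $i=i_*(j)$ case you write
$|\supp_x(\psi_{j,\qbn-1}\psi_{m,i,j,\qbn})(\cdot,t)|\le\|\psi_{j,\qbn-1}\|_{L^1}$.
This inequality goes the wrong way: since $0\le\psi_{j,\qbn-1}\le1$ one has $\|\psi_{j,\qbn-1}\|_{L^1}\le|\supp_x\psi_{j,\qbn-1}|$, not the reverse, and there is no pointwise lower bound on $\psi_{j,\qbn-1}$ on its support (it is a smooth bump vanishing at the boundary) that would let you reverse it. The inductive assumption~\eqref{eq:psi:i:q:support:old} only controls the $L^1$ norm at level $\qbn-1$, not the measure of the support, so your support-measure reduction cannot close in this case. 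The fix is to reduce only to the $L^1$ bound, which suffices since $\|\psi_{i,\qbn}\|_{L^r}^r\le\|\psi_{i,\qbn}\|_{L^1}$ for $r\ge1$: using $\psi_{i,\qbn}\le C\sum_m\psi_{m,i,\qbn}$ and $\psi_{m,i,\qbn}\le\sum_j\psi_{j,\qbn-1}\psi_{m,i,j,\qbn}$, bound $\int\psi_{j,\qbn-1}\psi_{m,i,j,\qbn}\le\|\psi_{j,\qbn-1}\|_{L^1}\lesssim\Gamma_{\qbn-1}^{-2j+\CLebesgue}$ when $i=i_*(j)$ (this \emph{is} a valid use of the inductive hypothesis since $\psi_{m,i,j,\qbn}\le1$), and use the Chebyshev support bound, which is stronger, when $i>i_*(j)$. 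Your exponent bookkeeping in both cases then carries over verbatim.
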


\begin{proof}[Proof of Lemma~\ref{lem:psi:i:q:support}]
The proof follows exactly the method of~\cite[Lemma~9.17]{GKN23}.  The main difference is that squares are substituted for cubes, and the definition of $\CLebesgue$ from~\eqref{eq:psi:i:q:support:old} has been adjusted accordingly.  We therefore omit further details and refer the reader to~\cite[Lemma~9.17]{GKN23}.
\end{proof}

\subsubsection{Verifying Eqn.~\texorpdfstring{\eqref{eq:inductive:timescales}}{eqinductive}}\label{ss:dodging:verification}
\begin{lemma}[\bf Overlapping and timescales]\label{lem:overlap:timescales}
Let $q''\in\{q+1,\dots,q+\bn-1\}$. Assume that $\psi_{i,q+\bn}\psi_{i'',q''}\not\equiv 0$. Then it must be the case that  {$\mu_{q+\bn}\Gamma_{q+\bn}^{-i}\leq \mu_{q''}\Gamma_{q''}^{-i''-{25}}$}.
\end{lemma}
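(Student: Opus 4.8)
The strategy is to show that if the supports of $\psi_{i,q+\bn}$ and $\psi_{i'',q''}$ overlap, then $i''$ cannot be too much larger than $i$, quantitatively enough to convert the desired timescale inequality $\mu_{q+\bn}\Gamma_{q+\bn}^{-i}\leq \mu_{q''}\Gamma_{q''}^{-i''-25}$ into a parameter inequality that holds by the choices in Section~\ref{sec:pam}. The key input is that $\hat w_{q+\bn}$ is much smaller in amplitude than $\hat w_{q''}$ (by a factor essentially $\delta_{q+\bn}^{\sfrac12}/\delta_{q''}^{\sfrac12}$, which is superexponentially small), so that on the support of $\psi_{i'',q''}$ — where $\hat w_{q''}$ is of size roughly $\Gamma_{q''}^{i''}\delta_{q''}^{\sfrac12}$ via~\eqref{eq:nasty:D:wq:old} with the reverse-type bound, or more precisely where the cutoff is active — the function $h_{0,0,q+\bn}$ defining $\psi_{i,q+\bn}$ is automatically bounded. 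Concretely, I would first recall that $\psi_{i,q+\bn}\psi_{i'',q''}\not\equiv 0$ forces a point $(x,t)$ in both supports; at such a point one has the lower bound from Lemma~\ref{lem:h:j:q:size} (its second assertion, $h_{m,j_m,q+\bn}\geq \frac12\Gamma_{q+\bn}^{(m+1)(i_m-i_*(j_m))}$ when $i_m>i_*(j_m)$) forcing $\delta_{q+\bn}^{\sfrac12}\Gamma_{q+\bn}^{i}\lesssim \|\hat w_{q+\bn}\|_{L^\infty(\mathrm{nbhd})}$, combined with an upper bound for $\|\hat w_{q+\bn}\|_{L^\infty}$ coming from Lemma~\ref{lem:mollifying:w}, specifically~\eqref{eq:vellie:inductive:dtq-1:uniform:upgraded:statement}, whose right-hand side carries a factor $r_q^{-1}$ but also only $\Gamma_{q}^{\sfrac{\badshaq}{2}+16}+\lambda_\qbn\lambda_\qho^{-\sfrac32}\Gamma_{q-1}^{-\sfrac12}$ instead of a large power of $\Gamma_{q+\bn}^{i}$.

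The second half is to bring in $\hat w_{q''}$: on $\supp\psi_{i'',q''}$, by the reverse inductive bound (the lower-bound analogue built into the construction of the velocity cutoffs, i.e. the place where $h$ is large) one has that $\hat w_{q''}$, or rather the relevant cutoff chain, forces $\Gamma_{q''}^{i''}$ to be comparable to a certain ratio. The cleanest route is to avoid re-deriving this and instead mirror the proof of~\cite[Lemma~9.19]{GKN23} (or its analogue in~\cite{BMNV21, NV22}): that proof establishes exactly a statement of this shape, with $\tau$'s in place of $\mu$'s, and the only substantive change here is cosmetic — all $\tau_{q'}$ have been replaced by $\mu_{q'}$ in the definitions of the flow maps, time cutoffs, and the inductive timescale assumption~\eqref{eq:inductive:timescales}. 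So I would state that the proof follows the method of~\cite[Lemma~9.19]{GKN23} verbatim after this substitution, and then verify that the single parameter inequality invoked there — which balances $\mu_{q+\bn}\Gamma_{q+\bn}^{-i}$ against $\mu_{q''}\Gamma_{q''}^{-i''}$ using the superexponential separation of frequencies and the extra factor $\Gamma_{q''}^{-25}$ — is implied by~\eqref{ineq:tau:q}, namely $\Gamma_\qbn^{300}\mu_{\qbn-1}^{-1}\leq\mu_\qbn^{-1}$ iterated down from $q+\bn$ to $q''+1$, together with the bound $i\leq\imax(q+\bn)$ from Lemma~\ref{lem:maximal:i} and $i''\leq\imax(q'')$ from~\eqref{ind:imax}.

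More explicitly: iterating $\Gamma_{q'+1}^{300}\mu_{q'}^{-1}\leq\mu_{q'+1}^{-1}$ for $q''\leq q'\leq q+\bn-1$ gives $\mu_{q+\bn}^{-1}\geq \mu_{q''}^{-1}\prod_{q'=q''+1}^{q+\bn}\Gamma_{q'}^{300}$, hence $\mu_{q+\bn}\Gamma_{q+\bn}^{-i}\leq \mu_{q''}\Gamma_{q''}^{-i''-25}$ will follow once $\Gamma_{q+\bn}^{i}\cdot\Gamma_{q''}^{-i''-25}\cdot\prod_{q'=q''+1}^{q+\bn}\Gamma_{q'}^{300}\geq 1$, which — using $\Gamma_{q'}=\Gamma_{q''}^{(b^{q'-q''})(b-1)\varepsilon_\Gamma\cdot(\text{bounded})}$ and the uniform bounds $i\leq\imax\leq\frac{\badshaq+12}{(b-1)\varepsilon_\Gamma}$ — reduces to an inequality among the fixed parameters $b,\varepsilon_\Gamma,\badshaq,\bn$ that is arranged to hold by choosing $a$ (hence all $\Gamma$'s) large in item~\eqref{i:choice:of:a}. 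The main obstacle, and the only place real care is needed, is the first half: correctly extracting from the overlap hypothesis that $\Gamma_{q+\bn}^{i}$ is controlled by the ratio of amplitudes — i.e. making sure the lower bound on $h_{m,j_m,q+\bn}$ from Lemma~\ref{lem:h:j:q:size} is combined with the right $L^\infty$ bound on $\hat w_{q+\bn}$ on a neighborhood of the overlap point (using that $\supp\hat w_{q+\bn}$ is slightly fattened per~\eqref{eq:dodging:useful:support}) so that no spurious factors of $\Gamma_{q''}^{i''}$ are lost. Everything after that is bookkeeping with the parameter inequalities already recorded in Section~\ref{sec:pam}.
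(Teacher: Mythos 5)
Your explicit chain of reasoning does not close, and it misses the mechanism by which the overlap hypothesis actually enters.  The paper's proof (deferred to~\cite[Lemma~9.18]{GKN23}, not 9.19) is an induction through the recursive structure of the velocity cutoffs.  By Definitions~\ref{def:intermediate:cutoffs}--\ref{def:psi:i:q:def}, a point $(x,t)\in\supp\psi_{i,q+\bn}$ lies in $\supp\psi_{j,q+\bn-1}$ for some $j$ with $i\ge i_*(j)$, hence $\Gamma_{q+\bn}^{i}\ge\Gamma_{q+\bn-1}^{j}$ by Definition~\ref{def:istar:j}.  If $q''\le q+\bn-2$ one then invokes the \emph{inductive} hypothesis~\eqref{eq:inductive:timescales} at level $q'=q+\bn-1$, $i'=j$, to get $\mu_{q+\bn-1}\Gamma_{q+\bn-1}^{-j}\le\mu_{q''}\Gamma_{q''}^{-i''-25}$, and combines this with $\Gamma_{q+\bn}^{-i}\le\Gamma_{q+\bn-1}^{-j}$ and $\mu_{q+\bn}\le\mu_{q+\bn-1}$ from~\eqref{ineq:tau:q}; if $q''=q+\bn-1$ then $\psi_{j,q+\bn-1}\psi_{i'',q+\bn-1}\not\equiv 0$ forces $|j-i''|\le 1$ by~\eqref{eq:inductive:partition}, and the extra $\Gamma_{q+\bn}^{-300}$ from~\eqref{ineq:tau:q} absorbs the unit-index loss.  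No amplitude comparison between $\hat w_{q+\bn}$ and $\hat w_{q''}$ appears.

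Your ``explicit'' reduction to $\Gamma_{q''}^{i''+25}\le\Gamma_{q+\bn}^{i}\prod_{q'=q''+1}^{q+\bn}\Gamma_{q'}^{300}$, followed by dropping $\Gamma_{q+\bn}^{i}\ge 1$ and bounding $i''\le\imax(q'')$, cannot succeed: since $\imax\lesssim\frac{\badshaq+12}{(b-1)\varepsilon_\Gamma}$ and $\varepsilon_\Gamma\ll(b-1)^2$, the exponent $\imax+25$ vastly exceeds any fixed power such as $300$ (or $300b$), so the inequality fails when $i=0$ and $i''=\imax$, and no choice of $a$ repairs an exponent mismatch.  That configuration is exactly what the overlap hypothesis is supposed to preclude, and your argument nowhere uses it.  Likewise, the ``first half'' of your plan --- a lower bound $\delta_{q+\bn}^{\sfrac12}\Gamma_{q+\bn}^{i}\lesssim\|\hat w_{q+\bn}\|_{L^\infty}$ from Lemma~\ref{lem:h:j:q:size} combined with~\eqref{eq:vellie:inductive:dtq-1:uniform:upgraded:statement} --- yields only an upper bound $i\le\imax(q+\bn)$; that is the content of Lemma~\ref{lem:maximal:i}, not a relation between $i$ and $i''$.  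The indispensable ingredients your proposal omits are the intermediate index $j$ at level $q+\bn-1$ and the inductive timescale hypothesis~\eqref{eq:inductive:timescales}.
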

\begin{proof}[Proof of Lemma~\ref{lem:overlap:timescales}]
The proof of this Lemma is identical to~\cite[Lemma~9.18]{GKN23}, save for substituting $\mu$ for $\tau$, and so we omit further details.
\end{proof}

\medskip

\noindent\textsc{Department of Mathematics, University of Z\"urich, Z\"urich, Switzerland.}
\vspace{.03in}
\newline\noindent\textit{Email address}: \href{mailto:vikram.giri@math.uzh.ch}{vikram.giri@math.uzh.ch}.
\smallskip

\noindent\textsc{Department of Mathematics, ETH Z\"urich, Z\"urich, Switzerland.}
\vspace{.03in}
\newline\noindent\textit{Email address}: \href{hyunju.kwon@math.ethz.ch}{hyunju.kwon@math.ethz.ch}.
\smallskip

\noindent\textsc{Department of Mathematics, Purdue University, West Lafayette, IN, USA.}
\vspace{.03in}
\newline\noindent\textit{Email address}: \href{mailto:mdnovack@purdue.edu}{mdnovack@purdue.edu}.

\end{document}